\newcommand{\lyxmathsym}[1]{\ifmmode\begingroup\def\b@ld{bold}
  \text{\ifx\math@version\b@ld\bfseries\fi#1}\endgroup\else#1\fi}
\numberwithin{figure}{section}
\numberwithin{equation}{section}
\theoremstyle{plain}
\newtheorem{thm}{\protect\theoremname}[section]
\theoremstyle{plain}
\newtheorem{lem}[thm]{\protect\lemmaname}
\theoremstyle{definition}
\newtheorem{defn}[thm]{\protect\definitionname}
\theoremstyle{plain}
\newtheorem{conjecture}[thm]{\protect\conjecturename}
\theoremstyle{plain}
\newtheorem{prop}[thm]{\protect\propositionname}
\theoremstyle{remark}
\newtheorem{rem}[thm]{\protect\remarkname}
\theoremstyle{remark}
\newtheorem{claim}[thm]{\protect\claimname}
\theoremstyle{plain}
\newtheorem{cor}[thm]{\protect\corollaryname}
\theoremstyle{definition}
\newtheorem{problem}[thm]{\protect\problemname}
\theoremstyle{remark}
\newtheorem*{acknowledgement*}{\protect\acknowledgementname}
\theoremstyle{definition}
\newtheorem{example}[thm]{\protect\examplename}
\renewcommand{\paragraph}{%
  \@startsection{paragraph}{4}%
  {\z@}{1.25ex \@plus 1ex \@minus .2ex}{-1em}%
  {\normalfont\normalsize\bfseries}%
}
\providecommand{\acknowledgementname}{Acknowledgement}
\providecommand{\claimname}{Claim}
\providecommand{\conjecturename}{Conjecture}
\providecommand{\corollaryname}{Corollary}
\providecommand{\definitionname}{Definition}
\providecommand{\examplename}{Example}
\providecommand{\lemmaname}{Lemma}
\providecommand{\problemname}{Problem}
\providecommand{\propositionname}{Proposition}
\providecommand{\remarkname}{Remark}
\providecommand{\theoremname}{Theorem}
\begin{document}
\def\balign#1\ealign{\begin{align}#1\end{align}}
\def\baligns#1\ealigns{\begin{align*}#1\end{align*}}
\def\balignat#1\ealign{\begin{alignat}#1\end{alignat}}
\def\balignats#1\ealigns{\begin{alignat*}#1\end{alignat*}}
\def\bitemize#1\eitemize{\begin{itemize}#1\end{itemize}}
\def\benumerate#1\eenumerate{\begin{enumerate}#1\end{enumerate}}

\newenvironment{talign*}
 {\let\displaystyle\textstyle\csname align*\endcsname}
 {\endalign}
\newenvironment{talign}
 {\let\displaystyle\textstyle\csname align\endcsname}
 {\endalign}

\def\balignst#1\ealignst{\begin{talign*}#1\end{talign*}}
\def\balignt#1\ealignt{\begin{talign}#1\end{talign}}

\let\originalleft\left
\let\originalright\right
\renewcommand{\left}{\mathopen{}\mathclose\bgroup\originalleft}
\renewcommand{\right}{\aftergroup\egroup\originalright}

\def\Gronwall{Gr\"onwall\xspace}
\def\Holder{H\"older\xspace}
\def\Ito{It\^o\xspace}
\def\Nystrom{Nystr\"om\xspace}
\def\Schatten{Sch\"atten\xspace}
\def\Matern{Mat\'ern\xspace}

\def\tinycitep*#1{{\tiny\citep*{#1}}}
\def\tinycitealt*#1{{\tiny\citealt*{#1}}}
\def\tinycite*#1{{\tiny\cite*{#1}}}
\def\smallcitep*#1{{\scriptsize\citep*{#1}}}
\def\smallcitealt*#1{{\scriptsize\citealt*{#1}}}
\def\smallcite*#1{{\scriptsize\cite*{#1}}}

\def\blue#1{\textcolor{blue}{{#1}}}
\def\green#1{\textcolor{green}{{#1}}}
\def\orange#1{\textcolor{orange}{{#1}}}
\def\purple#1{\textcolor{purple}{{#1}}}
\def\red#1{\textcolor{red}{{#1}}}
\def\teal#1{\textcolor{teal}{{#1}}}

\def\mbi#1{\boldsymbol{#1}} 
\def\mbf#1{\mathbf{#1}}
\def\mrm#1{\mathrm{#1}}
\def\tbf#1{\textbf{#1}}
\def\tsc#1{\textsc{#1}}

\def\mbiA{\mbi{A}}
\def\mbiB{\mbi{B}}
\def\mbiC{\mbi{C}}
\def\mbiDelta{\mbi{\Delta}}
\def\mbif{\mbi{f}}
\def\mbiF{\mbi{F}}
\def\mbih{\mbi{g}}
\def\mbiG{\mbi{G}}
\def\mbih{\mbi{h}}
\def\mbiH{\mbi{H}}
\def\mbiI{\mbi{I}}
\def\mbim{\mbi{m}}
\def\mbiP{\mbi{P}}
\def\mbiQ{\mbi{Q}}
\def\mbiR{\mbi{R}}
\def\mbiv{\mbi{v}}
\def\mbiV{\mbi{V}}
\def\mbiW{\mbi{W}}
\def\mbiX{\mbi{X}}
\def\mbiY{\mbi{Y}}
\def\mbiZ{\mbi{Z}}

\def\textsum{{\textstyle\sum}} 
\def\textprod{{\textstyle\prod}} 
\def\textbigcap{{\textstyle\bigcap}} 
\def\textbigcup{{\textstyle\bigcup}} 

\def\reals{\mathbb{R}} 
\def\integers{\mathbb{Z}} 
\def\rationals{\mathbb{Q}} 
\def\naturals{\mathbb{N}} 
\def\complex{\mathbb{C}} 

\def\what#1{\widehat{#1}}

\def\twovec#1#2{\left[\begin{array}{c}{#1} \\ {#2}\end{array}\right]}
\def\threevec#1#2#3{\left[\begin{array}{c}{#1} \\ {#2} \\ {#3} \end{array}\right]}
\def\nvec#1#2#3{\left[\begin{array}{c}{#1} \\ {#2} \\ \vdots \\ {#3}\end{array}\right]} 

\def\maxeig#1{\lambda_{\mathrm{max}}\left({#1}\right)}
\def\mineig#1{\lambda_{\mathrm{min}}\left({#1}\right)}

\def\Re{\operatorname{Re}} 
\def\indic#1{\mbb{I}\left[{#1}\right]} 
\def\logarg#1{\log\left({#1}\right)} 
\def\polylog{\operatorname{polylog}}
\def\maxarg#1{\max\left({#1}\right)} 
\def\minarg#1{\min\left({#1}\right)} 
\def\Earg#1{\E\left[{#1}\right]}
\def\Esub#1{\E_{#1}}
\def\Esubarg#1#2{\E_{#1}\left[{#2}\right]}
\def\bigO#1{\mathcal{O}\left(#1\right)} 
\def\littleO#1{o(#1)} 
\def\P{\mbb{P}} 
\def\Parg#1{\P\left({#1}\right)}
\def\Psubarg#1#2{\P_{#1}\left[{#2}\right]}
\def\Trarg#1{\Tr\left[{#1}\right]} 
\def\trarg#1{\tr\left[{#1}\right]} 
\def\Var{\mrm{Var}} 
\def\Vararg#1{\Var\left[{#1}\right]}
\def\Varsubarg#1#2{\Var_{#1}\left[{#2}\right]}
\def\Cov{\mrm{Cov}} 
\def\Covarg#1{\Cov\left[{#1}\right]}
\def\Covsubarg#1#2{\Cov_{#1}\left[{#2}\right]}
\def\Corr{\mrm{Corr}} 
\def\Corrarg#1{\Corr\left[{#1}\right]}
\def\Corrsubarg#1#2{\Corr_{#1}\left[{#2}\right]}
\newcommand{\info}[3][{}]{\mathbb{I}_{#1}\left({#2};{#3}\right)} 
\newcommand{\staticexp}[1]{\operatorname{exp}(#1)} 
\newcommand{\loglihood}[0]{\mathcal{L}} 


\providecommand{\arccos}{\mathop\mathrm{arccos}}
\providecommand{\dom}{\mathop\mathrm{dom}}
\providecommand{\diag}{\mathop\mathrm{diag}}
\providecommand{\tr}{\mathop\mathrm{tr}}
\providecommand{\card}{\mathop\mathrm{card}}
\providecommand{\sign}{\mathop\mathrm{sign}}
\providecommand{\conv}{\mathop\mathrm{conv}} 
\def\rank#1{\mathrm{rank}({#1})}
\def\supp#1{\mathrm{supp}({#1})}

\providecommand{\minimize}{\mathop\mathrm{minimize}}
\providecommand{\maximize}{\mathop\mathrm{maximize}}
\providecommand{\subjectto}{\mathop\mathrm{subject\;to}}

\def\openright#1#2{\left[{#1}, {#2}\right)}

\ifdefined\nonewproofenvironments\else
\ifdefined\ispres\else
 
\fi
\makeatletter
\@addtoreset{equation}{section}
\makeatother
\def\theequation{\thesection.\arabic{equation}}

\newcommand{\cmark}{\ding{51}}

\newcommand{\xmark}{\ding{55}}

\newcommand{\eq}[1]{\begin{align}#1\end{align}}
\newcommand{\eqn}[1]{\begin{align*}#1\end{align*}}
\renewcommand{\Pr}[1]{\mathbb{P}\left( #1 \right)}
\newcommand{\Ex}[1]{\mathbb{E}\left[#1\right]}

\newcommand{\matt}[1]{{\textcolor{Maroon}{[Matt: #1]}}}
\newcommand{\kook}[1]{{\textcolor{blue}{[Kook: #1]}}}
\definecolor{OliveGreen}{rgb}{0,0.6,0}
\newcommand{\sv}[1]{{\textcolor{OliveGreen}{[Santosh: #1]}}}

\global\long\def\on#1{\operatorname{#1}}%

\global\long\def\bw{\mathsf{Ball\ walk}}%
\global\long\def\sw{\mathsf{Speedy\ walk}}%
\global\long\def\gw{\mathsf{Gaussian\ walk}}%
\global\long\def\ps{\mathsf{Proximal\ sampler}}%
\global\long\def\dw{\mathsf{Dikin\ walk}}%

\global\long\def\chr{\mathsf{Coordinate\ Hit\text{-}and\text{-}Run}}%
\global\long\def\har{\mathsf{Hit\text{-}and\text{-}Run}}%
\global\long\def\gc{\mathsf{Gaussian\ cooling}}%
\global\long\def\ino{\mathsf{\mathsf{In\text{-}and\text{-}Out}}}%
\global\long\def\tgc{\mathsf{Tilted\ Gaussian\ cooling}}%
\global\long\def\PS{\mathsf{PS}}%
\global\long\def\psunif{\mathsf{PS}_{\textup{unif}}}%
\global\long\def\psexp{\mathsf{PS}_{\textup{exp}}}%
\global\long\def\psann{\mathsf{PS}_{\textup{ann}}}%
\global\long\def\psgauss{\mathsf{PS}_{\textup{Gauss}}}%
\global\long\def\eval{\mathsf{Eval}}%
\global\long\def\mem{\mathsf{Mem}}%

\global\long\def\O{O}%
\global\long\def\Otilde{\widetilde{O}}%
\global\long\def\Omtilde{\widetilde{\Omega}}%

\global\long\def\E{\mathbb{E}}%
\global\long\def\Z{\mathbb{Z}}%
\global\long\def\P{\mathbb{P}}%
\global\long\def\N{\mathbb{N}}%

\global\long\def\R{\mathbb{R}}%
\global\long\def\Rd{\mathbb{R}^{d}}%
\global\long\def\Rdd{\mathbb{R}^{d\times d}}%
\global\long\def\Rn{\mathbb{R}^{n}}%
\global\long\def\Rnn{\mathbb{R}^{n\times n}}%
\global\long\def\C{\mathbb{C}}%

\global\long\def\psd{\mathbb{S}_{+}^{d}}%
\global\long\def\pd{\mathbb{S}_{++}^{d}}%

\global\long\def\defeq{\stackrel{\mathrm{{\scriptscriptstyle def}}}{=}}%

\global\long\def\veps{\varepsilon}%
\global\long\def\lda{\lambda}%
\global\long\def\vphi{\varphi}%
\global\long\def\K{\mathcal{K}}%

\global\long\def\half{\frac{1}{2}}%
\global\long\def\nhalf{\nicefrac{1}{2}}%
\global\long\def\texthalf{{\textstyle \frac{1}{2}}}%
\global\long\def\ltwo{L^{2}}%

\global\long\def\ind{\mathds{1}}%
\global\long\def\op{\mathsf{op}}%
\global\long\def\ch{\mathsf{Ch}}%
\global\long\def\kls{\mathsf{KLS}}%
\global\long\def\ts{\mathsf{TS}}%
\global\long\def\hs{\textup{HS}}%

\global\long\def\cpi{C_{\mathsf{PI}}}%
\global\long\def\clsi{C_{\mathsf{LSI}}}%
\global\long\def\cch{C_{\mathsf{Ch}}}%
\global\long\def\clch{C_{\mathsf{logCh}}}%
\global\long\def\cexp{C_{\mathsf{exp}}}%
\global\long\def\cgauss{C_{\mathsf{Gauss}}}%

\global\long\def\chooses#1#2{_{#1}C_{#2}}%

\global\long\def\vol{\on{vol}}%

\global\long\def\law{\on{law}}%

\global\long\def\tr{\on{tr}}%

\global\long\def\diag{\on{diag}}%

\global\long\def\diam{\on{diam}}%

\global\long\def\poly{\on{poly}}%

\global\long\def\polylog{\on{polylog}}%

\global\long\def\Diag{\on{Diag}}%

\global\long\def\inter{\on{int}}%

\global\long\def\esssup{\on{ess\,sup}}%

\global\long\def\proj{\on{Proj}}%

\global\long\def\e{\mathrm{e}}%

\global\long\def\id{\mathrm{id}}%

\global\long\def\supp{\on{supp}}%

\global\long\def\spanning{\on{span}}%

\global\long\def\rows{\on{row}}%

\global\long\def\cols{\on{col}}%

\global\long\def\rank{\on{rank}}%

\global\long\def\T{\mathsf{T}}%

\global\long\def\bs#1{\boldsymbol{#1}}%

\global\long\def\eu#1{\EuScript{#1}}%

\global\long\def\mb#1{\mathbf{#1}}%

\global\long\def\mbb#1{\mathbb{#1}}%

\global\long\def\mc#1{\mathcal{#1}}%

\global\long\def\mf#1{\mathfrak{#1}}%

\global\long\def\ms#1{\mathscr{#1}}%

\global\long\def\mss#1{\mathsf{#1}}%

\global\long\def\msf#1{\mathsf{#1}}%

\global\long\def\textint{{\textstyle \int}}%
\global\long\def\Dd{\mathrm{D}}%
\global\long\def\D{\mathrm{d}}%
\global\long\def\grad{\nabla}%
 
\global\long\def\hess{\nabla^{2}}%
 
\global\long\def\lapl{\triangle}%
 
\global\long\def\deriv#1#2{\frac{\D#1}{\D#2}}%
 
\global\long\def\pderiv#1#2{\frac{\partial#1}{\partial#2}}%
 
\global\long\def\de{\partial}%
\global\long\def\lagrange{\mathcal{L}}%
\global\long\def\Div{\on{div}}%

\global\long\def\Gsn{\mathcal{N}}%
 
\global\long\def\BeP{\textnormal{BeP}}%
 
\global\long\def\Ber{\textnormal{Ber}}%
 
\global\long\def\Bern{\textnormal{Bern}}%
 
\global\long\def\Bet{\textnormal{Beta}}%
 
\global\long\def\Beta{\textnormal{Beta}}%
 
\global\long\def\Bin{\textnormal{Bin}}%
 
\global\long\def\BP{\textnormal{BP}}%
 
\global\long\def\Dir{\textnormal{Dir}}%
 
\global\long\def\DP{\textnormal{DP}}%
 
\global\long\def\Expo{\textnormal{Expo}}%
 
\global\long\def\Gam{\textnormal{Gamma}}%
 
\global\long\def\GEM{\textnormal{GEM}}%
 
\global\long\def\HypGeo{\textnormal{HypGeo}}%
 
\global\long\def\Mult{\textnormal{Mult}}%
 
\global\long\def\NegMult{\textnormal{NegMult}}%
 
\global\long\def\Poi{\textnormal{Poi}}%
 
\global\long\def\Pois{\textnormal{Pois}}%
 
\global\long\def\Unif{\textnormal{Unif}}%

\global\long\def\bpar#1{\bigl(#1\bigr)}%
\global\long\def\Bpar#1{\Bigl(#1\Bigr)}%

\global\long\def\abs#1{|#1|}%
\global\long\def\babs#1{\bigl|#1\bigr|}%
\global\long\def\Babs#1{\Bigl|#1\Bigr|}%

\global\long\def\snorm#1{\|#1\|}%
\global\long\def\bnorm#1{\bigl\Vert#1\bigr\Vert}%
\global\long\def\Bnorm#1{\Bigl\Vert#1\Bigr\Vert}%

\global\long\def\sbrack#1{[#1]}%
\global\long\def\bbrack#1{\bigl[#1\bigr]}%
\global\long\def\Bbrack#1{\Bigl[#1\Bigr]}%

\global\long\def\sbrace#1{\{#1\}}%
\global\long\def\bbrace#1{\bigl\{#1\bigr\}}%
\global\long\def\Bbrace#1{\Bigl\{#1\Bigr\}}%

\global\long\def\Abs#1{\left\lvert #1\right\rvert }%
\global\long\def\Par#1{\left(#1\right)}%
\global\long\def\Brack#1{\left[#1\right]}%
\global\long\def\Brace#1{\left\{  #1\right\}  }%

\global\long\def\inner#1{\langle#1\rangle}%
 
\global\long\def\binner#1#2{\left\langle {#1},{#2}\right\rangle }%

\global\long\def\norm#1{\lVert#1\rVert}%
\global\long\def\onenorm#1{\norm{#1}_{1}}%
\global\long\def\twonorm#1{\norm{#1}_{2}}%
\global\long\def\infnorm#1{\norm{#1}_{\infty}}%
\global\long\def\fronorm#1{\norm{#1}_{\text{F}}}%
\global\long\def\nucnorm#1{\norm{#1}_{*}}%
\global\long\def\staticnorm#1{\|#1\|}%
\global\long\def\statictwonorm#1{\staticnorm{#1}_{2}}%

\global\long\def\mmid{\mathbin{\|}}%

\global\long\def\otilde#1{\widetilde{O}(#1)}%
\global\long\def\wtilde{\widetilde{W}}%
\global\long\def\wt#1{\widetilde{#1}}%

\global\long\def\KL{\msf{KL}}%
\global\long\def\dtv{d_{\textrm{\textup{TV}}}}%
\global\long\def\FI{\msf{FI}}%
\global\long\def\tv{\msf{TV}}%
\global\long\def\TV{\msf{TV}}%

\global\long\def\cov{\on{cov}}%
\global\long\def\var{\on{Var}}%
\global\long\def\ent{\on{Ent}}%

\global\long\def\cred#1{\textcolor{red}{#1}}%
\global\long\def\cblue#1{\textcolor{blue}{#1}}%
\global\long\def\cgreen#1{\textcolor{green}{#1}}%
\global\long\def\ccyan#1{\textcolor{cyan}{#1}}%

\global\long\def\iff{\Leftrightarrow}%
 
\global\long\def\textfrac#1#2{{\textstyle \frac{#1}{#2}}}%

\definecolor{c063cb8}{RGB}{0,64,192}
\definecolor{cff0000}{RGB}{255,0,0}
\definecolor{c009200}{RGB}{0,128,0}
\definecolor{c008000}{RGB}{0,128,0}
\definecolor{cff6666}{RGB}{255,128,128}
\definecolor{c008600}{RGB}{0,128,0} 
\title{The Localization Method for High-dimensional Inequalities\date{}\author{Yunbum Kook\\ Georgia Tech\\ \texttt{yb.kook@gatech.edu} 
\and
Santosh S. Vempala\\ Georgia Tech\\ \texttt{vempala@gatech.edu}}}
\maketitle
\begin{abstract}
We survey the localization method for proving inequalities in high
dimension, pioneered by Lov\'asz and Simonovits (1993), and its stochastic
extension developed by Eldan (2012). The method has found applications
in a surprisingly wide variety of settings, ranging from its original
motivation in isoperimetric inequalities to optimization, concentration
of measure, and bounding the mixing rate of Markov chains. At heart,
the method converts a given instance of an inequality (for a set or
distribution in high dimension) into a highly structured instance,
often just one-dimensional.

\end{abstract}
\thispagestyle{empty}\pagebreak\tableofcontents{}

\setcounter{page}{0}
\thispagestyle{empty}\newpage{}

\section{Introduction}

High-dimensional inequalities are a rich subject with many motivating
conjectures from a variety of fields. As an example, consider the
following statement: \medskip{}

\emph{Any convex body $\K$ in $\Rn$ with the property that the uniform
distribution over it has zero mean and identity covariance contains
a ball of radius at least $\sqrt{(n+2)/n}$. }

\medskip{}

This statement is a (true) $n$-dimensional inequality with constraints.
Given that there are infinitely many convex bodies, and any convex
body can be made to satisfy the constraints after an affine transformation,
how does one prove such a statement?

As a second example, take the classical isoperimetry of a Gaussian:
the extremal isoperimetric subset for any Gaussian, i.e., one that
minimizes 

\[
\frac{\gamma^{+}(\partial S)}{\min\{\gamma(S),\gamma(\Rn\setminus S)\}}
\]
over all measurable subsets $S$, is given by a halfspace (the numerator
is the boundary measure). The extreme symmetry of the Gaussian allows
proofs that proceed by symmetrization. Consider the following substantial
generalization: \medskip{}

\emph{For any logconcave distribution in any dimension, there exists
a halfspace that gives an asymptotically optimal isoperimetric subset.
}\medskip{}

Here, by \emph{asymptotically optimal}, we mean ``within a universal
constant factor'', and this statement is the famous Kannan--Lov\'asz--Simonovits
(KLS) conjecture~\cite{KLS95isop}. At the moment, the current best
bound is that halfspaces are within $\O(\sqrt{\log n})$ of an optimal
subset, and the method surveyed here has played an important role
in the progress so far. In fact, localization has been a principal
tool for progress or resolution of many well-known inequalities in
high dimension. We first recall some of these inequalities along with
relevant definitions.

\section{A Selection of High-Dimensional Inequalities}

We begin with notation and definitions that will be used throughout.

\subsection{Preliminaries}

We use $\pi,\nu$ or $p$ to indicate a probability measure (equivalently,
a distribution) over $\Rn$, and use the same symbol for a measure
and its corresponding density (with respect to the Lebesgue measure)
when there is no confusion. Unless specified otherwise, $b=\E_{\pi}X$
(or $\mu$) and $\Sigma=\E_{\pi}[(X-b)^{\otimes2}]=\E_{\pi}[(X-b)(X-b)^{\T}]$
(or $\cov\pi$) denote the barycenter and covariance of $\pi$, respectively.

\paragraph{Notation.}

For $t>0$, we reserve $\gamma_{t}$ for the centered Gaussian distribution
$\msf N(0,tI_{n})$ (in $\Rn$) with covariance matrix $tI_{n}$.
The indicator function of a set $S\subseteq\Rn$ is denoted by $\ind_{S}(x):=[x\in S]$,
and $\mu|_{S}$ refers to a distribution $\mu$ truncated to $S$
(i.e., $\mu|_{S}\propto\mu\cdot\ind_{S}$). For two probability measures
$\mu,\pi$, we use $\mu\pi$ to denote the new distribution with density
proportional to $\mu$ times $\pi$.

The notation $a\lesssim b$ means $a=\O(b)$, $a\gtrsim b$ means
$a=\Omega(b)$, and $a\asymp b$ means $a=\O(b)$ and $a=\Omega(b)$
both hold. Lastly, $a=\Otilde(b)$ means $a=O(b\polylog b)$. For
$a,b\in\R$, we denote $a\wedge b:=\min(a,b)$ and $a\vee b:=\max(a,b)$.
For a positive semi-definite matrix $\Sigma$, let $\norm{\Sigma}$
denote the operator norm of $\Sigma$ (i.e., the largest eigenvalue
of $\Sigma$). For two random variables $X$ and $Y$, we use $X\overset{d}{=}Y$
to mean that $X$ and $Y$ follow the same distribution. 

\paragraph{Logconcavity.}

We call a function $f:\Rn\to[0,\infty)$ \emph{logconcave }(or log-concave)
if $-\log f$ is convex in $\Rn$, and a (Borel) probability measure
$\pi$ (or distribution) logconcave if it has a logconcave density
function with respect to the Lebesgue measure. For example, this class
includes uniform distributions over convex bodies, Gaussian distributions
$\msf N(b,\Sigma)$, exponential distributions, and Laplacian distributions.
We assume that any  distributions considered in this work are non-degenerate;
otherwise, we could simply work on a suitable affine subspace. For
$t\geq0$, $\pi$ is called \emph{$t$-strongly logconcave} if $-\log\pi$
is $t$-strongly convex (i.e., $-\log\pi-\frac{t}{2}\,\norm{\cdot}^{2}$
is convex).

A distribution $\pi$ is called \emph{isotropic} if it has zero mean
and identity covariance, i.e., $\E_{\pi}X=0$ and $\E_{\pi}[X^{\otimes2}]=I_{n}$.
One can readily notice that $T(x):\Sigma^{-1/2}(x-b)$ makes $\pi$
isotropic (i.e., the push-forward $T_{\#}\pi$ of $\pi$ via $T$
is isotropic when $\E_{\pi}X=b$ and $\E_{\pi}[(X-b)^{\otimes2}]=\Sigma$). 

\paragraph{Classical results in convex geometry.}

We recall several classical results from convex geometry. The following
fundamental properties of logconcave functions/distributions were
proven by Dinghas, Leindler, and Pr\'ekopa~\cite{Din57,Lei72,Pr73a,Pr73b}.
\begin{thm}
All marginals as well as the distribution function of a logconcave
function are logconcave. The convolution of two logconcave functions
is logconcave.
\end{thm}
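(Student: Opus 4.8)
The plan is to derive all three assertions from one master inequality, the \emph{Pr\'ekopa--Leindler inequality}: if $f,g,h\colon\R^{m}\to[0,\infty)$ are measurable, $\lambda\in(0,1)$, and $h(\lambda x+(1-\lambda)y)\ge f(x)^{\lambda}\,g(y)^{1-\lambda}$ for all $x,y\in\R^{m}$, then $\int h\ge(\int f)^{\lambda}(\int g)^{1-\lambda}$. Granting this, each claim is a one-line reduction. For marginals, let $F$ be logconcave on $\R^{n}=\R^{n_{1}}\times\R^{n_{2}}$ and put $p(x)=\int_{\R^{n_{2}}}F(x,y)\,\D y$; fix $x_{0},x_{1}$ and $\lambda$, set $x_{\lambda}=\lambda x_{0}+(1-\lambda)x_{1}$, and apply Pr\'ekopa--Leindler in $\R^{n_{2}}$ to $f=F(x_{0},\cdot)$, $g=F(x_{1},\cdot)$, $h=F(x_{\lambda},\cdot)$: joint logconcavity of $F$ is exactly the required hypothesis along $y$-slices, and the conclusion reads $p(x_{\lambda})\ge p(x_{0})^{\lambda}p(x_{1})^{1-\lambda}$. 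The convolution $f*g$ of logconcave $f,g$ is the $x$-marginal of $(x,y)\mapsto f(x-y)\,g(y)$, which is jointly logconcave (a product of logconcave functions, one precomposed with the linear map $(x,y)\mapsto x-y$), hence logconcave. Likewise the distribution function $x\mapsto\int f(t)\,\ind_{\{t\le x\}}\,\D t$ is the $x$-marginal of $(x,t)\mapsto f(t)\,\ind_{\{t\le x\}}$, jointly logconcave because $\{(x,t):t\le x\}$ is an intersection of halfspaces.

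It remains to prove Pr\'ekopa--Leindler, which I would do by induction on $m$. The base case $m=1$ is the heart of the matter. After rescaling ($f\mapsto f/\sup f$, $g\mapsto g/\sup g$, $h\mapsto h/((\sup f)^{\lambda}(\sup g)^{1-\lambda})$, which preserves the hypothesis and is equivalent to the conclusion) we may assume $\sup f=\sup g=1$, so the superlevel sets $\{f\ge t\}$, $\{g\ge t\}$ are nonempty for every $t\in(0,1)$. The hypothesis forces $\{h\ge t\}\supseteq\lambda\{f\ge t\}+(1-\lambda)\{g\ge t\}$, and the elementary one-dimensional Brunn--Minkowski bound $|\lambda A+(1-\lambda)B|\ge\lambda|A|+(1-\lambda)|B|$ for nonempty $A,B\subseteq\R$ (translate $A$ and $B$ so that they overlap in at most a point) gives $|\{h\ge t\}|\ge\lambda|\{f\ge t\}|+(1-\lambda)|\{g\ge t\}|$. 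Integrating in $t$ via the layer-cake formula and then applying weighted AM--GM yields $\int h\ge\lambda\int f+(1-\lambda)\int g\ge(\int f)^{\lambda}(\int g)^{1-\lambda}$. For the inductive step, write $\R^{m}=\R^{m-1}\times\R$: for fixed last coordinates $s_{0},s_{1}$ with $s_{\lambda}=\lambda s_{0}+(1-\lambda)s_{1}$, the $(m-1)$-dimensional case applied to $f(\cdot,s_{0})$, $g(\cdot,s_{1})$, $h(\cdot,s_{\lambda})$ controls the partial integrals $H(s)=\int_{\R^{m-1}}h(x,s)\,\D x$ and its two analogues, and a final application of the case $m=1$ to $H$ and those analogues closes the induction.

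The main obstacle is the $m=1$ base case together with its measure-theoretic housekeeping: one must know the superlevel sets are measurable (true, since logconcave functions, being $\exp$ of concave functions, are measurable), dispose of the degenerate cases $\int f\in\{0,\infty\}$ separately, and justify the layer-cake step. Everything downstream---the dimensional induction and the three corollaries---is essentially formal bookkeeping. I would also note the standing caveat that Pr\'ekopa--Leindler presupposes $h$ measurable, which does not follow from the pointwise inequality alone; in all three applications $h$ arises as a slice or marginal of a genuine (Borel-measurable) logconcave function, so this never bites.
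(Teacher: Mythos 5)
The paper does not actually prove this theorem: it simply states it and cites the original sources (Dinghas, Leindler, Pr\'ekopa), so there is no in-paper argument to compare against. Your proof is correct and is the standard modern treatment: reduce all three claims to the Pr\'ekopa--Leindler inequality by expressing the quantity in question as a marginal of a jointly logconcave function, and then prove Pr\'ekopa--Leindler by the superlevel-set/Brunn--Minkowski argument in dimension one followed by a Fubini-type induction. Two small points of hygiene worth flagging: first, you treat only coordinate marginals $p(x)=\int F(x,y)\,\D y$, whereas ``all marginals'' also covers pushforwards under arbitrary surjective linear maps $T$, but this reduces to the coordinate case since $F\circ T'$ is logconcave for any affine $T'$; second, in the $m=1$ base case the set $\lambda\{f\ge t\}+(1-\lambda)\{g\ge t\}$ need not itself be Lebesgue measurable even when the two factors are, so the inequality $|\{h\ge t\}|\ge\lambda|\{f\ge t\}|+(1-\lambda)|\{g\ge t\}|$ should be run through compact inner approximations of the two superlevel sets (for which the Minkowski combination is compact) rather than applied directly. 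With those caveats the argument is complete.
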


We will also need a reverse H\"older inequality (also known as Borell's
lemma \cite{Borell74convex}), where the specific constant in the
RHS is computed in \cite[Theorem 5.22]{LV07geometry}
\begin{lem}
[Reverse H\"older] If $\pi$ is a logconcave distribution over $\Rn$,
then for any $k\geq1$,
\[
\norm X_{L^{k}(\pi)}:=(\E_{\pi}[\norm X^{k}])^{1/k}\leq2k\,\E_{\pi}\norm X=2k\,\norm X_{L^{1}(\pi)}\,.
\]
\end{lem}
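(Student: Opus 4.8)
The plan is to reduce this $n$-dimensional statement to a one-dimensional tail estimate in the style of Borell. Since both sides scale linearly under $X\mapsto cX$, I would first normalize $\E_\pi|X|=1$, so that the goal becomes $\E_\pi[|X|^{k}]\le(2k)^{k}$. Everything then takes place in one dimension: I would pass to the law $\rho$ of the radial variable $R:=|X|$ on $[0,\infty)$, for which $\E_\pi[|X|^{k}]=\E_\rho[R^{k}]$ and $\int_{0}^{\infty}\P(R>t)\,dt=\E_\rho R=1$.

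The heart of the argument is an exponential tail bound for $R$. Fix $r_{0}>0$ with $\theta:=\P(R\le r_{0})>\tfrac12$; by Markov's inequality $\P(R>r_{0})\le 1/r_{0}$, so $r_{0}$ may be taken to be an absolute constant. The set $A:=\{x:|x|\le r_{0}\}$ is a symmetric convex body, and one has the elementary Minkowski-sum inclusion $A^{c}\supseteq\tfrac{2}{t+1}(tA)^{c}+\tfrac{t-1}{t+1}A$ for every $t\ge1$ (otherwise a point of $(tA)^{c}$ would lie in $\tfrac{t+1}{2}A-\tfrac{t-1}{2}A=tA$, using convexity and central symmetry). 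Feeding this into the Pr\'ekopa-type inequality $\mu\bigl(\lambda C+(1-\lambda)D\bigr)\ge\mu(C)^{\lambda}\mu(D)^{1-\lambda}$ valid for any logconcave measure $\mu$ — a consequence of the Dinghas--Leindler--Pr\'ekopa theorem stated above — with $\mu=\pi$, $C=(tA)^{c}$, $D=A$, $\lambda=\tfrac{2}{t+1}$, gives $1-\theta\ge\P(R>tr_{0})^{2/(t+1)}\theta^{(t-1)/(t+1)}$, i.e.
\[
\P(R>tr_{0})\le\theta\Bigl(\tfrac{1-\theta}{\theta}\Bigr)^{(t+1)/2}\qquad(t\ge1).
\]
As $\theta>\tfrac12$, the right-hand side decays geometrically, so $\P(R>s)\le e^{-cs}$ for all $s\ge r_{0}$ with an absolute constant $c=c(r_{0})>0$.

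With the tail bound in hand, I would finish by integration: $\E_\rho[R^{k}]=\int_{0}^{\infty}kt^{k-1}\P(R>t)\,dt$, splitting at $r_{0}$. The part over $[0,r_{0}]$ is at most $r_{0}^{k}$ (bounding $\P\le1$), and the part over $[r_{0},\infty)$ is, after inserting the exponential bound and extending the integral to $[0,\infty)$, at most $\int_{0}^{\infty}kt^{k-1}e^{-ct}\,dt=\Gamma(k+1)\,c^{-k}\le(k/c)^{k}$. Taking $k$-th roots yields $\norm{X}_{L^{k}(\pi)}\lesssim k\,\norm{X}_{L^{1}(\pi)}$, the inequality up to the value of the constant.

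I expect the main obstacle to be \emph{pinning down the constant exactly}: the crude bookkeeping above only gives $Ck$ for some universal $C>2$, and to reach the stated $2k$ one must optimize the threshold $r_{0}$ (possibly letting it depend on $k$), use a sharper estimate of the tail near $t=r_{0}$ in place of $\P\le1$, and carefully track the slack in $\Gamma(k+1)\le k^{k}$ — the computation carried out in \cite[Theorem 5.22]{LV07geometry}. A thematically apt alternative is to invoke localization directly: the radial distribution function $r\mapsto\P(R\le r)$ is itself logconcave (again by the Pr\'ekopa inequality, since $\{|x|\le r\}$ is a dilate of a fixed convex body), so the supremum of $\E_\pi[|X|^{k}]/(\E_\pi|X|)^{k}$ over logconcave $\pi$ is governed by a one-dimensional extremal problem that can be solved in closed form.
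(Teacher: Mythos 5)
The paper does not prove this lemma; it simply cites Borell's lemma \cite{Borell74convex} and credits the specific constant $2k$ to \cite[Theorem 5.22]{LV07geometry}. Your argument is precisely the classical Borell route those references follow: the Minkowski-sum inclusion $A^{c}\supseteq\tfrac{2}{t+1}(tA)^{c}+\tfrac{t-1}{t+1}A$ for a symmetric convex $A$, fed through Pr\'ekopa's $\mu(\lambda C+(1-\lambda)D)\ge\mu(C)^{\lambda}\mu(D)^{1-\lambda}$, yields the geometric tail $\P(R>tr_{0})\le\theta\bigl(\tfrac{1-\theta}{\theta}\bigr)^{(t+1)/2}$, and integrating $\E[R^{k}]=k\int_{0}^{\infty}t^{k-1}\P(R>t)\,dt$ gives $\norm{X}_{L^{k}(\pi)}\lesssim k\,\norm{X}_{L^{1}(\pi)}$. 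All of these steps are correct.

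You also correctly diagnose the only shortfall: the crude split at $r_{0}$ and the bound $\Gamma(k+1)c^{-k}\le(k/c)^{k}$ only give $Ck$ for some absolute $C$ depending on the choice of $r_{0}$, not the stated $2k$, and you point to \cite[Theorem 5.22]{LV07geometry} for the careful optimization. That is honest and accurate. Your alternative sketch — that $r\mapsto\P(|X|\le r)=\pi(rB^{n})$ is logconcave in $r$ (by Pr\'ekopa, since $rB^{n}$ is a dilate of a fixed convex set), reducing the question to a one-dimensional extremal problem over logconcave distribution functions vanishing at $0$ — is in fact the cleaner way to nail a constant and sits squarely in the spirit of the Dinghas--Leindler--Pr\'ekopa theorem stated just above the lemma. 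If you pursue it, note one subtlety: logconcavity of the \emph{cdf} $g(r)$ does not by itself give the exponential tail $1-g(tr_{0})\le\theta(\tfrac{1-\theta}{\theta})^{(t+1)/2}$; that bound genuinely requires the two-set Minkowski inclusion and not merely the one-parameter logconcavity of $r\mapsto g(r)$. So even along the alternative route you would still invoke Borell's inclusion to control the tail of $1-g$, after which the extremal $g$ (a shifted exponential) yields a constant strictly better than $2$. In short: no gap in the argument, only acknowledged slack in the bookkeeping of the constant.
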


We refer interested readers to \cite{LV07geometry,BGV14convex} for
further properties of logconcave functions and measures.

\subsection{Three conjectures in convex geometry}

We review three open problems in asymptotic convex geometry---the
KLS conjecture, the thin-shell conjecture (resolved in July 2025),
and the slicing conjecture (resolved in December 2024). We mention
the excellent lecture notes \cite{KL24isop} and survey \cite{GPT25isotropic}
which describe these conjectures from different perspectives. In particular,
\cite{GPT25isotropic} discusses several consequences of recent breakthroughs
for the asymptotic theory of high-dimensional convex bodies. Early
surveys addressed algorithmic connections, especially to high-dimensional
sampling \cite{VemSurvey,lee2019kannan}. 

\subsubsection{The Kannan-Lov\'asz-Simonovits (KLS) conjecture}
\begin{defn}
[Cheeger and KLS constant] For a probability measure $\pi$ over
$\Rn$, the \emph{Cheeger isoperimetric constant} for $\pi$ is defined
by 
\[
\psi_{\ch}(\pi)\defeq\inf_{S\subset\Rn}\frac{\pi^{+}(\partial S)}{\min\{\pi(S),\pi(S^{c})\}}\,,
\]
where $\pi^{+}(\partial S):=\liminf_{\veps\to0^{+}}\frac{\pi(S+\epsilon B^{n})-\pi(S)}{\veps}$
and $B^{n}$ is the unit Euclidean ball in $\Rn$. The KLS constant
for $\pi$ is simply the inverse of the Cheeger constant: 
\[
\psi_{\kls}(\pi)\defeq\psi_{\ch}^{-1}(\pi)\,.
\]
It will be convenient to only consider \emph{isotropic} logconcave
measures, so we define
\[
\psi_{\ch}(n)\defeq\inf_{\text{iso. }\pi}\psi_{\ch}(\pi)\,,\quad\&\quad\psi_{\kls}(n)=\sup_{\text{iso. }\pi}\psi_{\kls}(\pi)\,,
\]
where both infimum and supremum run over all isotropic logconcave
measures in $\Rn$.
\end{defn}

The KLS conjecture \cite{KLS95isop} posits that these parameters
are bounded by a universal constant, i.e., the boundary measure of
any subset is at least a universal constant times the smaller of the
measure of the subset or its complement, for any logconcave measure. 
\begin{conjecture}
[KLS conjecture] \label{conj:KLS-cheeger} For any logconcave probability
measure $\pi$ over $\Rn$,
\[
\psi_{\kls}(\pi)\asymp\norm{\cov\pi}^{1/2}\,.
\]
Equivalently,
\[
\psi_{\kls}(n)\asymp1\,.
\]
\end{conjecture}

The conjecture was originally motivated by the analysis of the $\bw$
for sampling uniformly from convex bodies and implies the thin-shell
conjecture, which in turn implies the slicing conjecture. This geometric
isoperimetric constant is closely connected to a functional inequality
called the \emph{Poincar\'e inequality.}
\begin{defn}
[Poincar\'e inequality]A probability measure $\pi$ over $\Rn$ is
said to satisfy a \emph{Poincar\'e inequality} with constant $\cpi(\pi)>0$
if for any locally-Lipschitz function $f:\Rn\to\R$, 
\begin{equation}
\var_{\pi}f\leq\cpi(\pi)\,\E_{\pi}[\norm{\nabla f}^{2}]\,,\tag{{\ensuremath{\msf{PI}}}}\label{eq:PI}
\end{equation}
where $\var_{\pi}f=\E_{\pi}[(f-\E_{\pi}f)^{2}]=\E_{\pi}[f^{2}]-(\E_{\pi}f)^{2}$
is the variance of $f$ with respect to $\pi$.
\end{defn}

The next proposition establishes their equivalence for logconcave
measures.
\begin{prop}
[Equivalence between Cheeger and Poincar\'e] The Cheeger implies
the Poincar\'e, and it can be reversed for logconcave distributions;
\[
\frac{1}{4}\leq\frac{\psi_{\kls}^{2}(\pi)}{\cpi(\pi)}\leq9\,,
\]
where the first inequality is due to Cheeger \cite{cheeger1970lower},
and the second is the Buser--Ledoux inequality \cite{buser1982note,Led04spectral}.
\end{prop}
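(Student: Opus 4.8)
The plan is to treat the two inequalities separately after unwinding the definition $\psi_{\kls}=\psi_{\ch}^{-1}$: writing $\lambda_{1}(\pi)\defeq 1/\cpi(\pi)$ for the spectral gap, the asserted bound $\tfrac14\le\psi_{\kls}^{2}(\pi)/\cpi(\pi)\le 9$ is exactly $\tfrac14\,\psi_{\ch}^{2}(\pi)\le\lambda_{1}(\pi)\le 9\,\psi_{\ch}^{2}(\pi)$, i.e.\ Cheeger's inequality on the left and the Buser--Ledoux inequality on the right. Only the right-hand bound will use logconcavity.

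For the left-hand (Cheeger) bound I would argue by the co-area formula and Cauchy--Schwarz. Let $f$ be locally Lipschitz and bounded (a routine truncation removes boundedness), let $m$ be a $\pi$-median of $f$, and set $g\defeq f-m$, $g_{+}\defeq\max(g,0)$, $g_{-}\defeq\max(-g,0)$. Since $\var_{\pi}f\le\E_{\pi}[(f-m)^{2}]=\E_{\pi}[g_{+}^{2}]+\E_{\pi}[g_{-}^{2}]$, it suffices to bound each term. The set $\{g_{+}=0\}$ has $\pi$-measure $\ge\tfrac12$, so every super-level set $\{g_{+}^{2}>s\}$ has measure $\le\tfrac12$; combining the definition of $\psi_{\ch}$ with the co-area formula $\E_{\pi}\snorm{\nabla h}=\int_{0}^{\infty}\pi^{+}(\partial\{h>s\})\,\D s$ applied to $h=g_{+}^{2}$ gives $\psi_{\ch}\,\E_{\pi}[g_{+}^{2}]\le\E_{\pi}\snorm{\nabla g_{+}^{2}}=2\,\E_{\pi}[g_{+}\snorm{\nabla g_{+}}]\le 2\,(\E_{\pi}[g_{+}^{2}])^{1/2}(\E_{\pi}[\ind_{\{g>0\}}\snorm{\nabla f}^{2}])^{1/2}$, hence $\E_{\pi}[g_{+}^{2}]\le 4\psi_{\ch}^{-2}\,\E_{\pi}[\ind_{\{g>0\}}\snorm{\nabla f}^{2}]$. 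The same bound holds for $g_{-}$ with $\{g<0\}$, and summing yields $\var_{\pi}f\le 4\psi_{\ch}^{-2}\,\E_{\pi}\snorm{\nabla f}^{2}$, i.e.\ $\lambda_{1}\ge\psi_{\ch}^{2}/4$.

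For the right-hand (Buser--Ledoux) bound I would run the heat-flow argument of Ledoux. Introduce the Langevin semigroup $(P_{t})_{t\ge0}$ with generator $\mc L=\lapl+\nabla\log\pi\cdot\nabla$, which is $\pi$-reversible with Dirichlet form $\E_{\pi}[\nabla f\cdot\nabla g]$; the Poincaré inequality gives the variance decay $\var_{\pi}(P_{t}f)\le e^{-2\lambda_{1}t}\var_{\pi}f$ by a Grönwall argument. Fix a Borel set $A$ with $\pi(A)\le\tfrac12$ and $\pi^{+}(\partial A)<\infty$. Using reversibility, $P_{t}1=1$, and the semigroup property, $\snorm{\ind_{A}-P_{t}\ind_{A}}_{L^{1}(\pi)}=2\bigl(\pi(A)-\snorm{P_{t/2}\ind_{A}}_{L^{2}(\pi)}^{2}\bigr)$, and since $\snorm{P_{t/2}\ind_{A}}_{L^{2}(\pi)}^{2}=\pi(A)^{2}+\var_{\pi}(P_{t/2}\ind_{A})\le\pi(A)^{2}+e^{-\lambda_{1}t}\pi(A)(1-\pi(A))$, this is $\ge\pi(A)(1-e^{-\lambda_{1}t})$. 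In the other direction, logconcavity enters through the Bakry--Émery curvature condition $\mathsf{CD}(0,\infty)$ (valid since $-\hess\log\pi\succeq0$), which yields the gradient bound $\snorm{\nabla P_{s}g}_{\infty}\lesssim s^{-1/2}\snorm{g}_{\infty}$; feeding this into $\tfrac{\D}{\D s}\inner{\ind_{A},P_{s}\ind_{A}}_{\pi}=-\E_{\pi}[\nabla\ind_{A}\cdot\nabla P_{s}\ind_{A}]\ge-\snorm{\nabla P_{s}\ind_{A}}_{\infty}\,\pi^{+}(\partial A)$ and integrating in $s$ gives $\snorm{\ind_{A}-P_{t}\ind_{A}}_{L^{1}(\pi)}\lesssim\sqrt t\,\pi^{+}(\partial A)$ (one approximates $\ind_{A}$ by Lipschitz functions to make the surface-integral step rigorous). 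Comparing the two estimates and optimizing over $t$ (e.g.\ $t\asymp 1/\lambda_{1}$) yields $\pi^{+}(\partial A)\gtrsim\sqrt{\lambda_{1}}\,\pi(A)$ for all such $A$, hence $\psi_{\ch}\gtrsim\sqrt{\lambda_{1}}$; a careful accounting of the absolute constants in the gradient bound and in the choice of $t$ gives the stated $\lambda_{1}\le 9\,\psi_{\ch}^{2}$.

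The main obstacle is this last bound: the co-area argument for Cheeger's inequality is essentially self-contained, but the reverse direction genuinely needs a regularization mechanism, and the only place logconcavity is used is in the $\mathsf{CD}(0,\infty)$ gradient estimate $\snorm{\nabla P_{s}g}_{\infty}\lesssim s^{-1/2}\snorm{g}_{\infty}$, whose proof via the $\Gamma_{2}$-calculus (or via an explicit coupling of Langevin diffusions) is the technical heart. A secondary nuisance is justifying the distributional identity $\partial_{s}\inner{\ind_{A},P_{s}\ind_{A}}_{\pi}=-\E_{\pi}[\nabla\ind_{A}\cdot\nabla P_{s}\ind_{A}]$ by smoothing $\ind_{A}$ and passing to the limit, and then tracking constants tightly enough to land at $9$ (any universal constant suffices for the qualitative equivalence).
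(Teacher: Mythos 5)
The paper does not include a proof of this proposition; it is stated with citations to Cheeger and Buser--Ledoux only. Your proof supplies the standard arguments underlying those citations: the co-area plus Cauchy--Schwarz proof of Cheeger's inequality (which needs no logconcavity), and Ledoux's heat-semigroup proof of the reverse direction via the $\mathsf{CD}(0,\infty)$ gradient bound $\|\nabla P_{s}g\|_{\infty}\lesssim s^{-1/2}\|g\|_{\infty}$, which is the one place logconcavity enters. Both halves are correct as sketched, and your unwinding of $\psi_{\kls}=\psi_{\ch}^{-1}$ into $\tfrac{1}{4}\psi_{\ch}^{2}\le 1/\cpi\le 9\,\psi_{\ch}^{2}$ is the right reading of the display. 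The only loose end, which you flag yourself, is the constant bookkeeping needed to land exactly on the stated $9$ in the Buser--Ledoux direction: your sketch manifestly delivers a universal constant, but hitting $9$ requires the explicit form of the gradient bound and an explicit optimization over $t$, and for the proposition's purposes any universal constant suffices.
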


Using this equivalence between \eqref{eq:PI} and the Cheeger constant,
we can state the KLS conjecture in terms of Poincar\'e constants.
\begin{conjecture}
[KLS conjecture via PI] \label{conj:KLS-PI} For any logconcave probability
measure $\pi$ over $\Rn$, it holds that 
\begin{equation}
\norm{\cov\pi}\leq\cpi(\pi)\lesssim\norm{\cov\pi}\,.\tag{\ensuremath{\msf{KLS}}}\label{eq:KLS-PI}
\end{equation}
\end{conjecture}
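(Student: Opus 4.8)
I would split \eqref{eq:KLS-PI} into its two halves, which are of entirely different character. For the left inequality $\norm{\Sigma}\le\cpi(\pi)$ I would simply feed linear test functions into \eqref{eq:PI}: for a unit vector $v\in\Rn$ put $f_v(x)=\inner{v,x-b}$, so that $\nabla f_v\equiv v$, hence $\E_\pi\norm{\nabla f_v}^2=1$, while $\var_\pi f_v=v^{\T}\Sigma v$. Then \eqref{eq:PI} forces $v^{\T}\Sigma v\le\cpi(\pi)$ for every unit $v$, and taking the supremum over $v$ gives $\norm{\Sigma}\le\cpi(\pi)$. This step uses no logconcavity and is unconditional.

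The right inequality $\cpi(\pi)\lesssim\norm{\Sigma}$ is the substantive content: it is exactly an equivalent form of the KLS conjecture, and this is where all the difficulty lies. My plan is a two-step reduction. First I would normalize: the affine map $T(x)=\Sigma^{-1/2}(x-b)$ makes $\pi$ isotropic, and a change of variables gives $\cpi(\pi)\le\norm{\Sigma}\,\cpi(T_{\#}\pi)$, so it suffices to prove $\cpi\lesssim1$ for isotropic logconcave measures, i.e., $\psi_{\kls}(n)=O(1)$. Then I would pass from the functional to the geometric formulation using the preceding equivalence $\cpi(\pi)\asymp\psi_{\kls}^2(\pi)$, which turns the goal into a universal lower bound on the Cheeger constant of every isotropic logconcave measure --- precisely Conjecture~\ref{conj:KLS-cheeger}.

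At this point I would invoke the localization method that is the subject of this survey. To lower-bound $\psi_{\ch}$, assume a bisecting set $S$ with anomalously small boundary; then reduce the high-dimensional isoperimetric deficit --- either by iterated hyperplane bisection in the Lov\'asz--Simonovits style, or by running a tilted measure-valued martingale $\pi_t$ in Eldan's stochastic-localization style --- to a family of essentially one-dimensional logconcave ``needles,'' on which the inequality can be checked directly. In the stochastic version the entire estimate comes down to controlling the growth of the covariance $\cov(\pi_t)$ along the process.

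The hard part will be precisely this covariance estimate. The best available bound on $\E\,\norm{\cov(\pi_t)}$ is lossy by a polylogarithmic factor, so the argument as it currently stands yields only $\cpi(\pi)\lesssim\norm{\Sigma}\,\log n$ (equivalently $\psi_{\kls}(n)=O(\sqrt{\log n})$) rather than the conjectured $\cpi(\pi)\lesssim\norm{\Sigma}$. Closing the remaining $\sqrt{\log n}$ is the open problem; the rest of the survey develops the localization machinery and the quantitative estimates it presently supports.
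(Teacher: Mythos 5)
Your treatment is exactly right and matches the paper's: the left inequality $\norm{\Sigma}\le\cpi(\pi)$ is proved by plugging the linear test function $x\mapsto (x-\mu)\cdot\theta$ into \eqref{eq:PI} (the paper notes this immediately after the conjecture), while the right inequality $\cpi(\pi)\lesssim\norm{\Sigma}$ is the substantive open content. You also correctly carry out the two reductions the paper uses (affine normalization to the isotropic case, and the Cheeger--Poincar\'e equivalence for logconcave measures), and correctly report the current state of the art: stochastic localization, via the operator-norm control of $\cov(\pi_t)$, yields only $\cpi(\pi)\lesssim\norm{\Sigma}\log n$ (Klartag's bound, proved in \S\ref{sec:KLS-bound}), with the remaining $\log n$ factor open.
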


We note that $\norm{\cov\pi}\leq\cpi(\pi)$ always holds (due to a
test function $(x-\mu)\cdot\theta$ for a suitable unit vector $\theta\in\mbb S^{n-1}$).
Then, the KLS conjecture says that a linear function almost \emph{saturates}
in a sense that $\cpi(\pi)\lesssim\norm{\cov\pi}$. 

As mentioned earlier, we can check that it suffices to focus on \emph{isotropic}
logconcave measures:
\begin{prop}
Let $\cpi(n):=\sup\cpi(\pi)$ where the supremum runs over all isotropic
logconcave distributions $\pi$. Then,
\[
\cpi(\pi)\leq\cpi(n)\,\norm{\cov\pi}\,.
\]
\end{prop}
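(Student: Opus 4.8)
The plan is to reduce to the isotropic case by the standard affine change of variables. Since $\pi$ is non-degenerate, $\Sigma$ is invertible; I would set $S \defeq \Sigma^{1/2}$ (symmetric positive definite) and consider the affine map $T(x) \defeq S^{-1}(x-b)$. As recalled earlier, the pushforward $\wt\pi \defeq T_{\#}\pi$ is then isotropic, and it is logconcave since its density $y \mapsto \abs{\det S}\,\pi(Sy+b)$ has negative logarithm equal to the composition of the convex map $-\log\pi$ with an affine map; hence $\cpi(\wt\pi) \le \cpi(n)$ by definition of $\cpi(n)$.

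Next, given an arbitrary locally Lipschitz $f \colon \Rn \to \R$, I would transport it to the isotropic side by setting $h \defeq f \circ T^{-1}$, i.e. $h(y) = f(Sy+b)$, which is again locally Lipschitz because $T^{-1}$ is affine. With $X \sim \pi$ and $Y \defeq T(X) \sim \wt\pi$ one has $f(X) = h(Y)$, so $\var_\pi f = \var_{\wt\pi} h$, and the Poincar\'e inequality for $\wt\pi$ gives $\var_\pi f \le \cpi(n)\,\E_{\wt\pi}[\norm{\nabla h}^2]$. The remaining task is to pull the Dirichlet form back: by Rademacher's theorem $f$ and $h$ are differentiable a.e., and the chain rule yields $\nabla h(y) = S\,\nabla f(Sy+b)$ a.e. (using $S = S^{\T}$), so $\norm{\nabla h(y)}^2 \le \norm{S}^2\,\norm{\nabla f(Sy+b)}^2 = \norm{\Sigma}\,\norm{\nabla f(Sy+b)}^2$ since $\norm{S}^2 = \norm{\Sigma}$. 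Integrating against $Y \sim \wt\pi$ and changing variables back to $X = SY+b \sim \pi$,
\[
\E_{\wt\pi}[\norm{\nabla h}^2] \le \norm{\Sigma}\,\E_{\wt\pi}\bbrack{\norm{\nabla f(SY+b)}^2} = \norm{\Sigma}\,\E_\pi[\norm{\nabla f}^2]\,,
\]
and combining with the previous bound gives $\var_\pi f \le \cpi(n)\,\norm{\Sigma}\,\E_\pi[\norm{\nabla f}^2]$ for every locally Lipschitz $f$, i.e. $\cpi(\pi) \le \cpi(n)\,\norm{\Sigma}$.

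I do not expect a genuine obstacle here: the argument is essentially a one-line change of variables, dressed up carefully. The only points needing attention are the measure-theoretic technicalities --- that local Lipschitzness survives the affine map and that the almost-everywhere chain rule is legitimate --- and getting the direction of the operator-norm comparison right, namely that the factor produced is $\norm{S} = \norm{\Sigma}^{1/2}$ (from differentiating $h$) rather than $\norm{S^{-1}}$. If one prefers to sidestep Rademacher, the same conclusion follows by starting from smooth test functions $g$ on the isotropic side, setting $f \defeq g \circ T$, and taking suprema; the content is identical.
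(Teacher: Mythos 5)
Your proof is correct and follows essentially the same affine change-of-variables argument as the paper: push $\pi$ forward to an isotropic $\widetilde\pi$ via $T(x)=\Sigma^{-1/2}(x-b)$, apply the Poincar\'e inequality there, and pull the Dirichlet form back with the chain rule, picking up the factor $\|\Sigma^{1/2}\|^2=\|\Sigma\|$. The only difference is that you handle the centering by $b$ and the a.e.\ chain rule explicitly, whereas the paper suppresses these minor technicalities.
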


\begin{proof}
For $\Sigma:=\cov\pi$, let $X\sim\pi$, $Y=\Sigma^{-1/2}\,(X-\E_{\pi}X)$,
and $\nu:=\law Y$. Then, $\nu$ is isotropic and logconcave, and
\begin{align*}
\var_{\pi}f & =\var_{\nu}(f\circ\Sigma^{1/2})\leq\cpi(n)\,\E_{\nu}[\norm{\nabla(f\circ\Sigma^{1/2})}^{2}]\\
 & \leq\cpi(n)\,\E_{\nu}[\norm{\Sigma}\,\norm{\nabla f\circ\Sigma^{1/2}}^{2}]\leq\cpi(n)\,\norm{\Sigma}\,\E_{\pi}[\norm{\nabla f}^{2}]\,.
\end{align*}
The claim follows from the definition of \eqref{eq:PI} for $\pi$.
\end{proof}
The current best bound on $\cpi(n)$ is due to Klartag, established
following a long line of progress \cite{LV24eldan,Chen21almost,KL22Bourgain,Klartag23log}.
\begin{thm}
[\cite{Klartag23log}]$\cpi(n)\lesssim\log n$ (equivalently, $\psi_{\kls}(n)\lesssim\sqrt{\log n}$).
\end{thm}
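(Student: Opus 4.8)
The plan is to prove this via Eldan's stochastic localization, following the chain of refinements \cite{LV24eldan,Chen21almost,KL22Bourgain} that culminates in \cite{Klartag23log}. By the preceding proposition it suffices to show $\cpi(\pi)\lesssim\log n$ for an isotropic logconcave $\pi$ with density $p$. I would run Eldan's measure-valued martingale $(p_t)_{t\ge0}$, $p_0=\pi$, given by $p_t\propto\exp\bpar{\binner{\theta_t}{\cdot}-\tfrac{t}{2}\,\norm{\cdot}^{2}}\,p$, where $\theta_t$ solves an SDE driven by a Brownian motion $(W_t)$ chosen so that $t\mapsto p_t$ is a martingale, i.e.\ $\E p_t=\pi$ for every $t$. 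Denoting by $\mu_t,A_t$ the barycenter and covariance of $p_t$, I would use three structural facts: (i) $-\log p_t-\tfrac{t}{2}\norm{\cdot}^{2}$ is convex, so $p_t$ is $t$-strongly logconcave and $\cpi(p_t)\le 1/t$ by the Brascamp--Lieb inequality; (ii) with the normalization $\D p_t(x)=p_t(x)\,\binner{x-\mu_t}{\D W_t}$ the covariance obeys $\D A_t=(\text{martingale part})-A_t^{2}\,\D t$, hence contracts in expectation; (iii) the martingale part of $A_t$ is governed by third moments of $p_t$, which logconcavity keeps tame (e.g.\ via the reverse \Holder inequality above).

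\paragraph{The variance identity.} Fix a locally-Lipschitz $f$ with $\E_\pi f=0$ and set $M_t:=\E_{p_t}[f]$. Since $(p_t)$ is a martingale, $\D M_t=\binner{c_t}{\D W_t}$ with $c_t:=\cov_{p_t}(f,x)=\E_{p_t}[f\,(x-\mu_t)]$, so $\E[M_t^{2}]=\int_0^t\E[\norm{c_s}^{2}]\,\D s$; combined with $\E\bbrack{\E_{p_t}[f^{2}]}=\E_\pi[f^{2}]=\var_\pi f$ this gives, for every $T>0$,
\[
\var_\pi f=\E[\var_{p_T}f]+\int_0^T\E[\norm{c_s}^{2}]\,\D s .
\]
Cauchy--Schwarz yields $\norm{c_s}^{2}\le\norm{A_s}\,\var_{p_s}f\le\norm{A_s}\,\cpi(p_s)\,\E_{p_s}[\norm{\nabla f}^{2}]$, while fact (i) bounds the terminal term by $T^{-1}\E_\pi[\norm{\nabla f}^{2}]$. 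Everything reduces to choosing $T$ and controlling the weight $\norm{A_s}\,\cpi(p_s)$ in the integral; here I would use the dimensional bound $\cpi(p_s)\le\cpi(n)\,\norm{A_s}$ (the preceding proposition applied to $p_s$), which turns the estimate into a self-consistent inequality for the finite quantity $\cpi(n)$.

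\paragraph{The main obstacle: controlling $\norm{A_s}$.} The hard part will be to show that with overwhelming probability $\norm{A_s}\lesssim 1$ for all $s\le T$ with $T\asymp 1/\log n$ --- this is precisely the quantity on which every improvement in the exponent of $\log n$ has depended. Granting it, on this good event $\int_0^T\E\bbrack{\norm{A_s}\,\cpi(p_s)\,\E_{p_s}[\norm{\nabla f}^{2}]}\,\D s\lesssim\cpi(n)\int_0^T\norm{A_s}^{2}\,\D s\cdot\E_\pi[\norm{\nabla f}^{2}]\lesssim\cpi(n)\,T\,\E_\pi[\norm{\nabla f}^{2}]$, and the rare complementary event is absorbed using the deterministic bound $\cpi(p_s)\le 1/s$ together with a moment estimate on $A_s$. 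Adding the terminal term and taking $T\asymp1/\log n$ gives
\[
\cpi(n)\lesssim\log n+\frac{\cpi(n)}{\log n},
\]
which rearranges to $\cpi(n)\lesssim\log n$. To establish the high-probability bound on $\norm{A_s}$, I would pick a potential $\Phi(A_s)$ --- a Schatten power $\tr(A_s^{q})^{1/q}$ with $q\asymp\log n$, or a log-determinant barrier on $\{\norm{A}\le 2\}$ --- and apply It\^o's formula: the drift $-A_s^{2}$ decreases $\Phi$, while the It\^o correction and the martingale quadratic variation produce error terms estimated via the third-moment (thin-shell-type) behavior of the logconcave $p_s$; tuning $q$ so these errors stay below the contraction until time $T\asymp1/\log n$, a maximal inequality for $\Phi(A_s)$ then yields the tail bound on $\sup_{s\le T}\norm{A_s}$.

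\paragraph{Conclusion.} Finally, $\cpi(n)\lesssim\log n$ transfers to $\psi_{\kls}(n)\lesssim\sqrt{\log n}$ by the Cheeger--Poincar\'e equivalence (Cheeger's inequality \cite{cheeger1970lower} together with Buser--Ledoux \cite{buser1982note,Led04spectral}). I expect the sole genuine difficulty to be the covariance control: a priori the eigenvalues of $A_s$ can grow, and the whole argument hinges on how long the $-A_s^{2}$ contraction can be shown to dominate the noise; pushing that horizon from $\asymp1/\log n$ to a universal constant would collapse the bound to $\cpi(n)=\O(1)$, i.e.\ prove the KLS conjecture itself.
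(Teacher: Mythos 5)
Your overall strategy is right and mirrors the paper's: run stochastic localization and use the martingale identity
\[
\var_\pi f = \E[\var_{p_T}f] + \int_0^T\E[\norm{c_s}^2]\,\D s\,,
\]
which is the SL form of the paper's law-of-total-variance decomposition \eqref{eq:total-variance} (with $T=1/h$, so that $p_T\overset{d}{=}\pi^{X|Y}$ and the integral equals $\var_{\pi^Y}Q_h^*f$). The potential you propose for controlling $\norm{A_s}$ — a Schatten/softmax norm at scale $q\asymp\log n$ — is also the paper's choice ($\beta=2\log n$ in $h(M)=\beta^{-1}\log\tr e^{\beta M}$). But as written your argument only yields $\cpi(n)\lesssim\log^2 n$, and your horizon claim is not established.

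The operator-norm control (Theorem~\ref{thm:opnorm-control}) holds only for $T\lesssim\log^{-2}n$, not $T\asymp 1/\log n$: the Freedman argument requires the accumulated drift of $h(\Sigma_t)$, of order $\beta T^{1/2}$ with $\beta\asymp\log n$ forced by the $(\log n)/\beta$ slack in the softmax proxy, to stay $O(1)$, giving $T\lesssim\beta^{-2}\asymp\log^{-2}n$. Feeding $T\asymp\log^{-2}n$ into your self-consistent bound $\cpi(\pi)\lesssim T^{-1}+\cpi(n)\,T$ gives only $\cpi(n)\lesssim\log^2 n$.

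The ingredient you are missing is the \emph{improved Lichnerowicz inequality} \eqref{eq:improved-LI}: for a $t$-strongly logconcave $\mu$, $\cpi(\mu)\le\sqrt{\norm{\cov\mu}/t}$, strictly better than the Brascamp--Lieb bound $\cpi(\mu)\le 1/t$ you invoke in your fact~(i). On the good event $\norm{A_s}\lesssim1$ for $s\le T$, this replaces the terminal bound $T^{-1}$ by $T^{-1/2}$, since $\cpi(p_T)\lesssim\sqrt{\norm{A_T}/T}\lesssim T^{-1/2}$, and turns the integrand into $\norm{c_s}^2\lesssim\norm{A_s}^{3/2}s^{-1/2}\,\E_{p_s}[\norm{\nabla f}^2]$, so that $\int_0^T\E[\norm{c_s}^2]\,\D s\lesssim T^{1/2}\,\E_\pi[\norm{\nabla f}^2]$ — no appeal to $\cpi(n)$ is then needed. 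With $T\asymp\log^{-2}n$ this gives $\cpi(\pi)\lesssim T^{-1/2}+T^{1/2}\lesssim\log n$, which is the theorem. Note also that \eqref{eq:improved-LI} enters the covariance control itself, via the $3$-tensor bound $\norm{\sum_i H_{s,i}^2}\lesssim\norm{\Sigma_s}^{5/2}s^{-1/2}$ of Lemma~\ref{lem:3tensor-2}; it is the genuinely new idea in \cite{Klartag23log}, not an optional refinement.
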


\subsubsection{The thin-shell conjecture}

We move onto a second conjecture called the \emph{thin-shell conjecture,}
developed by Anttila, Ball, and Perissinaki \cite{anttila2003central},
and Bobkov and Koldobsky \cite{BK03central}. As we will see, the
KLS conjecture implies the thin-shell conjecture.

\begin{wrapfigure}{r}{0.4\textwidth}
\centering
\begin{tikzpicture}[y=0.80pt, x=0.80pt, yscale=-0.250000, xscale=0.250000, inner sep=0pt, outer sep=0pt]   \path[draw=c063cb8,dash pattern=on 8.00pt off 8.00pt,line cap=butt,miter     limit=4.00,draw opacity=0.761,nonzero rule,line width=1.000pt]     (355.9839,452.3622) ellipse (5.8864cm and 5.4603cm);   \path[draw=c063cb8,line cap=butt,miter limit=4.00,draw opacity=0.762,nonzero     rule,line width=1.000pt] (354.5455,452.3622) ellipse (6.4141cm and 6.0934cm);   \path[draw=cff0000,line join=miter,line cap=butt,miter limit=4.00,even odd     rule,line width=1.000pt] (92.0000,424.3622) -- (466.0000,224.3622);   \path[draw=cff0000,line join=miter,line cap=butt,miter limit=4.00,even odd     rule,line width=1.000pt] (466.0000,224.3622) -- (560.0000,282.3622) ..     controls (530.4225,628.8411) and (530.0000,629.8270) .. (530.0000,629.8270) ..     controls (455.3521,679.1228) and (454.0000,679.3622) .. (454.0000,679.3622) --     (108.0000,524.3622);   \path[draw=cff0000,line join=miter,line cap=butt,miter limit=4.00,even odd     rule,line width=1.000pt] (92.0000,424.3622) .. controls (107.4930,525.7706)     and (108.0000,524.3622) .. (108.0000,524.3622);
\end{tikzpicture}
\caption*{\textbf{Figure.} Does a thin shell contain most of the measure?}
\end{wrapfigure}
\begin{defn}
[Thin-shell constant] For an isotropic probability measure $\pi$
over $\Rn$, the thin-shell constant for $\pi$ is 
\[
\sigma_{\ts}^{2}(\pi)=\sigma_{\pi}^{2}\defeq\frac{1}{n}\,\var_{\pi}(\norm{\cdot}^{2})\,.
\]
The \emph{thin-shell constant} refers to 
\[
\sigma_{\ts}^{2}(n)=\sigma_{n}^{2}\defeq\sup_{\text{iso. }\pi}\sigma_{\pi}^{2}\,,
\]
where the supremum runs over all the isotropic logconcave distributions
over $\Rn$.
\end{defn}

We are now ready to state the thin-shell conjecture.
\begin{conjecture}
[Thin-shell conjecture] \label{conj:Thin-shell} The thin-shell constant
is bounded by a universal constant:
\begin{equation}
\sigma_{n}^{2}=\O(1)\,.\tag{\ensuremath{\msf{TS}}}\label{eq:Ts}
\end{equation}
\end{conjecture}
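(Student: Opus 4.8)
The plan is to establish $\sigma_n^2=\O(1)$ by running Eldan's stochastic localization on an arbitrary isotropic logconcave $\pi$ over $\Rn$. Recall the scheme: set $\D\theta_t=\mu_t\,\D t+\D W_t$ with $\pi_t(x)\propto\pi(x)\exp(\inner{\theta_t,x}-\tfrac t2\,\norm x^2)$, and write $\mu_t,A_t$ for the barycenter and covariance of $\pi_t$. The ingredients I would use are all standard: $\pi=\E[\pi_t]$ as a mixture over the driving Brownian motion; $\D\pi_t(x)=\pi_t(x)\,\inner{x-\mu_t,\D W_t}$, so for any locally-Lipschitz $g$ the process $t\mapsto\E_{\pi_t}[g]$ is a martingale with $\D\E_{\pi_t}[g]=\inner{\cov_{\pi_t}(g,x),\D W_t}$ (in particular $\D\mu_t=A_t\,\D W_t$ and $\D A_t=-A_t^2\,\D t+(\text{martingale term built from the third-moment tensor of }\pi_t)$); the conservation law $\E[\tr A_t+\norm{\mu_t}^2]=\E_\pi\norm x^2=n$; and crucially that $\pi_t$ is $t$-strongly logconcave, hence $\cpi(\pi_t)\le 1/t$ by the Bakry--\'Emery criterion. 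Applying the exact It\^o variance identity to $g(x)=\norm x^2$ at a time $t\asymp 1$,
\[
\var_\pi(\norm\cdot^2)=\E[\var_{\pi_t}(\norm\cdot^2)]+\E\int_0^t\norm{\cov_{\pi_s}(\norm\cdot^2,x)}^2\,\D s,
\]
splits $n\,\sigma_\pi^2$ into a residual term and an integrated third-moment term, and it suffices to show both are $\O(n)$ uniformly in $\pi$.

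I would bound the residual using strong logconcavity at time $t$: since $\nabla\norm x^2=2x$, the Poincar\'e inequality \eqref{eq:PI} for $\pi_t$ gives $\var_{\pi_t}(\norm\cdot^2)\le 4\cpi(\pi_t)\,\E_{\pi_t}\norm x^2\le\tfrac4t\,\E_{\pi_t}\norm x^2$, and taking expectations and applying the conservation law yields $\E[\var_{\pi_t}(\norm\cdot^2)]\le 4n/t=\O(n)$ for $t\asymp1$. For the main term, Cauchy--Schwarz in $\pi_s$ gives $\norm{\cov_{\pi_s}(\norm\cdot^2,x)}^2\le\var_{\pi_s}(\norm\cdot^2)\,\norm{A_s}$, and one more use of \eqref{eq:PI} for $\pi_s$ reduces it to controlling $\E\int_0^t\cpi(\pi_s)\,\E_{\pi_s}[\norm x^2]\,\norm{A_s}\,\D s$. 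Here a first attempt — bounding $\cpi(\pi_s)\le\cpi(n)\,\norm{A_s}$ and using $\cpi(n)\lesssim\log n$ together with $\E[\E_{\pi_s}\norm x^2]=n$ — already delivers the (known) estimate $\sigma_n^2\lesssim\log n$; to reach $\O(1)$ one should instead estimate $\var_{\pi_s}(\norm\cdot^2)$ directly by an a priori thin-shell bound for the isotropized copy of $\pi_s$, which is again $\lesssim\sigma_n^2$ up to powers of $\norm{A_s}$, turning the identity into a \emph{self-improving} (bootstrapping) inequality that one iterates from $\O(\log n)$ down toward the fixed point $\O(1)$.

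The hard part, and the place where the recent breakthroughs enter, is controlling the operator norm $\norm{A_t}$ of the covariance along the localization on the interval $[0,t]$, $t\asymp1$: one must show that the third-moment martingale term in $\D A_t$ cannot inflate $\norm{A_t}$ by more than a constant factor, which amounts to bounding the spectral norm of the third-moment tensor of $\pi_s$ by $\norm{A_s}$ times $\cpi(\pi_s)^{1/2}$ (itself an isoperimetric input) and then running a stopping-time/Gr\"onwall argument against the favorable drift $-A_t^2\,\D t$. Making this quantitative enough to feed the bootstrap — and, in parallel, controlling the fluctuation $\norm{A_t-\E A_t}$ of the covariance martingale — is exactly the content of the line of work \cite{Chen21almost,KL22Bourgain,Klartag23log} and its successors, and driving the resulting recursion all the way to the optimal $\O(1)$ is the crux of the July 2025 resolution. (The one-dimensional Lov\'asz--Simonovits localization is less natural here, since the thin-shell estimate is not an integral inequality in a single function; the stochastic scheme, with its $t$-strongly logconcave intermediate measures and its matrix-valued covariance process, is the right vehicle.)
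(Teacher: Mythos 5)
You should note first that the paper itself does not prove the thin-shell conjecture: it states it, cites the July 2025 resolution by Klartag and Lehec, and in \S\ref{sec:thinshell} proves only Guan's weaker bound $\sigma_\pi\lesssim\log\cpi(\pi)\lesssim\log\log n$, via the $H^{-1}$ inequality of Barthe--Klartag, the spectral projection $E_\lda$, the heat-adjoint $Q_h^*$ with its dynamic $\Gamma$-calculus, and Guan's uniform trace bound $\E\tr(\Sigma_t^2)=O(n)$. Your direct SL decomposition --- the law-of-total-variance identity, the Brascamp--Lieb bound $\E[\var_{\pi_t}(\norm\cdot^2)]\le 4n/t$ on the residual, and the Cauchy--Schwarz reduction of the drift to $\E\int_0^t\var_{\pi_s}(\norm\cdot^2)\,\norm{A_s}\,\D s$ --- is the classical Eldan-style line of attack and is correct as far as it goes, but it is not the route by which the conjecture was actually settled, and the bootstrap you describe does not close.

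The bootstrap step has two concrete gaps. First, $\var_{\pi_s}(\norm\cdot^2)$ is not ``the thin-shell constant of the isotropized copy times powers of $\norm{A_s}$'': isotropizing $\pi_s$ sends $\norm\cdot^2$ to a non-spherical quadratic $Y\mapsto Y^\T A_s Y + 2\mu_s^\T A_s^{1/2}Y$, and bounding $\var_{\nu_s}(Y^\T M Y)$ for a general PSD $M$ and isotropic logconcave $\nu_s$ requires the Poincar\'e/KLS constant (to handle the traceless part of $M$), not just $\sigma_n$; so the inequality is not self-improving in $\sigma_n$ as written. Second, the time scales clash: you evaluate the decomposition at $t\asymp 1$ so that the residual is $4n/t=O(n)$, but the operator-norm control $\norm{\Sigma_t}=O(1)$ with high probability is only available for $t\lesssim\log^{-2}n$ (Theorem~\ref{thm:opnorm-control}), and extending it to $t\asymp 1$ is essentially the KLS conjecture itself. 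What Guan and Klartag--Lehec actually control on $[0,\Theta(1)]$ is the trace $\E\tr(\Sigma_t^2)=O(n)$, a much weaker quantity, and they feed it not into a direct variance bootstrap but into the $H^{-1}$/spectral machinery. Deferring the operator-norm control to ``the recent breakthroughs'' therefore misdescribes those breakthroughs: the $O(1)$ endpoint is reached by abandoning the iteration you propose, not by tightening one of its steps.
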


To parse its meaning, let us consider an isotropic Gaussian $X\sim\msf N(0,I_{n})$.
It is well-known from a standard concentration result that $\norm X\in n^{1/2}\pm\O(1)$
takes up most of the measure. In other words, most of the mass is
concentrated around a thin shell of width $\O(1)$. The thin-shell
conjecture essentially asks if this phenomenon generalizes to any
isotropic logconcave distributions.

As mentioned earlier, it can be shown that \eqref{eq:KLS-PI} implies
\eqref{eq:Ts}.
\begin{prop}
[KLS implies thin-shell] $\sigma_{n}^{2}\leq4\cpi(n)\asymp\psi_{\kls}^{2}(n)$.
\end{prop}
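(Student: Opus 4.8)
The plan is to apply the Poincar\'e inequality \eqref{eq:PI} directly, with the test function $f(x)=\norm{x}^2$, and then pass to suprema; the second part $\cpi(n)\asymp\psi_{\kls}^2(n)$ is nothing more than the Cheeger--Poincar\'e proposition above specialized to the class of isotropic logconcave measures, so the only real content is the inequality $\sigma_n^2\le 4\cpi(n)$.

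\emph{Step 1 (the bound $\sigma_n^2\le 4\cpi(n)$).} Fix an isotropic logconcave measure $\pi$ on $\Rn$ and let $X\sim\pi$. The map $f(x)=\norm{x}^2=\sum_i x_i^2$ is smooth, hence locally Lipschitz, and logconcave measures have finite moments of all orders, so \eqref{eq:PI} may be invoked for $f$. Since $\nabla f(x)=2x$, we have $\norm{\nabla f(x)}^2=4\norm{x}^2$, and therefore
\[
\var_\pi(\norm{\cdot}^2)\ \le\ \cpi(\pi)\,\E_\pi\bigl[4\norm{X}^2\bigr]\ =\ 4\,\cpi(\pi)\,\E_\pi[\norm{X}^2]\ =\ 4n\,\cpi(\pi),
\]
where the last equality uses isotropy: $\E_\pi[\norm{X}^2]=\tr\E_\pi[XX^\T]=\tr I_n=n$. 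Dividing by $n$ gives $\sigma_\pi^2\le 4\,\cpi(\pi)\le 4\,\cpi(n)$, and taking the supremum over all isotropic logconcave $\pi$ yields $\sigma_n^2\le 4\,\cpi(n)$.

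\emph{Step 2 (the equivalence).} By the Cheeger--Poincar\'e proposition, $\tfrac14\,\cpi(\pi)\le\psi_{\kls}^2(\pi)\le 9\,\cpi(\pi)$ for every isotropic logconcave $\pi$. Bounding $\cpi(\pi)\le\cpi(n)$ in the upper estimate and then taking $\sup_\pi$ gives $\psi_{\kls}^2(n)\le 9\,\cpi(n)$; bounding $\psi_{\kls}^2(\pi)\le\psi_{\kls}^2(n)$ in the lower estimate and then taking $\sup_\pi$ gives $\cpi(n)\le 4\,\psi_{\kls}^2(n)$. Hence $\cpi(n)\asymp\psi_{\kls}^2(n)$, which together with Step 1 completes the proof.

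There is essentially no obstacle here: the argument is a one-line specialization of the Poincar\'e inequality, and the only things to check are that $\norm{\cdot}^2$ is an admissible (locally Lipschitz) test function with $\E_\pi[\norm{X}^2]<\infty$, both immediate for logconcave $\pi$. The constant $4$ is simply an artifact of $\norm{\nabla(\norm{x}^2)}^2=4\norm{x}^2$ and is not optimized.
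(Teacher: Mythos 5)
Your proof is correct and is essentially identical to the paper's: both apply the Poincar\'e inequality \eqref{eq:PI} with test function $f(x)=\norm{x}^2$, use $\norm{\nabla f}^2=4\norm{x}^2$ and $\E_\pi[\norm{X}^2]=n$ for isotropic $\pi$, and then pass to the supremum. Your Step~2 (spelling out $\cpi(n)\asymp\psi_{\kls}^2(n)$ via the Cheeger--Poincar\'e equivalence) is a routine elaboration that the paper leaves implicit.
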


\begin{proof}
For isotropic $\pi$ and $f(\cdot)=\norm{\cdot}^{2}$, \eqref{eq:PI}
reads 
\[
\var_{\pi}(\norm{\cdot}^{2})\leq\cpi(\pi)\,\E[\norm{\nabla f}^{2}]=4\cpi(\pi)\,\E[\norm{\cdot}^{2}]=4n\cpi(\pi)\,.
\]
Thus, $\sigma_{\pi}^{2}\leq4\cpi(\pi)$, and taking supremum over
all isotropic logconcave $\pi$, we obtain $\sigma_{n}^{2}\leq4\cpi(\pi)$.
\end{proof}
A much stronger bound was established by Guan \cite{guan2024note}:
$\sigma_{n}\lesssim\log\psi_{\kls}(n)\lesssim\log\log n$. Subsequently,
Klartag and Lehec \cite{klartag2025thin} proved the conjecture. 
\begin{thm}
[\textup{\cite{klartag2025thin}}] $\sigma_{n}=O(1)$.
\end{thm}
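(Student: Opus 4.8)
The plan is to run Eldan's stochastic localization from the given isotropic logconcave $\pi$ and estimate $\var_\pi(\norm{\cdot}^2)$ by following it along the process, in the scheme of Eldan as sharpened by Guan \cite{guan2024note} and then by Klartag and Lehec \cite{klartag2025thin}. Write $(\mu_t)_{t\ge 0}$ for the localization with $\mu_0=\pi$, barycenter $b_t\defeq\E_{\mu_t}x$ and covariance $A_t\defeq\cov(\mu_t)$. First I would record the standard facts: $\mu_t$ is $t$-strongly logconcave (the Gaussian tilt adds $tI_n$ to $\hess(-\log\mu_t)$); $\E[A_t]$ obeys the operator inequality $\de_t\,\E[A_t]\preceq-(\E[A_t])^2$, hence $\E[A_t]\preceq(1+t)^{-1}I_n$; the barycenter satisfies $\D b_t=A_t\,\D W_t$; and $\D A_t=-A_t^2\,\D t+(\text{a martingale driven by the third-moment tensor of }\mu_t)$. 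A first consequence of the martingale structure is that $\E_{\text{path}}\,\E_{\mu_t}\norm{\cdot}^2\equiv\E_\pi\norm{\cdot}^2=n$, so only fluctuations need to be controlled.

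Next I would fix a horizon $T\asymp 1$ and split, by the law of total variance,
\[
\var_\pi(\norm{\cdot}^2)=\E\bbrack{\var_{\mu_T}(\norm{\cdot}^2)}+\var_{\text{path}}\bpar{\E_{\mu_T}\norm{\cdot}^2}\,.
\]
The first term is the "already Gaussian" part: since $\mu_T$ is $T$-strongly logconcave, the Brascamp--Lieb inequality gives $\var_{\mu_T}(\norm{\cdot}^2)\lesssim T^{-1}(\tr A_T+\norm{b_T}^2)$, and taking expectations with $\E[\tr A_T]\le n/(1+T)$ and $\E\norm{b_T}^2=\tr(I_n-\E[A_T])\le n$ makes it $O(n)$, i.e.\ $O(1)$ worth of $\sigma_n^2$. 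For the second term, $\E_{\mu_t}\norm{\cdot}^2=\norm{b_t}^2+\tr A_t$, and an \Ito computation shows the drift terms cancel exactly, leaving the pure martingale
\[
\D\bpar{\norm{b_t}^2+\tr A_t}=\binner{2A_tb_t+\psi_t}{\D W_t}\,,\qquad \psi_t\defeq\E_{\mu_t}\bbrack{(x-b_t)\,\norm{x-b_t}^2}\,,
\]
so that by the \Ito isometry $\var_{\text{path}}\bpar{\E_{\mu_T}\norm{\cdot}^2}=\E\bbrack{\int_0^T\norm{2A_tb_t+\psi_t}^2\,\D t}$.

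Everything then comes down to showing this accumulated term is $O(n)$, and the naive bounds $\norm{A_tb_t}^2\le\norm{A_t}^2\norm{b_t}^2$ and $\norm{\psi_t}^2\le\norm{A_t}\,\E_{\mu_t}\norm{x-b_t}^4$ lose far too much. The approach is a coupled differential inequality: along the path, $\norm{\psi_t}$ and $\E_{\mu_t}\norm{x-b_t}^4$ are controlled by the thin-shell data of the whitened measure $(A_t^{-1/2}(\cdot-b_t))_\#\mu_t$, which is isotropic and logconcave in effective dimension $\le n$ and hence no worse than a generic $n$-dimensional isotropic logconcave law; feeding this back and integrating shows that, up to a factor depending on $\sup_{t\le T}\norm{A_t}$ \emph{only through its logarithm}, the integrand is of order $n$ times a thin-shell parameter of the intermediate measures, while the $\norm{A_tb_t}^2$ contribution is absorbed by passing to a re-centered test function (or a modified potential). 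The point of the logarithmic dependence, already in Guan's analysis, is that one may insert the weak a priori bound $\sup_{k\le n}\psi_\kls(k)\lesssim\sqrt{\log n}$ from Klartag's theorem \cite{Klartag23log} rather than the conjectural $O(1)$.

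The hard part — and what separates the full resolution from the one-shot estimate, which only yields $\sigma_n\lesssim\log\log n$ — is to arrange this feedback so that the resulting inequality is self-referential in $\sigma_n$, not in $\log n$: once it has the shape $\sigma_n^2\lesssim 1+\log\sigma_n$, it forces $\sigma_n=O(1)$ outright, since the associated fixed-point equation has a bounded solution. Making the intermediate thin-shell constants (rather than a dimension-dependent surrogate) genuinely reappear on the right, and checking that the logarithmic losses from controlling $\sup_{t\le T}\norm{A_t}$ — morally the KLS difficulty itself — do not push the estimate back into the $\log\log n$ regime, is exactly the delicate refinement carried out by Klartag and Lehec \cite{klartag2025thin}, and is where I expect the real work to lie.
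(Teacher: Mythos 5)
The paper does not actually prove this theorem. Immediately before stating it the authors write that they ``only prove Guan's inequality $\sigma_n\lesssim\log\psi_{\kls}(n)\asymp\log\log n$,'' and the $O(1)$ bound is cited to \cite{klartag2025thin}. The route the paper \emph{does} develop for the weaker bound is spectral: the Barthe--Klartag $H^{-1}$-inequality reduces $\sigma_\pi^2$ to an integral of the spectral-mass function $F(\lambda)=\frac1n\sum_i\norm{E_\lambda x_i}_{L^2(\pi)}^2$; this is then controlled through the heat adjoint $Q_h^*$, the box operator, the dynamic $\Gamma$-calculus and the decreasing-Rayleigh-quotient lemma, fed by Guan's uniform trace bound $\E\tr(\Sigma_t^2)=O(n)$. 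Your sketch goes by a genuinely different route — Eldan's original direct decomposition: total variance for $\norm{\cdot}^2$ at a fixed horizon $T$, Brascamp--Lieb for the $T$-strongly-logconcave conditional, and the \Ito isometry for the martingale $\norm{b_t}^2+\tr A_t$. The computations you display are correct (the drift terms do cancel, $\E[\tr A_T+\norm{b_T}^2]=n$, etc.), and this is indeed how one sets up a self-referential inequality in $\sigma_n$.

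The genuine gap is that the step which would actually prove the theorem is described but not performed. Everything hinges on showing $\E\int_0^T\norm{2A_tb_t+\psi_t}^2\,\D t\lesssim n$, and your account of how this is done (``controlled by the thin-shell data of the whitened measure ... the integrand is of order $n$ times a thin-shell parameter of the intermediate measures, ... logarithmic dependence ...'') is a description of the desired outcome rather than an argument. Two concrete points that remain open in the sketch: (i) your stated ``naive'' Cauchy--Schwarz bound for $\psi_t$ uses $\E_{\mu_t}\norm{x-b_t}^4$ (size $\asymp n^2$), whereas the useful bound must first recenter and gives $\norm{\psi_t}^2\le\norm{A_t}\,\var_{\mu_t}(\norm{x-b_t}^2)$, which is exactly a (non-isotropic) thin-shell quantity for $\mu_t$ — you allude to this but do not make it part of the estimate; and (ii) the claim that the recursion closes to $\sigma_n^2\lesssim 1+\log\sigma_n$ rather than picking up $\polylog n$ from the $\sup_{t\le T}\norm{A_t}$ control is precisely the content of Klartag--Lehec's contribution, and asserting that it is ``where the real work lies'' does not supply it. Historically this same decomposition, pushed as far as one could, gave $\sigma_n\lesssim n^{1/3}\polylog n$ (Eldan) and later $\log\log n$ (Guan), so the bootstrap is not a finishing touch but the whole theorem. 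As written, the proposal is a plausible program, not a proof.
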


\subsubsection{Bourgain's slicing conjecture}

The slicing conjecture \cite{Bourgain1986}, also known as Bourgain's
hyperplane conjecture, has been a driving force for much of the development
of modern convex geometry. This geometric problem asks whether for
any convex body $\K\subset\Rn$ of volume one, there exists a hyperplane
$H\subset\Rn$ such that 
\[
\vol_{n-1}(\K\cap H)\gtrsim1.
\]
\begin{wrapfigure}{r}{0.4\textwidth}
\centering
\begin{tikzpicture}[y=0.80pt, x=0.80pt, yscale=-0.3000000, xscale=0.3000000, inner sep=0pt, outer sep=0pt]   \path[draw=c063cb8,line join=miter,line cap=butt,miter limit=4.00,draw     opacity=0.761,even odd rule,line width=1.000pt] (89.3829,473.8448) --     (175.3188,681.7452) -- (495.0000,557.3622);   \path[draw=c063cb8,line join=miter,line cap=butt,miter limit=4.00,draw     opacity=0.761,even odd rule,line width=1.000pt] (495.0000,557.3622) --     (537.9999,447.9389);   \path[draw=c063cb8,line join=miter,line cap=butt,miter limit=4.00,draw     opacity=0.761,even odd rule,line width=1.000pt] (89.3829,473.8448) --     (242.2085,331.3622);   \path[draw=c063cb8,line join=miter,line cap=butt,miter limit=4.00,draw     opacity=0.761,even odd rule,line width=1.000pt] (242.2085,331.3622) ..     controls (533.8042,447.9601) and (537.9999,447.9389) .. (537.9999,447.9389);   \path[cm={{0.7791,-0.6269,0.69866,0.71545,(0.0,0.0)}},draw=cff0000,line     cap=butt,miter limit=4.00,fill opacity=0.761,nonzero rule,line     width=1.000pt,rounded corners=0.0000cm] (-166.3427,364.3955) rectangle     (-141.6752,757.1052);   \path[fill=black,line join=miter,line cap=butt,line width=1.000pt]     (200.0334,723.5007) node[above right] (text5305) {$\mathrm{vol}(S) \gtrsim \mathrm{vol}(\K)$?};   \path[fill=black,line join=miter,line cap=butt,line width=1.000pt]     (446.5659,395.5517) node[above right] (text4142) {$\K$};   \path[fill=cff0000,fill opacity=0.316] (265.2715,507.4381) .. controls     (211.4740,452.3657) and (166.9055,406.6835) .. (166.2304,405.9221) --     (165.0030,404.5378) -- (173.2521,396.8505) .. controls (177.7892,392.6224) and     (181.5887,389.2011) .. (181.6955,389.2475) .. controls (182.0090,389.3836) and     (386.3381,598.6835) .. (386.2313,598.7591) .. controls (385.9773,598.9388) and     (363.5273,607.5756) .. (363.3213,607.5729) .. controls (363.1915,607.5709) and     (319.0691,562.5105) .. (265.2715,507.4381) -- cycle;   \path[fill=black,line join=miter,line cap=butt,line width=1.000pt]     (254.6037,566.1661) node[above right] (text4142-9) {$S$};
\end{tikzpicture}
\caption*{\textbf{Figure.} Is there a ``nice'' hyperplane?}
\end{wrapfigure}In other words, it asks if a convex body of unit volume always has
a constant volume $(n-1)$-dimensional slice. 
\begin{conjecture}
[Slicing conjecture] Consider the slicing constant defined by 
\[
\frac{1}{L_{n}}:=\inf_{\K\subset\Rn}\sup_{H}\vol_{n-1}(\K\cap H)\,,
\]
where the infimum runs over all convex bodies $\K$ of volume one,
and the supremum runs over all hyperplanes $H$. Then, 
\begin{equation}
L_{n}=\O(1)\,.\tag{\ensuremath{\msf{Sc}}}\label{eq:slicing}
\end{equation}
\end{conjecture}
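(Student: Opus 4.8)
The plan is to reduce the purely geometric slicing statement to the thin-shell bound recalled above, which is the genuinely hard input; the reduction itself is soft convex geometry. \emph{First I would pass from the extremal-section problem to an analytic quantity.} By a volume-preserving affine map, every convex body $\K\subset\Rn$ of volume one can be brought into \emph{isotropic position}: there is a volume-one affine image $\K'$ with barycenter $0$ and $\int_{\K'}xx^{\T}\,\D x=L_{\K}^{2}I_{n}$, the \emph{isotropic constant} $L_{\K}$ being an affine invariant. A classical computation (Hensley, Ball, Milman--Pajor) shows that in this position every central hyperplane section obeys $\vol_{n-1}(\K'\cap\theta^{\perp})\asymp L_{\K}^{-1}$ uniformly over $\theta\in\mathbb{S}^{n-1}$, and a comparison of arbitrary sections with central ones (Fradelizi) shows the maximum over all hyperplanes is within a universal factor of this. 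Hence $\sup_{H}\vol_{n-1}(\K\cap H)\asymp L_{\K}^{-1}$, so that
\[
\frac{1}{L_{n}}=\inf_{\K}\,\sup_{H}\vol_{n-1}(\K\cap H)\asymp\inf_{\K}\frac{1}{L_{\K}}=\frac{1}{\sup_{\K}L_{\K}}\,,
\]
and \eqref{eq:slicing} is equivalent to the statement that the isotropic constant of every convex body is $\O(1)$.

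\emph{Next I would bound the isotropic constant by the thin-shell constant.} Let $X$ be uniform on the isotropic body $\K'$ and set $Y=X/L_{\K}$; then $Y$ is isotropic in the sense of this survey (zero mean, identity covariance) and uniform on $\K'/L_{\K}$, a convex body of volume $L_{\K}^{-n}$, so the density of $Y$ at the origin equals $L_{\K}^{n}$. Applying the thin-shell estimate to $\law Y$ gives $\norm{Y}^{2}\in n\pm\O(\sqrt n\,\sigma_{n})$ with high probability, i.e.\ $Y$ concentrates on a shell of radius $\sqrt n$ and relative width $\O(\sigma_{n}/\sqrt n)$. Comparing the volume $L_{\K}^{-n}$ of $\K'/L_{\K}$ with that of the Euclidean ball $\sqrt n\,B^{n}$ of the same typical radius---this is exactly the Eldan--Klartag transference, carried out through the logarithmic Laplace transform of $\K'/L_{\K}$ and the associated needle (localization) decomposition rather than a naive volume comparison---would yield a universal bound $L_{n}\lesssim\sigma_{n}$; any polynomial dependence $L_{n}\lesssim\poly(\sigma_{n})$ already suffices for what follows.

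\emph{Finally I would conclude}, invoking the thin-shell theorem recalled above (the resolution of the thin-shell conjecture by Klartag and Lehec), $\sigma_{n}=\O(1)$: the previous step then gives $L_{n}=\O(1)$, and the first step upgrades this to $\inf_{\K}\sup_{H}\vol_{n-1}(\K\cap H)\gtrsim1$, which is \eqref{eq:slicing}. The hard part is the thin-shell input itself; the two reduction steps involve no high-dimensional phenomenon beyond logconcavity and can be made quantitatively clean with standard tools. The thin-shell bound is the culmination of the KLS program and runs through the stochastic localization surveyed in this article: one follows the measure-valued martingale obtained by tilting $\pi$ with a Brownian-driven Gaussian factor, controls the operator norm of its covariance along the flow, and thereby reduces the thin-shell quantity for $\pi$ to a differential inequality in the single time variable. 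I would treat that estimate as the black box it deserves to be and devote my effort only to making the first two steps precise. It is worth noting that the KLS constant alone does not suffice: $\psi_{\kls}(n)\lesssim\sqrt{\log n}$ propagates to at best $L_{n}\lesssim\sqrt{\log n}$, so the recently established $\O(1)$ thin-shell estimate is genuinely needed for \eqref{eq:slicing}.
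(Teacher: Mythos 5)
Your proposal is mathematically sound but takes a genuinely different route from the one developed in \S\ref{sec:slicing} of this survey. You go through the chain $\msf{Sc} \Leftarrow (L_n \lesssim \sigma_n) \Leftarrow \msf{Ts}$: first reduce the extremal-section problem to boundedness of the isotropic constant (Hensley/Ball/Milman--Pajor plus Fradelizi's comparison of arbitrary with central sections), then invoke the Eldan--Klartag transference $L_n \lesssim \sigma_n$ \cite{eldan2011approximately}, and finally plug in the $\sigma_n = \O(1)$ thin-shell theorem \cite{klartag2025thin} as a black box. The paper instead follows Bizeul \cite{bizeul2025slicing}: it reduces slicing to the Dafnis--Paouris small-ball estimate \eqref{eq:small-ball-conjecture}, applies Paouris' sub-Gaussian small-ball bound to the strongly logconcave measure $\pi_t$ produced by stochastic localization, and closes the loop using only Guan's uniform trace bound $\E\tr\Sigma_t \gtrsim n$ at a constant time---a strictly weaker input than full thin-shell. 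What each route buys: yours is the conceptually cleanest implication now that thin-shell is known, but it leans on the heaviest available black box; the paper's route was the one that resolved slicing \emph{before} thin-shell was settled, is more self-contained within the SL framework developed in this survey, and makes visible exactly which covariance-control estimate along the localization flow is needed. One remark of yours is factually off: you write that the ``$\O(1)$ thin-shell estimate is genuinely needed'' because KLS alone propagates only to $L_n \lesssim \sqrt{\log n}$. Bizeul's argument (and, earlier, Guan's $\sigma_n \lesssim \log\log n$ combined with $L_n \lesssim \sigma_n$, which already beats $\sqrt{\log n}$) shows that the full thin-shell bound is \emph{not} needed for $L_n = \O(1)$; the trace bound $\E\tr(\Sigma_t^2) = \O(n)$ suffices, and indeed historically slicing fell in December 2024, seven months before thin-shell. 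That misstatement does not affect the correctness of your proof, since you are in any case free to use the (true) thin-shell theorem, but it gets the logical dependencies backwards.
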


It turns out that \eqref{eq:Ts} implies \eqref{eq:slicing}: $L_{n}\lesssim\sigma_{n}$
\cite{eldan2011approximately}. Following Guan's analysis \cite{guan2024note},
the conjecture was resolved by Klartag and Lehec \cite{klartag2024affirmativeresolutionbourgainsslicing},
and Bizeul \cite{bizeul2025slicing}.
\begin{thm}
[\cite{klartag2024affirmativeresolutionbourgainsslicing,bizeul2025slicing}]
$L_{n}=\O(1)$.
\end{thm}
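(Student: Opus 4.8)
The plan is to not attack $L_n = \O(1)$ head-on, but to route through the two reductions already recorded above and then to locate where the genuine difficulty sits. First, the Eldan--Klartag estimate $L_n \lesssim \sigma_n$ \cite{eldan2011approximately} transfers the problem entirely to the thin-shell constant, so it suffices to prove $\sigma_n = \O(1)$; and that bound is precisely the thin-shell theorem of Klartag and Lehec \cite{klartag2025thin}. Formally, then, the proof of the slicing conjecture is the single line $L_n \lesssim \sigma_n = \O(1)$, and all of the content lies in the thin-shell estimate --- itself a landmark application of stochastic localization, so that is what a meaningful proof plan must address.

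To prove $\sigma_n = \O(1)$ I would use stochastic localization. Start from an isotropic logconcave $\pi$ on $\Rn$ and run Eldan's process, producing a random tilt $\pi_t \propto \exp(\inner{c_t,\cdot} - \tfrac{t}{2}\,\norm{\cdot}^2)\,\pi$ whose driving vector $c_t$ is chosen (via an It\^o SDE for the barycenter) so that $(\pi_t)_{t\ge 0}$ is a measure-valued martingale with $\pi_0 = \pi$. Write $a_t$ and $A_t$ for the barycenter and covariance of $\pi_t$. The structural facts to exploit are: $A_t$ is nonincreasing in the Loewner order, with $\mathrm{d}A_t = -A_t^2\,\mathrm{d}t + (\text{third-moment tensor})[\mathrm{d}W_t]$; the martingales $\tr A_t$ and $\norm{a_t-a_0}^2$ decompose, through their quadratic variations, $\var_\pi(\norm{\cdot}^2)$ into increments indexed by $t$; and $\pi_t$ is $\Omega(t)$-strongly logconcave, hence satisfies a dimension-free Poincar\'e inequality, once $t \gtrsim 1$. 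The scheme is to integrate this variance budget from $t=0$ to a stopping time $\asymp 1$, bounding each increment by the Poincar\'e constant of $\pi_t$ together with the reverse H\"older inequality, and to absorb the tail $t \gtrsim 1$ using that a strongly logconcave isotropic-type measure has $\var(\norm{\cdot}^2) = \O(n)$ (the Poincar\'e computation of the proposition above applied to $f = \norm{\cdot}^2$).

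The hard part --- and why the conjecture stayed open for decades --- is keeping the covariance process controlled: one needs $\E\,\norm{A_t} = \O(1)$ uniformly for $t \lesssim 1$, not merely $\O(\polylog n)$. The third-moment term in $\mathrm{d}A_t$ can a priori inflate $\norm{A_t}$, and a crude Gr\"onwall bound only yields $\norm{A_t} \lesssim \cpi(\pi_t)$, which through Klartag's $\cpi(n) \lesssim \log n$ \cite{Klartag23log} loses logarithmic factors; Guan's refinement \cite{guan2024note} pushes this down to $\sigma_n \lesssim \log\log n$, still not $\O(1)$. Closing the remaining gap requires the self-improving device of Klartag--Lehec: assume $\sigma_k \le C$ (equivalently a bound on $\psi_{\kls}$) for every dimension $k \le n$ as an induction hypothesis; show that this forces the localization process to stay in a good regime for a time $\asymp 1$, via a stopping-time argument combined with small-ball and large-deviation estimates for $\pi_t$ that rule out the growth of $\norm{A_t}$; and then read off from the integrated variance budget the \emph{same} constant $C$, with no loss. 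It is this fixed-point step, rather than any individual estimate, that is the crux; once it is in place, $\sigma_n = \O(1)$, and therefore $L_n \lesssim \sigma_n = \O(1)$, as claimed.
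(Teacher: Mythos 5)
Your opening reduction $L_n \lesssim \sigma_n$ is logically valid, but it routes the slicing conjecture through the thin-shell theorem, which is both historically backwards and technically heavier than what the paper does. The survey follows Bizeul's proof, which never needs $\sigma_n = \O(1)$: it instead invokes the Dafnis--Paouris equivalence (Theorem~\ref{thm:small-ball-conjecture}) between $\bar L_n = \O(1)$ and a small-ball estimate $\P_\pi\bigl(\norm{X-y}^2 \le \veps n\bigr) \le \veps^{c_2 n}$, and proves that small-ball estimate directly. The ingredients there are Paouris's small-ball bound for sub-Gaussian (i.e.\ strongly logconcave) measures as the ``model inequality,'' a truncation/cutoff variant (Lemma~\ref{lem:small-ball-cutoff}), a short stochastic-analysis step controlling $\log \pi_t(S)$ along SL (Lemma~\ref{lem:measure-control-Markov}), and --- crucially --- only the trace-level consequences of Guan's bound, namely $\E\tr\Sigma_c \gtrsim n$ at a constant time $c$ (Lemma~\ref{lem:trace-lower-bound}). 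That is strictly weaker input than $\sigma_n = \O(1)$, and it was available in December~2024; the thin-shell theorem came in July~2025. So the paper's route proves slicing from less, while yours postpones all the difficulty into the harder target.

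Your sketch of the thin-shell half also has substantive inaccuracies, which matter if you really want to cash out the single line $L_n \lesssim \sigma_n = \O(1)$. You identify the key obstacle as getting $\E\norm{A_t} = \O(1)$ uniformly for $t \lesssim 1$; but $\E\norm{\Sigma_t} = \O(1)$ at constant time is essentially KLS itself, not thin-shell, and the survey only establishes it up to $t \lesssim \log^{-2} n$ (Theorem~\ref{thm:opnorm-control}). What the thin-shell argument in this paper actually uses is Guan's \emph{trace} control $\E\tr(\Sigma_t^2) = \O(n)$ together with the Barthe--Klartag $H^{-1}$ inequality and a spectral analysis of the heat adjoint $Q_h^*$; and even then the paper only reaches $\sigma_n \lesssim \log\log n$, explicitly deferring the full $\sigma_n = \O(1)$ to Klartag--Lehec's 2025 paper. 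Finally, your parenthetical ``$\sigma_k \le C$ (equivalently a bound on $\psi_{\kls}$)'' is wrong: a thin-shell bound is implied by, but does not imply, a KLS bound, so the induction hypothesis you describe is not equivalent to what you are trying to prove and the proposed bootstrap is not well-posed as stated.
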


Therefore, summarizing the status of the three conjectures, we have
\[
\text{\fbox{\ \ensuremath{O(1)=L_{n}\asymp\sigma_{n}\leq\log\psi_{\kls}\asymp\log\cpi(n)\lesssim\log\log n}\ }}
\]

\paragraph{Equivalent formulations.}

One can formulate \eqref{eq:slicing} in terms of the maximum of its
density. Let $\pi_{S}$ denote the uniform distribution over a compact
set $S$. 

Let $\K$ be a convex body for which
\[
\vol\K=1\,,\quad\cov\pi_{\K}=L_{\K}^{2}I_{n}\ \text{for some }L_{\K}>0\,.
\]
Clearly, $\pi_{L_{\K}^{-1}\K}$ is isotropic, and its density (over
its support) is simply 
\[
\frac{1}{\vol(L_{\K}^{-1}\K)}=L_{\K}^{n}\,.
\]
Then, \eqref{eq:slicing} is equivalent to
\[
\bar{L}_{n}:=\sup_{\K}L_{\K}=\sup_{\K}\sup_{\Rn}\bpar{\frac{1}{\vol(L_{\K}^{-1}\K)}}^{1/n}=\O(1)\,,
\]
where the supremum runs over all isotropic convex bodies. Its natural
logconcave generalization would be 
\[
\hat{L}_{n}:=\sup_{\pi}\sup_{\Rn}\pi^{1/n}=\O(1)\,,
\]
where the supremum runs over all isotropic logconcave distributions.

We close this section with a classical result in convex geometry (see
\cite{KM22slicing}).
\begin{thm}
\label{thm:slicing-equiv} The following are equivalent:
\begin{itemize}
\item $L_{n}=\O(1)$.
\item $\bar{L}_{n}=\O(1)$.
\item $\hat{L}_{n}=\O(1)$.
\end{itemize}
\end{thm}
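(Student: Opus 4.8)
The plan is to derive the three equivalences from two comparisons, each valid up to universal constants: $L_n\asymp\bar L_n$ and $\bar L_n\asymp\hat L_n$. One half of the second comparison is immediate: a convex body is a special logconcave distribution, so restricting the supremum defining $\hat L_n$ to uniform measures on isotropic convex bodies gives $\bar L_n\le\hat L_n$. The rest splits into (i) relating the isotropic constant of a convex body to its hyperplane sections, and (ii) reducing the logconcave case to the convex case.

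For (i) I would invoke the classical hyperplane-section estimates for isotropic convex bodies (Hensley, Ball): if $\K\subset\Rn$ is an isotropic convex body normalized so that $\vol\K=1$ and $\cov\pi_\K=L_\K^2 I_n$, then every central section obeys $c_1/L_\K\le\vol_{n-1}(\K\cap\theta^\perp)\le c_2/L_\K$ for universal $c_1,c_2>0$, and, by Brunn's concavity principle, every parallel section is within a universal factor of the central one. Feeding the isotropic representative of an arbitrary unit-volume body into the infimum defining $L_n$ yields $\bar L_n\lesssim L_n$. For the reverse bound: given $\K$ with $\vol\K=1$, pick a linear map $T$ with $\det T=1$ putting $\K$ into isotropic position $\tilde\K=T(\K-b)$, so that $L_{\tilde\K}\le\bar L_n$; since $\det T=1$ forces $\norm{T}_\op\ge1$, there is a unit vector $u$ with $\norm{T^{-\T}u}=1/\norm{T}_\op\le1$. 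Writing $\theta:=T^{-\T}u/\norm{T^{-\T}u}$, the change-of-volume identity $\vol_{n-1}(TS)=\abs{\det T}\,\norm{T^{-\T}u}\,\vol_{n-1}(S)$ for $S\subset u^\perp$, combined with $\vol_{n-1}(\tilde\K\cap\theta^\perp)\ge c_1/L_{\tilde\K}$, produces a hyperplane section of $\K$ of $(n-1)$-volume at least $c_1/\bar L_n$. Hence $\sup_H\vol_{n-1}(\K\cap H)\gtrsim1/\bar L_n$ for every unit-volume $\K$, i.e.\ $L_n\lesssim\bar L_n$.

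The substance is in (ii), the bound $\hat L_n\lesssim\bar L_n$, for which I would use Ball's bodies. Given an isotropic logconcave density $f$ on $\Rn$, translate so that its mode is at the origin, hence $f(0)=\|f\|_\infty$, and set, for $p\ge1$,
\[
K_p(f)\defeq\Bbrace{x\in\Rn : p\int_0^\infty r^{p-1}f(rx)\,\D r\ge f(0)}\,.
\]
Two facts are needed. First, $K_p(f)$ is a convex body in $\Rn$ whenever $f$ is logconcave and integrable; this is where logconcavity is genuinely used, via the radial identity $\rho_{K_p(f)}(\theta)^p=\tfrac{p}{f(0)}\int_0^\infty s^{p-1}f(s\theta)\,\D s$ and a convexity lemma for such integrals. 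Second, the (affine-invariant) isotropic constant of $K_{n+1}(f)$ is comparable to the isotropic constant $L_f:=\|f\|_\infty^{1/n}(\det\cov\pi_f)^{1/(2n)}$ of the function $f$, which equals $\|f\|_\infty^{1/n}$ when $f$ is isotropic; this follows from the volume identity $\vol K_n(f)=f(0)^{-1}\int_{\Rn}f$ together with a matching expression for $\cov\pi_{K_{n+1}(f)}$ in terms of the second moments of $f$, kept in check by the reverse H\"older inequality and logconcavity. Chaining the two, $\|f\|_\infty^{1/n}=L_f\lesssim L_{K_{n+1}(f)}\le\bar L_n$, so $\hat L_n\lesssim\bar L_n$, and the loop closes.

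The main obstacle is step (ii): showing that $K_p(f)$ is convex for $p\ge1$ (a careful use of the logconcavity of $f$, including the non-even case) and that $\cov\pi_{K_{n+1}(f)}$ is comparable, as a quadratic form and up to universal constants, to $\cov\pi$ (which requires quantitative moment bounds for logconcave measures). These are the only steps demanding real work; step (i) is essentially bookkeeping once the classical section and Brunn estimates are granted.
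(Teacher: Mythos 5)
The paper states this theorem without proof, deferring to \cite{KM22slicing}; your proposal reconstructs the standard classical route and it is essentially correct. The decomposition into $L_n\asymp\bar L_n$ (via the Hensley--Ball central-section bounds $\vol_{n-1}(\K\cap\theta^{\perp})\asymp 1/L_\K$ for isotropic $\K$, plus Fradelizi's consequence of Brunn's concavity that the centroidal section is within a factor $e$ of the maximal parallel one) and $\bar L_n\asymp\hat L_n$ (via Ball's convex bodies $K_p(f)$) is exactly how the classical literature handles it, and your bookkeeping in the $L_n\lesssim\bar L_n$ direction --- choosing $u$ with $\norm{T^{-\T}u}=1/\norm T_{\op}\le 1$ and using the section change-of-volume formula --- is sound. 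One small caution: for the comparison $L_{K_{n+1}(f)}\asymp L_f$ the standard normalization centers $f$ at its \emph{barycenter}, not its mode, and uses Fradelizi's bound $f(0)\ge e^{-n}\sup f$ to trade between the two; your choice to center at the mode dodges that lemma on the density side but then requires a little extra care in matching the covariance of $K_{n+1}(f)$ to that of $f$, since Ball's moment identities are cleanest with the barycenter at the origin. This is a genuine (if routine) detail to keep track of, not a gap in the strategy.
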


\subsection{Large deviation inequalities}

The following concentration result due to Paouris has been influential
and useful in asymptotic convex geometry \cite{Paouris2006}.
\begin{thm}
\label{thm:Paouris} For any isotropic logconcave distribution $\pi$
in $\Rn$, there exists a universal constant $c>0$ such that 
\[
\P_{\pi}(\norm X_{2}-\sqrt{n}\geq t\sqrt{n})\le\exp(-ctn^{1/2})\quad\text{for any }t\geq1\,.
\]
\end{thm}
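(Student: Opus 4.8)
The plan is to convert the tail bound into a bound on the moments $I_q(\pi):=(\E_\pi\norm{X}_2^q)^{1/q}$ and to prove the moment bound via $L_q$-centroid bodies. By Markov's inequality applied to $\norm{X}_2^q$,
\[
\P_\pi(\norm{X}_2\ge\lambda)\le\Bigl(\frac{I_q(\pi)}{\lambda}\Bigr)^{q}\qquad\text{for all }\lambda>0,\ q\ge1,
\]
so it suffices to prove a single uniform moment estimate $I_q(\pi)\lesssim\sqrt n+q$ over $q\ge1$ and all isotropic logconcave $\pi$ on $\Rn$: the stated inequality then follows by optimizing $q$ in Markov (choosing $q$ a constant multiple of $t\sqrt n$ for large $t$, and a small constant multiple of $\sqrt n$ for $t$ of bounded size, using that $I_q(\pi)$ is within $(1+\veps)$ of $\sqrt n$ once $q/\sqrt n$ is a small enough $\veps$-dependent constant). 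The regime $q\ge\sqrt n$ of the moment bound is cheap: the reverse H\"older inequality (Borell's lemma applied to the Euclidean balls $\{\norm{x}_2\le r\}$) upgrades to the comparison $\norm{\norm{X}_2}_{L^p(\pi)}\lesssim\tfrac pq\,\norm{\norm{X}_2}_{L^q(\pi)}$ for $p\ge q\ge 2$, so once $I_{\sqrt n}(\pi)\lesssim\sqrt n$ is known one gets $I_q(\pi)\lesssim q$ for every $q\ge\sqrt n$.

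Hence everything reduces to the estimate $I_q(\pi)\lesssim\sqrt n$ for $2\le q\lesssim\sqrt n$---equivalently, to the assertion that the critical exponent $q_\ast(\pi):=\max\{q:I_q(\pi)\le 2\sqrt n\}$ satisfies $q_\ast(\pi)\gtrsim\sqrt n$ (note $I_2(\pi)=\sqrt n$ by isotropy)---which is the substance of Paouris's theorem. The tool is the $L_q$-centroid body $Z_q(\pi)$, the convex body with support function $h_{Z_q(\pi)}(\theta)=(\E_\pi\abs{\langle X,\theta\rangle}^{q})^{1/q}$; isotropy gives $Z_2(\pi)=B_2^n$, and monotonicity in $q$ together with reverse H\"older gives $B_2^n\subseteq Z_q(\pi)\subseteq 2q\,B_2^n$. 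The argument then combines (i) a geometric inequality bounding $I_q(\pi)$ in terms of volumetric parameters of $Z_q(\pi)$ and its polar; (ii) volume estimates for $Z_q(\pi)$ valid for every isotropic logconcave measure---exactly the type of integral inequality for logconcave densities that the Lov\'asz--Simonovits localization lemma reduces to one- or two-dimensional verification; and (iii) a bootstrap in $q$, controlling $I_{2q}(\pi)$ by $I_q(\pi)$ as long as $q\le q_\ast(\pi)$ and iterating down to $I_2(\pi)=\sqrt n$.

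The main obstacle is precisely this estimate, $I_q(\pi)\lesssim\sqrt n$ up to $q\sim\sqrt n$, and the obstruction is the dependence among the coordinates of $X$. Writing $\norm{X}_2^2=\sum_{i=1}^nX_i^2$, each $X_i$ is a one-dimensional logconcave marginal, so $X_i^2$ has $\psi_{1/2}$-type tails; if the $X_i^2$ were independent, a Bernstein bound for such variables would already give $\P_\pi(\norm{X}_2\ge C\sqrt n)\le\exp(-c\sqrt n)$, the target rate, with the exponent $\sqrt n$ produced by the heavy tail of a single squared coordinate. Recovering this bound from the logconcavity of the joint law alone, without any independence to exploit, is the crux, and the centroid-body description together with localization is the mechanism that supplies it. (A more recent route deduces Paouris-type inequalities from the thin-shell bound $\sigma_n=O(1)$, now a theorem, but the argument sketched above is the one that remains within the localization framework surveyed in this paper.)
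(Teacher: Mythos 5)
The paper does not prove Theorem~\ref{thm:Paouris}; it is stated with a citation to Paouris's original work, so there is no ``paper's proof'' to compare against. Your sketch is a faithful outline of Paouris's route: reduce the tail bound to the moment estimate $I_q(\pi)=(\E_\pi\norm{X}^q)^{1/q}\lesssim\sqrt n+q$, prove the hard regime $I_q\lesssim\sqrt n$ for $q\lesssim\sqrt n$ by studying the $L_q$-centroid bodies $Z_q(\pi)$, use localization (this is indeed why the result belongs in this survey) to reduce the needed integral inequalities for logconcave densities to low-dimensional verification, bootstrap in $q$ via the critical exponent $q_\ast$, and finish with Markov. Your handling of the cheap regime $q\ge\sqrt n$ via reverse H\"older is also correct. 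This is the right family of ideas.

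There is, however, one genuine gap, and it sits precisely at the $t=1$ endpoint of the statement you are asked to prove. Your parenthetical asserts that $I_q(\pi)\le(1+\veps)\sqrt n$ once $q/\sqrt n$ is a small $\veps$-dependent constant. This does \emph{not} follow from the moment estimate $I_q\lesssim\sqrt n+q$ as you have set it up, because the implied universal constant there need not be close to $1$, and the $q_\ast$-bootstrap in Paouris's argument only delivers a bound of the form $I_q\le C\,I_2=C\sqrt n$ for $q\le c\sqrt n$ with $C$ a (possibly large) universal constant. With such a $C$, Markov at $\lambda=2\sqrt n$ and any $q\le c\sqrt n$ gives $(C/2)^q$, which is useless if $C\ge 2$. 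Concretely: from $I_q\le C\sqrt n$ for $q\le c\sqrt n$ one obtains $\P(\norm X\ge u\sqrt n)\le e^{-c'u\sqrt n}$ only for $u\gtrsim C$, i.e., the form $\P(\norm X\ge C_0 t\sqrt n)\le e^{-t\sqrt n}$, $t\ge 1$, which is exactly how Paouris's theorem is stated in the original paper and in most surveys. That form leaves the window $2\le u<C_0$ uncovered. To prove the statement as written here (threshold $(1+t)\sqrt n$ with $t\ge 1$, hence down to $u=2$), you need one of the following: track the constants in the centroid-body volume estimates to verify $C<2$; add a separate argument for $2\sqrt n\le\norm X\lesssim\sqrt n$ (e.g., the Lipschitz-concentration refinement of Lee and Vempala, Theorem 2.4 in this survey, applied to $g=\norm\cdot$ at deviation $\sqrt n$ gives $e^{-c\sqrt n}$ directly); or actually prove the $(1+\veps)$-moment claim, which is a thin-shell-type statement going beyond the $q_\ast$ machinery. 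As written, the $(1+\veps)$ assertion is doing real work and is not justified.
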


The next theorem is a common generalization of the above and Levy
concentration of Lipschitz functions, due to Lee and Vempala \cite{LV24eldan}.
\begin{thm}
For any $L$-Lipschitz function $g$ in $\Rn$ and any isotropic logconcave
measure $\pi$, and any $t\ge1,$ there exists a universal constant
$c>0$ such that
\[
\P_{\pi}(|g(X)-\bar{g}|\geq Lt)\leq\exp\bpar{-\frac{ct^{2}}{t+\sqrt{n}}}\,,
\]
where $\bar{g}$ is the median or mean of $g(x)$ for $x\sim\pi$.
\end{thm}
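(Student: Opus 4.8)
The plan is to normalize to $L=1$ (replace $g$ by $g/L$) and take $\bar g=\E_\pi g$; the median case then follows from the mean case, since $\pi$ is isotropic and $g$ is $1$‑Lipschitz, so $\var_\pi g\le\cpi(\pi)\le\cpi(n)\lesssim\log n$ by the Proposition and Klartag's bound, hence by Chebyshev $|\E_\pi g-\mathrm{med}(g)|\le\sqrt{2\var_\pi g}\lesssim\sqrt{\log n}\le\sqrt n$, and shifting the centering point by $\O(\sqrt n)$ changes only the universal constants in $e^{-ct^2/(t+\sqrt n)}$. It is also harmless to assume $t$ exceeds a large absolute constant, since otherwise $t^2/(t+\sqrt n)=\O(1)$ and the bound holds trivially.

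\textbf{Large deviations, $t\ge 4\sqrt n$.} Here I would invoke Paouris's inequality (Theorem~\ref{thm:Paouris}) directly. Since $g$ is $1$‑Lipschitz and $\pi$ is isotropic, $|g(X)-\E_\pi g|\le|X|+\E_\pi|X|\le|X|+\sqrt n$, so $\{|g-\E_\pi g|\ge t\}\subseteq\{|X|\ge t-\sqrt n\}\subseteq\{|X|\ge\tfrac34 t\}$. Applying Theorem~\ref{thm:Paouris} at scale $\tfrac34 t\ge 3\sqrt n$ gives $\P_\pi(|g-\E_\pi g|\ge t)\le e^{-ct}$, and since $t^2/(t+\sqrt n)\le t$ this already implies the claimed bound in this range.

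\textbf{Moderate deviations, $1\le t\le 4\sqrt n$.} This is where the localization method enters. Run Eldan's stochastic localization $\pi_t(x)\propto\exp(\langle B_t,x\rangle-\tfrac t2\norm x^2)\,\pi(x)$ with $dB_t=dW_t+\mu_t\,dt$ and $\mu_t,\Sigma_t$ the barycenter and covariance of $\pi_t$, so that $(\pi_t)$ is a measure‑valued martingale ($\E[\pi_t]=\pi$) and every realization of $\pi_T$ is $T$‑strongly logconcave. Fix a short horizon $T\asymp 1/\sqrt n$ and write $\pi=\E[\pi_T]$. Each component $\pi_T^{(\omega)}$, being $T$‑strongly logconcave, satisfies a log‑Sobolev inequality with constant $1/T$ (Bakry--\'Emery), hence $\P_{\pi_T^{(\omega)}}(|g-m_T^{(\omega)}|\ge s)\le 2e^{-Ts^2/2}$ where $m_t:=\E_{\pi_t}g$ and $m_0=\E_\pi g=\bar g$; with $T\asymp 1/\sqrt n$ this is precisely the Gaussian rate $e^{-cs^2/\sqrt n}$ around the component mean, uniformly in $\omega$. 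To control the component means themselves, observe that stochastic localization is driftless as a measure‑valued process, so $m_t$ is a continuous martingale with $dm_t=\langle\cov_{\pi_t}(x,g),dW_t\rangle$ and therefore $d\langle m\rangle_t=|\cov_{\pi_t}(x,g)|^2\,dt\le\norm{\Sigma_t}\,\var_{\pi_t}(g)\,dt\le\cpi(n)\,\norm{\Sigma_t}^2\,dt$, using Cauchy--Schwarz and then $\var_{\pi_t}(g)\le\cpi(\pi_t)\le\cpi(n)\norm{\Sigma_t}$ (the Proposition together with $|\nabla g|\le 1$). On the event $G:=\{\sup_{t\le T}\norm{\Sigma_t}\le 2\}$ this yields $\langle m\rangle_T\le 4\cpi(n)T=\O(1)$, whence a stopped exponential‑martingale bound gives $\P(\{|m_T-m_0|\ge s\}\cap G)\le 2e^{-cs^2}$. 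Assembling via the tower property, $\P_\pi(|g-\bar g|\ge t)\le\E_\omega[\P_{\pi_T^{(\omega)}}(|g-m_T^{(\omega)}|\ge t/2)]+\P_\omega(|m_T^{(\omega)}-m_0|\ge t/2)\le 2e^{-ct^2/\sqrt n}+2e^{-ct^2}+\P(G^c)$, and a short case analysis shows the first two terms are $\lesssim e^{-c't^2/(t+\sqrt n)}$ for all $t\le 4\sqrt n$.

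\textbf{The main obstacle.} What remains, and what I expect to be the hard part, is the estimate $\P(G^c)=\P[\sup_{t\le T}\norm{\Sigma_t}>2]\le e^{-\Omega(\sqrt n)}$ on the time scale $T\asymp 1/\sqrt n$; this is exactly where the $\sqrt n$ in the theorem originates, since $t^2/(t+\sqrt n)=\O(\sqrt n)$ for $t\le 4\sqrt n$, so a failure probability of order $e^{-c\sqrt n}$ is absorbed but nothing larger would be. To establish it one analyzes the covariance SDE $d\Sigma_t=-\Sigma_t^2\,dt+(\text{third-moment tensor of }\pi_t):dW_t$, bounds the third moments of $\pi_t$ by $\O(\norm{\Sigma_t}^{3/2})$ while $\norm{\Sigma_t}\le 2$ using the reverse \Holder inequality (Borell's lemma), and runs a matrix‑valued supermartingale argument as developed by Eldan and in subsequent work---rather than a crude $\varepsilon$‑net over directions, which would lose a factor $e^{\Omega(n)}$. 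The key point is that over a horizon of length $1/\sqrt n$ the martingale part of $\norm{\Sigma_t}$ accumulates quadratic variation $\O(1/\sqrt n)$, so escaping from the initial value $1$ up to $2$ costs $e^{-\Omega(\sqrt n)}$, which swamps the $\poly(n)$ union‑bound factor over a time grid. Granting this estimate, combining the three regimes---and absorbing the small‑$t$ and small‑$n$ edge cases into the universal constants---completes the proof.
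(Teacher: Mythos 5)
Your strategy---decompose $\pi=\E[\pi_T]$ via stochastic localization at scale $T\asymp n^{-1/2}$, use the log-Sobolev inequality for the $T$-strongly logconcave components, control the component means as a martingale, establish an exponentially small covariance-escape probability, and invoke Paouris for $t\gtrsim\sqrt n$---is the route taken in Lee--Vempala \cite{LV24eldan}, which the paper cites for this theorem. The ``main obstacle'' you flag is in fact supplied by the survey itself: the Freedman-inequality derivation behind Theorem~\ref{thm:opnorm-control} yields $\P\bigl(\exists\,\tau\le T:\|\Sigma_\tau\|\ge 2\bigr)\le\exp(-1/(CT))$ for every $T\lesssim\log^{-2}n$, not only $T\asymp\log^{-2}n$, so $T\asymp n^{-1/2}$ delivers precisely the $e^{-\Omega(\sqrt n)}$ escape bound you need.

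The one genuine gap is in the assembly for the moderate regime. You assert that $2e^{-ct^2/\sqrt n}+2e^{-ct^2}+\P(G^c)\lesssim e^{-c't^2/(t+\sqrt n)}$ for all $1\le t\le 4\sqrt n$, but this fails for $1\le t\lesssim n^{1/4}$: there the target $e^{-c't^2/(t+\sqrt n)}=1-\Theta(t^2/\sqrt n)$ is close to $1$, while $2e^{-ct^2/\sqrt n}$ alone is close to $2$, so the three-term bound exceeds $1$ and carries no information; shrinking $c'$ to absorb the prefactor $2$ requires $t^2/\sqrt n\gtrsim 1$, i.e.\ $t\gtrsim n^{1/4}$. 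To close the range $1\le t\lesssim n^{1/4}$ one needs a direct anti-concentration estimate for the one-dimensional logconcave variable $Y:=g(X)$: since $\var Y\le\cpi(\pi)\lesssim\psi_{\kls}^2(n)\lesssim\sqrt n$ and a one-dimensional logconcave density with variance $\sigma^2$ is $\gtrsim\sigma^{-1}$ near its mean and median, one gets $\P(|Y-\bar g|<t)\gtrsim\min\{1,t/\sigma\}\gtrsim t^2/\sqrt n$ in this range, which is exactly the required complement $\ge 1-e^{-ct^2/(t+\sqrt n)}$. The same density estimate is also what quietly justifies your Chebyshev-based mean-to-median shift, which has the same small-$t$ issue.
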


\subsection{Strongly logconcave isoperimetry}

For $\D\pi\propto\exp(-V)\,\D x$, the probability measure $\pi$
is $t$-strongly logconcave if $V-\frac{t}{2}\,\norm{\cdot}^{2}$
is convex. This is a strictly smaller class than logconcave distributions
and enjoys stronger properties. For instance, it is a classical result
that the Poincar\'e constant of $t$-strongly logconcave distributions
is bounded by $1/t$. This can be seen as a consequence of the \emph{Brascamp--Lieb
inequality}: for a probability measure $\pi\propto e^{-V}$ with strictly
convex $V$ and any $f:\Rn\to\R$,
\begin{equation}
\var_{\pi}f\leq\E_{\pi}\inner{\nabla f,(\hess V)^{-1}\,\nabla f}\,.\label{eq:Brascamp-Lieb}
\end{equation}
This $1/t$-bound can also be explained by the fact that the Poincar\'e
inequality is implied by the stronger log-Sobolev inequality (i.e.,
$\cpi\leq\clsi$).
\begin{defn}
[Log-Sobolev constant] A probability measure $\pi$ over $\R^{n}$
is said to satisfy a \emph{log-Sobolev inequality} (LSI) with constant
$\clsi(\pi)>0$ if for any locally-Lipschitz function $f:\Rn\to\R$,
\begin{equation}
\ent_{\pi}(f^{2})\leq2\clsi(\pi)\,\E_{\pi}[\norm{\nabla f}^{2}]\,,\tag{\ensuremath{\msf{LSI}}}\label{eq:lsi}
\end{equation}
where $\ent_{\pi}(f^{2}):=\E_{\pi}[f^{2}\log f^{2}]-\E_{\pi}[f^{2}]\log\E_{\pi}[f^{2}]$.
\end{defn}

It is also classical that $\clsi(\pi)\leq1/t$ for any $t$-strongly
logconcave distributions $\pi$, and this can be established in multiple
ways (e.g., Caffarelli's contraction theorem \cite{Caffarelli00monotonicity}
or the Bakry--\'Emery condition \cite{BGL14analysis}). Below, we
demonstrate another way to see this through connections to a logarithmic-Cheeger
inequality.

Since $\cpi(\pi)\leq\clsi(\pi)\leq1/t$ for $t$-strongly logconcave
distributions $\pi$, the bound $\cpi(\pi)\leq1/t$ becomes useless
as $t\to0$. We have already seen that $\norm{\cov\pi}\leq\cpi(\pi)$
for any probability measures $\pi$, and thus $\norm{\cov\pi}\leq\cpi(\pi)\leq1/t$
for $t$-strongly logconcave $\pi$. Hence, the KLS conjecture essentially
asks for a ``refined'' bound for $0$-strongly (i.e., \emph{all})
logconcave distributions, asserting an $\O(\norm{\cov\pi})$ bound
instead of the $1/t$-bound.

\paragraph{LSI for logconcave distributions with bounded support.}

Although any logconcave distribution has a finite Poincar\'e constant
(e.g., $\cpi(\pi)\lesssim\norm{\cov\pi}\log n$ due to the KLS bound),
this is not the case for \eqref{eq:lsi}. It is well-known that having
a finite LSI constant implies Gaussian-like tail decay, which is clearly
false, for example, for exponential distributions. Nonetheless, some
results are  known for logconcave distributions with finite support.
For instance, Frieze and Kannan established that $\clsi(\pi)\lesssim D^{2}$
\cite{FK99lsi} for any logconcave distributions with support of diameter
$D$, and this was improved to $\O(D)$ for\emph{ }isotropic logconcave
distributions \cite{LV24eldan}. Later, these two bounds were unified
as $\clsi(\pi)\lesssim D\cpi^{1/2}(\pi)$ \cite[Theorem 1.5]{KV25faster}
(up to logarithmic factors), since $\cpi(\pi)\lesssim\norm{\cov\pi}\log n\leq D^{2}$. 

\paragraph{Log-Cheeger and LSI.}

Recall that the Cheeger inequality is an equivalent ``geometric''
formulation of \eqref{eq:PI} for logconcave distributions. It turns
out that there exists a similar equivalence for \eqref{eq:lsi}. To
see this, we recall the \emph{logarithmic-Cheeger inequality}.
\begin{defn}
The \emph{log-Cheeger constant} $\clch(\pi)$ of a probability measure
$\pi$ on $\R^{n}$ is defined as 
\[
\clch(\pi)=\inf_{S\subset\R^{n},\pi(S)\le\frac{1}{2}}\frac{\pi(\partial S)}{\pi(S)\sqrt{\log(1/\pi(S))}}\,,
\]
where the infimum runs over every measurable set $S$ of measure at
most $1/2$.
\end{defn}

It is obvious that $\clch\leq\cch$ due to $\sqrt{\log(1/\pi(S))}$,
and Ledoux showed that the log-Cheeger inequality is equivalent to
\eqref{eq:lsi} through $\clsi(\pi)\asymp\clch^{-2}(\pi)$ \cite{ledoux1994simple},
just as $\cpi(\pi)\asymp\cch^{-2}(\pi)$ for any logconcave distribution
$\pi$. Ledoux also showed that $\clch(\pi)\gtrsim t^{1/2}$ for any
$t$-strongly logconcave $\pi$ \cite{Ledoux1999}, which implies
that $\clsi(\pi)\asymp\clch^{-2}(\pi)\lesssim1/t$ as desired.
\begin{thm}
\label{thm:strongly_logconcave} For any $t$-strongly logconcave
distribution $\pi$, and any measurable set $S$ with $\pi(S)\le1/2$,
we have 
\[
\frac{\pi(\partial S)}{\pi(S)\sqrt{\log(1/\pi(S))}}\gtrsim\sqrt{t}\,.
\]
\end{thm}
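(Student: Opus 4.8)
The plan is to prove a log-isoperimetric inequality for $t$-strongly logconcave measures directly by comparison with the Gaussian case and a reduction to the case $t=1$ via scaling, then invoking a known Gaussian log-isoperimetric bound. First I would reduce to $t=1$: if $\pi$ is $t$-strongly logconcave and $X\sim\pi$, set $Y=\sqrt t\,X$ with law $\nu$; then $\nu$ is $1$-strongly logconcave, and the map $x\mapsto\sqrt t\,x$ rescales boundary measure by $\sqrt t$ relative to volume in the appropriate way, so $\clch(\pi)=\sqrt t\,\clch(\nu)$. Hence it suffices to show $\clch(\nu)\gtrsim 1$ for any $1$-strongly logconcave $\nu$, i.e. the desired bound with $t=1$.

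For the core case, I would use the semigroup/heat-flow argument of Ledoux (or equivalently the Bakry--Émery criterion): a $1$-strongly logconcave measure satisfies $\mathrm{CD}(1,\infty)$, and along the associated diffusion semigroup $(P_s)_{s\ge0}$ one has the gradient bound $\norm{\nabla P_s f}\le e^{-s}\,P_s\norm{\nabla f}$. The standard route is: for a set $S$ with $\nu(S)\le 1/2$, apply the semigroup to $\ind_S$ and use the co-area/smoothing trick — bound $\nu(S)-\int (P_s\ind_S)^2\,\D\nu$ (which controls how much mass $P_s$ moves) against $s^{1/2}\,\nu^+(\partial S)$ for small $s$, while on the other side the strong-convexity gradient contraction forces $P_s\ind_S$ to spread so that $\int (P_s\ind_S)^2\,\D\nu \le \nu(S)^{2}\cdot(\text{something})$; optimizing over $s$ and tracking the logarithmic factor yields $\nu^+(\partial S)\gtrsim \nu(S)\sqrt{\log(1/\nu(S))}$. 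Equivalently, one may simply cite that $1$-strong logconcavity implies $\clsi(\nu)\le 1$ (Bakry--Émery), combine with Ledoux's equivalence $\clsi(\nu)\asymp \clch^{-2}(\nu)$ stated in the excerpt to get $\clch(\nu)\gtrsim 1$, and then undo the scaling.

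The main obstacle is getting the logarithmic factor $\sqrt{\log(1/\nu(S))}$ to come out with the right power — an ordinary Cheeger-type semigroup argument only gives $\nu^+(\partial S)\gtrsim \nu(S)$, and extracting the extra $\sqrt{\log(1/\nu(S))}$ requires choosing the diffusion time $s$ depending on $\nu(S)$ (roughly $s\asymp \log(1/\nu(S))$) and carefully using the exponential contraction $e^{-s}$ rather than a crude bound, together with a hypercontractivity-type estimate for $\norm{P_s\ind_S}_{L^2}$ in terms of $\nu(S)$. Handling the boundary term $\nu^+(\partial S)=\liminf_{\veps\to0}\frac{\nu(S+\veps B^n)-\nu(S)}{\veps}$ rigorously (rather than for smooth test functions) requires a standard approximation of $\ind_S$ by Lipschitz functions and a lower-semicontinuity argument, which I would state but not belabor. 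If one prefers to avoid reproving Ledoux's $\clch\gtrsim t^{1/2}$ bound from scratch, the cleanest exposition is the two-line deduction from $\clsi\le 1/t$ plus the stated equivalence $\clsi\asymp\clch^{-2}$, with the scaling reduction making the $\sqrt t$ explicit.
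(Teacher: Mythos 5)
Your proposal is correct, but it takes a genuinely different route from the one the paper points to. The paper proves this theorem via the \emph{classical localization} method of \S\ref{sec:The-Classical-Method}: it reduces the $n$-dimensional isoperimetric inequality to a one-dimensional inequality over log-affine needles (citing \cite[Theorem 47]{LV24eldan}, and more generally the localization proofs in \cite{KannanLM06,Bobkov2007,CV2014}), which is then verified directly. You instead reduce to $t=1$ by the scaling identity $\clch(\pi)=\sqrt{t}\,\clch(\nu)$ under $x\mapsto\sqrt{t}x$ (correct, though the paper's Theorem~\ref{thm:Gaussian-iso} handles an arbitrary positive-definite $B$ without this reduction), and then either run Ledoux's semigroup argument or combine Bakry--\'Emery ($\clsi\le 1/t$) with the equivalence $\clsi\asymp\clch^{-2}$ already stated in the paper. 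This is precisely the alternative the paper acknowledges in its remark ``the same theorem can be obtained by other means \cite{Ledoux1999,eldan13thin}.'' Two small caveats. In route (a), the diffusion time should be $s\asymp 1/\log(1/\nu(S))$, not $s\asymp\log(1/\nu(S))$: one takes $s$ \emph{small} so that the hypercontractive estimate $\norm{P_s\ind_S}_2^2\le\nu(S)^{2/p(s)}$ with $p(s)\approx 2-2s/\clsi$ just loses a constant factor against $\nu(S)$, and the Buser-type bound $\nu(S)-\norm{P_s\ind_S}_2^2\lesssim\sqrt{s}\,\nu^+(\partial S)$ then produces the $\sqrt{\log(1/\nu(S))}$ factor upon rearranging. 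In route (b), note that you are running the paper's own intent in reverse: the surrounding text presents the log-Cheeger bound $\clch\gtrsim\sqrt{t}$ as an \emph{independent, localization-based} alternative to Bakry--\'Emery for obtaining $\clsi\le 1/t$, whereas you derive the log-Cheeger bound \emph{from} Bakry--\'Emery. Both are logically sound; the localization route buys a proof that avoids semigroup machinery and curvature conditions entirely (and extends cleanly to the Gaussian-reweighted form of Theorem~\ref{thm:Gaussian-iso}), while the semigroup route buys brevity when LSI for strongly logconcave measures is taken as known.
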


We note that this result can be proven via the classical localization
technique in \S\ref{sec:The-Classical-Method}, as shown in \cite[Theorem 47]{LV24eldan}.

\subsection{Carbery--Wright anti-concentration}

The following is an illustrative special case of the Carbery--Wright
theorem \cite{carbery2001distributional}. Their general theorem holds
for complex vector-valued polynomials and allows for weaker and more
general moment conditions. 
\begin{thm}
For a polynomial $p:\K\rightarrow\R$ of degree $d$ defined over
a logconcave distribution $\pi$ in $\R^{n}$ with $\var_{\pi}(p(X))\ge1$,
for any $t\in\R$, 
\[
\P_{\pi}(|p(X)-t|\le\veps)\lesssim d\veps^{1/d}\,.
\]
\end{thm}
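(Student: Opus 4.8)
The plan is to reduce the multivariate Carbery–Wright estimate to a one-dimensional anti-concentration statement via the classical localization method of \S\ref{sec:The-Classical-Method}, in exactly the spirit of the survey. Fix the polynomial $p$ of degree $d$, the threshold $t\in\R$, and $\veps>0$. Consider the function $g(x):=\ind_{\{|p(x)-t|\le\veps\}}$ on $\K$, and observe that $\P_{X\sim\Unif(\K)}(|p(X)-t|\le\veps)=\int_{\K}g\,\D\pi_{\K}$ where $\pi_{\K}$ is the uniform (hence logconcave) distribution on $\K$. The idea is to set up the localization as a contradiction/extremal argument: suppose this probability exceeds $C d\veps^{1/d}$ for a large universal constant $C$; we want to localize the integral inequality
\[
\int_{\K} \Bigl(g(x) - C d\veps^{1/d}\Bigr)\,\D\pi_{\K}(x) > 0
\]
against the (one-sided) constraint coming from the variance lower bound $\var_{\pi_{\K}}(p)\ge 1$. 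A clean way to handle the variance constraint is to peel it into two linear constraints of the form $\int_{\K}(p-t)^2\,\D\pi_{\K}\ge \text{something}$ after recentering; since localization reduces one integral inequality at a time while respecting a bounded number of linear side constraints, I would carry two auxiliary functions ($p-t$ and $(p-t)^2$, or equivalently $p$ and $p^2$) through the bisection.

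The core step: apply the Localization Lemma repeatedly (needle/bisection), which replaces $\K$ and $\pi_{\K}$ by a sequence of ever-thinner "needles" — segments $[a,b]\subset\K$ carrying a one-dimensional logconcave density $\rho$ (an exponential weight $e^{cs}$ along the segment) — such that the localized versions of all the integral inequalities are preserved. After localization we are left with: there is a segment parametrized by $s\in[0,\ell]$, a logconcave density $\rho$ on it, and the restriction $q(s):=p(a+s\,(b-a)/\ell)$, which is a univariate polynomial of degree $\le d$, satisfying (i) $\int \rho\cdot\ind_{\{|q-t|\le\veps\}} > C d\veps^{1/d}$ and (ii) a variance-type lower bound $\var_{\rho}(q)\gtrsim 1$ inherited from the constraints. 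The target contradiction is then a purely one-dimensional claim: \emph{a degree-$d$ univariate polynomial $q$ cannot be $\veps$-close to a fixed value $t$ on a $\rho$-fraction more than $O(d\veps^{1/d})$ of a one-dimensional logconcave measure, once its variance is normalized.} This is provable by elementary means — the set $\{s:|q(s)-t|\le\veps\}$ is a union of at most $d$ intervals (since $q-t\pm\veps$ each have at most $d$ roots), on each of which $q$ varies by at most $2\veps$; combining the total variation budget of $q$ over these intervals with the logconcave lower bound on the density and Chebyshev-type control from $\var_\rho(q)\gtrsim 1$ (using the reverse Hölder inequality for the logconcave $\rho$ to convert moment bounds into pointwise length bounds) yields that the total $\rho$-mass of these intervals is $O(d\veps^{1/d})$. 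Here the $\veps^{1/d}$ comes from the standard fact that a monic-ish degree-$d$ polynomial controlled in sup-norm by $\veps$ on an interval forces that interval to have length $O(\veps^{1/d})$ after rescaling by the leading behavior.

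The main obstacle I anticipate is bookkeeping the variance constraint through localization: localization cleanly preserves \emph{linear} functionals of the measure (integrals of fixed functions), but $\var_{\pi_{\K}}(p)$ is quadratic in the measure. The standard fix — and the step requiring care — is to first fix the mean: write $\var_{\pi_{\K}}(p)=\int (p-m)^2\,\D\pi_{\K}$ with $m=\E_{\pi_{\K}}p$, treat $m$ as a frozen constant, and then localize the three linear inequalities $\int g\,\D\pi_{\K} > Cd\veps^{1/d}$, $\int(p-m)^2\,\D\pi_{\K}\ge 1$, and $\int(p-m)\,\D\pi_{\K}= 0$ simultaneously (localization tolerates finitely many linear constraints, dropping at most one inequality per constraint, which only costs constants). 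After localization one has $\var_\rho(q)\ge \int(q-m)^2\,\D\rho \gtrsim 1$, which is what the one-dimensional endgame needs. The remaining one-dimensional polynomial estimate is routine real analysis, so the whole difficulty is concentrated in (a) this reduction and (b) extracting the sharp $\veps^{1/d}$ dependence in the univariate case rather than a cruder power of $\veps$.
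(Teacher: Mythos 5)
Your overall strategy---apply the Localization Lemma to reduce to a one-dimensional needle and then verify the resulting univariate anti-concentration inequality---is precisely what the survey describes for Carbery--Wright (\S 3.3, the ``Anti-concentration'' paragraph), so the outline is on track. The problem is in how you carry the variance constraint through the bisection. Freezing $m=\E_{\pi_{\K}}p$ and imposing $\int(p-m)\,\D\pi_{\K}=0$, $\int(p-m)^{2}\,\D\pi_{\K}\ge1$, \emph{and} the target inequality gives three simultaneous linear conditions on the measure, but Lemma~\ref{lem:LOCALIZATION} is a two-function statement; to keep all three through repeated bisection you would need the Fradelizi--Guedon multi-constraint extension, whose $1$-dimensional extreme points have piecewise log-affine densities with a number of pieces growing with the number of constraints. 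Your assertion that ``localization tolerates finitely many linear constraints, dropping at most one inequality per constraint, which only costs constants'' is not an accurate account of that mechanism, and as written the reduction does not follow from the lemma the survey actually states.

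The fix is simpler than what you propose. Since the variance is the \emph{minimal} second moment over all centerings, $\var_{\pi_{\K}}(p)=\min_{c}\E_{\pi_{\K}}[(p-c)^{2}]\le\E_{\pi_{\K}}[(p-t)^{2}]$, so the hypothesis $\var_{\pi_{\K}}(p)\ge1$ already yields $\int(p-t)^{2}\,\D\pi_{\K}\ge1$ for the very value $t$ appearing in the conclusion. You can then feed the pair $g_{1}=\ind_{\{|p-t|\le\veps\}}-Cd\veps^{1/d}$ and $g_{2}=(p-t)^{2}-1$ directly into the two-function Localization Lemma (or its exponential-needle form, Lemma~\ref{lem:localization_exp_needles}); no mean constraint is needed, and you land on a genuine $1$-dimensional exponential needle carrying a degree-$\le d$ polynomial $q$ with $\int(q-t)^{2}\,\D\rho\ge1$. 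The remaining univariate claim, that $\rho(\{|q-t|\le\veps\})\lesssim d\veps^{1/d}$ for $1$-dimensional log-concave $\rho$, is the genuine content of the one-dimensional Carbery--Wright estimate. Your sketch of that step (Remez/Chebyshev control of the $\le d$ intervals where $|q-t|\le\veps$, combined with reverse H\"older for log-concave marginals) points in the right direction, but be careful that log-concave densities do not admit a uniform pointwise lower bound; the conversion from interval length to $\rho$-mass needs the moment/ reverse-H\"older machinery, not a density floor.
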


\subsection{Certifiable hypercontractivity}
\begin{conjecture}
Let $f:\R^{n}\rightarrow\R$ be a degree-$k$ polynomial defined as
$f(x)=\langle(1,x)^{\otimes k},A\rangle$ where $A$ is a $k$-th
order tensor of real coefficients. Then, for an isotropic logconcave
distribution $\pi$, there exists a universal constant $C>0$ such
that 
\[
\var_{\pi}f\le(Ck)^{2k}\,\|A\|_{\hs}^{2}\,.
\]
\end{conjecture}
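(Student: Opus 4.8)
We sketch a plan toward the conjecture; the three ingredients are localization (to pin down which $\pi$ is extremal), a reverse H\"older estimate (to control moments), and a Gaussian benchmark (where $\|A\|_{\hs}^{2}$ is the natural scale). Fix the polynomial $f$ and seek to bound $\sup_{\pi}\var_{\pi}f$ over all isotropic logconcave $\pi$ on $\Rn$. The first step is the observation that $\var_{\pi}f\le\E_{\pi}[(f-c)^{2}]$ for every constant $c$, and that $\pi\mapsto\E_{\pi}[(f-c)^{2}]$ is \emph{linear} on the convex family of logconcave probability measures obeying the finitely many \emph{linear} isotropy constraints $\int x_{i}\,\D\pi=0$ and $\int x_{i}x_{j}\,\D\pi=\delta_{ij}$. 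By the Lov\'asz--Simonovits localization lemma and its several-constraint refinement (\S\ref{sec:The-Classical-Method}), the supremum of such a linear functional is attained at an extreme point, i.e.\ a measure with log-affine density supported on a segment (a ``needle''), or on a low-dimensional object assembled from finitely many needles. On a needle $\{p+t\theta:t\in[a,b]\}$, $f$ restricts to a one-variable polynomial $P(t)=f(p+t\theta)$ of degree $\le k$ carried by an exponential weight, and the whole question collapses to a one-dimensional variance estimate for $P$.

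Next, for such $P$ the reverse H\"older inequality (Borell's lemma, and its degree-$k$ polynomial extension, both proved by one-dimensional localization) controls every $L^{q}$-norm of $P(T)$ by its $L^{1}$-norm with constant $(Cq)^{k}$, so that $\var(P(T))\le(Ck)^{2k}\,(\E\lvert P(T)-\E P(T)\rvert)^{2}$; it then remains to bound this mean -- equivalently, the coefficients of $P$, which are contractions of $A$ against powers of $(1,p)$ and $\theta$ -- by $\|A\|_{\hs}$. The Gaussian benchmark enters precisely here: for $\gamma_{1}$ one has the exact Wick expansion $\E_{\gamma_{1}}[f^{2}]=\langle A\otimes A,\,\E_{\gamma_{1}}[(1,X)^{\otimes 2k}]\rangle$, in which the ``through'' pairings contribute exactly $k!\,\|A\|_{\hs}^{2}$ and the remaining pairings contribute ``contraction'' terms that, after subtracting $(\E_{\gamma_{1}}f)^{2}$, make up $\var_{\gamma_{1}}f$. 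Comparing the one-dimensional needle estimate against this expansion is what is supposed to deliver the $\|A\|_{\hs}^{2}$ on the right-hand side, with the two $(Ck)^{k}$ factors coming, respectively, from the reverse H\"older step and from the $k!$-type combinatorics of the Wick expansion.

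The step I expect to be the main obstacle is keeping \emph{every} constant dimension-free, and the difficulty concentrates in the interplay between $\|A\|_{\hs}$ and the centering of $f$. The Wick expansion already shows that $\var_{\gamma_{1}}f$ equals $k!\,\|A\|_{\hs}^{2}$ plus contraction terms, and for ``radially clustered'' polynomials -- powers of $\|x\|^{2}$ being the extreme case -- the degree-$k$ tensor has small Hilbert--Schmidt norm while those contraction terms are of order $n^{\lfloor k/2\rfloor}$; so one must read $A$ as carrying the (for degree $\le 2$ entirely $\pi$-independent, and for higher degree still uniformly controlled) contribution of $\E_{\pi}f$, i.e.\ work with the centered polynomial, whose tensor absorbs exactly those terms. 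Making this uniform in $n$ amounts to showing that the localization needle can never be aligned so as to realize a radial configuration that defeats the $\|A\|_{\hs}$ normalization -- equivalently, that $\var(f(p+t\theta))$ along every admissible needle is genuinely $O((Ck)^{2k})\,\|A\|_{\hs}^{2}$. This is the point at which the now-established thin-shell inequality $\sigma_{n}=O(1)$, together with its higher-degree analogues controlling $\var_{\pi}$ of powers of $\|X\|^{2}$, has to be fed back into the localization estimate; quantifying this coupling is, I expect, the crux of the argument, and it is also what would determine the precise exponent $2k$.
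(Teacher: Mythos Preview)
The statement you are trying to prove is listed in the paper as a \emph{conjecture}, not a theorem: the paper gives no proof, and simply records that Kothari and Steinhardt showed it is implied by the (still open) KLS conjecture. So there is no ``paper's own proof'' to compare your sketch against; the relevant benchmark is whether your argument actually closes, and it does not.

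The main structural gap is in the localization step. Isotropy is not one or two linear constraints but $n + \binom{n+1}{2}$ of them (all first moments vanish, all second moments match $I_n$). The multi-constraint localization of Fradelizi--Gu\'edon produces extreme points that are mixtures of roughly that many log-affine needles, not a single needle; the ``low-dimensional object assembled from finitely many needles'' you mention has dimension growing like $n^{2}$, which defeats any hope of a dimension-free bound by this route. Even if you somehow landed on one needle $\{p+t\theta\}$, the coefficients of $P(t)=f(p+t\theta)$ are contractions of $A$ against tensors built from $(1,p)$ and $\theta$, and for a typical isotropic $\pi$ one has $\|p\|\asymp\sqrt{n}$; so those contractions are of size $n^{O(k)}\|A\|_{\hs}$, not $\|A\|_{\hs}$. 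You flag exactly this kind of blow-up yourself with the $\|x\|^{2}$ example, but invoking thin-shell only controls the \emph{variance} of radial quantities, not the \emph{magnitude} of the needle coefficients, and your appeal to ``higher-degree analogues'' is precisely the unproved content of the conjecture. In short, the plan reduces an $n$-dimensional open problem to a one-dimensional problem whose data still carry the full $n$-dependence; the known route (via KLS) goes through the Poincar\'e inequality rather than through needle localization, and that is the step that remains open.
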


It was shown by Kothari and Steinhardt \cite{kothari2017betteragnosticclusteringrelaxed}
that this conjecture is implied by the KLS conjecture. They also developed
several algorithmic consequences.

\subsection{Spectral gap and entropy }

The mixing rate of a Markov chain can be bounded using the \emph{spectral
gap} of its transition operator. Doing so is typically easier for
distributions with additional structure. Chen and Eldan \cite{chen2022localization}
proposed localization as an approach to reduce bounding the spectral
gap for general families of target distributions for sampling by Markov
chains to specialized families. Using this, they improved the analysis
of Markov chains for several well-known problems, notably in both
discrete and continuous state spaces. Their approach has since been
employed by other authors to simplify and refine such analyses. Since
this (active) topic deserves a separate and detailed discussion, we
do not include it in the survey.

\section{The Classical Method\label{sec:The-Classical-Method}}

\inputencoding{latin9}Here we present the classical or standard localization
method, introduced by Lov\'asz and Simonovits \cite{LS93random}
and refined by Kannan, Lov\'asz, and Simonovits \cite{KLS95isop}.
The method can be captured as a concise general lemma. 

\subsection{The main idea: bisection}
\begin{lem}[Localization Lemma \cite{KLS95isop}]
\label{lem:LOCALIZATION}Let $g,h:\R^{n}\rightarrow\R$ be lower
semi-continuous integrable functions such that 
\[
\int g(x)\,\D x>0\quad\text{and}\quad\int h(x)\,\D x>0\,.
\]
 Then, there exist two points $a,b\in\R^{n}$ and an affine function
$\ell:[0,1]\rightarrow\R_{+}$ such that 
\[
\int_{0}^{1}\ell(t)^{n-1}\,g\bpar{(1-t)a+tb}\,\D t>0\quad\mbox{and}\quad\int_{0}^{1}\ell(t)^{n-1}\,h\bpar{(1-t)a+tb}\,\D t>0\,.
\]
\end{lem}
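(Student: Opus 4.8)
The plan is to prove this by the bisection (``needle'') argument of Lov\'asz and Simonovits: repeatedly cut the ambient region by hyperplanes, each time passing to a piece on which both integrals stay positive, until what remains is essentially one-dimensional.

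\emph{Reduction to a compact region.} Since $g$ and $h$ are integrable with positive integral, $|\int_{C^{c}}g|\le\int_{C^{c}}|g|\to0$ as a cube $C$ grows to $\R^{n}$, so we may fix a cube $C$ with $\int_{C}g>0$ and $\int_{C}h>0$. A needle $(a,b,\ell)$ with $[a,b]\subseteq C$ that works for $g\ind_{C}$ and $h\ind_{C}$ also works for $g,h$, since the integrands agree along $[a,b]$; so we may assume $g,h$ are supported in $C$ and look for a needle inside $C$. The case $n=1$ is immediate (take $[a,b]$ a large interval and $\ell\equiv1$), so assume $n\ge2$.

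\emph{The bisection step.} Suppose $K\subseteq C$ is a convex body with $\int_{K}g>0$ and $\int_{K}h>0$. Viewing $g\,\D x|_{K}$ and $h\,\D x|_{K}$ as two signed mass distributions on $K$, the ham-sandwich theorem (a Borsuk--Ulam argument, available since $n\ge2$) yields a hyperplane $H$ splitting $K$ into convex pieces $K_{1},K_{2}$ with $\int_{K_{1}}g=\int_{K_{2}}g=\tfrac12\int_{K}g>0$ and $\int_{K_{1}}h=\int_{K_{2}}h=\tfrac12\int_{K}h>0$; keep whichever piece has the smaller volume, so $\mathrm{vol}(K_{1})\le\tfrac12\mathrm{vol}(K)$. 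Iterating gives nested convex bodies $C=K^{(0)}\supseteq K^{(1)}\supseteq\cdots$ with $\int_{K^{(i)}}g=2^{-i}\int_{C}g$, $\int_{K^{(i)}}h=2^{-i}\int_{C}h$, and $\mathrm{vol}(K^{(i)})\le2^{-i}\mathrm{vol}(C)$. Matching these two decay rates is the point: with $\mu_{i}:=\ind_{K^{(i)}}/\mathrm{vol}(K^{(i)})$,
\[
\int g\,\D\mu_{i}=\frac{\int_{K^{(i)}}g}{\mathrm{vol}(K^{(i)})}\ \ge\ \frac{\int_{C}g}{\mathrm{vol}(C)}>0,
\]
and likewise $\int h\,\D\mu_{i}\ge\int_{C}h/\mathrm{vol}(C)>0$, so the normalized averages stay bounded away from zero.

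\emph{Passing to the limit.} The $K^{(i)}$ are nested with volume tending to $0$, so they decrease to a convex set of dimension $\le n-1$; extracting a subsequence one may take the limit to be a segment $[a,b]$ (if an intermediate limit is higher-dimensional, restart the argument inside it, and a degenerate limit $a=b$ is also permissible). Parametrize $[a,b]$ by $t\in[0,1]$, set $\phi_{i}(t)=\mathrm{vol}_{n-1}(K^{(i)}\cap\{t\text{-slice}\})$, and recall from Brunn's theorem that $\phi_{i}^{1/(n-1)}$ is concave. After normalization, $\mu_{i}$ converges weakly to a measure $m(t)^{n-1}\,\D t$ on $[0,1]$ with $m$ concave and nonnegative; using lower semicontinuity of $g$ and $h$ to compare slice-averages with their values along $[a,b]$, one gets $\int_{0}^{1}m(t)^{n-1}g((1-t)a+tb)\,\D t>0$ and the analogous bound for $h$. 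To replace the concave profile by an affine one, apply the one-dimensional version of the bisection step to the weight $m^{n-1}$ on $[0,1]$: cut $[0,1]$ into two subintervals keeping both weighted integrals positive, iterate to shrink onto a point $t^{\star}\in(0,1)$, and rescale; since $m$ is continuous, the rescaled weight converges to a power of the constant $m(t^{\star})$ — an affine function — while both integrals remain positive. This produces the required $a,b$ and affine $\ell$.

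\emph{Main obstacle.} The delicate part is the passage to the limit: one must ensure the nested bodies genuinely collapse onto a one-dimensional (possibly degenerate) segment, preserve \emph{strict} positivity of both integrals in the limit — which is exactly why the simultaneously volume- and integral-balanced cut is used, so the two $2^{-i}$ factors cancel — and handle $g,h$ being merely lower semicontinuous when identifying the limiting slice-averages with values along the needle. The ham-sandwich step (needing $n\ge2$) and the concave-to-affine reduction are comparatively routine once this framework is in place.
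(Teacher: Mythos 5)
Your overall architecture (bisect, use Brunn--Minkowski to get a concave profile, then argue to an affine one) matches the paper, but two of your steps differ from the paper's and both contain genuine gaps. First, the \emph{bisection step}: you ham-sandwich bisect both $g$ and $h$ simultaneously and keep the smaller-volume piece, which is a nice way to keep the normalized integrals bounded below, but it does not force the nested bodies to collapse onto a segment. Volume $\to 0$ only gives that the intersection has Lebesgue measure zero, i.e.\ it lies in some hyperplane; it can easily converge to an $(n-1)$-dimensional ``pancake.'' Your ``restart the argument inside it'' is not a working fallback: inside the lower-dimensional affine hull the measures $g\,\D x, h\,\D x$ are singular, so it is not clear what the two mass distributions to bisect should even be, nor that the limiting normalized averages carry over. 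The paper avoids this by \emph{constraining} each bisecting hyperplane to contain a prescribed $(n-2)$-dimensional affine subspace of the form $\{x_i=r_1,\,x_j=r_2\}$ with rational $r_1,r_2$; a direct pigeonhole argument on the countable family then forces the intersection to be a line or a point.

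The second and more serious gap is the \emph{concave-to-affine step}. You propose to iterate a one-dimensional bisection until the interval shrinks to a point $t^\star$, then rescale so the weight $m^{n-1}$ looks constant. But as the interval shrinks, the segment $[a_k,b_k]\subset\R^n$ also collapses to a single point $p$, so this procedure can only ever output a degenerate needle ($a=b$), and a degenerate needle is valid only when $g$ and $h$ are simultaneously strictly positive at some common $p$. That need not happen: take $n=2$, $g=\ind_{[0,1]^2}$, $h=\ind_{[2,3]\times[0,1]}$; both have positive integral, yet no point has $g(p)>0$ and $h(p)>0$, and the conclusion holds only via a genuinely non-degenerate needle crossing both squares. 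So ``shrink to a point'' cannot be the right mechanism. The paper's replacement of the concave profile $\psi$ by a linear $\ell$ is a much more delicate extremality argument (three cases on whether $\psi$ vanishes at $0$ and/or $1$, bisecting an auxiliary body $K'$ through chosen $(n-2)$-dimensional subspaces, and in the last case introducing an optimal inner boundary $\eta$), and precisely because it is designed to keep the needle non-degenerate. You would also want the paper's preliminary reductions---WLOG continuity of $g,h$, and relaxing $>0$ to $\ge 0$ by subtracting a small positive continuous $\gamma$---since both your limit arguments only preserve $\ge 0$, not strict positivity.
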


The points $a,b$ represent the endpoints of an interval and one may
think of $\ell(t)^{n-1}$ as proportional to the cross-sectional area
of an infinitesimal cone with $t$ going along its axis. The lemma
says if $g,h$ have positive integrals in $\R^{n},$ then there is
a cone truncated at $a$ and $b$, so that the integrals of $g$ and
$h$ over it are positive. Without loss of generality, we can assume
that $a,b$ are in the union of the supports of $g$ and $h$.

\begin{wrapfigure}{r}{0.4\textwidth}
\centering
\begin{tikzpicture}[y=0.80pt, x=0.80pt, yscale=-0.250000, xscale=0.250000, inner sep=0pt, outer sep=0pt]   \path[draw=black,fill=cff6666,line cap=butt,miter limit=4.00,fill     opacity=0.761,nonzero rule,line width=1.000pt] (120.7860,427.2994) ellipse     (0.4222cm and 1.0081cm);   \path[draw=black,fill=cff6666,line cap=butt,miter limit=4.00,fill     opacity=0.758,nonzero rule,line width=1.0pt] (564.1912,414.3622) ellipse     (1.3706cm and 2.5744cm);   \path[draw=black,line join=miter,line cap=butt,miter limit=4.00,even odd     rule,line width=1pt] (119.7851,425.6119) .. controls (564.1911,416.6121)     and (562.1893,416.6121) .. (562.1893,416.6121);   \path[draw=black,line join=miter,line cap=butt,miter limit=4.00,even odd     rule,line width=1pt] (119.5685,391.8627) .. controls (563.9745,324.3643)     and (560.1875,323.2393) .. (560.1875,323.2393);   \path[draw=black,line join=miter,line cap=butt,miter limit=4.00,even odd     rule,line width=1pt] (120.0068,462.8607) -- (561.7147,505.1406);
\end{tikzpicture}
\caption*{Truncated cone}
\end{wrapfigure}
\begin{proof}
Choose a continuous integrable function $\gamma:\Rn\to(0,\infty)$
such that 
\[
\int\gamma(x)\,\D x<\frac{1}{2}\min\Bbrace{\int g(x)\,\D x,\int h(x)\,\D x}\,.
\]
It is enough to prove the lemma with $\ge0$ in place of $>0$. Indeed,
if the weak form holds for $g-\gamma$ and $h-\gamma$, then for some
$a,b\in\Rn$ and affine $\ell:[0,1]\to\R_{+}$, 
\[
\int_{0}^{1}\ell(t)^{n-1}(g-\gamma)\bpar{(1-t)\,a+tb}\,\D t\ge0,\qquad\int_{0}^{1}\ell(t)^{n-1}(h-\gamma)\bpar{(1-t)\,a+tb}\,\D t\ge0\,.
\]
Since $\gamma>0$ everywhere, this implies 
\[
\int_{0}^{1}\ell(t)^{n-1}g\bpar{(1-t)\,a+tb}\,\D t\ge\int_{0}^{1}\ell(t)^{n-1}\gamma\bpar{(1-t)\,a+tb}\,\D t>0\,,
\]
and similarly for $h$. Also, every lower semi-continuous integrable
function on $\Rn$ is the pointwise increasing limit of continuous
functions, so it suffices to focus continuous $g$ and $h$.

Fix a large closed ball $\K_{0}\subset\Rn$ such that 
\[
\int_{\K_{0}}g(x)\,\D x>0\quad\text{and}\quad\int_{\K_{0}}h(x)\,\D x>0\,.
\]
Let $A_{ijrs}:=\{x\in\Rn:x_{i}=r,x_{j}=s\}$ for $1\leq i<j\leq n$
and $r,s\in\mbb Q$, and enumerate its countable collection as $\{A_{k}\}_{k\geq1}$.
We now define a decreasing sequence of convex bodies $\K_{0}\supset\K_{1}\supset\cdots$
such that for every $m$, 
\[
\int_{\K_{m}}g(x)\,\D x\ge0\quad\text{and}\quad\int_{\K_{m}}h(x)\,\D x\ge0\,.
\]
When $\K_{m}$ has been defined, if $A_{m}\cap\inter\K_{m}\neq\emptyset$,
set $\K_{m+1}:=\K_{m}$. Otherwise, rotate a halfspace $H$ through
$A_{m}$; the function 
\[
H\mapsto\int_{\K_{m}\cap H}g(x)\,\D x-\int_{\K_{m}\cap H^{c}}g(x)\,\D x
\]
changes continuously during the rotation and changes sign after a
half-turn. Thus, there is a \emph{bisecting halfspace} $H_{m}$ such
that 
\[
\int_{\K_{m}\cap H_{m}}g(x)\,\D x=\int_{\K_{m}\cap H_{m}^{c}}g(x)\,\D x=\frac{1}{2}\int_{\K_{m}}g(x)\,\D x\,.
\]
Clearly, either $\int_{\K_{m}\cap H_{m}}h\geq0$ or $\int_{\K_{m}\cap H_{m}^{c}}h\geq0$
holds, and we define $\K_{m+1}$ to be the domain that has non-negative
$h$-integral.

Let $\K:=\cap_{m=0}^{\infty}\K_{m}$. We claim that $\K$ is of the
form $\overline{ab}$ (i.e., a line segment joining $a,b\in\Rn$).
If not (i.e., $\dim\K\geq2$), for some pair $1\le i<j\le n$, the
projection of $\K$ onto the $(x_{i},x_{j})$-plane has nonempty interior.
Choose a rational point $(r,s)$ in that interior. Then the affine
subspace $A:=\{x\in\Rn:x_{i}=r,x_{j}=s\}$ meets (the relative) $\inter\K$.
Let $m$ be such that $A_{m}=A$. Then, the hyperplane boundary of
$H_{m}$ intersects $\inter\K$, and properly separates points of
$\K$. However, $\K\subset\K_{m+1}\subset\K_{m}\cap H_{m}$ (or $\K_{m}\cap H_{m}^{c}$),
so $\K$ would have to lie entirely in one of the two closed halfspaces.
This contradiction leads to $\dim\K\le1$.

If $a=b$ (so $\K$ is a point), then by continuity, 
\[
\frac{1}{\vol\K_{m}}\int_{\K_{m}}g(x)\,\D x\to g(u)\,,\qquad\frac{1}{\vol\K_{m}}\int_{\K_{m}}h(x)\,\D x\to h(u)\,.
\]
The LHS are nonnegative by construction, hence $g(u),h(u)\geq0$.
Taking $\ell\equiv1$, we are done.

If $a\neq b$, then we may assume $a=0$ and $b=e_{1}$ (by scaling
and translation). For a slice $Z_{t}:=\{x\in\Rn:x_{1}=t\}$ (here
$t\in\R$) and $m\in\Z_{\geq0}$, consider the following sequence
of functions over $[0,1]$: 
\[
\psi_{m}(t):=\bpar{\frac{\vol_{n-1}(\K_{m}\cap Z_{t})}{\vol_{n}\K_{m}}}^{\frac{1}{n-1}}\,.
\]
Note that $\int\psi_{m}(t)^{n-1}\,\D t=1$. For $\alpha_{m}:=\min\{x_{1}:x\in\K_{m}\}$
and $\beta_{m}:=\max\{x_{1}:x\in\K_{m}\}$, we have $\alpha_{m}\le0,\beta_{m}\ge0$
and $\alpha_{m}\rightarrow0,\beta_{m}\rightarrow1$. Moreover, by
the Brunn--Minkowski inequality $\psi_{i}$ is non-negative concave
on its support interval. Then we can take a subsequence of indices
$m$ for which $\psi_{m}$ converges to a limit. To see this, note
that $\psi_{m}$ is uniformly equicontinuous in every interval $[s,t]$
with $0<s<t<1$, and by the Arzela--Ascoli theorem, the sequence
has a pointwise convergent subsequence. The limit $\psi$ is also
non-negative concave with the same integral, i.e., $\int\psi(t)^{n-1}\,\D t=1$.

Note that for $x=(y,t)\in\R^{n-1}\times\R$,
\[
\frac{1}{\vol\K_{m}}\int_{\K_{m}}g(x)\,\D x=\int_{\alpha_{i}}^{\beta_{i}}\Bpar{\frac{1}{\vol_{n-1}(\K_{m}\cap Z_{t})}\int_{\K_{m}\cap Z_{t}}g(y,t)\,\D y}\,\psi_{m}(t)^{n-1}\,\D t\,.
\]
Since the LHS is nonnegative, and the RHS approaches $\int_{0}^{1}g(0,t)\,\psi(t)^{n-1}\,\D t$,
we obtain that

\[
\int_{0}^{1}\psi(t)^{n-1}\,g(te_{1})\,\D t\ge0\quad\text{and}\quad\int_{0}^{1}\psi(t)^{n-1}\,h(te_{1})\,\D t\ge0\,.
\]

The last part of the proof is to argue that we can take $\psi$ to
be a nonnegative linear function, reducing the extremal functions
to a small parameter family (namely, $a,b,$ and the values of the
function at $a$ and $b$). To do this consider a $\psi$ that has
(i) minimal support $\norm{a-b}$; WLOG assume $a=0,b=e_{1}$, and
(ii) is linear in the segment $[\alpha,\beta]$ for $0\le\alpha\le\beta\le1$
with $\beta-\alpha$ is maximal. It will be useful to view $\psi$
as defining a convex body $\K'$ whose slices are simplices:
\[
0\le t\le1,\quad x_{1},\ldots,x_{n}\ge0,\quad x_{2}+\ldots+x_{n}\le\psi(t)\,.
\]
In words, the slice of $\K'$ at $x_{1}=t$ is the standard simplex
given by the convex hull of 
\[
\{te_{1},te_{1}+\psi(t)\,e_{2},\ldots,te_{1}+\psi(t)\,e_{i},\ldots,te_{1}+\psi(t)\,e_{n}\}\,.
\]
The slice at $x_{1}=t$ has volume $\psi(t)^{n-1}/(n-1)!$. Hence,
if we define $\hat{g}(x)=g(x_{1}e_{1})$ and $\hat{h}(x)=h(x_{1}e_{1})$,
then 
\[
\int_{\K'}\hat{g}=\frac{1}{(n-1)!}\int_{0}^{1}\psi(t)^{n-1}\,g(te_{1})\,\D t\ge0
\]
and similarly for $\int_{\K'}\hat{h}\ge0$. WLOG we can assume that
$\int_{\K'}\hat{g}=0$.

\begin{figure}
\centering{}\includegraphics[width=0.5\textwidth]{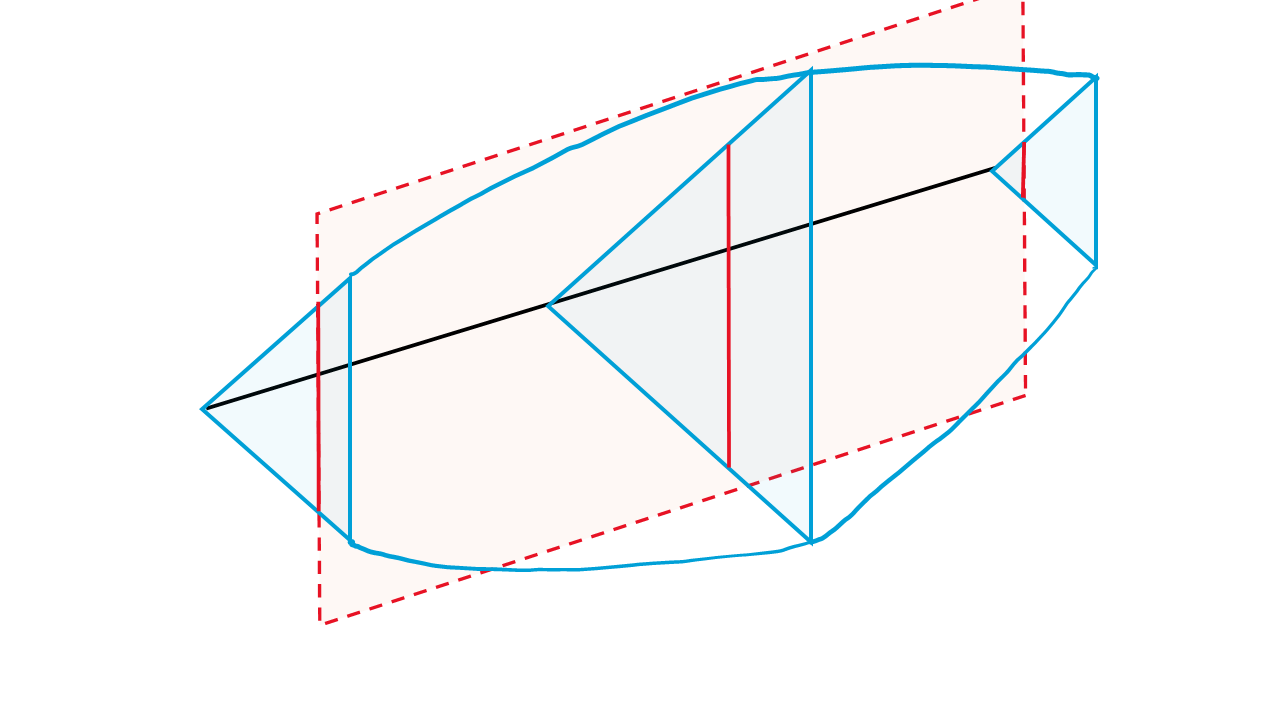}\caption{Going to an affine needle}

\end{figure}

The overall proof idea is to subdivide $\K'$ with a hyperplane in
such a way that one of the resulting halfspaces satisfies the same
two inequalities but violates either (i) or (ii). To choose this hyperplane,
we first fix an $(n-2)$-dimensional subspace $A=\left\{ x:x_{1}=\sigma,\,x_{2}+\ldots+x_{n}=\tau\right\} $
where $0<\sigma<1$ and $\tau>0$. The hyperplane will be chosen to
contain $A$ and bisect the function $\hat{g}$ so that $\int_{L}\hat{g}=\int_{L'}\hat{g}=0$
in both resulting convex bodies $L$ and $L'$. Suppose $\int_{L}h\ge0$.
Then we can infer that $L$ must intersect both $Z_{0}$ and $Z_{1}$,
otherwise the function $\psi$ restricted to $L$ contradicts the
(i), the minimality of the support of $\psi$. 

We now consider 3 cases:
\begin{enumerate}
\item $\psi(0)=\psi(1)=0$. For this case, let $\sigma=1/2$. Then $L$
must contain the segment $[0,1]\,e_{1}$, for any $\tau>0$, else
it contradicts (i). Then there is a linear function $\ell(t)$ with
$\ell(0),\ell(1)\ge0,\ell(1/2)=\tau$ such that the function $\psi$
restricted to $L$ is $\psi_{L}(t)=\min\{\psi(t),\ell(t)\}$. But
then as $\tau\rightarrow0$, $\ell(t)$ tends to zero in the interval
$[0,1]$ and so $\psi_{L}$ is linear in a larger sub-interval tending
to $[0,1]$ contradicting (ii).
\item $\psi(0)=0,\psi(1)>0$. For this case, consider $\tau=\psi(1)\,\sigma$.
Now if $L$ does not contain $[0,1]\,e_{1}$, then $\psi_{L}(0)=\psi_{L}(1)=0$,
taking us back to the first case. Otherwise, $\psi_{L}$ is linear
in the interval $[\sigma,1]$ and hence for small enough $\sigma$
this again contradicts (ii).
\item $\psi(0),\psi(1)>0$. Consider the family of continuous, nonnegative,
convex functions $\eta:[0,1]\to\R_{+}$ satisfying $\eta(t)\le\psi(t)$
for all $t\in[0,1]$. Let 
\[
\K_{\eta}=\K'\cap\{x:x_{2}+\ldots+x_{n}\ge\eta(x_{1})\}=\{x\in\R_{+}:x_{1}\in[0,1],\,\eta(x_{1})\le x_{2}+\ldots+x_{n}\le\psi(x_{1})\}\,.
\]
Then we choose $\eta$ so that (iii) $\int_{0}^{1}\eta(t)\,\D t$
is maximal while still satisfying 
\[
\int_{\K_{\eta}}\hat{g}=0\,,\quad\int_{\K_{\eta}}\hat{h}\ge0\,.
\]
Then, we can assume that $\eta(0)<\psi(0),\eta(1)<\psi(1)$. If not,
we consider the function $\psi-\eta$ and this takes us back to one
of the previous cases. Now let $(\sigma,\tau)$ be the intersection
of the segments $[(0,\eta(0)),(1,\psi(1))]$ and $[(0,\psi(0)),(1,\eta(1))]$.
Let $M,M'$ be the partition of $\K_{\eta}$ such that $M$ does not
contain $[0,1]\,e_{1}$ and
\[
\int_{M}\hat{g}=\int_{M'}\hat{g}=0\,.
\]
Then, if $\int_{M}\hat{h}\ge0$, this contradicts (iii), the maximality
of $\eta$. Hence, $\int_{M}\hat{h}<0$ and the truncated cone $\K'\setminus M$
satisfies 
\[
\int_{\K'\setminus M}\hat{g}=\int_{\K'}\hat{g}-\int_{M}\hat{g}=0\qquad\int_{\K'\setminus M}\hat{h}=\int_{\K'}\hat{h}-\int_{M}\hat{h}\ge0\,,
\]
which contradicts (ii), the maximality of the linear subinterval of
$\psi$. 
\end{enumerate}
This completes the proof.
\end{proof}
The origin of this method, and in particular, the bisection idea,
can be traced back to a paper by Payne and Weinberger \cite{payne1960optimal}.
Localization has been extended to the Riemannian setting \cite{klartag2017needle}. 

Next we give alternative versions of Lemma~\ref{lem:LOCALIZATION}
which are sometimes more convenient in applications.

The first is in terms of single inequality on products of integrals.
Note that an $n$-dimensional needle $N$ is specified by a pair of
points $a,b\in\R^{n}$ and a linear function $\ell:[0,1]\rightarrow\R_{+}$.
The integral over a needle is defined as 
\[
\int_{N}f=\int_{0}^{1}f\bpar{(1-t)a+tb}\,\ell(t)^{n-1}\,\D t\,.
\]

\begin{lem}[Localization: product form]
\cite{KLS95isop} Let $f_{1},f_{2},f_{3},f_{4}:\R^{n}\rightarrow\R_{+}$
be nonnegative, continuous functions and $\alpha,\beta>0$. Then,
the following are equivalent:
\end{lem}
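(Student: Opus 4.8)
The plan is to prove the two implications in the equivalence separately; only one of them carries any content. The implication from the general (full-domain) inequality to the inequality on a single structured object is a pure specialization: a needle $N=(a,b,\ell)$ with $\ell$ affine and nonnegative computes, up to the fixed constant $(n-1)!$, an ordinary integral over a convex body $K'\subset\R^{n}$ whose cross-sections orthogonal to the needle's axis are rescaled standard simplices --- this is exactly the body built in the last part of the proof of Lemma~\ref{lem:LOCALIZATION}, so that $\int_{N}f=(n-1)!\int_{K'}\hat f$ where $\hat f$ is constant on each such simplex. Applying the general product inequality to the lifts $\hat f_{1},\dots,\hat f_{4}$ and cancelling the $(n-1)!$ on both sides of the product yields the needle inequality. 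So the real work is the converse: that controlling the product inequality on every needle forces it on the whole domain.

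For that direction I would argue by contradiction, feeding the right pair of functions into Lemma~\ref{lem:LOCALIZATION}. Suppose the global inequality $(\int f_{1})(\int f_{2})\ge(\int f_{3})(\int f_{4})$ fails. First reduce to the case where all four integrals are strictly positive: the degenerate cases are absorbed by the same continuity / monotone-approximation device used in the proof of Lemma~\ref{lem:LOCALIZATION} (replace each $f_{i}$ by $f_{i}+\veps\,\phi$ for a fixed strictly positive integrable $\phi$ and let $\veps\to0$). Next, rescale $f_{1},f_{2}$ by reciprocal positive constants so that $\int f_{1}=\int f_{2}=:\mu$, and likewise $f_{3},f_{4}$ so that $\int f_{3}=\int f_{4}=:\nu$; on every needle the two competing factors then scale reciprocally as well, so both the needle hypothesis and the (negated) global conclusion are unchanged, and now $\mu<\nu$. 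Pick any $c$ with $1<c<\nu/\mu$ and set $g=f_{3}-c\,f_{1}$, $h=f_{4}-c\,f_{2}$. These are lower semi-continuous, integrable, and $\int g=\int h=\nu-c\mu>0$, so Lemma~\ref{lem:LOCALIZATION} produces a needle $N$ with $\int_{N}g>0$ and $\int_{N}h>0$, i.e.\ $\int_{N}f_{3}>c\int_{N}f_{1}\ge0$ and $\int_{N}f_{4}>c\int_{N}f_{2}\ge0$. Multiplying these and using $c>1$ gives $(\int_{N}f_{3})(\int_{N}f_{4})>c^{2}(\int_{N}f_{1})(\int_{N}f_{2})\ge(\int_{N}f_{1})(\int_{N}f_{2})$, contradicting the needle hypothesis; the case $a=b$ is handled directly, the needle collapsing to a point and the statement to the pointwise inequality $f_{1}(a)f_{2}(a)\ge f_{3}(a)f_{4}(a)$.

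The step I expect to be the main (though modest) obstacle is the normalization bookkeeping: one has to check carefully that the two reciprocal rescalings really do leave the needle hypothesis invariant while keeping the global inequality violated, and that the freedom to choose $c$ strictly between $1$ and $\nu/\mu$ is available precisely because the global failure forces $\mu<\nu$. That is the whole trick --- it is what converts "the product inequality fails globally" into "it fails on some needle," which is the statement Lemma~\ref{lem:LOCALIZATION} is designed to rule out. Everything else (the simplex-slice realization of a needle, the $\varepsilon$-perturbation for zero integrals, the degenerate $a=b$ case) is routine and mirrors the corresponding steps already carried out in the proof of Lemma~\ref{lem:LOCALIZATION}.
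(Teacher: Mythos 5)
Your key device for the hard direction --- feeding Lemma~\ref{lem:LOCALIZATION} the test functions built from differences of the $f_{i}$, with a scalar $c$ squeezed strictly between $1$ and the ratio of the normalized integrals --- is the right one, and since the paper does not actually prove this lemma, your argument stands or falls on its own. As written, though, the direction (needles imply bodies) does not close because you negate the inequality in the wrong sense. Clause~(1) asserts $\int_{\K}f_{1}\int_{\K}f_{2}\le\int_{\K}f_{3}\int_{\K}f_{4}$, so its failure means $\int_{\K}f_{1}\int_{\K}f_{2}>\int_{\K}f_{3}\int_{\K}f_{4}$ for some $\K$, which after your reciprocal rescaling forces $\mu>\nu$, not $\mu<\nu$. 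Running your argument with the roles as you assigned them, the localization lemma produces a needle $N$ on which $\int_{N}f_{3}\int_{N}f_{4}>\int_{N}f_{1}\int_{N}f_{2}$; that is exactly what clause~(2) already allows, so there is no contradiction. The repair is mechanical: swap $(f_{1},f_{2})\leftrightarrow(f_{3},f_{4})$ throughout. Take $g=f_{1}-cf_{3}$ and $h=f_{2}-cf_{4}$ with $1<c<\mu/\nu$, so that $\int g=\int h=\mu-c\nu>0$; localization then yields a needle with $\int_{N}f_{1}>c\int_{N}f_{3}\ge0$ and $\int_{N}f_{2}>c\int_{N}f_{4}\ge0$, and multiplying gives $\int_{N}f_{1}\int_{N}f_{2}>c^{2}\int_{N}f_{3}\int_{N}f_{4}\ge\int_{N}f_{3}\int_{N}f_{4}$, which genuinely contradicts clause~(2).

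A secondary caution on the easy direction: applying clause~(1) to the lifts $\hat{f}_{i}$ is only licensed if those lifts coincide with the original $f_{i}$, but clause~(1) is a hypothesis about the $f_{i}$ themselves, and a function on $\R^{n}$ need not be constant on cross-sections orthogonal to the chosen needle axis. The clean way to pass from (1) to (2) is a shrinking-cone limit: for small $\varepsilon>0$ form the truncated cone $K'_{\varepsilon}$ with axis $[a,b]$ and cross-sections of linear size $\varepsilon\ell(t)$, use continuity of the $f_{i}$ to obtain $(n-1)!\,\varepsilon^{-(n-1)}\int_{K'_{\varepsilon}}f_{i}\to\int_{N}f_{i}$, and then let $\varepsilon\to0$ in the product inequality for $K'_{\varepsilon}$ after multiplying both sides by $((n-1)!)^{2}\varepsilon^{-2(n-1)}$.
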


\begin{enumerate}
\item For every convex body $\K\subset\R^{n},$ we have
\[
\left(\int_{\K}f_{1}\right)^{\alpha}\left(\int_{\K}f_{2}\right)^{\beta}\le\left(\int_{\K}f_{3}\right)^{\alpha}\left(\int_{\K}f_{4}\right)^{\beta}\,.
\]
\item For every needle $N=([a,b],\ell)$ in $\R^{n}$, we have
\[
\left(\int_{N}f_{1}\right)^{\alpha}\left(\int_{N}f_{2}\right)^{\beta}\le\left(\int_{N}f_{3}\right)^{\alpha}\left(\int_{N}f_{4}\right)^{\beta}\,.
\]
\end{enumerate}
The next version is in terms of exponential needles, i.e., we replace
the linear functions obtained in the one-dimensional case in Lemma~\ref{lem:LOCALIZATION}
with truncated exponentials. 
\begin{lem}[Localization: exponential needles]
\label{lem:localization_exp_needles} Let $f_{1},f_{2},f_{3},f_{4}:\R^{n}\rightarrow\R_{+}$
be nonnegative, continuous functions and $\alpha,\beta>0$. Then,
the following are equivalent:
\end{lem}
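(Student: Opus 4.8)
The plan is to deduce the exponential-needle version from the affine-needle version (the Product form of Lemma~\ref{lem:LOCALIZATION}) by a limiting argument on the dimension. The key observation is that a truncated exponential weight in one dimension arises as a limit of the cross-sectional weight $\ell(t)^{m-1}$ of an affine needle in $\R^m$ as $m\to\infty$: if one takes the affine function $\ell_m(t)=1+\frac{c}{m-1}\,(t-t_0)$ on the relevant interval, then $\ell_m(t)^{m-1}=(1+\frac{c(t-t_0)}{m-1})^{m-1}\to e^{c(t-t_0)}$ uniformly on compact intervals. So an exponential needle in $\R^n$ with density $e^{ct}$ on $[a,b]$ should be realizable as an affine needle in $\R^{n+N}$ for large $N$, after embedding $[a,b]$ along one axis and letting the affine function absorb the extra dimensions.

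First I would fix the setup: state the exponential-needle integral as $\int_{N}f = \int_0^1 f((1-t)a+tb)\,e^{c t}\,\D t$ (with $c\in\R$ the exponent, possibly truncated to keep $e^{ct}$ a valid cross-section, though positivity is automatic here). The forward direction ``(1) $\Rightarrow$ (2)'' — from convex bodies to exponential needles — I would obtain by realizing a given exponential needle as a limit of genuine slices of convex bodies in growing dimension: embed the segment $[a,b]$ as the $x_1$-axis and consider the body in $\R^{1+N}$ whose slice at $x_1=t$ is the ball (or simplex) of radius $r_N(t)$ chosen so that $r_N(t)^{N}\propto e^{ct}$; apply hypothesis (1) to this body, then pass $N\to\infty$ using dominated convergence, exactly mirroring the concave-to-affine reduction at the end of the proof of Lemma~\ref{lem:LOCALIZATION}. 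The reverse direction ``(2) $\Rightarrow$ (1)'' follows because affine needles are a special (degenerate $c$) case — more precisely, one applies the affine Product form Localization Lemma to reduce an inequality over convex bodies to one over affine needles, and affine needles are the $c\to 0$ limit of exponential needles combined with the dimension-lifting trick; alternatively, since the affine needle inequality already implies (1), and exponential needles include affine needles as limits, (2) is at least as strong as the affine-needle statement, hence implies (1).

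The cleanest route is probably: (a) invoke the already-established equivalence between the convex-body inequality and the affine-needle inequality (the Product form lemma); (b) then show the affine-needle inequality is equivalent to the exponential-needle inequality directly in one dimension, via the substitution that an affine needle $(\ell(t)=A+Bt)^{m-1}\,\D t$ on a fixed interval, under reparametrization and the limit $m\to\infty$ with $B/(A(m-1))$ held fixed, becomes an exponential weight — and conversely every exponential weight $e^{ct}$ on $[a,b]$ is a pointwise-monotone limit of normalized affine weights $\ell_m(t)^{m-1}$, so that $\int f\,e^{ct}\,\D t$ inequalities transfer in both directions by monotone/dominated convergence applied to the four functions $f_1,\dots,f_4$.

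The main obstacle I expect is the interchange of limits and the compactness/tightness control when passing to the dimension limit: one must ensure the affine cross-sections $\ell_m(t)^{m-1}$ converge to $e^{ct}$ not just pointwise but with enough uniformity (and with integrals converging) so that all four integrals $\int f_i$ converge simultaneously, and that no mass escapes at the endpoints $a,b$ — this is where continuity of the $f_i$ on the closed interval $[a,b]$ and boundedness are used. A secondary subtlety is bookkeeping the normalization: the product form of the inequality is scale-invariant in each needle, so overall constant factors in $\ell_m^{m-1}$ versus $e^{ct}$ are harmless, but one should say this explicitly to avoid the appearance of divergent constants. Once these analytic points are handled, the equivalence is a formal consequence of the affine case.
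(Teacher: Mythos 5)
You have misread the statement. Item (1) in this lemma is not about convex bodies in $\R^n$ --- it asserts the product inequality for every \emph{one-dimensional logconcave weight} $F:\R\to\R_+$ integrated over the fixed interval $[a,b]\subset\R$; item (2) is the same inequality for truncated exponentials $e^{\gamma t}$ on subintervals $[a',b']\subseteq[a,b]$. Both sides are purely one-dimensional, and there is no ambient dimension $n$ anywhere. The ``convex body $\Longleftrightarrow$ needle'' equivalence you keep invoking is the \emph{previous} lemma (the Product form), not this one; your entire dimension-lifting plan (``realize the exponential needle as a limit of slices of convex bodies in growing dimension,'' ``apply hypothesis (1) to this body'') addresses that other statement. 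Once the hypothesis is read correctly, the direction (1)$\Rightarrow$(2) is trivial and needs no limit: $t\mapsto e^{\gamma t}\ind_{[a',b']}(t)$ is itself a logconcave function on $\R$, so taking $F$ equal to it recovers (2) verbatim.

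The genuine content is (2)$\Rightarrow$(1): knowing the inequality for every truncated exponential, deduce it for an arbitrary logconcave $F$. Your proposal contains no argument for this step. Your observation that $\ell_m(t)^{m-1}\to e^{ct}$ correctly shows that affine-power weights degenerate to exponentials, but it says nothing about how a general logconcave $F$ (e.g.\ a Gaussian restricted to $[a,b]$, which is not a limit of affine powers) is controlled by exponentials. The standard route is to pass to the two-inequality form --- if $\int Ff_1\int Ff_2>\int Ff_3\int Ff_4$ then, choosing $\alpha,\beta>0$ with $\alpha\beta=1$ appropriately, the functions $g=f_1-\alpha f_3$ and $h=f_2-\beta f_4$ satisfy $\int Fg>0$ and $\int Fh>0$ --- and then run a one-dimensional extremality/bisection argument showing that among logconcave densities on a segment satisfying one linear constraint, the extreme points are Diracs and truncated log-affine densities. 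This is exactly the concave-to-affine step (cases 1--3) at the end of the proof of Lemma~\ref{lem:LOCALIZATION}, or equivalently the Fradelizi--Guedon theorem quoted just below. That reduction is what is missing from your proposal.
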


\begin{enumerate}
\item For every logconcave function $F:\R^{n}\rightarrow\R_{+}$, we have
\[
\left(\int_{\R^{n}}F(x)\,f_{1}(x)\,\D x\right)^{\alpha}\left(\int_{\R^{n}}F(x)\,f_{2}(x)\,\D x\right)^{\beta}\le\left(\int_{\R^{n}}F(x)\,f_{3}(x)\,\D x\right)^{\alpha}\left(\int_{\R^{n}}F(x)\,f_{4}(x)\,\D x\right)^{\beta}\,.
\]
\item For every interval $a,b\in\R^{n}$ and every $\gamma\in\R$, we have
\[
\left(\int_{0}^{1}e^{\gamma t}f_{1}(t)\,\D t\right)^{\alpha}\left(\int_{0}^{1}e^{\gamma t}f_{2}(t)\,\D t\right)^{\beta}\le\left(\int_{0}^{1}e^{\gamma t}f_{3}(t)\,\D t\right)^{\alpha}\left(\int_{0}^{1}e^{\gamma t}f_{4}(t)\,\D t\right)^{\beta}\,
\]
where $f_{i}(t)=f_{i}((1-t)a+tb)$. 
\end{enumerate}
\begin{rem}
\label{rem:one-eq-ineq-localization}We can also write the exponential
needle version of localization in terms of two inequalities or one
equality and one inequality: For every logconcave $\ensuremath{F:\R\rightarrow\R_{+}}$
\[
\int_{a}^{b}Ff_{1}=0\,,\qquad\int_{a}^{b}Ff_{2}>0
\]
if and only if for every subinterval $\ensuremath{[a',b']\subseteq[a,b]}$,
and every $\gamma\in\R$, 
\[
\int_{a'}^{b'}e^{\gamma t}f_{1}=0\,,\qquad\int_{a'}^{b'}e^{\gamma t}f_{2}>0\,.
\]

We conclude this section with a nice interpretation of the localization
lemma by Fradelizi and Guedon. They also give a version that extends
localization to multiple inequalities.
\end{rem}

\begin{thm}[Reformulated Localization Lemma \cite{fradelizi2004extreme}]
Let $\K$ be a compact convex set in $\Rn$ and $f$ be an upper
semi-continuous function. Let $P_{f}$ be the set of logconcave distributions
$\mu$ supported by $\K$ satisfying $\int f\,\D\mu\geq0$. The set
of extreme points of $\text{conv}\,P_{f}$ is exactly:
\end{thm}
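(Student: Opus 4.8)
The plan is to prove the Fradelizi--Guedon characterization of the extreme points of $\conv P_f$ by combining a compactness argument with the classical Localization Lemma (Lemma~\ref{lem:LOCALIZATION}) applied in its measure form. First I would record the candidate answer: the extreme points of $\conv P_f$ should be the logconcave distributions $\mu$ supported on $\K$ that are either (a) a Dirac mass $\delta_x$ at a point $x \in \K$ with $f(x) \geq 0$, or (b) supported on a segment $[a,b] \subseteq \K$ with a density of the form $t \mapsto \ell(t)^{n-1}$ (an affine needle, after reparametrization) or more generally $e^{c t}$ times such, where the integral constraint $\int f \, \D\mu \geq 0$ is \emph{tight}, i.e. $\int f \, \D\mu = 0$ (unless $\mu$ is a Dirac at a point where $f>0$). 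The content of the theorem is that no more complicated measures are needed.

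The argument has two directions. For the ``only extreme points are of this form'' direction, suppose $\mu \in P_f$ is \emph{not} of the stated form; I want to write $\mu = \tfrac12(\mu_1 + \mu_2)$ with $\mu_1 \neq \mu_2$ both in $\conv P_f$. If $\int f\, \D\mu > 0$ strictly, then a small perturbation of $\mu$ within the logconcave distributions on $\K$ keeps the constraint satisfied, giving a nontrivial convex decomposition; so we may assume $\int f \, \D\mu = 0$. If moreover $\mu$ is not a needle distribution, then $\supp \mu$ genuinely uses more than one dimension, and the bisection procedure from the proof of Lemma~\ref{lem:LOCALIZATION} — cutting by a hyperplane through a chosen $(n-2)$-dimensional affine subspace that bisects $\int f \, \D\mu$ — splits $\mu$ into two pieces $\mu|_H$ and $\mu|_{H^c}$ (renormalized), each still logconcave on $\K$ and each still satisfying $\int f = 0$; since the cut is nontrivial these are distinct, and $\mu$ is their convex combination. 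Iterating/limiting this as in the classical proof pushes any non-extreme measure toward the needle family, and the ``going to an affine needle'' step (Cases 1--3 in the proof above) handles the reduction from general concave profiles to affine $\ell(t)^{n-1}$; the exponential-needle variant (Lemma~\ref{lem:localization_exp_needles}) is what appears when $f$ couples with a genuine logconcave weight rather than a uniform one.

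For the converse — that each listed measure really \emph{is} extreme — I would argue directly: a Dirac $\delta_x$ is an extreme point of the set of \emph{all} probability measures, hence a fortiori of the subset $\conv P_f$. For a needle distribution $\mu$ on $[a,b]$ with $\int f\, \D\mu = 0$ and affine (or exponential) profile, suppose $\mu = \lambda \nu_1 + (1-\lambda)\nu_2$ with $\nu_i \in \conv P_f$, $\lambda \in (0,1)$. Since $\mu$ is supported on the segment $[a,b]$ and has a density that is extreme among logconcave densities on that segment with the prescribed affine/exponential shape, both $\nu_i$ must be supported on $[a,b]$; a one-dimensional argument (using that on an interval the only way to average two logconcave densities and land on an $\ell^{n-1}$ or $e^{ct}$ profile is for all three to coincide, modulo the tightness of $\int f = 0$ which removes the remaining freedom) forces $\nu_1 = \nu_2 = \mu$.

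The main obstacle I expect is the second bullet of Case 3 and its analogue here: showing that the \emph{affine/exponential} profile, not merely a one-dimensional concave profile, is forced at the extreme points. The bisection alone collapses the support to a segment and collapses the constraint to an equality, but squeezing a general concave $\psi$ down to a linear $\ell$ requires the delicate three-case surgery (choosing the auxiliary $(n-2)$-plane through $(\sigma,\tau)$, defining the dominated convex envelope $\eta$, and playing off maximality of $\int \eta$ against the two integral constraints) exactly as in the displayed proof of Lemma~\ref{lem:LOCALIZATION} — and one must check that this surgery respects convexity/extremality of the resulting decomposition at every step and passes to the limit. Handling the passage to exponential needles when $f$ is paired with an arbitrary logconcave $F$ (rather than just integrating against Lebesgue measure on $\K$) is the same difficulty dressed in the notation of Lemma~\ref{lem:localization_exp_needles}. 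I would also need to be careful about upper semicontinuity of $f$ (to keep $P_f$ closed and the limiting measures in $P_f$) and about topologizing $\conv P_f$ appropriately (weak-$*$ convergence of measures on the compact set $\K$) so that Arzel\`a--Ascoli-type compactness, as used in the classical proof, applies to extract the convergent subsequence of profiles.
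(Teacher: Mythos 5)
The paper does not actually prove this theorem: it cites the statement directly from Fradelizi and Gu\'edon \cite{fradelizi2004extreme} as a reformulation of the Localization Lemma and then moves on, so there is no in-paper proof to compare against. Judging your proposal on its own, the central gap is that you never engage with condition 2(c) --- the requirement that $\int_a^x f\,\D v > 0$ for all $x \in (a,b)$, or symmetrically from the other end --- and this condition is load-bearing in both directions. For necessity: if a log-affine needle $v$ on $[a,b]$ has $\int f\,\D v = 0$ but the partial integral $g(x) := \int_a^x f\,\D v$ vanishes at some interior point $c \in (a,b)$, then $v|_{[a,c]}$ and $v|_{[c,b]}$, renormalized, are both logconcave, both in $P_f$, and $v$ is a nontrivial convex combination of them --- so $v$ is \emph{not} extreme even though it satisfies 2(a) and 2(b). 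For sufficiency, your claim that a log-affine needle with $\int f\,\D v = 0$ ``forces $\nu_1 = \nu_2 = \mu$'' is simply false without (c), as the same splitting shows. Noticing and exploiting the sign structure of $g$ is where the real content of the characterization lies, and your proposal omits it.

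Two further issues. Your necessity sketch dismisses measures with $\int f\,\D\mu > 0$ via a ``small perturbation,'' but Diracs $\delta_x$ with $f(x) > 0$ are legitimate extreme points listed in case 1 and admit no such perturbation inside $\conv P_f$; the argument must first peel off Diracs and only then, for non-Dirac $\mu$ with strict slack, produce an explicit decomposition (e.g., a hyperplane cut under which both renormalized halves retain positive integral). And the reduction from the concave $\ell(t)^{n-1}$ profiles produced by Lemma~\ref{lem:LOCALIZATION} to the genuinely log-affine densities $e^\ell$ of condition 2(a) is not cosmetic: the Fradelizi--Gu\'edon extreme points live on a single segment with no ambient dimension left to exploit, so one must either pass $n\to\infty$ in the needle profile or carry out the one-dimensional surgery directly, and that surgery is precisely what simultaneously forces condition (c). Treating 2(a) and 2(c) as independent steps, as your outline implicitly does, will not close.
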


\begin{enumerate}
\item The Dirac measure at points $x$ such that $f(x)\geq0$, or
\item The distributions $v$ that have
\begin{enumerate}
\item density function of the form $e^{\ell}$ with linear $\ell$,
\begin{enumerate}
\item support equal to a segment $[a,b]\subset\K$,
\item $\int f\,\D v=0$,
\item $\int_{a}^{x}f\,\D v>0$ for $x\in(a,b)$ or $\int_{x}^{b}f\,\D v>0$
for $x\in(a,b)$.
\end{enumerate}
\end{enumerate}
\end{enumerate}
This version provides a useful tool: Since maximizers of convex functions
contain extreme points, one can solve $\max_{\mu\in P_{f}}\Phi(\mu)$
for any convex $\Phi$ by only checking Dirac measures and log-affine
functions.

\subsection{Application: logconcave isoperimetry}

We will sketch a proof of the following theorem to illustrate the
use of localization. This theorem was also proved by Karzanov and
Khachiyan \cite{KarzanovK91} using a different, more direct approach.
\begin{thm}[\cite{DyerF90,LS90,KarzanovK91}]
\label{ISO2}Let $f$ be a logconcave function whose support has
diameter $D$ and let $\pi_{f}$ be the induced measure. Then for
any partition of $\R^{n}$ into measurable sets $S_{1},S_{2},S_{3}$, 

\[
\pi_{f}(S_{3})\geq\frac{2d(S_{1},S_{2})}{D}\,\min\{\pi_{f}(S_{1}),\pi_{f}(S_{2})\}\,,
\]
where $d(S_{1},S_{2}):=\inf_{x\in S_{1},y\in S_{2}}d(x,y)$.
\end{thm}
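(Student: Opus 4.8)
The plan is to use the Localization Lemma (Lemma~\ref{lem:LOCALIZATION}) to reduce this $n$-dimensional isoperimetric inequality to a one-dimensional statement about truncated exponential (or log-affine) needles, following the classical argument of Lov\'asz and Simonovits. First I would argue by contradiction: suppose there is a partition $S_1,S_2,S_3$ of $\R^n$ violating the inequality, so that (after relabeling) $\pi_f(S_1)\le\pi_f(S_2)$ and
\[
\pi_f(S_3) < \frac{2\,d(S_1,S_2)}{D}\,\pi_f(S_1)\,.
\]
Writing $d=d(S_1,S_2)$, I would encode the two ``halves'' of this bad inequality as two functions with positive integral. Concretely, set $g = \frac{2d}{D}\,\ind_{S_1}\,f - \ind_{S_3}\,f$ and $h = \frac{2d}{D}\,\ind_{S_2}\,f - \ind_{S_3}\,f$; then $\int g>0$ and $\int h>0$ by the contradiction hypothesis (since $\pi_f(S_3)$ is smaller than $\frac{2d}{D}$ times \emph{each} of $\pi_f(S_1),\pi_f(S_2)$). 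A technical point: the indicator functions are not lower semi-continuous, so I would either first prove the statement with $S_1,S_2$ open and $S_3$ closed (which suffices by a routine approximation, shrinking $S_3$ slightly and using that $d(S_1,S_2)$ only changes continuously), or invoke the version of localization that applies to measurable sets via the Lov\'asz--Simonovits approximation argument already sketched in the proof of Lemma~\ref{lem:LOCALIZATION}.

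Applying localization to $g$ and $h$ yields points $a,b\in\R^n$ and an affine $\ell:[0,1]\to\R_+$ with
\[
\int_0^1 \ell(t)^{n-1}\,g\bpar{(1-t)a+tb}\,\D t>0\,,\qquad \int_0^1 \ell(t)^{n-1}\,h\bpar{(1-t)a+tb}\,\D t>0\,.
\]
Here $a,b$ lie in the support of $f$, so $\norm{a-b}\le D$. Now parametrize the segment by arc length and define the one-dimensional measure $\D\mu(t) = \ell(t)^{n-1}\,f\bpar{(1-t)a+tb}\,\D t$ on $[0,1]$; since $f$ is logconcave along the segment and $\ell^{n-1}$ is (the $(n-1)$-st power of an affine function, hence) log-concave, $\mu$ is a logconcave measure on a segment of length $\le D$. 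Pulling back the partition, let $T_i=\{t\in[0,1]:(1-t)a+tb\in S_i\}$. The two inequalities become $\frac{2d}{D}\,\mu(T_1) > \mu(T_3)$ and $\frac{2d}{D}\,\mu(T_2) > \mu(T_3)$, hence
\[
\mu(T_3) < \frac{2d}{D}\,\min\{\mu(T_1),\mu(T_2)\}\,.
\]
So it suffices to derive a contradiction from a one-dimensional logconcave measure on an interval of length $L\le D$, partitioned into $T_1,T_2,T_3$ with $d(T_1,T_2)\ge d$ (distances only grow under projection onto the line? — no; I must be careful: $d(T_1,T_2)$ in the one-dimensional arc-length coordinate is at least $d(S_1,S_2)/1$ since the segment map is a contraction composed with... actually the Euclidean distance between the preimages on the segment is at least $d(S_1,S_2)$ because the segment sits inside $\R^n$, so along the segment the separation is $\ge d$).

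The core one-dimensional claim is then: for a logconcave density $\rho$ on an interval of length $L$, if $T_1,T_2$ are separated by distance $\ge d$ with $T_3$ the region between/around them, then $\rho(T_3)\ge \frac{2d}{L}\min\{\rho(T_1),\rho(T_2)\}$. This is where the real work is, and it is the expected main obstacle. Since $T_1$ and $T_2$ are separated, $T_3$ contains a subinterval $[p,q]$ of length $\ge d$ that lies between a point of $T_1$ and a point of $T_2$; WLOG $T_1$ lies to the left of $[p,q]$ and $T_2$ to the right. Using log-concavity of $\rho$ one shows $\int_{[p,q]}\rho \ge \frac{q-p}{L}\min\{\int_{\text{left}}\rho, \int_{\text{right}}\rho\}$ — indeed for a logconcave function on $[\alpha,\beta]$ the integral over a sub-interval $[p,q]$ is at least $\frac{q-p}{\beta-\alpha}$ times $\min$ of the integrals over $[\alpha,p]$ and $[q,\beta]$ (this reduces, via the exponential-needle form of localization or a direct two-line convexity computation, to checking it for $\rho$ of the form $e^{ct}$, where it is an elementary calculus inequality). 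Applying this with $q-p\ge d$, $\beta-\alpha = L\le D$, $[\alpha,p]\supseteq$ hull of $T_1$ and $[q,\beta]\supseteq$ hull of $T_2$ gives
\[
\mu(T_3)\ge \int_p^q \rho \ge \frac{d}{D}\min\{\mu(T_1),\mu(T_2)\}\,,
\]
which already contradicts the strict inequality if one is slightly careless with the factor $2$; recovering the sharp constant $2d/D$ requires the more careful accounting where the two ``gaps'' on either side of $[p,q]$ are handled so that the worst case is when $T_3$ is exactly a central strip, yielding the factor $2$ exactly as in \cite{LS90,KLS95isop}. This contradiction with the assumed bad partition completes the proof.
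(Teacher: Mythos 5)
Your proposal follows the paper's proof nearly line by line: proof by contradiction, the same pair of functions $g,h$ (written with indicators rather than piecewise), the same fix for lower semi-continuity by slightly opening $S_{1},S_{2}$, the identical localization step, and the same reduction to a one-dimensional logconcave density $F(t)=\ell(t)^{n-1}f\bpar{(1-t)a+tb}$ on $[0,1]$ with the factor $2$ deferred to the references. The one place where your outline quietly assumes something false is the step ``WLOG $T_{1}$ lies to the left of $[p,q]$ and $T_{2}$ to the right.'' After localization the pulled-back sets $T_{1},T_{2},T_{3}$ need not be intervals, and $T_{1}$ and $T_{2}$ may interleave so that \emph{neither} sits on one side of any single separating gap; this is not a WLOG. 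The paper addresses exactly this by first invoking a combinatorial argument that reduces to the case where each $Z_{i}$ is a single interval, and only then proves the three-interval inequality (factor $2$ from unimodality, the tight constant from one-dimensional logconcavity). With that interval reduction inserted before your separating-interval lemma, your sketch matches the paper's argument.
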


\begin{wrapfigure}{r}{0.4\textwidth}
\centering
\begin{tikzpicture}[y=0.80pt, x=0.80pt, yscale=-0.200000, xscale=0.200000, inner sep=0pt, outer sep=0pt]   \path[draw=cff0000,line join=miter,line cap=butt,miter limit=4.00,even odd     rule,line width=1.000pt] (84.8522,415.4839) .. controls (377.3462,302.3555)     and (380.0296,302.3555) .. (380.0296,302.3555) -- (490.3539,367.2152) --     (693.9910,486.9334) -- (683.2572,668.5343);   \path[draw=cff0000,line join=miter,line cap=butt,miter limit=4.00,even odd     rule,line width=1.000pt] (84.8522,415.4839) .. controls (50.3252,819.2758) and     (284.0089,759.4034) .. (683.2572,668.5343);   \path[draw=c063cb8,line join=miter,line cap=butt,miter limit=4.00,draw     opacity=0.761,even odd rule,line width=1.000pt] (174.0000,382.3622) ..     controls (223.1713,796.0781) and (466.8943,432.1223) .. (462.0000,716.3622);   \path[draw=c063cb8,line join=miter,line cap=butt,miter limit=4.00,draw     opacity=0.761,even odd rule,line width=1.000pt] (296.0000,389.3622) ..     controls (625.9415,359.4824) and (483.4065,426.7886) .. (442.0000,496.3622) ..     controls (402.3261,614.9930) and (611.7898,588.9575) .. (548.0000,698.1069);   \path[draw=c063cb8,line join=miter,line cap=butt,miter limit=4.00,draw     opacity=0.761,even odd rule,line width=1.000pt] (232.9787,359.8090) ..     controls (238.3701,382.0707) and (243.5154,393.1383) .. (296.0000,389.3622);   \path[draw=c063cb8,fill=c063cb8,line join=miter,line cap=butt,miter     limit=4.00,draw opacity=0.761,fill opacity=0.761,line width=1.000pt]     (160.5745,624.7026) node[above right] (text6789) {$S_1$};   \path[draw=c063cb8,fill=c063cb8,line join=miter,line cap=butt,miter     limit=4.00,draw opacity=0.761,fill opacity=0.761,line width=1.000pt]     (343.6170,506.6175) node[above right] (text6793) {$S_3$};   \path[draw=c063cb8,fill=c063cb8,line join=miter,line cap=butt,miter     limit=4.00,draw opacity=0.761,fill opacity=0.761,line width=1.000pt]     (579.7872,593.8516) node[above right] (text6797) {$S_2$};
\end{tikzpicture}
\caption*{Euclidean isoperimetry}
\end{wrapfigure}

A classical variant of this result is in the Riemannian setting.
\begin{thm}[\cite{li1980estimates}]
If $K\subset(M,g)$ is a locally convex bounded domain with smooth
boundary, diameter $D$ and $\text{Ric}_{g}\geq0$, then the Poincar\'e
constant is at most $4D^{2}/\pi^{2}$, i.e., for any $g$ with $\int g=0$,
we have that
\[
\int\left|\nabla g(x)\right|^{2}\,\D x\geq\frac{\pi^{2}}{4D^{2}}\int g(x)^{2}\,\D x\,.
\]
\end{thm}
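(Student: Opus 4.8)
The plan is to reduce the $n$-dimensional Poincaré-type inequality on the Riemannian domain $K$ to a family of one-dimensional inequalities via needle decomposition (the Riemannian analogue of the localization lemma cited as \cite{klartag2017needle}), exactly as one does in the Euclidean case. Concretely, suppose for contradiction that the bound fails: there is a smooth $g$ with $\int_K g\,\D x=0$ and $\int_K |\nabla g|^2 < \frac{\pi^2}{4D^2}\int_K g^2$. Applying the localization/needle lemma to the pair of functions $u_+ = \max(g,0)^2 - \lambda^{-1}|\nabla g|^2 \cdot(\text{something})$ is awkward, so instead I would phrase localization directly in its integral-comparison form: there is a geodesic needle — a minimizing geodesic segment $\gamma:[0,L]\to K$ together with a weight $w(t)$ that, by the curvature hypothesis $\mathrm{Ric}_g\geq 0$, is \emph{concave} (this is where nonnegative Ricci curvature enters, via the Jacobian comparison / Heintze--Karcher type estimate) — on which the violated inequality persists. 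That is, writing $\bar g(t) = g(\gamma(t))$, one obtains $\int_0^L \bar g\,w = 0$ and $\int_0^L (\bar g')^2\, w < \frac{\pi^2}{4D^2}\int_0^L \bar g^2\, w$, while $L \le D$ since $\gamma$ is a minimizing geodesic in a domain of diameter $D$.

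The heart of the matter is then the one-dimensional weighted Poincaré inequality: for any concave nonnegative weight $w$ on $[0,L]$ with $L\le D$ and any $\bar g$ with $\int_0^L \bar g\, w=0$,
\[
\int_0^L (\bar g')^2\, w \;\geq\; \frac{\pi^2}{4D^2}\int_0^L \bar g^2\, w\,.
\]
I would prove this by the standard one-dimensional argument: a concave weight on an interval is, after the change of variables that straightens it, comparable to the measure on a segment arising from a one-dimensional log-concave density with a concavity (not just log-concavity) constraint, and the extremal case is the \emph{constant} weight, which reduces to the classical fact that the Neumann Poincaré constant of an interval of length $L$ is $L^2/\pi^2$, giving $\pi^2/L^2 \ge \pi^2/D^2$ — note this is even better than the claimed $\pi^2/(4D^2)$, and the factor $4$ is the slack that makes the concave-weight reduction go through cleanly (one can also simply invoke the sharp one-dimensional comparison for concave weights, e.g. from the Payne--Weinberger / Li--Yau circle of ideas). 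The monotonicity in the weight — that among concave weights the constant one is worst — is itself a short argument: parametrize $w$ so that the measure becomes Lebesgue measure on a possibly shorter interval, use that concavity of $w$ means the induced density is log-concave with bounded support, and apply the one-dimensional spectral-gap estimate for such densities.

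The main obstacle I anticipate is \emph{rigorously importing localization into the Riemannian setting}: unlike $\R^n$, where bisecting halfspaces are elementary, here one needs the needle decomposition of $(M,g)$ adapted to the test function $g$ (equivalently, to the $1$-Lipschitz function $g$ after normalization, via the $L^1$-optimal transport / disintegration of $\vol$ into geodesics transversal to the level sets of a Kantorovich potential). Establishing that the conditional measures along these geodesics have densities satisfying the concavity estimate requires the curvature-dimension input $\mathrm{CD}(0,n)$, i.e. $\mathrm{Ric}_g \ge 0$ together with $\dim = n$, precisely the hypothesis given; I would cite \cite{klartag2017needle} for this disintegration and for the fact that the conditional density $w$ on each needle satisfies $w^{1/(n-1)}$ concave (hence in particular, if we only want the clean statement, $w$ itself is $\log$-concave, but the sharper $w^{1/(n-1)}$-concavity is what yields the sharp one-dimensional constant). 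The smoothness and boundedness of $K$ and smoothness of $\partial K$ ensure no boundary or regularity pathologies obstruct the disintegration. Once the reduction is in place, the remaining one-dimensional estimate is classical and the constant $\pi^2/(4D^2)$ follows with room to spare.
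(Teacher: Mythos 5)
The paper records this theorem as a classical fact (Li, 1980) and does not prove it; what it actually proves by localization is the Euclidean analogue, Theorem~\ref{ISO2}. Li's own argument is completely different from yours---it is a Bochner-formula/gradient-estimate (maximum-principle) proof, and the factor $4$ is an artifact of that method (later removed by Zhong--Yang). Your route---Riemannian needle decomposition followed by a one-dimensional weighted Poincar\'e inequality---is a legitimate third proof, is the natural Riemannian extension of the paper's localization proof of Theorem~\ref{ISO2}, and done carefully would even give the sharp constant $\pi^{2}/D^{2}$.

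A few things to tighten, though none is fatal. The reduction is not an application of the two-function localization lemma; what you need from \cite{klartag2017needle} is the disintegration of $\D\vol|_{K}$ along the transport rays of the Kantorovich potential for $g_{+}\,\D\vol-g_{-}\,\D\vol$, together with the guarantee that \emph{each} needle measure $w_{\omega}(t)\,\D t$ satisfies $\int g(\gamma_{\omega}(t))\,w_{\omega}(t)\,\D t=0$. That per-needle zero-mean condition is the crux; state it as such, since without it there is no admissible mean-zero test function on a needle. You should also make explicit the pointwise bound $|\nabla g|\geq|\partial_{t}(g\circ\gamma_{\omega})|$, after which the direct chain $\int_{K}|\nabla g|^{2}\geq\int_{\Omega}\int(\bar g_{\omega}')^{2}w_{\omega}\geq\frac{\pi^{2}}{D^{2}}\int_{\Omega}\int\bar g_{\omega}^{2}w_{\omega}=\frac{\pi^{2}}{D^{2}}\int_{K}g^{2}$ is cleaner than a contradiction. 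For the one-dimensional step, "constant weight is extremal among concave weights" is not a one-line fact and is not a consequence of log-concavity alone: Bobkov's variance bound for a log-concave density on $[0,L]$ only gives a gap $\gtrsim1/L^{2}$, not $\pi^{2}/L^{2}$. What you actually need is the Kr\"oger/Bakry--Qian one-dimensional spectral comparison for $\mathrm{CD}(0,n)$ densities, i.e.\ $w$ with $w^{1/(n-1)}$ concave, which is exactly what $\mathrm{Ric}_{g}\geq0$ gives on each needle; cite that. (With the theorem's lossy $\pi^{2}/(4D^{2})$, the cruder log-concave estimate is enough, but then say so explicitly rather than appeal to "the Payne--Weinberger circle of ideas.") Lastly, the local convexity of $K$ is not a regularity convenience: it is the hypothesis that keeps the needles inside $K$ and makes the Neumann problem and the disintegration interact correctly, so it should be invoked by name.
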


For the case of convex bodies in $\Rn$, this result is equivalent
to Theorem~\ref{ISO2} up to a constant. 
\begin{proof}[Proof of Theorem~\ref{ISO2}.]
 For a proof by contradiction, let us assume the converse of its
conclusion, i.e., for some partition $S_{1},S_{2},S_{3}$ of $\R^{n}$
and logconcave density $f$, assume that
\[
\int_{S_{3}}f(x)\,\D x<C\int_{S_{1}}f(x)\,\D x\quad\mbox{ and }\quad\int_{S_{3}}f(x)\,\D x<C\int_{S_{2}}f(x)\,\D x\,,
\]
where $C=2d(S_{1},S_{2})/D$. This can be reformulated as 
\begin{equation}
\int g(x)\,\D x>0\quad\mbox{ and }\quad\int h(x)\,\D x>0\,,\label{eq:1}
\end{equation}
where 
\[
g(x)=\begin{cases}
Cf(x) & \mbox{ if }x\in S_{1}\,,\\
0 & \mbox{ if }x\in S_{2}\,,\\
-f(x) & \mbox{ if }x\in S_{3}\,.
\end{cases}\quad\mbox{ and }\quad h(x)=\begin{cases}
0 & \mbox{ if }x\in S_{1}\,,\\
Cf(x) & \mbox{ if }x\in S_{2}\,,\\
-f(x) & \mbox{ if }x\in S_{3}\,.
\end{cases}
\]
We can apply the localization lemma to get an interval $[a,b]$ and
an affine function $\ell$ such that 
\begin{equation}
\int_{0}^{1}\ell(t)^{n-1}\,g\bpar{(1-t)a+tb}\,\D t>0\quad\mbox{ and }\quad\int_{0}^{1}\ell(t)^{n-1}\,h\bpar{(1-t)a+tb}\,\D t>0\,.\label{AFTERLEMMA}
\end{equation}
The functions $g,h$ as we have defined them are not lower semi-continuous.
However, this can be addressed by expanding $S_{1}$ and $S_{2}$
slightly so as to make them open sets, and making the support of $f$
an open set. Since we are proving strict inequalities, these modifications
do not affect the conclusion.

Let us partition $[0,1]$ into $Z_{1},Z_{2},Z_{3}$ as follows: 
\[
Z_{i}=\{t\in[0,1]\,:\,(1-t)a+tb\in S_{i}\}\,.
\]
Note that $\abs{u-v}\ge d(S_{1},S_{2})/D$ for any pair of points
$u\in Z_{1},v\in Z_{2}$. We can rewrite (\ref{AFTERLEMMA}) as 
\[
\int_{Z_{3}}\ell(t)^{n-1}\,f\bpar{(1-t)a+tb}\,\D t<C\int_{Z_{1}}\ell(t)^{n-1}\,f\bpar{(1-t)a+tb}\,\D t
\]
and 
\[
\int_{Z_{3}}\ell(t)^{n-1}\,f\bpar{(1-t)a+tb}\,\D t<C\int_{Z_{2}}\ell(t)^{n-1}\,f\bpar{(1-t)a+tb}\,\D t\,.
\]
The functions $f$ and $\ell^{n-1}$ are both logconcave, so $F(t)=\ell(t)^{n-1}\,f((1-t)a+tb)$
is also logconcave. We get, 
\begin{equation}
\int_{Z_{3}}F(t)\,\D t<C\min\Bbrace{\int_{Z_{1}}F(t)\,\D t,\int_{Z_{2}}F(t)\,\D t}\,.\label{FEQ}
\end{equation}
Now consider what Theorem~\ref{ISO2} asserts for the function $F(t)$
over the interval $[0,1]$ and the partition $Z_{1},Z_{2},Z_{3}$:
\begin{equation}
\int_{Z_{3}}F(t)\,\D t\geq2d(Z_{1},Z_{2})\min\Bbrace{\int_{Z_{1}}F(t)\,\D t,\int_{Z_{2}}F(t)\,\D t}\,.\label{FEQ2}
\end{equation}
We have substituted $1$ for the diameter of the interval $[0,1]$.
Also, $2d(Z_{1},Z_{2})\geq2d(S_{1},S_{2})/D=C$. Thus, Theorem~\ref{ISO2}
applied to the function $F(t)$ contradicts (\ref{FEQ}) and to prove
the theorem in general, it suffices to prove it in the one-dimensional
case.

A combinatorial argument reduces this to the case when each $Z_{i}$
is a single interval. In detail, denoting $\pi(I):=\int_{I}F(t)\,\D t$
and $t:=d(Z_{1},Z_{2})$, we can show that if $\pi(I)\geq t\min\{\pi([0,a]),\pi([b,1])\}$
for every interval $I=(a,b)\subset[0,1]$ with $b-a\geq t$, then
$\pi(Z_{3})\geq t\min\{\pi(Z_{1}),\pi(Z_{2})\}$. For every maximal
open interval $I=(a,b)\subset Z_{3}$ with $b-a\geq t$, we color
one of $[0,a]$ and $[b,1]$ whose integral is smaller; namely, color
$[0,a]$ \emph{red} if $\pi([0,a])\le\pi([b,1])$. Consider the union
$R$ of all red sets. We can write $R=[0,r]\cup[s,1]$ for some $0\leq r\leq s\leq1$
(possibly $R=[0,1]$). If one of $Z_{1}$ or $Z_{2}$ is fully contained
in $R$, then we have
\[
\pi(Z_{3})\ge t\pi(R)\ge t\min\{\pi(Z_{1}),\pi(Z_{2})\}\,,
\]
and we are done. Otherwise, we can pick $x\in U\cap Z_{1}$ and $y\in U\cap Z_{2}$
where $U\subset[0,1]$ is uncolored so that we can take a maximal
interval $I=(a,b)\subset U\cap Z_{3}$ (with $b-a\geq t$) lying between
$x$ and $y$. Hence, either $[0,a]$ or $[b,1]$ was colored red,
so one of $Z_{1}$ and $Z_{2}$ must indeed be contained in $R$. 

Lastly, proving the assumption (i.e., the case of three intervals)
up to a factor of $2$ is a simple exercise and uses only the unimodality
of $F$. Precisely, for every $0\leq a<b\leq1$, we can check $\pi([a,b])\geq(b-a)\,\min\{\pi([0,a]),\pi([b,1])\}$.
Let $m\in[0,1]$ be a mode of $F$, so $F$ is non-decreasing on $[0,m]$
and non-increasing on $[m,1]$. When $b\leq m$, as $F$ is non-decreasing
on $[0,b]$, we have $\pi([0,a])\leq aF(a)\leq F(a)$, so 
\[
\pi([a,b])\geq(b-a)\,F(a)\geq(b-a)\,\pi([0,a])\geq(b-a)\,\min\{\pi([0,a]),\pi([b,1])\}.
\]
The case of $m\leq a$ can be proven in a similar fashion. When $a\leq m\leq b$,
using $F(t)\ge\min\{F(a),F(b)\}$ for all $t\in[a,b]$, we have $\pi([a,b])\ge(b-a)\,\min\{F(a),F(b)\}$.
Since $\pi([0,a])\leq F(a)$ and $\pi([b,1])\leq F(b)$, we are done.
The improvement by a factor of $2$ to get the tight bound uses one-dimensional
logconcavity, specifically observing that the inequality is extremal
for a truncated exponential distribution. 
\end{proof}
\begin{figure}
\centering{}\vspace*{-3cm}
\begin{tikzpicture}[y=0.80pt, x=0.80pt, yscale=-0.500000, xscale=0.500000, inner sep=0pt, outer sep=0pt]   \path[draw=black,line join=miter,line cap=butt,miter limit=4.00,even odd     rule,line width=1.000pt] (49.4368,481.0238) -- (638.9274,473.7382);   \path[draw=cff0000,line join=miter,line cap=butt,miter limit=4.00,even odd     rule,line width=1.000pt] (49.9408,477.0956) .. controls (211.8739,472.0423)     and (225.1589,42.0372) .. (408.0000,363.3622);   \path[draw=cff0000,line join=miter,line cap=butt,miter limit=4.00,even odd     rule,line width=1.000pt] (408.0000,363.3622) .. controls (465.1394,474.3529)     and (497.3013,470.6621) .. (637.3252,470.6447);   \path[draw=black,line join=miter,line cap=butt,miter limit=4.00,even odd     rule,line width=1.000pt] (163.1089,392.7482) .. controls (162.6487,480.1715)     and (162.6487,480.1715) .. (162.6487,480.1715);   \path[draw=black,line join=miter,line cap=butt,miter limit=4.00,even odd     rule,line width=1.000pt] (245.6003,268.6231) .. controls (245.3088,479.5246)     and (245.3088,479.5246) .. (245.3088,479.5246);   \path[xscale=0.943,yscale=1.060,fill=c008600,line join=miter,line cap=butt,line     width=1.000pt] (196.7728,481.5306) node[above right] (text7961) {$Z_3$};   \path[xscale=0.943,yscale=1.060,fill=black,line join=miter,line cap=butt,line     width=1.000pt] (355.1962,481.5861) node[above right] (text7961-8) {$Z_2$};   \path[xscale=0.943,yscale=1.060,fill=black,line join=miter,line cap=butt,line     width=1.000pt] (78.3270,482.0243) node[above right] (text7961-7) {$Z_1$};   \path[fill=c008600] (163.8199,435.1567) -- (164.1599,391.7936) --     (168.7722,384.6251) .. controls (174.7102,375.3961) and (178.2118,369.6886) ..     (193.1964,344.8145) .. controls (219.2131,301.6275) and (227.9729,288.7495) ..     (243.0990,271.4510) -- (244.3579,270.0115) -- (244.3768,373.6738) --     (244.3957,477.3362) -- (222.9072,477.6342) .. controls (211.0885,477.7981) and     (192.8825,478.0644) .. (182.4493,478.2260) -- (163.4800,478.5199) --     (163.8200,435.1567) -- cycle;
\end{tikzpicture}\caption{One-dimensional isoperimetry}
\end{figure}

\subsection{Further applications}

\paragraph{Isoperimetry. }

The localization lemma has been used to prove a variety of isoperimetric
inequalities. Theorem~\ref{thm:strongly_logconcave} shows that the
KLS conjecture is true for an important family of distributions. We
state it below in a slightly more general form. The proof is again
by localization \cite{KannanLM06,Bobkov2007,CV2014}. We note that
the same theorem can be obtained by other means \cite{Ledoux1999,eldan13thin}.
\begin{thm}
\label{thm:Gaussian-iso}Let $h(x)=f(x)e^{-\frac{1}{2}x^{\T}Bx}/\int f(y)e^{-\frac{1}{2}y^{\T}By}\,\D y$
where $f:\R^{n}\rightarrow\R_{+}$ is an integrable logconcave function
and $B$ is positive definite. Then $h$ is logconcave and for any
measurable subset $S$ of $\R^{n}$ with $h(S)\le1/2$,
\[
\frac{h(\partial S)}{h(S)\sqrt{\log(1/h(S))}}\gtrsim\frac{1}{\norm{B^{-1}}^{\frac{1}{2}}}\,.
\]
\end{thm}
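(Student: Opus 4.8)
The plan is to reduce the $n$-dimensional statement to a one-dimensional inequality via the localization lemma, exactly in the style of the proof of Theorem~\ref{ISO2}, and then settle the resulting one-dimensional claim directly. First I would set up the contradiction: suppose there is a measurable set $S$ with $h(S)\le 1/2$ but $h(\partial S) < c\,\norm{B^{-1}}^{-1/2}\,h(S)\sqrt{\log(1/h(S))}$ for a small universal constant $c$. Using the standard device of replacing the boundary measure by an annulus and passing to a partition $S_1 = S$, $S_2 = $ (complement of an $\veps$-neighborhood), $S_3 = $ thin shell between them, I would phrase the violated inequality as the positivity of two integrals $\int g > 0$, $\int h' > 0$ in $\R^n$, where $g$ and $h'$ are built from $h$ restricted to the three pieces (as in the proof of Theorem~\ref{ISO2}). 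Logconcavity of $h$ itself follows immediately since $f$ is logconcave and $e^{-\frac12 x^\T B x}$ is logconcave, and products of logconcave functions are logconcave.

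Next I would apply the Localization Lemma~\ref{lem:LOCALIZATION} to $g$ and $h'$, obtaining an interval $[a,b]$ and an affine $\ell:[0,1]\to\R_+$ along which both weighted integrals stay positive. Restricting $h$ to the needle, the density along the needle becomes (up to normalization)
\[
F(t) \;\propto\; \ell(t)^{n-1}\, f\bpar{(1-t)a+tb}\, \exp\Bpar{-\tfrac12\,\bpar{(1-t)a+tb}^\T B\,\bpar{(1-t)a+tb}}\,.
\]
The first two factors are logconcave in $t$; the Gaussian factor, as a function of $t$ along the segment, is $\exp(-\tfrac12(\alpha t^2 + \beta t + \gamma))$ with leading coefficient $\alpha = (b-a)^\T B\,(b-a) \ge \norm{a-b}^2/\norm{B^{-1}}$. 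So $F$ is a one-dimensional logconcave density that is, in addition, $\kappa$-strongly logconcave on $[0,1]$ with $\kappa \gtrsim \norm{a-b}^2/\norm{B^{-1}}$ after rescaling the interval to unit length — note the segment length $\norm{a-b}$ enters precisely to make the strong-convexity parameter scale-invariant. The partition $Z_1,Z_2,Z_3$ of $[0,1]$ pulled back from $S_1,S_2,S_3$ inherits $d(Z_1,Z_2)\gtrsim \veps/\norm{a-b}$, and the violated inequality reads, in the limit $\veps\to 0$,
\[
F^{+}(\partial Z) \;<\; c\,\norm{B^{-1}}^{-1/2}\, F(Z)\sqrt{\log(1/F(Z))}
\]
for some half-measure set $Z$. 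Thus it suffices to prove the theorem for one-dimensional $\kappa$-strongly logconcave densities, which is exactly Theorem~\ref{thm:strongly_logconcave} in dimension one; by scale-invariance the dimensional constant $\norm{B^{-1}}^{-1/2}$ matches $\sqrt{\kappa}$.

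The one-dimensional case is the technical heart, and I expect it to be the main obstacle — not because it is deep, but because one must actually produce the log-Cheeger constant $\sqrt{t}$ for one-dimensional $t$-strongly logconcave measures with the right constant. The clean route is Ledoux's argument \cite{Ledoux1999}: reduce to intervals (a combinatorial argument shows the worst $Z$ may be taken to be a single half-line $(-\infty,m]$ with $F(Z) = p \le 1/2$), write $F^{+}(\partial Z) = F(m)$, and bound $F(m)\,\big/\,\bpar{p\sqrt{\log(1/p)}}$ from below using that $-\log F$ has second derivative at least $t$. Concretely, comparing $F$ to the Gaussian $\gamma_{1/t}$ via the fact that strong logconcavity forces the tail $p = \int_{-\infty}^m F$ to decay at least as fast as a Gaussian tail with variance $1/t$ relative to the density value $F(m)$, one gets $F(m) \ge c\sqrt{t}\, p\sqrt{\log(1/p)}$ directly from the elementary one-variable estimate for Gaussians. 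The delicate points are (i) handling the case where the extremal $Z$ is a bounded interval rather than a half-line, where one uses that a $t$-strongly logconcave density on an interval is dominated by a suitable Gaussian restriction, and (ii) tracking constants so that the final bound is genuinely $\gtrsim \sqrt{t}$ uniformly; both are standard but require care. I would cite \cite[Theorem~47]{LV24eldan} for the fully worked-out localization proof of the one-dimensional strongly-logconcave log-Cheeger inequality and present the above as a sketch.
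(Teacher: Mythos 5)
Your high-level plan — localize to a needle, observe that the Gaussian factor makes the needle density strongly logconcave with parameter essentially $1/\norm{B^{-1}}$ (after passing to arc-length), and invoke the one-dimensional log-Cheeger bound of Theorem~\ref{thm:strongly_logconcave} — is exactly the approach the paper outsources to \cite{KannanLM06,Bobkov2007,CV2014} and \cite[Theorem 47]{LV24eldan}; the paper does not supply a proof. The curvature calculation along the segment (second derivative of the Gaussian exponent is $(b-a)^{\T}B(b-a)\ge\norm{a-b}^{2}/\norm{B^{-1}}$) and the Ledoux-style one-dimensional tail estimate in your last paragraph are both correct.

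The gap is in the passage from $\R^{n}$ to the needle. You take $g,h'$ ``as in the proof of Theorem~\ref{ISO2},'' i.e., $g=Ch\,\ind_{S_{1}}-h\,\ind_{S_{3}}$ and $h'=Ch\,\ind_{S_{2}}-h\,\ind_{S_{3}}$ with $C$ the constant in the violated inequality. For log-Cheeger, $C$ contains the factor $\sqrt{\log(1/h(S))}$, computed with the \emph{original} measure $h(S)$. After Lemma~\ref{lem:LOCALIZATION}, the needle inequality you obtain is $F(Z_{3})<C\min\{F(Z_{1}),F(Z_{2})\}$ with the \emph{same fixed} $C$; it does \emph{not} become the inequality you write with $\sqrt{\log(1/F(Z))}$ on the right, because the two-function localization lemma gives no control on $\min\{F(Z_{1}),F(Z_{2})\}$ relative to $h(S)$. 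If $h(S)$ is tiny but the needle produced has $\min\{F(Z_{1}),F(Z_{2})\}$ near $1/2$, then $\sqrt{\log(1/h(S))}$ is huge while $\sqrt{\log(1/\min F)}$ is $O(1)$, and the one-dimensional theorem fails to contradict the localized inequality. This difficulty is genuinely absent in Theorem~\ref{ISO2}, where the Cheeger profile is linear in the measure and the constant transfers unchanged.

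The repair is a different choice of test functions: use the $\ge0$ form of the localization lemma with $g=h\cdot(p-\ind_{S_{1}})$ where $p:=h(S)$, which \emph{encodes the constraint} $h(S_{1})=p$, and keep the violated inequality (with the fixed factor $\sqrt{\log(1/p)}$) as $h'$. Localization then yields a needle whose normalized mass $\tilde F(Z_{1})$ on $Z_{1}$ is at most $p\le1/2$; since $u\mapsto\sqrt{\log(1/u)}$ is decreasing, the factor $\sqrt{\log(1/p)}$ appearing in the localized violation is dominated by $\sqrt{\log(1/\tilde F(Z_{1}))}$, and now Theorem~\ref{thm:strongly_logconcave} in one dimension gives the contradiction. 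This is precisely the bookkeeping that \cite[Theorem 47]{LV24eldan} carries out; your sketch imports only its conclusion, so it is worth reading that proof for the constraint-encoding step.
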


In other words, the expansion of $h$ is higher for subsets of smaller
measure; when $B=tI$, then the RHS is $\sqrt{t}$. 

\paragraph{Annealing. }

The analysis of the Gaussian Cooling algorithm for volume computation
\cite{CV2015} uses localization. The algorithm is based on sampling
from a sequence of distributions with each distribution giving a warm
start for the next and ensuring that an estimator for the ratio of
consecutive integrals has small variance. The proof of the inequality
for bounding the variance uses the localization lemma. 

\paragraph*{Anti-concentration.}

The proof of the Carbery--Wright anti-concentration inequality uses
the localization lemma to reduce to one-dimensional distributions,
and then verifies the resulting one-dimensional inequality. The proof
proceeds as follows. We will in fact prove the following slightly
more general result.
\begin{thm}
For a polynomial $p:\Rn\rightarrow\R$ of degree $d$ and a logconcave
distribution $\pi$ in $\R^{n}$, it holds that for any $q\in[1,\infty)$,
\[
\P_{\pi}(\abs{p(X)}\leq\veps)\lesssim\frac{q\veps^{1/d}}{(\E_{\pi}[\abs{p(X)}^{q/d}])^{1/q}}\,.
\]
\end{thm}

\begin{proof}
[Proof sketch] We apply the localization lemma, specifically the
version with exponential needles (Remark~\ref{rem:one-eq-ineq-localization}).
Suppose that the desired inequality is false: there exists a degree-$d$
polynomial $p:\Rn\to\R$, $T>0$, and $q\in[1,\infty)$ such that
for any constant $C>0$,
\[
\int\ind[\abs{p(X)}\leq\veps]\,\pi(x)\,\D x>Cq\veps^{1/d}\Bpar{\int\abs{p(x)}^{q/d}\,\pi(x)\,\D x}^{-1/q}\,.
\]
Let $c_{1}:=\int\abs{p(x)}^{q/d}\,\pi(x)\,\D x$, and define
\[
f_{1}(x):=\abs{p(x)}^{q/d}-c_{1}\,,\qquad f_{2}(x):=\ind[\abs{p(x)}\leq\veps]-Cq\veps^{1/d}c_{1}^{-1/q}\,,
\]
which satisfies $\int f_{1}(x)\,\pi(x)\,\D x=0$ and $\int f_{2}(x)\,\pi(x)\,\D x>0$.
Due to Remark~\ref{rem:one-eq-ineq-localization}, 
\[
\int_{0}^{T}f_{1}(t)e^{-t}\,\D t=0\,,\qquad\int_{0}^{T}f_{2}(t)e^{-t}\,\D t>0\,.
\]
Equivalently, this means that
\begin{align*}
c_{1} & =\frac{\int_{0}^{T}\abs{p(t)}^{q/d}e^{-t}\,\D t}{\int_{0}^{T}e^{-t}\,\D t}\,,\\
\int_{0}^{T}f_{2}(t)e^{-t}\,\D t & =\int_{0}^{T}\ind[\abs{p(t)}\leq\veps]\,e^{-t}\,\D t>Cq\veps^{1/d}c_{1}^{-1/q}\int_{0}^{T}e^{-t}\,\D t\,.
\end{align*}
Hence,
\[
\Bpar{\frac{\int_{0}^{T}\abs{p(t)}^{q/d}e^{-t}\,\D t}{\int_{0}^{T}e^{-t}\,\D t}}^{1/q}\int_{0}^{T}\ind[\abs{p(t)}\leq\veps]\,e^{-t}\,\D t>Cq\veps^{1/d}\int_{0}^{T}e^{-t}\,\D t\,.
\]
Thus, it suffices to disprove this by showing that for any degree-$d$
polynomial $p:\Rn\to\R$, $T>0$, and $q\in(0,\infty)$: 
\[
\Bpar{\frac{\int_{0}^{T}\left|p(t)\right|^{q/d}e^{-t}\,\D t}{\int_{0}^{T}e^{-t}\,\D t}}^{1/q}\,\frac{\int_{0}^{T}\ind[\abs{p(t)}\le\varepsilon]\,e^{-t}\,\D t}{\int_{0}^{T}e^{-t}\,\D t}\lesssim\varepsilon^{1/d}\max\{q,1\}\,.
\]
For the proof of this one-dimensional inequality, which relies on
classical facts about the analysis of polynomials, we refer the reader
to \cite{carbery2001distributional}. For the reader's convenience
we include one important illustrative case, which is a classical result
\cite{dudley1935determining}. 
\begin{claim}
For any polynomial $p:\R\rightarrow\mathbb{C}$ of degree at most
$d$ and any interval $I\subset\R$, we have 
\[
(\sup_{x\in I}\abs{p(x)})^{1/d}\,\abs{\{x\in I:\abs{p(x)}\le\varepsilon\}}\lesssim\abs I\,.
\]
\end{claim}

\end{proof}

\paragraph{Optimization.}

The proof that the universal barrier in the interior-point method
for convex optimization in $\R^{n}$ has the optimal value of $n$
for its self-concordance parameter is based on the localization lemma
\cite{lee2021universal}.

\selectlanguage{american}%

\section{The Stochastic Method}

\inputencoding{latin9}In this section, we present the stochastic version
of localization, introduced by Eldan \cite{eldan13thin} and developed
further by several researchers. While classical localization can be
viewed as a deterministic reduction from an arbitrary distribution
to one with one-dimensional support, stochastic localization instead
decomposes a given distribution into a convex combination (distribution)
over simpler distributions, which satisfy nice properties with high
probability. This decomposition itself is achieved by a simple Martingale
process applied to the original density. Roughly speaking, it replaces
the hyperplane decomposition of standard localization with a more
general weighted decomposition. 

\paragraph{It\^o calculus.}

Before we proceed, we review the basics of It\^o calculus for the
reader's convenience. Let $(X_{t})_{t\ge0}$ be a vector-valued It\^o
process in $\Rn$ of the form
\[
X_{t}=X_{0}+\int_{0}^{t}b_{s}\,\D s+\int_{0}^{t}\sigma_{s}\,\D W_{t}\quad\text{ for }t\geq0\,,
\]
where $b_{t}\in\Rn$ is the \emph{drift} coefficient, $\sigma_{t}\in\R^{n\times m}$
is the \emph{diffusion} coefficient, and $W_{t}$ is an $m$-dimensional
Brownian motion. This is often written in differential form as 
\[
\D X_{t}=b_{t}\,\D t+\sigma_{t}\,\D W_{t}\,.
\]

When $X_{t}$ is a real-valued It\^o process, the \emph{quadratic
variation} $[X]_{t}$ is defined as
\[
[X]_{t}=\lim_{n\to\infty}\sum_{i=1}^{n}(X_{t_{i}}-X_{t_{i-1}})^{2}=\int_{0}^{t}\sigma_{s}^{2}\,\D s,
\]
where $\{t_{i}\}_{0\leq i\leq n}$ is a partition of $[0,t]$ whose
mesh size $\max_{i\in[n]}\abs{t_{i}-t_{i-1}}$ goes to zero as $n\to\infty$.
When $X_{t}$ is vector-valued, the quadratic variation generalizes
to the matrix-valued process 
\[
[X]_{t}=\int_{0}^{t}\sigma_{s}\sigma_{s}^{\T}\,\D s\,
\]
and $\D[X^{i},X^{j}]_{t}=(\sigma_{t}\sigma_{t}^{\top})_{ij}dt$. For
instance, we can readily check that $[W]_{t}=tI$.

\emph{It\^o's formula} describes how the process $X_{t}$ transforms
under a smooth function $f$. For a twice-differentiable function
$f\in C^{2}(\Rn)$, the new It\^o process $(f(X_{t}))_{t\geq0}$
is given by
\begin{align}
\D f(X_{t}) & =\sum_{i}\frac{\D f(X_{t})}{\D X^{i}}\,\D X^{i}+\frac{1}{2}\sum_{i,j}\frac{\D^{2}f(X_{t})}{\D X^{i}\D X^{j}}\,\D[X^{i},X^{j}]_{t}\label{eq:Ito}\\
 & =\inner{\nabla f(X_{t}),\D X_{t}}+\half\,\inner{\hess f(X_{t}),\D[X]_{t}}\nonumber \\
 & =\bpar{\inner{\nabla f(X_{t}),b_{t}}+\half\,\inner{\hess f(X_{t}),\sigma_{t}\sigma_{t}^{\T}}}\,\D t+\inner{\sigma_{t}^{\T}\nabla f(X_{t}),\D W_{t}}\,,\nonumber 
\end{align}
where $\inner{A,B}:=\tr(A^{\T}B)$ for two matrices $A,B$. When $f$
is a vector-valued function, the formula applies component-wise.

\subsection{The main idea: infinitesimal re-weighting }

Stochastic Localization (SL) is a continuous transformation applied
to a probability density. It is defined as follows for any distribution
$p$ in $\Rn$:
\begin{align}
p_{0} & =p\nonumber \\
\D p_{t}(x) & =p_{t}(x)\,\langle x-b_{t},\D W_{t}\rangle\ \text{for all }x\in\Rn\,,\tag{\ensuremath{\msf{SL}}-\ensuremath{\msf{SDE}}}\label{eq:SL-SDE}
\end{align}
where $b_{t}=\int x\,\D p_{t}(x)$ is the barycenter of $p_{t}$.
Its properties are summarized below:
\begin{enumerate}
\item The density $p_{t}$ at time $t$ is a probability measure over $\Rn$
(i.e., $\int\D p_{t}=1$):
\[
\D\int p_{t}=\int\D p_{t}=\Bigl<\int(x-b_{t})\,\D p_{t}(x),\D W_{t}\Bigr>=\langle0,\D W_{t}\rangle=0\,.
\]
\item It is a martingale with respect to the ``filtration induced by the
Brownian motion'', i.e., $\E p_{t}(x)=p(x)$ for all $x\in\Rn$, where
the expectation is taken over the randomness of $\D W_{t}$ (not the
measure $p$ or $p_{t}$).
\item For a function $F:\R^{n}\rightarrow\R$, the It\^o derivative of
the martingale $M_{t}=\int F(x)\,p_{t}(\D x)$ is:
\begin{equation}
\D M_{t}=\int F(x)\,\langle x-b_{t},\D W_{t}\rangle\,p_{t}(\D x)\,.\tag{{\ensuremath{\msf{MG}}}}\label{eq:moment}
\end{equation}
\item The solution at time $t$ can be explicitly stated as follows: 
\begin{equation}
p_{t}(x)\propto p(x)\,\exp\bpar{c_{t}^{\T}x-\frac{t}{2}\,\|x\|^{2}}\,.\tag{\ensuremath{\msf{SL}}-\ensuremath{\msf{pdf}}}\label{eq:SL-pdf}
\end{equation}
Hence, for any $t>0$, we have that $p_{t}$ is $t$-strongly logconcave,
since $p$ is logconcave. 
\end{enumerate}
An alternative equivalent definition is often useful.
\begin{defn}
\label{def:A} Given a logconcave density $p$, we define the following
stochastic differential equation:
\begin{equation}
c_{0}=0\,,\quad\D c_{t}=\mu_{t}\,\D t+\D W_{t}\,,\label{eq:dBt}
\end{equation}
where $\mu_{t}$ is defined as the barycenter of a time-evolving density,
\[
p_{t}(x)=\frac{e^{c_{t}^{\T}x-\frac{t}{2}\,\norm x^{2}}p(x)}{\int e^{c_{t}^{\T}y-\frac{t}{2}\,\norm y^{2}}p(y)\,\D y}\,,\quad\mu_{t}=\E_{p_{t}}X\,.
\]
\end{defn}

We will presently explain why $p_{t}$ takes this form with a Gaussian
component. Before we do that, we note that the process can be generalized
using a ``control'' matrix $C_{t}$ at time $t$. This is a positive
definite matrix that could, for example, be used to adapt the process
to the covariance of the current distribution. At time $t$, the covariance
matrix is
\[
A_{t}\defeq\E_{p_{t}}[(X-\mu_{t})\,(X-\mu_{t})^{\T}]\,.
\]

The control matrix is incorporated in the following more general version:
\begin{defn}
\label{def:SDE} Given a logconcave density $p$, we define the following
stochastic differential equation:
\begin{equation}
c_{0}=0\,,\quad\D c_{t}=C_{t}\mu_{t}\,\D t+C_{t}^{1/2}\,\D W_{t}\,,\label{eq:dBtC}
\end{equation}
\[
B_{0}=0,\quad\D B_{t}=C_{t}\,\D t,
\]
where the probability distribution $p_{t}$, the mean $\mu_{t}$,
and the covariance $A_{t}$ are defined by 
\[
p_{t}(x)=\frac{e^{c_{t}^{\T}x-\frac{1}{2}\,x^{\T}B_{t}x}p(x)}{\int e^{c_{t}^{\T}y-\frac{1}{2}\,y^{\T}B_{t}y}p(y)\,\D y}\,,\quad\mu_{t}=\E_{p_{t}}X,\quad A_{t}=\E_{p_{t}}[(X-\mu_{t})^{\otimes2}]\,,
\]
and the control matrices $C_{t}$ are symmetric matrices to be specified
later.
\end{defn}

When $C_{t}$ is a Lipschitz function with respect to $c_{t},\mu_{t},A_{t}$,
and $t$, standard theorems (e.g., \cite[\S5.2]{oksendal2013stochastic})
show the existence and uniqueness of the solution in time $[0,T]$
for any $T>0$.

In most applications to date, it suffices to focus on the case $C_{t}=I$
and hence $B_{t}=tI$, a setting sometimes called the LV process \cite{LV24eldan}.
The lemma below says that the stochastic process is the same as continuously
multiplying $p_{t}(x)$ by a random infinitesimally small linear function. 
\begin{lem}
\label{lem:def-pt} Equation~(\ref{eq:SL-SDE}) and Definition~\ref{def:A}
describe the same density $p_{t}$ with barycenter $b_{t}=\mu_{t}$
and covariance $\Sigma_{t}=A_{t}$.
\end{lem}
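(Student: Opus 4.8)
The plan is to verify that the explicit density in Definition~\ref{def:A} solves the SDE \eqref{eq:SL-SDE}, so that by uniqueness of solutions the two descriptions coincide; the statements about barycenter and covariance are then immediate once we match notation. I would start by fixing $x\in\Rn$ and computing the It\^o differential of the \emph{unnormalized} density $\widetilde p_t(x) = e^{c_t^\T x - \frac{t}{2}\norm{x}^2}\,p(x)$, treating $x$ as a constant and $c_t$ as the only source of randomness. Since $\D c_t = \mu_t\,\D t + \D W_t$ and $\D[c^i,c^j]_t = \delta_{ij}\,\D t$, applying It\^o's formula \eqref{eq:Ito} to the function $c\mapsto e^{c^\T x - \frac{t}{2}\norm x^2}p(x)$ (and accounting for the explicit $-\frac{t}{2}\norm x^2$ time dependence) gives
\[
\D\widetilde p_t(x) = \widetilde p_t(x)\,\Bpar{\inner{x,\mu_t}\,\D t - \tfrac12\,\norm x^2\,\D t + \tfrac12\,\norm x^2\,\D t + \inner{x,\D W_t}} = \widetilde p_t(x)\,\bpar{\inner{x,\mu_t}\,\D t + \inner{x,\D W_t}}\,,
\]
where the two $\frac12\norm x^2\,\D t$ terms — one from the drift of $c_t$ paired linearly with $x$ is \emph{not} one of them; rather, one comes from the explicit $\partial_t$ term and one from the It\^o second-order term $\frac12\sum_{ij}x_ix_j\,\D[c^i,c^j]_t$ — cancel exactly. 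This cancellation is the whole point of the Gaussian factor $e^{-\frac{t}{2}\norm x^2}$ in \eqref{eq:SL-pdf}, and getting the bookkeeping right here is the one place to be careful.

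Next I would pass to the normalized density $p_t(x) = \widetilde p_t(x)/Z_t$ with $Z_t = \int \widetilde p_t(y)\,\D y$. Computing $\D Z_t$ by integrating the previous display, $\D Z_t = Z_t\,\bpar{\inner{b_t,\mu_t}\,\D t + \inner{b_t,\D W_t}}$ where $b_t = \int x\,p_t(\D x) = \mu_t$ (this identification is part of the claim and holds by definition of $\mu_t$). Then the quotient rule for It\^o processes, $\D(\widetilde p_t/Z_t) = \frac{\D\widetilde p_t}{Z_t} - \frac{\widetilde p_t\,\D Z_t}{Z_t^2} - \frac{\D[\widetilde p(x),Z]_t}{Z_t^2} + \frac{\widetilde p_t\,\D[Z]_t}{Z_t^3}$, produces, after substituting the differentials and their cross-variations ($\D[\widetilde p(x),Z]_t = \widetilde p_t(x)Z_t\inner{x,b_t}\,\D t$, $\D[Z]_t = Z_t^2\norm{b_t}^2\,\D t$), the identity
\[
\D p_t(x) = p_t(x)\,\bpar{\inner{x - b_t,\D W_t} + \bpar{\inner{x,\mu_t} - \inner{b_t,\mu_t} - \inner{x,b_t} + \norm{b_t}^2}\,\D t}\,.
\]
With $b_t = \mu_t$ the entire $\D t$ coefficient collapses to $\inner{x,\mu_t} - \norm{\mu_t}^2 - \inner{x,\mu_t} + \norm{\mu_t}^2 = 0$, leaving exactly $\D p_t(x) = p_t(x)\,\inner{x - b_t,\D W_t}$, which is \eqref{eq:SL-SDE}.

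**The main obstacle** is purely organizational: carrying the It\^o cross-variation terms through the normalization step without sign errors, and confirming that the several $\D t$ contributions cancel. The cancellation is forced — it must happen, since $p_t$ remains a probability density (property~1 in the list) — but one should check it rather than invoke it. Finally, having shown both constructions satisfy the same SDE with the same initial condition $p_0 = p$, and noting that the coefficients are Lipschitz in the relevant sense (the drift $x\mapsto x-b_t$ and the fact that $b_t,\mu_t$ depend smoothly on the law), uniqueness of strong solutions gives $p_t$ from \eqref{eq:SL-SDE} equals $p_t$ from Definition~\ref{def:A}; then $b_t = \mu_t$ was already used, and $\Sigma_t = \int (x-b_t)^{\otimes 2}p_t(\D x) = \E_{p_t}[(X-\mu_t)^{\otimes2}] = A_t$ by definition, completing the proof.
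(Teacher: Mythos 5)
Your proof is correct. It runs the verification in the opposite direction from the paper: the paper (in the general $C_t$ version around Lemma~\ref{lem:def-ptC}) starts from the SDE $\D p_t(x) = \inner{x-\mu_t, C_t^{1/2}\D W_t}\,p_t(x)$, computes $\D\log p_t(x)$ by It\^o's lemma, and integrates the $x$-dependent part to recover the exponential-tilt form $p_t(x)\propto p(x)\exp(c_t^\T x - \tfrac12 x^\T B_t x)$; you instead take the explicit density from Definition~\ref{def:A} as the starting point and show, by It\^o's quotient rule applied to $\widetilde p_t/Z_t$, that it satisfies \eqref{eq:SL-SDE}. The two arguments are mathematically equivalent. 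Your bookkeeping is accurate: $\D\widetilde p_t(x) = \widetilde p_t(x)(\inner{x,\mu_t}\D t + \inner{x,\D W_t})$ is right (the explicit $-\tfrac12\|x\|^2$ time derivative cancels the second-order It\^o term $\tfrac12\|x\|^2\D t$), and the quotient-rule cross-variations $\D[\widetilde p(x),Z]_t = \widetilde p_t(x)Z_t\inner{x,b_t}\D t$ and $\D[Z]_t = Z_t^2\|b_t\|^2\D t$ produce exactly the cancellation you display. The paper's route via $\D\log p_t$ is marginally slicker because the log-transform separates the $x$-independent normalization term $g(t)$ automatically, whereas you must carry the $Z_t$ terms through the quotient rule by hand; but your approach makes the final drift cancellation more explicit. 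One small prose slip in the sentence explaining the $\tfrac12\|x\|^2\D t$ cancellation reads awkwardly, but the mathematics it accompanies is correct.
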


\paragraph{Important properties of stochastic localization.}

Let $(p_{t})_{t\geq0}$ with isotropic logconcave $p_{0}=p$ be a
stochastic process obtained by stochastic localization (either Equation~(\ref{eq:SL-SDE})
or Definition~\ref{def:A}). Let $\D\pi_{t}(x)\propto p_{t}(x)\,\D x$
and $\Sigma_{t}=\cov\pi_{t}$. Intuitively, $\Sigma_{t}$ will not
deviate too much from $\Sigma_{0}=I_{n}$, so quantities of interest
including $\norm{\Sigma_{t}}$ and $\tr\Sigma_{t}$ will remain close
to $\norm{\Sigma_{0}}=1$ and $\tr\Sigma_{0}=n$. We collect important
properties of $(\pi_{t})_{t\geq0}$ that will be proven in later sections.
\begin{itemize}
\item \textbf{The largest eigenvalue} of $\Sigma_{t}$:
\[
\E\norm{\Sigma_{t}}=\O(1)\quad\text{for }t\lesssim\log^{-2}n\,.
\]
\item \textbf{Trace of $\Sigma_{t}^{2}$}:
\[
\E\tr(\Sigma_{t}^{2})=\Theta(n)\quad\text{for }t\lesssim1\,.
\]
In fact, $\E\tr(\Sigma_{t}^{2})=O(n)$ for \emph{all} $t>0$.
\item \textbf{Trace of $\Sigma_{t}$}:
\[
\E\tr\Sigma_{t}=\Omega(n)\quad\text{for }t\lesssim1\,.
\]
\end{itemize}
\selectlanguage{english}%

\selectlanguage{american}%

\subsection{Warm-up I: classical localization with short needles}

Given a distribution with logconcave density $p$, we start at time
$t=0$ and for every time $t>0$, we apply an infinitesimal change
to the density. This is done by picking a random direction from a
Gaussian with a certain covariance matrix $C_{t}$, called the \emph{control
matrix}, as in Definition~\ref{def:SDE}. In order to construct the
stochastic process, we assume that the support of $p$ is contained
in a ball of radius $R>n$. Note that there is only exponentially
small probability outside this ball, at most $e^{-O(R)}$ (see Theorem~\ref{thm:Paouris}).

We now proceed to analyzing the process. Roughly speaking, the first
lemma below says that the stochastic process is the same as continuously
multiplying $p_{t}(x)$ by a random infinitesimal linear function.
\begin{lem}
\label{lem:def-ptC} For the process given by (\ref{eq:dBtC}), we
have 
\[
\D p_{t}(x)=\inner{x-\mu_{t},C_{t}^{1/2}\,\D W_{t}}\,p_{t}(x)\quad\text{for any }x\in\Rn\,.
\]
\end{lem}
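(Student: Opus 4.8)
The plan is to compute $\D p_t(x)$ directly from the explicit form of $p_t$ in Definition~\ref{def:SDE} by applying \Ito's formula, and to check that the drift term vanishes. Write $p_t(x) = e^{c_t^\T x - \frac12 x^\T B_t x}\, p(x) / Z_t$ where $Z_t = \int e^{c_t^\T y - \frac12 y^\T B_t y}\, p(y)\,\D y$. Since $x$ is a fixed spatial point (not a stochastic variable), the randomness enters only through $c_t$ (and through $B_t$, which is deterministic, $\D B_t = C_t\,\D t$). So I would set $\phi_t(x) := c_t^\T x - \frac12 x^\T B_t x - \log Z_t$, so that $p_t(x) = e^{\phi_t(x)}\, p(x)$, and expand $\D p_t(x) = p_t(x)\,\bigl(\D\phi_t(x) + \frac12\,\D[\phi(x)]_t\bigr)$ by \Ito.

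First I would compute $\D(c_t^\T x) = x^\T \D c_t = \inner{x, C_t\mu_t}\,\D t + \inner{x, C_t^{1/2}\,\D W_t}$, with quadratic variation $\D[c^\T x]_t = x^\T C_t x\,\D t$. The term $-\frac12 x^\T B_t x$ contributes only $-\frac12 x^\T C_t x\,\D t$ (no martingale part). The main computation is $\D\log Z_t$: by \Ito applied to $\log$, $\D\log Z_t = \frac{\D Z_t}{Z_t} - \frac12 \frac{\D[Z]_t}{Z_t^2}$. Here $\D Z_t = \int e^{\phi_t(y)+\log Z_t}\,p(y)\,\bigl(\inner{y,C_t\mu_t}\,\D t + \inner{y, C_t^{1/2}\,\D W_t} + (\text{from }\D B_t)\, (-\tfrac12 y^\T C_t y)\,\D t + \tfrac12 y^\T C_t y\,\D t\bigr)$ — being careful that the $\D B_t$ drift and the quadratic-variation correction from $c_t^\T y$ inside the exponential combine. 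Dividing by $Z_t$ turns these integrals into expectations under $p_t$: the martingale part of $\D\log Z_t$ is $\inner{\mu_t, C_t^{1/2}\,\D W_t}$, and its drift collects $\inner{\mu_t, C_t\mu_t}\,\D t$ plus the $\frac12\E_{p_t}[y^\T C_t y]$ terms, and then $-\frac12\frac{\D[Z]_t}{Z_t^2} = -\frac12 \mu_t^\T C_t \mu_t\,\D t$ using $\D[Z]_t/Z_t^2 = \mu_t^\T C_t \mu_t \,\D t$ (since $\D Z_t / Z_t$ has martingale part $\inner{\mu_t, C_t^{1/2}\,\D W_t}$).

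Assembling $\D\phi_t(x) + \frac12\,\D[\phi(x)]_t$, the martingale part is $\inner{x, C_t^{1/2}\,\D W_t} - \inner{\mu_t, C_t^{1/2}\,\D W_t} = \inner{x - \mu_t, C_t^{1/2}\,\D W_t}$, which is exactly the claimed expression. I then need to verify that \emph{all drift terms cancel}: the $\inner{x,C_t\mu_t}$ and $-\frac12 x^\T C_t x$ pieces, the $+\frac12 x^\T C_t x$ correction from the quadratic variation of $c_t^\T x$, and the various $\mu_t$- and $\E_{p_t}[yy^\T]$-dependent pieces from $-\D\log Z_t$ and its second-order term. This drift cancellation is the main obstacle: it is where the precise definition of $\D c_t = C_t\mu_t\,\D t + C_t^{1/2}\,\D W_t$ is used crucially (the drift $C_t\mu_t$ is exactly what is needed to kill the spurious $\E_{p_t}$-terms), and one must track the \Ito corrections carefully — including the one hidden inside differentiating $e^{c_t^\T y}$ when forming $\D Z_t$. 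Once the drift vanishes one reads off $\D p_t(x) = p_t(x)\,\inner{x-\mu_t, C_t^{1/2}\,\D W_t}$. As a sanity check, specializing $C_t = I$ recovers \eqref{eq:SL-SDE}, consistent with Lemma~\ref{lem:def-pt}.
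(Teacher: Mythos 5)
Your proof is correct, and it goes in the direction that the lemma as stated actually requires: you take the explicit exponential-family density from Definition~\ref{def:SDE} as given and differentiate it via It\^o's formula to derive the SDE. The computation the paper shows after the lemma is the converse verification---it assumes $\D p_t(x) = \inner{x-\mu_t, C_t^{1/2}\,\D W_t}\,p_t(x)$, computes $\D\log p_t(x)$, and checks that it integrates to $c_t^\T x - \frac12 x^\T B_t x$ plus an $x$-independent term. Both are the same It\^o computation run in opposite directions, and yours is arguably the cleaner proof of the statement as phrased. One small imprecision worth flagging: the It\^o correction $\frac12\,\D[\phi(x)]_t$ is not ``$+\frac12 x^\T C_t x$ from the quadratic variation of $c_t^\T x$'' alone. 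Since $\phi_t(x) = c_t^\T x - \frac12 x^\T B_t x - \log Z_t$ has martingale part $(x-\mu_t)^\T C_t^{1/2}\,\D W_t$ (the $-\log Z_t$ piece contributes $-\mu_t^\T C_t^{1/2}\,\D W_t$), the correct correction is $\frac12(x-\mu_t)^\T C_t(x-\mu_t)\,\D t$, and the cross term $-x^\T C_t\mu_t\,\D t$ is precisely what cancels the drift $\inner{x,C_t\mu_t}\,\D t$ coming from $\D c_t$. You do account for all of this in the end (and you correctly observe that the $\frac12\E_{p_t}[y^\T C_t y]$ terms inside $\D Z_t$ cancel between the $\D B_t$ drift and the It\^o correction from $c_t^\T y$, leaving $\D\log Z_t = \mu_t^\T C_t^{1/2}\,\D W_t + \frac12\mu_t^\T C_t\mu_t\,\D t$), so the full drift of $\D\phi_t(x)+\frac12\,\D[\phi(x)]_t$ vanishes and the claimed SDE follows.
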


By computing $\D\log p_{t}(x)$ with the It\^o's lemma, we see that
applying $\D p_{t}(x)$ as in the lemma above results in the density
$p_{t}(x)$, with a Gaussian term:
\begin{align*}
\D\log p_{t}(x) & =\frac{\D p_{t}(x)}{p_{t}(x)}-\frac{1}{2}\,\frac{\D[p_{t}(x)]_{t}}{p_{t}(x)^{2}}\\
 & =\inner{x-\mu_{t},C_{t}^{1/2}\,\D W_{t}}-\frac{1}{2}(x-\mu_{t})^{\T}C_{t}\,(x-\mu_{t})\,\D t\\
 & =x^{\T}\,(C_{t}\mu_{t}\,\D t+C_{t}^{1/2}\,\D W_{t})-\frac{1}{2}\,x^{\T}C_{t}x\,\D t-\underbrace{(\mu_{t}^{\T}C_{t}^{1/2}\,\D W_{t}+\frac{1}{2}\,\mu_{t}^{\T}C_{t}\mu_{t}\,\D t)}_{=:g(t)}\\
 & =x^{\T}\,\D c_{t}-\frac{1}{2}\,x^{\T}(\D B_{t})x+g(t)\,,
\end{align*}
where the last term is independent of $x$, and the first two terms
explain the form of $p_{t}(x)$ and the appearance of the Gaussian. 

The next lemma gives the time derivative of the covariance matrix.
\begin{lem}
\label{lem:dA} We have
\begin{align*}
\D A_{t} & =\int(x-\mu_{t})^{\otimes2}\,\inner{x-\mu_{t},C_{t}^{1/2}\,\D W_{t}}\,p_{t}(\D x)-A_{t}C_{t}A_{t}\,\D t\,.
\end{align*}
\end{lem}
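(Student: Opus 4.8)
The plan is to compute $\D A_t$ by applying It\^o's formula to the matrix-valued functional $A_t = \int (x-\mu_t)^{\otimes 2}\,p_t(\D x)$, which depends on time both through the evolving density $p_t$ and through the mean $\mu_t = \mu_t(p_t)$. First I would write $A_t = M_t - \mu_t^{\otimes 2}$, where $M_t := \int x^{\otimes 2}\,p_t(\D x)$ is the (uncentered) second-moment martingale. By the moment formula for the $\msf{SL}$-$\msf{SDE}$ written in the $C_t$-controlled form (Lemma \ref{lem:def-ptC}), we have, for any fixed matrix-valued integrand,
\[
\D M_t = \int x^{\otimes 2}\,\inner{x-\mu_t,C_t^{1/2}\,\D W_t}\,p_t(\D x)\,,
\]
which has no drift term (it is a genuine martingale). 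So all of the $\D t$ contribution to $\D A_t$ must come from the $-\D(\mu_t^{\otimes 2})$ piece.

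Next I would expand $\D(\mu_t^{\otimes 2})$ via the It\^o product rule: $\D(\mu_t \mu_t^{\T}) = (\D\mu_t)\,\mu_t^{\T} + \mu_t\,(\D\mu_t)^{\T} + \D[\mu,\mu]_t$. For this I need $\D\mu_t$, which comes from applying \eqref{eq:moment}/Lemma \ref{lem:def-ptC} to $F(x) = x$:
\[
\D\mu_t = \int x\,\inner{x-\mu_t,C_t^{1/2}\,\D W_t}\,p_t(\D x) = \int (x-\mu_t)^{\otimes 2}\,C_t^{1/2}\,\D W_t\cdot(\text{as a vector}) = A_t C_t^{1/2}\,\D W_t\,,
\]
since $\int (x-\mu_t)\inner{x-\mu_t,v}\,p_t(\D x) = A_t v$ for any vector $v$ and the $\mu_t\inner{\cdots}$ term integrates to zero. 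Then the martingale (zero-drift) parts of $(\D\mu_t)\mu_t^{\T}$ and its transpose cancel against the corresponding martingale parts elsewhere — actually they combine with the $M_t$-martingale to give exactly the stated stochastic integral $\int (x-\mu_t)^{\otimes 2}\inner{x-\mu_t,C_t^{1/2}\D W_t}\,p_t(\D x)$ (one checks $x^{\otimes 2} - \mu_t x^{\T} - x\mu_t^{\T} = (x-\mu_t)^{\otimes 2} - \mu_t^{\otimes 2}$ and the $\mu_t^{\otimes 2}$ coefficient integrates to zero against $\inner{x-\mu_t,\cdot}$). The quadratic-variation term is $\D[\mu,\mu]_t = A_t C_t^{1/2} (C_t^{1/2})^{\T} A_t^{\T}\,\D t = A_t C_t A_t\,\D t$ using $A_t, C_t$ symmetric. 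Since this enters $\D A_t$ with a minus sign, we get the claimed drift $-A_t C_t A_t\,\D t$.

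The main obstacle is purely bookkeeping: carefully separating the martingale part from the bounded-variation part in the product rule and verifying that the various stochastic integrals recombine into the single clean expression $\int (x-\mu_t)^{\otimes 2}\inner{x-\mu_t,C_t^{1/2}\,\D W_t}\,p_t(\D x)$ rather than leaving residual $\mu_t$-dependent stochastic terms. The key identities that make this work are (a) $\int (x-\mu_t)\,p_t(\D x) = 0$, which kills all the cross terms involving a single centered factor, and (b) $\int (x-\mu_t)\inner{x-\mu_t, v}\,p_t(\D x) = A_t v$, which produces the $A_t C_t^{1/2}\,\D W_t$ form of $\D\mu_t$ and hence the $A_t C_t A_t$ in the quadratic variation. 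No subtle analytic estimates are needed — existence/uniqueness of the SDE (already assumed, via $C_t$ Lipschitz) guarantees all these It\^o manipulations are legitimate on $[0,T]$.
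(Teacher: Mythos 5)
Your plan is correct and follows essentially the same route the paper uses when it carries out this computation explicitly for the case $C_t = I$ in \S\ref{sec:KLS-bound} (the derivation of $\D\Sigma_t = \D H_t - \Sigma_t^2\,\D t$): split $A_t = \int x^{\otimes 2}\,p_t(\D x) - \mu_t^{\otimes 2}$, use the martingale formula from Lemma~\ref{lem:def-ptC} on the second-moment term, apply the It\^o product rule to $\mu_t^{\otimes 2}$ with $\D\mu_t = A_t C_t^{1/2}\,\D W_t$, and observe that the martingale parts recombine into $\int (x-\mu_t)^{\otimes 2}\inner{x-\mu_t,C_t^{1/2}\D W_t}\,p_t(\D x)$ via the identity $xx^{\T} - x\mu_t^{\T} - \mu_t x^{\T} = (x-\mu_t)^{\otimes 2} - \mu_t\mu_t^{\T}$ and the vanishing of $\int(x-\mu_t)\,p_t(\D x)$. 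The quadratic-variation term $A_t C_t A_t\,\D t$ is computed correctly using the symmetry of $A_t$ and $C_t^{1/2}$, giving the stated drift.
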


With this process, we can show the following ``short-needle'' localization
theorem.
\begin{thm}[Short Needle Decomposition]
\label{thm:short-needle}  Let $\nu$ be an isotropic logconcave
probability measure in $\R^{n}$, and $f:\R^{n}\rightarrow\R$ be
a bounded measurable function such that $\int f\,\D\nu=0$. Then there
exists a random probability measure $q$ such that
\begin{enumerate}
\item \emph{{[}Decomposition of $\nu${]}} $\E q=\nu$. 
\item The support of $q$ is either a Dirac measure at points $x$ with
$f(x)=0$, or a logconcave measure supported by a segment $[a,b]\in\R^{n}$
such that $\int f\,\mathrm{d}q=0$, such that with probability at
least $1-O(\log^{-2}n)$ over the randomness of $q$, we have $\var q=O(\log^{2}n)$, 
\item The support of $q$ is either a Dirac measure at points $x$ with
$f(x)=0$, or a log-affine measure supported by a segment $[a,b]\in\R^{n}$
such that $\int f\,\D q=0$, and $\E\var q=O(\log^{2}n)$.
\end{enumerate}
\end{thm}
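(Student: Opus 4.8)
The plan is to compose the two localization methods. First, run stochastic localization to pass from $\nu$ to a random strongly logconcave (hence concentrated) measure; then apply the classical localization lemma to that measure to extract genuine one‑dimensional needles, whose strong logconcavity will force them to be short. Concretely: by Theorem~\ref{thm:Paouris}, all but an $e^{-\Omega(R)}$ fraction of $\nu$ lies in a ball of radius $R\gg n$, so after truncating we may take $\nu$ compactly supported (the change in $\nu$ is absorbed at the end). Run the process of Definition~\ref{def:A} with $p_0=\nu$, so $p_t\propto\nu(x)\exp(c_t^{\T}x-\tfrac t2\|x\|^2)$; recall $\E p_t=\nu$ (the martingale property) and that $p_t$ is $t$‑strongly logconcave for every $t>0$. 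Fix $T\asymp\log^{-2}n$; by the stated bound $\E\|\Sigma_T\|=O(1)$ and Markov's inequality, with probability $\ge 1-O(\log^{-2}n)$ over the Brownian path $\|\Sigma_T\|=O(\log^2 n)$, and from $\E\tr(\Sigma_T^2)=O(n)$ with Cauchy--Schwarz, $\E\tr\Sigma_T=O(n)$.

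Condition on the Brownian path, so $p_T$ is a fixed $T$‑strongly logconcave density; suppose for now that $\int f\,dp_T=0$ (the delicate point, addressed below). Running the bisection of Lemma~\ref{lem:LOCALIZATION} on $p_T$ and tracking the cross‑sectional factor as in its proof, $p_T$ decomposes into needles of density $\ell(t)^{n-1}\,p_T\big((1-t)a+tb\big)$ on $t\in[0,1]$ with $\ell$ affine positive, together with Diracs; since $\ell^{n-1}$ is logconcave and the Gaussian factor of $p_T$ contributes $\tfrac T2\|(1-t)a+tb\|^2$, which is $T\|a-b\|^2$‑strongly convex in $t$, each needle is $T\|a-b\|^2$‑strongly logconcave in the parameter, so its variance in $t$ is at most $1/(T\|a-b\|^2)$ and hence its variance in $\R^n$ is at most $1/T=O(\log^2 n)$ — deterministically given the path, which is item~2 (the $1-O(\log^{-2}n)$ there is slack). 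Composing with $\E p_T=\nu$ and invoking the reformulated lemma~\cite{fradelizi2004extreme} to keep $\int f\,dq=0$ exactly — using $\int f\,dq\ge0$ for every extreme point of $\{\mu:\int f\,d\mu\ge0\}$ together with $\E\int f\,dq=\int f\,d\nu=0$, which also pins the Diracs to $\{f=0\}$ — gives item~1. For item~3, decompose each logconcave needle further into log‑affine needles with $\int f\,dq=0$ by the one‑dimensional case of~\cite{fradelizi2004extreme}; since a further averaging only decreases variance (law of total variance), $\E\var q\le 1/T=O(\log^2 n)$.

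The hard part is the suppressed assumption $\int f\,dp_T=0$: stochastic localization does not preserve $t\mapsto\int f\,dp_t$, which is only a martingale started at $0$, so the needles extracted from $p_T$ a priori satisfy $\int f\,d(\text{needle})=\int f\,dp_T\neq0$. Two ways to repair this, each with its own cost: (i) drive \eqref{eq:SL-SDE} with Brownian motion projected orthogonally to $\int f(x)(x-b_t)\,dp_t(x)$, which forces $\int f\,dp_t\equiv0$ but leaves the accumulated quadratic term $\exp(-\tfrac12 x^{\T}B_t x)$ with $B_t$ only positive semidefinite — degenerate in at most the single $f$‑correlated direction, since $\tr(tI-B_t)=t$ — so one must re‑derive the covariance control and the needle‑variance bound for this process, handling the lone degenerate direction by hand via $\var_{p_t}(f)\le\|f\|_\infty^2$ and $\|\Sigma_t\|=O(\log^2 n)$; or (ii) decompose $\nu$ itself with a constraint‑respecting run to a much later time $T\asymp n/\log^2 n$, at which $\E\tr\Sigma_T=O(\log^2 n)$ because the trace contracts like $n/(1+t)$, and read off item~3 directly from the law of total variance. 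I expect the degenerate‑direction analysis to be the real crux, since exactly there strong logconcavity — the source of the clean $1/T$ bound — is unavailable.
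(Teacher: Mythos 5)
Your proposal correctly identifies the crux of the theorem: driving \eqref{eq:SL-SDE} with Brownian motion projected orthogonal to $\int f(x)(x-b_t)\,dp_t(x)$ preserves $\int f\,dp_t\equiv 0$ but leaves $B_t$ degenerate in (at most) one direction, precisely where the strong logconcavity that would give a clean $1/T$ variance bound is unavailable. This is exactly the paper's construction (control matrix $C_t=I-u_tu_t^{\T}$), and your observation that $\tr(tI-B_t)=t$ implies $B_t\succeq\tfrac t2(I-v_tv_t^{\T})$ is the same eigenvalue count the paper uses. Your further decomposition of the logconcave needle into log-affine needles via Fradelizi--Guedon, and the ``averaging decreases variance'' step for item~3, also match the paper.

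However, you leave the degenerate-direction analysis unresolved, and the fix you gesture at (``by hand via $\var_{p_t}(f)\le\|f\|_\infty^2$'') does not lead anywhere: the variance of $q$ in direction $v$ is a property of the limiting covariance $A_\infty$, not of the scalar random variable $f$ under $p_t$, and there is no obvious route from $\|f\|_\infty$ to a bound on $v_{t_0}^{\T}A_\infty v_{t_0}$. What the paper actually does is a purely covariance-level argument: by Lemma~\ref{lem:all_t}, $(A_t)_{t\ge0}$ is a supermartingale in the Loewner order (since $\D A_t=\D(\text{martingale})-A_tC_tA_t\,\D t$), so conditioning on $A_{t_0}\preceq 2I$ (which holds with probability $\ge 1-\exp(-c/t_0)$ by Lemma~\ref{lem:op_norm} for $t_0\asymp\log^{-2}n$) gives $\E[v_{t_0}^{\T}A_\infty v_{t_0}\mid A_{t_0}]\le 2$. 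Markov's inequality then yields $v_{t_0}^{\T}A_\infty v_{t_0}=O(\log^2 n)$ with probability $\ge 1-O(\log^{-2}n)$, and orthogonal directions are handled by Brascamp--Lieb $A_\infty\preceq B_{t_0}^{-1}\preceq\tfrac{2}{t_0}(I-v_{t_0}v_{t_0}^{\T})^{-1}$. Note this is why the paper needs $\E\|\Sigma_{t_0}\|=O(1)$ at all: not (as your first paragraph implicitly assumes) to bound the needles of a fixed $p_T$, but to bound the \emph{one} bad direction of the limit $q=p_\infty$ that strong logconcavity cannot reach. Your alternative (ii) --- running to $T\asymp n/\log^2 n$ without the projection --- does not repair the constraint $\int f\,dp_T=0$ at all, so it cannot be made to work.

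There is also a genuine structural difference worth noting: your first paragraph proposes a two-stage hybrid (SL to a finite time $T$, then classical bisection on $p_T$), whereas the paper is a pure SL argument taking $q=\lim_{t\to\infty}p_t$, with the Fradelizi--Guedon decomposition invoked only to pass from logconcave to log-affine at the very end. Your hybrid has an appealing feature --- if $B_T$ were nondegenerate, the needle variance $\le 1/T$ would follow deterministically from strong logconcavity with no appeal to the operator-norm control --- but the constraint $\int f\,dq=0$ forces the projection, and then the needle of $p_T$ along $v_T$ is merely logconcave, so its variance is not controlled by $T$-strong logconcavity. You would be thrown back onto exactly the same $v_{t_0}^{\T}A_\infty v_{t_0}$ estimate, so the hybrid does not actually sidestep the hard part.
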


We will need the following lemmas. 
\begin{lem}
\label{lem:op_norm} We have $A_{t_{0}}\preceq2I$ for $t_{0}\lesssim\log^{-2}n$
with probability at least $1-\exp(-c/t_{0})$, for some constant $c>0$.
\end{lem}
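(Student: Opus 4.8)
The plan is to control $\norm{A_t}$ through the smooth surrogate $\Gamma_t \defeq \tr(A_t^q)$ with $q$ of order $\log n$, to which It\^o's formula applies, and then to run a stopping-time argument. Throughout I take $C_t = I$ (so $B_t = tI$) and $p$ isotropic, so that $A_0 = I_n$ and $\Gamma_0 = n$; recall $\norm{A_t}^q \le \Gamma_t \le n\,\norm{A_t}^q$, so for $q \gtrsim \log n$ the quantities $\norm{A_t}$ and $\Gamma_t^{1/q}$ agree up to a universal factor. I introduce the stopping time $\tau \defeq \inf\{t \ge 0 : \norm{A_t} \ge 2\}$; since $A_t$ has continuous paths and $A_0 = I$, on $\{\tau < \infty\}$ we have $\norm{A_\tau} = 2$, hence $\Gamma_\tau \ge 2^q$. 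It therefore suffices to prove $\P(\tau \le t_0) \le \exp(-c/t_0)$ for $t_0 \lesssim \log^{-2} n$, since $\tau > t_0$ forces $A_s \prec 2I$ for all $s \le t_0$.

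First I would differentiate $\Gamma_t$. Writing $\D A_t = \D N_t - A_t^2\,\D t$, where by Lemma~\ref{lem:dA} the martingale part is $\D N_t = \sum_m T_m\,\D W_t^m$ with $T_m = T_m(t)$ the symmetric matrix whose $(i,j)$-entry is $\E_{p_t}[(X-\mu_t)_i(X-\mu_t)_j(X-\mu_t)_m]$, It\^o's formula for $M \mapsto \tr(M^q)$ gives
\[
\D\Gamma_t = q\,\tr\!\bpar{A_t^{q-1}\,\D N_t} \;-\; q\,\tr\!\bpar{A_t^{q+1}}\,\D t \;+\; \frac{q}{2}\sum_{a=0}^{q-2}\sum_m \tr\!\bpar{A_t^a\,T_m\,A_t^{q-2-a}\,T_m}\,\D t .
\]
The drift of $\log\Gamma_t$ is $\Gamma_t^{-1}$ times the drift of $\Gamma_t$, minus $\tfrac12\,\D[\Gamma]_t/\Gamma_t^2$; the term $-q\,\tr(A_t^{q+1})$ and the term $-\tfrac12\D[\Gamma]_t/\Gamma_t^2$ are nonpositive and can be discarded for an upper bound. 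The quadratic variation is the easy part: writing $Y \defeq X - \mu_t$ and using $\tr(A_t^{q-1}T_m) = \E_{p_t}[\inner{Y,A_t^{q-1}Y}\,Y_m]$,
\[
\D[\Gamma]_t = q^2\sum_m \bpar{\tr(A_t^{q-1}T_m)}^2\,\D t = q^2\,\bnorm{\E_{p_t}\bbrack{\inner{Y,A_t^{q-1}Y}\,Y}}^{2}\,\D t ,
\]
and Cauchy--Schwarz together with the reverse H\"older inequality (Borell's lemma), applied to the logconcave vector $A_t^{(q-1)/2}Y$, give $\bnorm{\E_{p_t}[\inner{Y,A_t^{q-1}Y}Y]} \le C\,\tr(A_t^q)\,\norm{A_t}^{1/2} = C\,\Gamma_t\,\norm{A_t}^{1/2}$. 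Hence $\D[\Gamma]_t/\Gamma_t^2 \le C^2 q^2\norm{A_t}\,\D t \le 2C^2 q^2\,\D t$ on $\{t \le \tau\}$.

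The hard part will be a \emph{dimension-free} bound, valid on $\{t \le \tau\}$, for the drift of $\log\Gamma_t$, namely for $\frac{q}{2\Gamma_t}\sum_{a=0}^{q-2}\sum_m \tr(A_t^a T_m A_t^{q-2-a}T_m)$. Each summand equals $\E_{Y,Y'}\bbrack{\inner{Y,Y'}\inner{Y,A_t^aY'}\inner{Y,A_t^{q-2-a}Y'}}$ over independent centered copies $Y, Y' \sim p_t$, which vanishes identically when $p_t$ is Gaussian, so the estimate must exploit cancellation---a crude operator-norm bound produces spurious powers of $n$. As in the Lee--Vempala analysis, the right estimate again uses the reverse H\"older inequality together with a spectral bound on the third-moment tensor of a logconcave measure, and gives that the drift of $\log\Gamma_t$ is $O(\polylog n)$ on $\{t\le\tau\}$; only $O(\log^3 n)$ is needed below.

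Granting a drift bound $R = O(\log^3 n)$ on $\{t \le \tau\}$, I fix $q \defeq \lceil 5\log n\rceil$ and take $t_0 \le c_1\log^{-2}n$ with $c_1$ small enough that $R\,t_0 \le \log n$. Then $M_t \defeq \log\Gamma_{t\wedge\tau\wedge t_0} - \log n - \int_0^{t\wedge\tau\wedge t_0}(\text{drift}_s)\,\D s$ is a continuous martingale, and by the quadratic-variation bound above $[M]_\infty \le 2C^2 q^2 t_0$ almost surely. On the event $\{\tau \le t_0\}$ we have $\Gamma_\tau \ge 2^q$, so $q\log 2 \le \log\Gamma_\tau = \log n + \int_0^\tau(\text{drift}_s)\,\D s + M_\tau \le 2\log n + M_\tau$, which forces $M_\tau \ge q\log 2 - 2\log n \ge \log n$. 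By the exponential inequality for continuous martingales,
\[
\P(\tau \le t_0) \;\le\; \P\bpar{\sup_{t\ge0} M_t \ge \log n} \;\le\; \exp\!\bpar{-\frac{(\log n)^2}{4C^2 q^2 t_0}} \;=\; \exp(-c/t_0)\,,
\]
since $q \asymp \log n$. This proves the claim, and in fact yields $A_s \preceq 2I$ for all $s \le t_0$ with the stated probability.
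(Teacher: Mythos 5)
The overall scaffolding — a Lyapunov function, a stopping time at $\norm{A_\tau}=2$, a quadratic-variation bound, and the exponential (Freedman) martingale inequality — matches the paper's strategy in \S\ref{subsec:Operator-norm-control}. Your quadratic-variation estimate via Cauchy--Schwarz and reverse H\"older is correct, and the final exponential bookkeeping with $q\asymp\log n$ and $t_0\lesssim\log^{-2}n$ is consistent.

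The genuine gap is the drift bound, which you assert rather than prove: ``gives that the drift of $\log\Gamma_t$ is $O(\polylog n)$.'' This is precisely the hard step, and it does not fall out cleanly for the potential $\Gamma_t=\tr(A_t^q)$. The It\^o drift you write down is $\frac{q}{2\Gamma_t}\sum_{a}\sum_m\tr(A_t^aT_mA_t^{q-2-a}T_m)$; applying the swap lemma (the paper's Lemma~\ref{lem:swap-MHMH}) collapses the $a$-sum but leaves you with $\frac{q(q-1)}{2}\,\frac{\tr(A_t^{q-2}\sum_m T_m^2)}{\tr(A_t^q)}$, and the natural bound $\tr(A^{q-2}B)\le\norm{B}\,\tr(A^{q-2})$ produces the ratio $\tr(A_t^{q-2})/\tr(A_t^q)$. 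This ratio is \emph{not} $O(1)$ on $\{t\le\tau\}$; it blows up whenever the spectrum of $A_t$ has small eigenvalues, which nothing prevents. So the estimate as sketched does not give a dimension-free drift bound, and your claimed $R=O(\log^3 n)$ constant-in-time drift is unsupported. Moreover, even the correct drift bound (obtained with the paper's softmax Lyapunov function) is not constant: it carries a $\norm{\Sigma_s}^{5/2}/\sqrt{s}$ factor coming from the improved Lichnerowicz inequality \eqref{eq:improved-LI}, and it is only after integrating this singular-but-integrable term over $[0,t_0]$ that the drift contribution is $O(1)$. Your argument sets up $\int_0^{t_0}R\,\D s\le\log n$ with $R$ constant, which is a different and ungrounded claim.

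This is exactly the issue the paper's choice of Lyapunov function is designed to avoid. The paper works with the softmax $h(M)=\beta^{-1}\log\tr e^{\beta M}$, whose gradient $G_t=e^{\beta\Sigma_t}/\tr e^{\beta\Sigma_t}$ satisfies $\tr G_t=1$, so the Hessian bound $\hess h[\,H,H]\le\beta\,\tr(G\,H^2)$ (Lemma~\ref{lem:second-order-bound}) followed by $(1,\infty)$-H\"older gives $\sum_i\tr(G_tH_i^2)\le\norm{\sum_iH_i^2}$ with no spurious $\tr(A^{q-2})$ factor; the $3$-tensor bound (Lemma~\ref{lem:3tensor-2}, relying on \eqref{eq:improved-LI}) then closes the estimate. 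To rescue the $\tr(A^q)$ route you would need a sharper tensor inequality that controls $\tr(A^{q-2}\sum_mT_m^2)$ \emph{relative to} $\tr(A^q)$ rather than via $\norm{\sum_m T_m^2}$ alone --- this is doable (it is essentially what Chen and Klartag--Lehec prove) but requires a non-trivial additional argument, which is exactly what your proof leaves out.
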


This will be proven in full details in \S\ref{sec:KLS-bound} (see
Theorem~\ref{thm:opnorm-control}). The next lemma is a weaker bound
that holds for all time. 
\begin{lem}
\label{lem:all_t} Conditioning on $p_{s}$ for $s\geq0$, we have
$\E[A_{t}\,|\,p_{s}]\preceq A_{s}$ for all $t\geq s$.
\end{lem}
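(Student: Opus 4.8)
The plan is to read the claim directly off the covariance SDE of Lemma~\ref{lem:dA}, whose drift is manifestly negative semidefinite. Integrating that identity from $s$ to $t$,
\[
A_t = A_s + \int_s^t\!\int_{\Rn} (x-\mu_u)^{\otimes 2}\,\inner{x-\mu_u,\,C_u^{1/2}\,\D W_u}\,p_u(\D x)\;-\;\int_s^t A_u C_u A_u\,\D u\,,
\]
and since $C_u\succeq 0$ (this is forced already by the appearance of $C_u^{1/2}$ in the process) we have $A_u C_u A_u = (A_u C_u^{1/2})(A_u C_u^{1/2})^{\T}\succeq 0$, using that $A_u$ and $C_u^{1/2}$ are symmetric. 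Thus the finite-variation part of $A_t$ is nonincreasing in the Loewner order, and the whole matter reduces to showing that the Itô integral contributes nothing in conditional expectation.

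To handle that, I would test against a fixed unit vector $v\in\mbb S^{n-1}$: the scalar process $X^v_t := v^\T A_t v = \int \inner{x-\mu_t,v}^2\,p_t(\D x)$ is nonnegative, and by the display above, on $[s,t]$ it equals $X^v_s$ plus an Itô (local) martingale minus the increasing quantity $\int_s^t v^\T A_u C_u A_u v\,\D u\ge 0$. A nonnegative local martingale (here, the Itô integral shifted by the $\mathcal F_s$-measurable constant $X^v_s$) is a supermartingale by Fatou, so its conditional expectation given the Brownian filtration $\mathcal F_s$ is $\le 0$; hence $\E[X^v_t\mid\mathcal F_s]\le X^v_s$. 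As $v$ is arbitrary this says exactly $\E[A_t\mid\mathcal F_s]\preceq A_s$. (In the bounded-support regime of this section, where $\norm{x-\mu_u}$ and $C_u$ are bounded on $[s,t]$, one may instead invoke Itô's isometry to see the integral is a genuine $L^2$-martingale and bypass the Fatou step.)

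Finally I would pass from $\mathcal F_s$ to $p_s$. Since $p_s$ is a deterministic function of $(c_s,B_s)$, it is $\mathcal F_s$-measurable, so $A_s$ is $\sigma(p_s)$-measurable and the tower rule gives $\E[A_t\mid p_s] = \E\bpar{\E[A_t\mid\mathcal F_s]\mid p_s}\preceq \E[A_s\mid p_s] = A_s$; alternatively, $(c_t,B_t)$ is Markov and $\sigma(p_s)=\sigma(c_s,B_s)$, so the two conditionings agree for functionals of the future. The only point that needs any care is the passage from local to true (super)martingale for the Itô term, which is immediate from nonnegativity of $A_t$; everything else is a direct consequence of Lemma~\ref{lem:dA} together with $C_t\succeq 0$.
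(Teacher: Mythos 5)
Your proof is correct and follows the same route as the paper: read off the SDE from Lemma~\ref{lem:dA}, observe that the drift $-A_tC_tA_t\,\D t$ is negative semidefinite, and argue the Itô integral vanishes in conditional expectation. You simply make the two points the paper glosses over precise — that the stochastic integral is a priori only a local martingale (handled via nonnegativity of $v^\T A_t v$ and Fatou, or by boundedness in this regime), and that conditioning on $p_s$ is equivalent to conditioning on $\mathcal F_s$ by the Markov property.
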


\begin{proof}
By Lemma \ref{lem:dA}, we have
\[
\D A_{t}=\int(x-\mu_{t})^{\otimes2}\,\inner{x-\mu_{t},C_{t}^{1/2}\,\D W_{t}}\,p_{t}(\D x)-A_{t}C_{t}A_{t}\,\D t\,.
\]
The claim follows by observing that the first term is a martingale
term and the second term $-A_{t}C_{t}A_{t}\preceq0$.
\end{proof}
\begin{lem}
\label{lem:strongly_cov} For $c\in\Rn$ and a PD matrix $B\in\Rnn$,
let $A$ be the covariance matrix of the probability distribution
with density
\[
h(x)=\frac{\exp\bpar{c^{\T}x-\frac{1}{2}\,x^{\T}Bx}\,p(x)}{\int\exp\bpar{c^{\T}y-\half\,y^{\T}By}\,p(y)\,\D y}\,,
\]
where $p(x)$ is a logconcave function. Then, we have $A\preceq B^{-1}.$
\end{lem}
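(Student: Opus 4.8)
The plan is to recognize Lemma~\ref{lem:strongly_cov} as a one-line consequence of the Brascamp--Lieb inequality \eqref{eq:Brascamp-Lieb} specialized to linear test functions. First I would write $h \propto e^{-V}$ with potential
\[
V(x) = -\log p(x) - c^{\T}x + \tfrac{1}{2}\,x^{\T}Bx\,.
\]
Since $p$ is logconcave, $-\log p$ is convex, so wherever second derivatives exist we have $\hess V = \hess(-\log p) + B \succeq B \succ 0$; in particular $V$ is strictly convex and the Gaussian factor $e^{-\frac12 x^{\T}Bx}$ guarantees $h$ is integrable with finite covariance, so \eqref{eq:Brascamp-Lieb} applies. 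Moreover, since $M \succeq B$ forces $M^{-1} \preceq B^{-1}$ for positive definite matrices, we get $(\hess V)^{-1} \preceq B^{-1}$ pointwise.

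Next, for an arbitrary unit vector $\theta$, I would take $f(x) = \inner{\theta,x}$ in \eqref{eq:Brascamp-Lieb}, so that $\nabla f \equiv \theta$ and $\var_h f = \theta^{\T}A\theta$. Then
\[
\theta^{\T}A\theta = \var_h f \le \E_h\inner{\nabla f,(\hess V)^{-1}\nabla f} = \E_h\bbrack{\theta^{\T}(\hess V)^{-1}\theta} \le \theta^{\T}B^{-1}\theta\,.
\]
As $\theta$ is arbitrary, this is exactly $A \preceq B^{-1}$, which is the claim.

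The only point requiring care — the step I would flag as the (routine) obstacle — is that $-\log p$ need not be twice differentiable, so $\hess V$ may fail to exist and the stated form of Brascamp--Lieb does not literally apply. The standard fix is mollification: replace $p$ by $p_{\veps} := p * \gamma_{\veps}$, the convolution with the centered Gaussian $\gamma_{\veps} = \msf N(0,\veps I_{n})$. By Pr\'ekopa--Leindler $p_{\veps}$ is logconcave, it is smooth, and the associated densities $h_{\veps} \propto e^{c^{\T}x - \frac12 x^{\T}Bx}\,p_{\veps}(x)$ converge to $h$; the factor $e^{-\frac12 x^{\T}Bx}$ supplies enough uniform decay to pass convergence of covariances to the limit. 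Applying the bound to each $h_{\veps}$ and letting $\veps \to 0$ yields $A \preceq B^{-1}$. (Alternatively, one can reduce to $B = I$ via $y = B^{1/2}x$, observe that the density in $y$ is $1$-strongly logconcave, hence has Poincar\'e constant at most $1$ by Bakry--\'Emery, and then conclude $\cov \preceq I$ from linear test functions; this avoids quoting Brascamp--Lieb directly but rests on the same circle of ideas.)
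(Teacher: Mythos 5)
Your argument is exactly the paper's: apply Brascamp--Lieb to $h$ with a linear test function and observe $\hess V\succeq B$, hence $(\hess V)^{-1}\preceq B^{-1}$. The paper gives this in one line; your remarks on mollification and the Bakry--\'Emery alternative are reasonable extra care, but the approach is identical.
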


\begin{proof}
Using the Brascamp--Lieb inequality (\ref{eq:Brascamp-Lieb}) for
$\pi=h$ and $f(x)=v^{\T}(x-\E_{\pi}X)$ for any unit vector $v\in\R^{n}$,
we obtain that
\[
v^{\T}Av\leq v^{\T}B^{-1}v\,,
\]
which completes the proof.
\end{proof}
We are now ready for the main proof. 
\begin{proof}
[Proof of Theorem~\ref{thm:short-needle}] We apply the SL starting
from $\nu$ with density $p$ and set $q=\lim_{t\rightarrow\infty}p_{t}$.
The control matrix $C_{t}$ is specified as follows so that $\int f(x)\,p_{t}(\D x)=0$
holds for all $t\geq0$. By Lemma~\ref{lem:def-ptC}, we have
\[
\D\int f(x)\,p_{t}(\D x)=\int f(x)\,\inner{x-\mu_{t},C_{t}^{1/2}\,\D W_{t}}\,p_{t}(x)=\Bigl\langle\int(x-\mu_{t})\,f(x)\,p_{t}(\D x),C_{t}^{1/2}\,\D W_{t}\Bigr\rangle\,.
\]
Define the unit vector 
\[
u_{t}=\frac{\int(x-\mu_{t})\,f(x)\,p_{t}(\D x)}{\bnorm{\int(x-\mu_{t})\,f(x)\,p_{t}(\D x)}}\,.
\]
In the case where the denominator of $u_{t}$ is $0$, define $u_{t}$
to be an arbitrary unit vector. Let the control matrix be
\[
C_{t}=I_{n}-u_{t}u_{t}^{\T}\,.
\]
One can check $\D\int f(x)\,p_{t}(\D x)=0$, so $\int f(x)\,p_{t}(\D x)=0$
holds for all $t\in\R_{+}$ due to $\int f\,\D\nu=0$.

Let $v_{t}$ be the unit eigenvector that corresponds to the largest
eigenvalue of $B_{t}$. We prove the following lower bound on $B_{t}$
in terms of $v_{t}$:
\[
B_{t}\succeq\frac{t}{2}\,(I_{n}-v_{t}v_{t}^{\T})\quad\text{for all }t\geq0\,.
\]
This lower bound implies that as $t\rightarrow\infty$, the support
of $p_{t}$ tends to an at most one-dimensional affine subspace. 

Since $\mathrm{d}B_{t}=C_{t}\,\D t=(I-u_{t}u_{t}^{\T})\,\D t$, we
have
\[
B_{t}=\int_{0}^{t}(I_{n}-u_{s}u_{s}^{\T})\,\D s=tI_{n}-\int_{0}^{t}u_{s}u_{s}^{\T}\,\D s\,.
\]
Let $\lambda_{1}\geq\lambda_{2}\geq\cdots\geq\lambda_{n}\geq0$ be
the eigenvalues of the PSD matrix $\int_{0}^{t}u_{s}u_{s}^{\T}\,\D s$,
and $\eta_{1},\cdots,\eta_{n}$ the corresponding orthonormal eigenvectors.
Notice that $v_{t}=\eta_{1}$, and since $\tr(\int_{0}^{t}u_{s}u_{s}^{\T}\,\D s)=\int_{0}^{t}\norm{u_{s}}^{2}\,\D t=t$,
we have $\lambda_{2}\leq t/2$ and that $\lambda_{1}\leq t$. It follows
that 
\[
tI_{n}-\int_{0}^{t}u_{s}u_{s}^{\T}\,\mathrm{d}s=\frac{t}{2}\,I_{n}+\sum_{i=1}^{n}\frac{t}{2}\,\eta_{i}\eta_{i}^{\T}-\sum_{i=1}^{n}\lambda_{i}\eta_{i}\eta_{i}^{\T}\succeq\frac{t}{2}\,(I_{n}-\eta_{1}\eta_{1}^{\T})\,.
\]
This finishes the proof of the claim.

Lemma~\ref{lem:op_norm} states that with high probability, $A_{t_{0}}\preceq2I$
for $t_{0}\asymp\log^{-2}n$. Conditioning on $p_{t_{0}}$, we have
from Lemma~\ref{lem:all_t} that 
\[
\E[v_{t_{0}}^{\T}A_{\infty}v_{t_{0}}\,|\,A_{t_{0}}]\leq v_{t_{0}}^{\T}A_{t_{0}}v_{t_{0}}\leq2\,.
\]
Taking expectation again, we obtain $\E[v_{t_{0}}^{\T}A_{\infty}v_{t_{0}}]\leq2$,
which implies that $v_{t_{0}}^{\T}A_{\infty}v_{t_{0}}=O(\log^{2}n)$
with probability at least $1-O(1/\log^{2}n)$ due to Markov's inequality.
Under this event, for any unit vector $v\in\R^{n}$, we write $v=u+w$
where $u$ is parallel to $v_{t_{0}}$ and $w\bot v_{t_{0}}$. It
follows that 
\begin{align*}
v^{\T}A_{\infty}v & \leq2u^{\T}A_{\infty}u+2w^{\T}A_{\infty}w\\
 & \underset{(i)}{\leq}2C\log^{2}n+2w^{\T}B_{t_{0}}^{-1}w\\
 & \underset{(ii)}{\leq}2C\log^{2}n+2C\log^{2}n\,w^{\T}(I-v_{t_{0}}v_{t_{0}}^{\T})^{-1}\,w=O(\log^{2}n),
\end{align*}
where $(i)$ follows from Lemma~\ref{lem:strongly_cov} and that
$B_{t}$ is increasing with $t$, and $(ii)$ uses the claim above
with the inverse interpreted as being taken in the $(n-1)$-dimensional
subspace orthogonal to $v_{t_{0}}$. 

The proof of the last part of the theorem, going from logconcave to
log-affine is not based on stochastic localization. We take the logconcave
measure $q$ from the above construction with one-dimensional support,
and can either apply the last part of the proof of Lemma \ref{lem:LOCALIZATION}
or apply the argument of Fradelizi and Guedon \cite{fradelizi2004extreme}
(Step 3 in the proof of their Theorem 1). The latter decomposes $q$
into log-affine distributions while maintaining that zero integral
condition. Any decomposition of a $1$-dimensional measure cannot
increase its variance. The conclusion follows.
\end{proof}

\subsection{Warm-up II: polylog KLS with operator norm control}

Though Klartag showed $\cpi(n)\asymp\psi_{\kls}^{2}(n)=O(\log n)$
in \cite{Klartag23log}, we can give a crisp proof of a logarithmic
bound on the KLS constant using the control on the operator norm from
Lemma~\ref{lem:op_norm}. 
\begin{thm}
$\cpi(n)\asymp\psi_{\kls}^{2}(n)=O(\log^{2}n)$.
\end{thm}

\begin{proof}
Let $\nu$ be an isotropic logconcave distribution with density $p$
in $\R^{n}$, and let $S$ be a measurable subset of $\R^{n}$ with
$\nu(S)=1/2$. Our goal is to lower bound the isoperimetric ratio
of $S$, i.e., the fraction 
\[
\frac{p(\partial S)}{p(S)}\,.
\]
Now consider SL starting at $p_{0}=p$ reaching $p_{t}$ at time $t$,
and denote by $\nu_{t}$ the probability measure with density $p_{t}$.
Using the $t$-strong logconcavity of $p_{t}$ and the Brascamp-Lieb
inequality (\ref{eq:Brascamp-Lieb}), we know that 
\[
\frac{p_{t}(\partial S)}{p_{t}(S)}\apprge t^{1/2}\,.
\]

Since $p_{t}$ is a martingale, $\E p_{t}(S)=p_{0}(S)$ and $\E p_{t}(\partial S)=p_{0}(\partial S)$.
Then,
\begin{align}
p_{0}(\partial S) & =\E p_{t}(\partial S)\apprge t^{1/2}\,\E p_{t}(S)\nonumber \\
 & \apprge t^{1/2}\,\P\bpar{\frac{1}{4}\le p_{t}(S)\le\frac{3}{4}}\,.\label{eq:ptS_bound}
\end{align}
It would suffice to show that $p_{t}(S)$, whose expectation is $p_{0}(S)=1/2$,
remains bounded away from $0$ and $1$. To this end, consider how
$p_{t}(S)$ changes:
\[
\D p_{t}(S)=\D\int_{S}p_{t}(x)\,dx=\int_{S}\D p_{t}(x)=\int_{S}\langle x-\mu_{t},\D W_{t}\rangle p_{t}(x)\,\D x=\Bpar{\int_{S}(x-\mu_{t})\,p_{t}(x)\,\D x}\,\D W_{t}\,,
\]
and also compute its quadratic variation: using the CS inequality
\begin{align*}
\D[p_{t}(S)]_{t} & =\Bnorm{\int_{S}(x-\mu_{t})\,p_{t}(x)\,\D x}^{2}\,\D t\le\sup_{v:\norm v_{2}\le1}\left(\int_{S}v^{\T}(x-\mu_{t})\,p_{t}(x)\,\D x\right)^{2}\,\D t\\
 & \le\sup_{v:\norm v\le1}\Bpar{\int_{S}\inner{v,x-\mu_{t}}^{2}\,p_{t}(x)\,\D x\cdot\int_{S}p_{t}(x)\,\D x}\,\D t\\
 & \le\sup_{v:\norm v\le1}v^{\T}\Bpar{\int(x-\mu_{t})^{\otimes2}\,p_{t}(x)\,\D x}v\,\D t\\
 & =\norm{A_{t}}\,\D t\,.
\end{align*}
Hence,
\[
[p_{t}(S)]_{t}\le t\,\sup_{0\le s\le t}\norm{A_{t}}\,.
\]
By Theorem~\ref{thm:opnorm-control}, we have that for $t_{0}\asymp\log^{-2}n$,
with probability at least $1-e^{-1/(ct_{0})}$, we have $\sup_{t\leq t_{0}}\norm{A_{t}}\le2$,
and thus $[p_{t_{0}}(S)]_{t}\le2t_{0}$. Finally, by Freedman's inequality
(Lemma~\ref{lem:freedman}), 
\[
\abs{p_{t}(S)-p_{0}(S)}\le2\sqrt{2t_{0}}\le0.01\ (\mbox{say)}
\]
with probability at least $1-e^{-2}$. Therefore, $p_{t}(S)\in[1/4,3/4]$
with probability at least (say) $1/2$. Using our earlier inequality
(\ref{eq:ptS_bound}), 
\[
\frac{p_{0}(\partial S)}{p_{0}(S)}\apprge\sqrt{t_{0}}\asymp\frac{1}{\log n}\,,
\]
which completes the proof.
\end{proof}
\selectlanguage{english}%

\section{The KLS Conjecture \label{sec:KLS-bound}}

One of the main purposes of this survey is to provide a unified and
gentle introduction --- together with essentially self-contained
proofs --- of how SL has led to recent breakthroughs on three longstanding
conjectures in asymptotic convex geometry: the KLS conjecture, the
thin-shell conjecture, and the slicing conjecture. We attempt to seamlessly
interleave arguments from Klartag and Lehec's lecture notes \cite{KL24isop},
Klartag's bound on the KLS constant \cite{Klartag23log}, Guan's breakthrough
on the uniform control of the trace along SL \cite{guan2024note},
and Bizeul's simplified proof of the slicing conjecture \cite{bizeul2025slicing}.
To make the exposition reader-friendly, we include preliminaries on
semigroup theory in \S\ref{sec:semigroup} and a useful perspective
on connections between SL and the $\ps$ ($\PS$) in \S\ref{sec:PS-SL}.

We start with Klartag's $\O(\log n)$ bound on $\cpi(n)$ (equivalently,
$\cpi(\pi)\lesssim\norm{\Sigma}\log n$) \cite{Klartag23log}. Although
not strictly necessary, readers may first consult the introduction
to the $\PS$ in \S\ref{sec:PS-SL}, since many of the technical
complications (when working with the SL) become simpler by working
with the $\PS$ language --- we use the notation 
\[
\pi=\pi^{X},\quad\pi^{Y}=\pi^{X}*\gamma_{h},\quad\bs{\pi}(x,y)\propto\pi(x)\,\gamma_{h}(y-x)
\]
and the $X$-conditional for given $Y=y$ is
\[
\pi^{X|Y=y}:=\bm{\pi}^{X|Y=y}=\bm{\pi}^{X,Y}/\pi^{Y}(y).
\]
The proof outline is to (1) relate $\cpi(\pi)$ to some \emph{average}
of $\cpi(\pi^{X|Y=y})$, (2) apply the improved Lichnerowicz inequality
\eqref{eq:improved-LI} --- which states $\cpi(\mu)\leq(\norm{\cov\mu}/t)^{1/2}$
for $t$-strongly logconcave distributions $\mu$ (see \S\ref{sec:semigroup})
--- and (3) use SL to bound $\norm{\Sigma_{\pi^{X|Y}}}$. Going forward,
we use $\cov\pi$ and $\Sigma_{\pi}$ to denote the covariance matrix
of a distribution $\pi$.

\subsection{Gaussian localization}

The following approach, called ``Gaussian localization'' in \cite{Klartag23log},
is essentially just the backward step of the $\PS$. The key idea
is to bound $\cpi(\pi^{X|Y})$ rather than $\cpi(\pi^{X})=\cpi(\pi)$,
because $\pi^{X|Y}$ admits a good bound on its $\cpi$ via \eqref{eq:improved-LI}.

\paragraph{Law of total variance.}

Viewing $\pi^{X}=\E_{y\sim\pi^{Y}}[\pi^{X|Y=y}]$ as a mixture of
the distribution $\pi^{X|Y=y}$ over $y$, we decompose the variance
using the law of total variance as follows:
\begin{equation}
\var_{\pi^{X}}f=\E_{y\sim\pi^{Y}}\var_{\pi^{X|Y=y}}f+\var_{y\sim\pi^{Y}}(\E_{\pi^{X|Y=y}}f)=\E_{\pi^{Y}}\var_{\pi^{X|Y=y}}f+\var_{\pi^{Y}}Q_{h}^{*}f\,,\label{eq:total-variance}
\end{equation}
where $Q_{h}^{*}$ is the heat adjoint (see \S\ref{sec:PS-SL}).

\paragraph{Relating $\protect\cpi(\pi^{X})$ and $\protect\cpi(\pi^{X|Y=y})$.}

We then bound the second term by a multiple of the first term, sandwiching
$\var_{\pi^{X}}f$ in terms of the first term. To do so, we first
compute the gradient of $\pi^{X|Y=y}$ (i.e., $\PS$'s backward step).
\begin{prop}
[Gradient of $\pi^{X|Y=y}$] \label{prop:back-grad} For $\pi^{X|Y=y}(x)=\bs{\pi}(x,y)/\pi^{Y}(y)$,
\[
\nabla_{y}\pi^{X|Y=y}=\frac{x-b_{h}}{h}\,\pi^{X|Y=y}\,,
\]
where $b_{h}$ is the barycenter of $\pi^{X|Y=y}$ (i.e., $b_{h}=b_{h}(y)=\E_{\pi^{X|Y=y}}X$).
\end{prop}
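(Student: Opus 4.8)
The plan is to differentiate the explicit formula for the conditional density directly in the variable $y$, using the logarithmic-derivative trick. Writing $\bs{\pi}(x,y)\propto\pi(x)\,\gamma_h(y-x)$ and $\pi^{Y}(y)=\int\pi(x')\,\gamma_h(y-x')\,\D x'$ with the same normalizing constant, we have
\[
\pi^{X|Y=y}(x)=\frac{\pi(x)\,\gamma_h(y-x)}{\int\pi(x')\,\gamma_h(y-x')\,\D x'}\,,
\]
which is a smooth function of $y$ for each fixed $x$. Taking logarithms, $\log\pi^{X|Y=y}(x)=\log\pi(x)+\log\gamma_h(y-x)-\log\pi^{Y}(y)$, so that $\nabla_{y}\log\pi^{X|Y=y}(x)=\nabla_{y}\log\gamma_h(y-x)-\nabla_{y}\log\pi^{Y}(y)$, and it remains to compute the two pieces.

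Since $\gamma_h(z)\propto\exp(-\norm z^{2}/(2h))$, the first piece is immediate: $\nabla_{y}\log\gamma_h(y-x)=(x-y)/h$. For the second piece I would differentiate under the integral sign — justified by dominated convergence, since $\pi$ is logconcave (hence has at most exponential tails and finite moments) while the Gaussian kernel and its $y$-gradient decay superpolynomially — to obtain
\[
\nabla_{y}\log\pi^{Y}(y)=\frac{\int\pi(x')\,\nabla_{y}\gamma_h(y-x')\,\D x'}{\pi^{Y}(y)}=\int\frac{x'-y}{h}\,\pi^{X|Y=y}(x')\,\D x'=\frac{b_{h}-y}{h}\,,
\]
where $b_{h}=\E_{\pi^{X|Y=y}}X$ is exactly the barycenter in the statement. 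Combining the two pieces gives $\nabla_{y}\log\pi^{X|Y=y}(x)=(x-y)/h-(b_{h}-y)/h=(x-b_{h})/h$, and multiplying through by $\pi^{X|Y=y}(x)$ yields the claimed identity.

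The only genuine technical point is the interchange of derivative and integral in the computation of $\nabla_y\log\pi^Y(y)$; everything else is a one-line manipulation, so I do not expect any real obstacle. As an alternative framing that makes the result transparent, one can note that $\{\pi^{X|Y=y}\}_y$ is a natural exponential family in the parameter $y/h$ with sufficient statistic $x$ and carrier $\pi(x)\,\e^{-\norm x^2/(2h)}$; the identity is then just the standard fact that the score equals the statistic minus its mean, with the mean being $b_h$.
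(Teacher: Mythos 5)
Your proof is correct and follows essentially the same route as the paper: differentiate the explicit conditional density in $y$ and track the contribution from the normalizing term $\pi^{Y}(y)$. The paper applies the quotient rule to $\pi^{X|Y=y}=\bs{\pi}(x,y)/\pi^{Y}(y)$ directly whereas you take a logarithmic derivative first, but this is a cosmetic difference; the key step in both is computing $\nabla_{y}\pi^{Y}$ by differentiating the Gaussian kernel under the integral, and your exponential-family/score remark is a nice one-line sanity check.
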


\begin{proof}
We compute the gradient of Gaussian convolution: for any function
$f$,
\begin{align*}
\nabla_{y}\int f(x)\,\gamma_{h}(y-x)\,\D x & =\frac{1}{(2\pi h)^{n/2}}\,\nabla_{y}\int\exp\bpar{-\frac{1}{2h}\,\norm{y-x}^{2}}\,f(x)\,\D x\\
 & =-\frac{1}{(2\pi h)^{n/2}}\,\int\frac{y-x}{h}\,\exp\bpar{-\frac{1}{2h}\,\norm{y-x}^{2}}\,f(x)\,\D x\\
 & =-\int\frac{y-x}{h}\,f(x)\,\gamma_{h}(y-x)\,\D x\,,
\end{align*}
so $\grad_{y}\pi^{Y}=\grad_{y}(\pi^{X}*\gamma_{h})=-\int\frac{y-x}{h}\,\pi^{X}(x)\,\gamma_{h}(y-x)\,\D x$.

Taking the gradient of $\pi^{X|Y=y}(x)=\pi^{X}(x)\,\gamma_{h}(y-x)/\pi^{Y}(y)$,
\begin{align*}
\nabla_{y}\pi^{X|Y=y}(x) & =-\frac{y-x}{h}\,\frac{\pi^{X}(x)\,\gamma_{h}(y-x)}{\pi^{Y}(y)}-\frac{\pi^{X}(x)\,\gamma_{h}(y-x)}{\pi^{Y}(y)}\,\frac{\nabla_{y}\pi^{Y}}{\pi^{Y}(y)}\\
 & =-\frac{y-x}{h}\,\frac{\pi^{X}(x)\,\gamma_{h}(y-x)}{\pi^{Y}(y)}+\frac{\pi^{X}(x)\,\gamma_{h}(y-x)}{\pi^{Y}(y)}\int\frac{y-x}{h}\,\frac{\pi^{X}(x)\,\gamma_{h}(y-x)}{\pi^{Y}(y)}\,\D x\\
 & =-\frac{y-x}{h}\,\pi^{X|Y=y}+\pi^{X|Y=y}\int\frac{y-x}{h}\,\pi^{X|Y=y}\,\D x\\
 & \underset{(i)}{=}\pi^{X|Y=y}\,\bpar{-\frac{y-x}{h}+\frac{y-b_{h}}{h}}\\
 & =\frac{x-b_{h}}{h}\,\pi^{X|Y=y}\,,
\end{align*}
where $b_{h}$ denotes the barycenter of $\pi^{X|Y=y}$.
\end{proof}
\begin{lem}
For $h>0$, function $f:\R^{n}\rightarrow\R$ with finite second moment
and logconcave $\pi^{X}$ over $\Rn$,
\[
\E_{\pi^{Y}}\var_{\pi^{X|Y=y}}f\le\var_{\pi^{X}}f\leq\bpar{2+\frac{\cpi(\pi^{X})}{h}}\,\E_{\pi^{Y}}\var_{\pi^{X|Y=y}}f\,.
\]
\end{lem}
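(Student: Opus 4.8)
The plan is to read the statement straight off the law of total variance \eqref{eq:total-variance}, which already gives the left inequality for free: it says $\var_{\pi^{X}}f=\E_{\pi^{Y}}\var_{\pi^{X|Y=y}}f+\var_{\pi^{Y}}(Q_{h}^{*}f)$, and the last term is nonnegative. For the right inequality, subtracting $\E_{\pi^{Y}}\var_{\pi^{X|Y=y}}f$ from both sides of the target shows it is equivalent to
\[
\var_{\pi^{Y}}(Q_{h}^{*}f)\;\le\;\Bpar{1+\frac{\cpi(\pi^{X})}{h}}\,\E_{\pi^{Y}}\var_{\pi^{X|Y=y}}f\,,
\]
so the whole task reduces to estimating the variance of the ``backward mean'' $Q_{h}^{*}f(y)=\E_{\pi^{X|Y=y}}f$.

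I would bound this by applying the Poincar\'e inequality \eqref{eq:PI} to $\pi^{Y}$, i.e.\ $\var_{\pi^{Y}}(Q_{h}^{*}f)\le\cpi(\pi^{Y})\,\E_{\pi^{Y}}\norm{\grad Q_{h}^{*}f}^{2}$, and then control the two factors separately. For the constant, since $\pi^{Y}=\pi^{X}*\gamma_{h}$ and the Poincar\'e constant is subadditive under convolution (apply the law of total variance to the product $\pi^{X}\otimes\gamma_{h}$ on $\Rn\times\Rn$ and push forward by $(x,w)\mapsto x+w$), we get $\cpi(\pi^{Y})\le\cpi(\pi^{X})+\cpi(\gamma_{h})=\cpi(\pi^{X})+h$. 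For the gradient, Proposition~\ref{prop:back-grad} gives $\grad_{y}\pi^{X|Y=y}=\frac{x-b_{h}}{h}\,\pi^{X|Y=y}$, so differentiating under the integral sign and recentering by $\E_{\pi^{X|Y=y}}(X-b_{h})=0$,
\[
\grad_{y}Q_{h}^{*}f(y)=\int f(x)\,\frac{x-b_{h}}{h}\,\pi^{X|Y=y}(x)\,\D x=\frac{1}{h}\,\E_{\pi^{X|Y=y}}\bbrack{(X-b_{h})\,\bpar{f(X)-Q_{h}^{*}f(y)}}\,.
\]

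Testing this vector against a unit direction $u$ and applying Cauchy--Schwarz gives $\inner{u,\grad_{y}Q_{h}^{*}f(y)}^{2}\le h^{-2}\,u^{\T}(\cov\pi^{X|Y=y})u\cdot\var_{\pi^{X|Y=y}}f$, and maximizing over $u$ yields $\norm{\grad_{y}Q_{h}^{*}f(y)}^{2}\le h^{-2}\,\norm{\cov\pi^{X|Y=y}}\,\var_{\pi^{X|Y=y}}f$. Now $\pi^{X|Y=y}(x)\propto\pi^{X}(x)\,\exp\bpar{h^{-1}\inner{y,x}-\tfrac{1}{2h}\norm{x}^{2}}$ with $\pi^{X}$ logconcave, so Lemma~\ref{lem:strongly_cov} applied with $B=h^{-1}I_{n}$ gives $\cov\pi^{X|Y=y}\preceq hI_{n}$, hence $\norm{\grad_{y}Q_{h}^{*}f(y)}^{2}\le h^{-1}\var_{\pi^{X|Y=y}}f$. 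Taking $\E_{\pi^{Y}}$ and combining with the Poincar\'e step gives $\var_{\pi^{Y}}(Q_{h}^{*}f)\le\frac{\cpi(\pi^{X})+h}{h}\,\E_{\pi^{Y}}\var_{\pi^{X|Y=y}}f$, which is exactly the displayed bound, and adding back $\E_{\pi^{Y}}\var_{\pi^{X|Y=y}}f$ finishes the proof. The only step that is not mechanical is the convolution inequality $\cpi(\pi^{X}*\gamma_{h})\le\cpi(\pi^{X})+h$: it is classical, but it is precisely what makes the constant come out as the advertised $2+\cpi(\pi^{X})/h$ rather than something larger; everything else is the two quoted lemmas plus a single Cauchy--Schwarz.
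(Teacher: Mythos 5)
Your proof is correct and is essentially the paper's argument step for step: law of total variance for the left inequality, Poincar\'e for $\pi^{Y}$ combined with the convolution subadditivity $\cpi(\pi^{X}*\gamma_{h})\le\cpi(\pi^{X})+h$, the gradient identity from Proposition~\ref{prop:back-grad}, a single Cauchy--Schwarz, and the Brascamp--Lieb bound $\cov\pi^{X|Y=y}\preceq hI_{n}$ (which is exactly Lemma~\ref{lem:strongly_cov} with $B=h^{-1}I_{n}$). The only cosmetic difference is that you sketch the tensorization proof of the convolution subadditivity yourself, whereas the paper simply cites it.
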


\begin{proof}
Applying \eqref{eq:PI} to $Q_{h}^{*}f(y)$ and using $\cpi(\pi^{Y})=\cpi(\pi^{X}*\gamma_{h})\leq\cpi(\pi^{X})+h$
\cite{chafai04entropies},
\begin{equation}
\var_{\pi^{Y}}Q_{h}^{*}f\leq\cpi(\pi^{Y})\,\E_{\pi^{Y}}[\norm{\nabla_{y}Q_{h}^{*}f}^{2}]\leq\bpar{\cpi(\pi^{X})+h}\,\E_{\pi^{Y}}[\norm{\nabla_{y}Q_{h}^{*}f}^{2}]\,.\label{eq:bound-second-term}
\end{equation}
Using the formula of $\nabla_{y}\pi^{X|Y=y}$ above,
\[
\nabla_{y}Q_{h}^{*}f=\int f(x)\,\nabla_{y}\pi^{X|Y=y}(\D x)=\int\frac{x-b_{h}}{h}\,f(x)\,\pi^{X|Y=y}(\D x)\,.
\]
By Cauchy--Schwarz,
\[
\norm{\nabla_{y}Q_{h}^{*}f}^{2}=\Bnorm{\int\frac{x-b_{h}}{h}\,f(x)\,\pi^{X|Y=y}(\D x)}^{2}\leq\frac{1}{h^{2}}\,\norm{\Sigma_{\pi^{X|Y=y}}}\,\var_{\pi^{X|Y=y}}f\leq\frac{1}{h}\,\var_{\pi^{X|Y=y}}f\,,
\]
where the last inequality follows from that $\pi^{X|Y=y}$ is $h^{-1}$-strongly
logconcave (so $\norm{\Sigma_{\pi^{X|Y=y}}}\leq\cpi(\pi^{X|Y=y})\le h$
(say) by Brascamp--Lieb). Putting this bound back to \eqref{eq:bound-second-term},
\[
\var_{\pi^{Y}}Q_{h}^{*}f\leq\frac{\cpi(\pi^{X})+h}{h}\,\E_{\pi^{Y}}\var_{\pi^{X|Y=y}}f\,.
\]
Putting this back to the bound \eqref{eq:total-variance} from the
total law of variance,
\[
\var_{\pi^{X}}f=\E_{\pi^{Y}}\var_{\pi^{X|Y=y}}f+\var_{\pi^{Y}}Q_{h}^{*}f\leq\bpar{2+\frac{\cpi(\pi^{X})}{h}}\,\E_{\pi^{Y}}\var_{\pi^{X|Y=y}}f\,,
\]
which completes the proof.
\end{proof}
We now convert this ``variance coupling'' to a ``$\cpi$ coupling''
between $\pi^{X}$ and $\pi^{X|Y}$.
\begin{cor}
For $h>0$,
\[
\cpi(\pi^{X})\lesssim\bpar{1+\frac{\cpi(\pi^{X})}{h}}\,\E_{\pi^{Y}}\cpi(\pi^{X|Y=y})\,.
\]
Therefore,
\[
\cpi(\pi^{X})\lesssim\bpar{\sqrt{h}+\frac{\cpi(\pi^{X})}{\sqrt{h}}}\,\E_{\pi^{Y}}\sqrt{\norm{\Sigma_{\pi^{X|Y=y}}}}\,.
\]
\end{cor}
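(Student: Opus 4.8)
The plan is to combine the Poincar\'e inequality for the conditionals $\pi^{X|Y=y}$ with the variance coupling of the preceding lemma, and, for the second estimate, to invoke the improved Lichnerowicz inequality on each $\pi^{X|Y=y}$.

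First I would record that $\pi^{X|Y=y}$ is $h^{-1}$-strongly logconcave: since $-\log\pi^{X|Y=y}(x)=-\log\pi^X(x)+\tfrac{1}{2h}\,\norm{y-x}^2+\mathrm{const}$ and $\pi^X$ is logconcave, its Hessian in $x$ is $\succeq h^{-1}I$. Then, fixing a locally-Lipschitz $f$, I apply \eqref{eq:PI} to each conditional, $\var_{\pi^{X|Y=y}}f\le\cpi(\pi^{X|Y=y})\,\E_{\pi^{X|Y=y}}\norm{\nabla f}^2$, take $\E_{\pi^Y}$, and regroup the resulting double expectation by the tower property $\E_{\pi^Y}\E_{\pi^{X|Y=y}}[\,\cdot\,]=\E_{\pi^X}[\,\cdot\,]$ (the $X$-marginal of $\bs{\pi}$ is $\pi^X$): this gives $\E_{\pi^Y}\var_{\pi^{X|Y=y}}f\le\E_{\pi^X}\bbrack{\kappa(X)\,\norm{\nabla f(X)}^2}$, where $\kappa(x)$ is the conditional average of the constants $\cpi(\pi^{X|Y=y})$ given $X=x$ under the joint $\bs{\pi}$, so that $\E_{\pi^X}\kappa=\E_{\pi^Y}\cpi(\pi^{X|Y=y})$. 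Combining this with the preceding lemma, using $2+r\le2(1+r)$, and replacing the weight $\kappa(X)$ inside the integral by a constant multiple of its mean then yields the first inequality $\cpi(\pi^X)\lesssim(1+\cpi(\pi^X)/h)\,\E_{\pi^Y}\cpi(\pi^{X|Y=y})$.

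The second inequality would then be immediate: each $\pi^{X|Y=y}$ is $h^{-1}$-strongly logconcave, so the improved Lichnerowicz inequality \eqref{eq:improved-LI} gives $\cpi(\pi^{X|Y=y})\le\bpar{h\,\norm{\Sigma_{\pi^{X|Y=y}}}}^{1/2}$; taking $\E_{\pi^Y}$ and substituting into the first inequality produces $\cpi(\pi^X)\lesssim(\sqrt h+\cpi(\pi^X)/\sqrt h)\,\E_{\pi^Y}\sqrt{\norm{\Sigma_{\pi^{X|Y=y}}}}$.

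The step I expect to be the real obstacle is the one phrased blandly above as ``replacing the weight $\kappa(X)$ by a multiple of its mean'': one cannot in general pull a $y$-dependent factor out of the $y$-conditional energy integral at the cost of only its $\pi^Y$-average, so the naive bound produces $\sup_y\cpi(\pi^{X|Y=y})$ rather than $\E_{\pi^Y}\cpi(\pi^{X|Y=y})$. Closing this gap is where the stochastic-localization input enters: one uses that $\cpi(\pi^{X|Y=y})\asymp(h\,\norm{\Sigma_{\pi^{X|Y=y}}})^{1/2}$ is tightly concentrated in $y$ --- the operator-norm control of Lemma~\ref{lem:op_norm} --- with the low-probability exceptional $y$'s absorbed by the deterministic Brascamp--Lieb bound $\cpi(\pi^{X|Y=y})\le h$.
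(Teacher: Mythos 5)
The first half of your argument has a genuine gap, and it is precisely the one you flag at the end. After applying \eqref{eq:PI} conditionally and using the tower property, you arrive at $\E_{\pi^X}[\kappa(X)\,\norm{\nabla f(X)}^2]$ and then need to replace the $\pi^Y$-random weight by its mean. There is no way to do this for an \emph{arbitrary} test function $f$ without additional, quantitative control on the fluctuations of $y\mapsto\cpi(\pi^{X|Y=y})$ — exactly as you observe. Your proposed patch (invoke the SL operator-norm control of Lemma~\ref{lem:op_norm} to say the constants are concentrated, then absorb the tail by the Brascamp--Lieb bound $\cpi(\pi^{X|Y=y})\le h$) is not how the paper proceeds, and it is also logically out of order: the entire point of this corollary is to \emph{reduce} bounding $\cpi(\pi^X)$ to bounding $\E_{\pi^Y}\norm{\Sigma_{\pi^{X|Y=y}}}$, which is then established by the SL analysis of Theorem~\ref{thm:cov-bound}. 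Using that analysis already inside this corollary would collapse the reduction into a circle (and, independently, concentration of $\cpi(\pi^{X|Y=y})$ in $y$ does not directly control $\kappa(X)$, which is a conditional average over a different fibration).

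The missing idea is Milman's saturation theorem, cited in the paper's proof: for a logconcave $\pi^X$ there exists a single \emph{$1$-Lipschitz} function $f$ with $\var_{\pi^X}f\asymp\cpi(\pi^X)$. Plugging this specific $f$ into the variance-coupling lemma removes the obstruction entirely: the energy term $\E_{\pi^{X|Y=y}}[\norm{\nabla f}^2]\le 1$ uniformly in $y$, so $\var_{\pi^{X|Y=y}}f\le\cpi(\pi^{X|Y=y})$ with no weight to pull out. One then reads off
\[
\cpi(\pi^X)\asymp\var_{\pi^X}f\le\Bpar{2+\frac{\cpi(\pi^X)}{h}}\,\E_{\pi^Y}\var_{\pi^{X|Y=y}}f\le\Bpar{2+\frac{\cpi(\pi^X)}{h}}\,\E_{\pi^Y}\cpi(\pi^{X|Y=y})\,,
\]
and the first claim follows. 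Your derivation of the second inequality from the first — applying \eqref{eq:improved-LI} to each $h^{-1}$-strongly logconcave conditional and factoring out $\sqrt h$ — is correct and matches the paper.
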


\begin{proof}
The variance-coupling lemma tells us that
\[
\var_{\pi^{X}}f\leq\bpar{1+\frac{\cpi(\pi^{X})}{h}}\,\E_{\pi^{Y}}\var_{\pi^{X|Y=y}}f\,.
\]
For the LHS, using Milman \cite{Milman2009}, for a logconcave probability
measure $\pi^{X}$ over $\Rn$, there exists a $1$-Lipschitz function
$f:\Rn\to\R$ such that (for some universal constant hidden in $\asymp$
below)
\[
\var_{\pi^{X}}f\asymp\cpi(\pi^{X})\,.
\]
Taking this $1$-Lipschitz function $f$, we can lower bound the LHS
by $\cpi(\pi^{X})$ (up to some constant).

As for the RHS, we apply \eqref{eq:PI} to $f$ and $\pi^{X|Y=y}$
to obtain that
\[
\var_{\pi^{X|Y=y}}f\le\cpi(\pi^{X|Y=y})\,\E_{\pi^{X|Y=y}}[\norm{\nabla f}^{2}]\leq\cpi(\pi^{X|Y=y})\,.
\]
Combining these two bounds on the LHS and RHS, 
\[
\cpi(\pi^{X})\lesssim\bpar{1+\frac{\cpi(\pi^{X})}{h}}\,\E_{\pi^{Y}}\cpi(\pi^{X|Y})\,.
\]

For the second claim, we apply \eqref{eq:improved-LI} to $\cpi(\pi^{X|Y})$
in the inequality above:
\begin{align*}
\cpi(\pi^{X}) & \lesssim\bpar{1+\frac{\cpi(\pi^{X})}{h}}\,\E_{\pi^{Y}}\cpi(\pi^{X|Y})\leq\bpar{1+\frac{\cpi(\pi^{X})}{h}}\,\E_{\pi^{Y}}\sqrt{h\,\norm{\Sigma_{\pi^{X|Y=y}}}}\\
 & =\bpar{\sqrt{h}+\frac{\cpi(\pi^{X})}{\sqrt{h}}}\,\E_{\pi^{Y}}\sqrt{\norm{\Sigma_{\pi^{X|Y=y}}}}\,,
\end{align*}
which completes the proof.
\end{proof}
Now it suffices to bound $\E_{\pi^{Y}}\norm{\Sigma_{\pi^{X|Y=y}}}$
since 
\[
\E_{\pi^{Y}}\sqrt{\norm{\Sigma_{\pi^{X|Y=y}}}}\leq\sqrt{\E_{\pi^{Y}}\norm{\Sigma_{\pi^{X|Y=y}}}}\,.
\]

\paragraph{SL for operator-norm control.}

We will show the following operator-norm control presently. 
\begin{thm}
\label{thm:cov-bound} Assume $\pi^{X}$ is isotropic. For $\pi_{h}^{X|Y=y}=\pi^{X|Y=y}$for
some $h>0$, let 
\[
s_{0}=\min\{s>0:\E_{\pi^{Y}}\norm{\Sigma_{\pi_{h}^{X|Y=y}}}\leq5\ \text{for all }h\geq s\}\,.
\]
Then, $s_{0}\lesssim\log^{2}n$. Moreover, there is an example in
which $s_{0}\gtrsim\log n$.
\end{thm}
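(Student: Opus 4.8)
The plan is to first recognize $\E_{\pi^Y}\norm{\Sigma_{\pi_h^{X|Y=y}}}$ as an expectation along the SL process of \eqref{eq:SL-pdf}, started from the isotropic $\pi=\pi^X$ with control $C_t=I$. Indeed $\pi^{X|Y=y}(x)\propto\pi^X(x)\exp\bpar{\tfrac1h\inner{y,x}-\tfrac1{2h}\norm x^2}$ is exactly the density $p_t$ with $t=1/h$ and $c_t=ty$; and since $Y=X+\sqrt h\,Z$ with $X\sim\pi^X$, $Z\sim\gamma_1$, one has $y/h=tX+\sqrt t\,Z\overset{d}{=}tX+W_t$, which is the (standard) law of $c_t$ along \eqref{eq:dBt} --- this is checked directly by comparing covariances, or via Girsanov; see also \S\ref{sec:PS-SL}. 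Consequently $\E_{\pi^Y}\norm{\Sigma_{\pi_h^{X|Y=y}}}=\E\,\norm{A_{1/h}}$, and the theorem reduces to showing (a) $\E\norm{A_t}\le5$ for all $t\le c/\log^2 n$, and (b) an example with $\E\norm{A_{t^\star}}>5$ at some $t^\star\gtrsim1/\log n$.

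\textbf{Step 2: the upper bound via moments of $\Sigma_t$ (the crux).} This recovers Klartag's $\O(\log n)$ bound \cite{Klartag23log}; I would follow the It\^o-calculus route as in \cite{LV24eldan}. Fix an even $q\asymp\log n$ and track $\Phi_t:=\tr(\Sigma_t^q)$, so $\Phi_0=n$. From $\D\Sigma_t=(\text{martingale})-\Sigma_t^2\,\D t$ (Lemma~\ref{lem:dA} with $C_t=I$) and It\^o's formula, the drift of $\Phi_t$ equals $-q\tr(\Sigma_t^{q+1})\,\D t$ plus an It\^o correction of the form
\[
\tfrac q2\sum_{k=0}^{q-2}\sum_m\tr\bpar{\Sigma_t^k\,v_t^{(m)}\,\Sigma_t^{q-2-k}\,v_t^{(m)}}\,\D t\,,\qquad (v_t^{(m)})_{ij}=\E_{p_t}\bbrack{(X-\mu_t)_i(X-\mu_t)_j(X-\mu_t)_m}\,.
\]
Dropping the negative first term, I would bound the correction using only logconcavity of $p_t$: the reverse H\"older inequality (Borell's lemma) applied to linear functionals of $p_t$ controls the third-moment tensor $(v_t^{(m)})$ by the covariance, and combined with the a priori bound $\E\tr(\Sigma_t^2)=\O(n)$ (one of the collected properties of SL) this yields, on the event $\{\norm{\Sigma_s}\le4\text{ for all }s\le t\}$, a drift estimate $\tfrac{\D}{\D t}\E\Phi_t\lesssim q^2\,\E\Phi_t$. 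Making this precise against the stopping time $\tau:=\inf\{t:\norm{\Sigma_t}>4\}$, Gr\"onwall gives $\E\Phi_{t\wedge\tau}\le n\exp(Cq^2 t)$. I expect the bookkeeping of this It\^o correction --- choosing the right norm of the third-moment tensor so the bound closes at an $O(q^2)$ rate rather than a dimension-dependent one --- to be the main obstacle.

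\textbf{Step 3: conclude the upper bound.} Take $q=\lceil2\log n\rceil$ and $t\le c/\log^2 n$ for a small absolute constant $c$, so that $\E\Phi_{t\wedge\tau}\le2n$. Markov's inequality then gives $\Par{\tau\le t}\le\E\norm{\Sigma_{t\wedge\tau}}^q/4^q\le2n/4^q=n^{-\Omega(1)}$. On $\{\tau>t\}$ one has $\norm{\Sigma_t}\le4$, while always $\norm{A_t}\le\norm{B_t^{-1}}=1/t\lesssim\log^2 n$ by Lemma~\ref{lem:strongly_cov}; hence $\E\norm{A_t}\le4+\log^2 n\cdot(2n/4^q)\le5$ for all $n$ past an absolute constant, the finitely many small dimensions being absorbed into $\lesssim$ (or handled via $\E\norm{A_t}\le\E\tr A_t\le n$ from Lemma~\ref{lem:all_t}). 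This also reproves Lemma~\ref{lem:op_norm}, and shows $s_0\lesssim\log^2 n$.

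\textbf{Step 4: the lower bound.} It suffices to exhibit one isotropic logconcave $\pi$ and one time $t^\star\asymp1/\log n$ with $\E\norm{A_{t^\star}}>5$. I would take $\pi$ to be the $n$-fold product of a normalized one-dimensional Laplace (two-sided exponential) density $\mu$. Because the SL with $C_t=I$ factorizes over coordinates, $A_t=\diag(a_t^{(1)},\dots,a_t^{(n)})$ with $a_t^{(i)}=\var(p_t^{(i)})$, $p_t^{(i)}(x)\propto\mu(x)\exp\bpar{c_t^{(i)}x-\tfrac t2x^2}$, and the $c_t^{(i)}$ are i.i.d.\ one-dimensional SL processes. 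The variance $a_t^{(i)}$ inflates to $\asymp1/t$ once $c_t^{(i)}$ reaches the critical tilt of $\mu$ (up to a margin $\O(\sqrt t)$), the Gaussian term $e^{-tx^2/2}$ being too weak to prevent this when $t\ll1$. Since $\max_i|c_t^{(i)}|$ behaves like $\sqrt{2t\log n}$ (with a positive drift term only hastening the approach), one can choose $t^\star\asymp1/\log n$ so that with probability $\Omega(1)$ some coordinate has reached criticality, forcing $\norm{A_{t^\star}}=\max_i a_{t^\star}^{(i)}\gtrsim1/t^\star\asymp\log n$ and hence $\E\norm{A_{t^\star}}\gtrsim\log n>5$. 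Thus $s_0\ge1/t^\star\gtrsim\log n$, so the method genuinely loses a logarithmic factor.
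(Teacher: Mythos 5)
Your Step 1 (translating $\E_{\pi^Y}\|\Sigma_{\pi^{X|Y}}\|$ to $\E\|\Sigma_{1/h}\|$ along the SL process) is correct and is exactly the bridge the paper builds through the proximal-sampler correspondence. The trouble is Step 2, which is where the actual work is, and as written it has a genuine gap rather than a ``bookkeeping'' issue.

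First, the ``a priori bound $\E\tr(\Sigma_t^2)=\O(n)$'' you invoke is Guan's trace bound (Theorem~\ref{thm:trace-control}), which in this paper is proved \emph{after} and \emph{using} the operator-norm control you are trying to establish; citing it here is circular. Second, and more fundamentally, the drift estimate $\frac{\D}{\D t}\E\Phi_t\lesssim q^2\,\E\Phi_t$ does not close with the tools available. The It\^o correction for $\Phi_t=\tr(\Sigma_t^q)$ is bounded (via Lemma~\ref{lem:swap-MHMH}) by $\tfrac{q(q-1)}{2}\tr\bigl(\Sigma_t^{q-2}\sum_m H_m^2\bigr)\le\tfrac{q(q-1)}{2}\|\sum_m H_m^2\|\,\tr(\Sigma_t^{q-2})$, and there is no way to control $\tr(\Sigma_t^{q-2})$ by $\tr(\Sigma_t^q)=\Phi_t$: when eigenvalues of $\Sigma_t$ are small (which inevitably happens as SL kills directions), $\tr(\Sigma_t^{q-2})$ can be arbitrarily larger than $\tr(\Sigma_t^q)$. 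Also note that Borell/reverse-H\"older alone (Lemma~\ref{lem:3tensor-1}) only gives $\sup_\theta\|\sum_m\theta_m H_m\|\lesssim\|\Sigma_t\|^{3/2}$; the bound $\|\sum_m H_m^2\|\lesssim\|\Sigma_t\|^{5/2}/t^{1/2}$ that the drift computation actually needs is Lemma~\ref{lem:3tensor-2}, which requires the improved Lichnerowicz inequality (i.e., $t$-strong logconcavity of $p_t$), not just logconcavity.

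The paper avoids both obstacles by working with Klartag's proxy $h(M)=\frac1\beta\log\tr e^{\beta M}$ rather than $\tr(M^q)$. Two features make this work: (i) its Hessian bound (Lemma~\ref{lem:second-order-bound}) carries only one power of $\beta$, $\hess h(M)[H,H]\le\beta\,\tr\bigl(\frac{e^{\beta M}}{\tr e^{\beta M}}H^2\bigr)$, whereas $\tr(M^q)$ produces $q(q-1)$; and (ii) the ``density matrix'' $G_t=e^{\beta\Sigma_t}/\tr e^{\beta\Sigma_t}$ has $\tr G_t=1$, so the $(1,\infty)$-H\"older step $\tr(G_t\sum_m H_m^2)\le\|\sum_m H_m^2\|$ yields an operator-norm bound with \emph{no} stray trace factor. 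With the $\|\Sigma_s\|^{5/2}/s^{1/2}$ bound of Lemma~\ref{lem:3tensor-2}, the drift integrates to $\Theta(\beta t^{1/2})$, and taking $\beta\asymp\log n$, $t\asymp\log^{-2}n$ keeps it $O(1)$; Freedman's inequality then controls the martingale part. This is precisely the ingredient your $\tr(\Sigma_t^q)$ proxy lacks: its gradient $q\Sigma_t^{q-1}$ is not normalized, so the analogous H\"older step leaves a factor $\tr(\Sigma_t^{q-2})$ that cannot be absorbed. (Taking $\tr(\Sigma_t^q)^{1/q}$ or $\tfrac1q\log\tr(\Sigma_t^q)$ would restore normalization and is morally equivalent to Klartag's proxy, but the resulting It\^o calculus is messier than the $\log\tr\exp$ route.)

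Your Step 3 is fine conditional on Step 2, and the product-Laplace example in Step 4 is the right intuition for the $\log n$ lower bound: the maximum of $n$ i.i.d.\ $c_t$-coordinates reaches the critical exponential tilt once $t\gtrsim1/\log n$, at which point that coordinate's conditional variance jumps to order $1/t$.
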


We \textbf{have not used} SL at all thus far (and it may be possible
to bound $\E_{\pi^{Y}}\norm{\Sigma_{\pi^{X|Y=y}}}$ directly without
SL). However, as we discuss in \S\ref{sec:PS-SL}, SL provides a
continuous-time perspective on the backward step $\pi^{X|Y=y}$. This
allows us to study the evolution of $\norm{\Sigma_{\pi^{X|Y}}}$ through
the SL framework, reducing the problem to an application of It\^o
calculus.

Using this theorem, we establish $\cpi(n)\lesssim\log n$ as follows. 
\begin{thm}
[\cite{Klartag23log}] $\cpi(\pi)\lesssim\norm{\Sigma}\log n$ for
any logconcave $\pi$ over $\Rn$.
\end{thm}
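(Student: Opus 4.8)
The plan is to reduce to the isotropic case and then assemble the pieces developed in this subsection. By the Proposition bounding $\cpi(\pi)$ by $\cpi(n)\,\norm{\Sigma}$, it suffices to prove $\cpi(n)\lesssim\log n$, i.e.\ $\cpi(\pi^{X})\lesssim\log n$ for every isotropic log-concave $\pi^{X}$ on $\Rn$. Fix such a $\pi^{X}$. We also record that $\cpi(\pi^{X})<\infty$ --- this is the classical bound of Kannan--Lov\'asz--Simonovits (or any of the earlier finite bounds on $\psi_{\kls}(n)$) --- because finiteness is needed to absorb a term at the end; and for the finitely many small values of $n$ one just invokes $\cpi(n)<\infty$ directly, so we may assume $n$ is large. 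Throughout, $h>0$ denotes the Gaussian smoothing level and $\pi^{X|Y=y}$ the corresponding backward conditional.

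First I would invoke Theorem~\ref{thm:cov-bound}: the threshold $s_{0}$ above which $\E_{\pi^{Y}}\norm{\Sigma_{\pi^{X|Y=y}}}\le5$ holds satisfies $s_{0}\lesssim\log^{2}n$. Let $C_{1}$ be the universal constant hidden in the $\lesssim$ of the Corollary (the ``$\cpi$ coupling''), and set $h:=\max\{s_{0},\,20\,C_{1}^{2}\}$, so that $h\lesssim\log^{2}n$ and, since $h\ge s_{0}$, $\E_{\pi^{Y}}\norm{\Sigma_{\pi^{X|Y=y}}}\le5$. By Jensen, $\E_{\pi^{Y}}\sqrt{\norm{\Sigma_{\pi^{X|Y=y}}}}\le\sqrt{\E_{\pi^{Y}}\norm{\Sigma_{\pi^{X|Y=y}}}}\le\sqrt{5}$. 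Plugging this and the value of $h$ into the Corollary yields
\[
\cpi(\pi^{X})\le C_{1}\sqrt{5}\,\Bpar{\sqrt{h}+\frac{\cpi(\pi^{X})}{\sqrt{h}}}\le C_{1}\sqrt{5}\,\sqrt{h}+\half\,\cpi(\pi^{X})\,,
\]
where the last step uses $C_{1}\sqrt{5}/\sqrt{h}\le C_{1}\sqrt{5}/\sqrt{20\,C_{1}^{2}}=\half$. Since $\cpi(\pi^{X})<\infty$, we move the last term to the left to get $\cpi(\pi^{X})\le2\,C_{1}\sqrt{5}\,\sqrt{h}\lesssim\sqrt{\log^{2}n}=\log n$. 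Hence $\cpi(n)\lesssim\log n$, and by the reduction, $\cpi(\pi)\le\cpi(n)\,\norm{\Sigma}\lesssim\norm{\Sigma}\log n$ for every log-concave $\pi$.

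The genuinely hard step is not this assembly but the input Theorem~\ref{thm:cov-bound} --- the operator-norm control showing $\E_{\pi^{Y}}\norm{\Sigma_{\pi^{X|Y=y}}}=\O(1)$ already at smoothing level $h\asymp\log^{2}n$. This is where stochastic localization does the work: one identifies the backward conditional $\pi^{X|Y=y}$ with the time-$h$ density of the SL process, differentiates the covariance via It\^o's formula (Lemma~\ref{lem:dA}), and runs a potential/barrier argument to keep $\norm{A_{t}}$ bounded for $t\lesssim\log^{2}n$; the matching lower bound $s_{0}\gtrsim\log n$ in that theorem shows this route cannot by itself do better than $\log n$ for $\cpi(n)$. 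Everything else in the present proof is bookkeeping, the one subtlety being that one must know $\cpi(\pi^{X})$ is finite a priori in order to perform the absorption.
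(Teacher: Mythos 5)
Your proof is correct and follows essentially the same route as the paper: reduce to the isotropic case, invoke Theorem~\ref{thm:cov-bound} to pick $h\asymp\log^{2}n$ so that $\E_{\pi^{Y}}\norm{\Sigma_{\pi^{X|Y=y}}}=\O(1)$, plug this together with Jensen into the $\cpi$-coupling corollary, and absorb the $\cpi(\pi^{X})/\sqrt{h}$ term by taking $h$ at least a fixed constant. The only difference is that you make explicit the need for a priori finiteness of $\cpi(\pi^{X})$ to justify the absorption (the paper compresses this into ``rearranging terms''), which is a legitimate, if standard, detail worth recording.
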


\begin{proof}
For an isotropic logconcave $\pi^{X}$, take $h\asymp\log^{2}n$ to
ensure $\E_{\pi^{Y}}\norm{\Sigma_{\pi^{X|Y=y}}}=\O(1)$. Then, 
\[
\cpi(\pi)\lesssim\sqrt{h}+\frac{\cpi(\pi)}{\sqrt{h}}\,.
\]
Rearranging terms, we obtain $\cpi(n)\lesssim\log n$, which implies
$\cpi(\pi)\lesssim\norm{\Sigma}\log n$.
\end{proof}

\subsection{Operator-norm control via SL\label{subsec:Operator-norm-control}}

We are now ready to do the ``heavy-lifting'' required to bound $\norm{\Sigma_{\pi^{X|Y=y}}}$,
using SL .

\subsubsection{Proof outline for covariance bound}

The main problem addressed in this section is the following:
\begin{center}
\emph{For isotropic $\pi^{X}$, bound $\E_{\pi^{Y}}\norm{\Sigma_{\pi^{X|Y=y}}}=\O(1)$
for $h>0$, as small as possible.}
\par\end{center}

It may not be immediately clear how SL enters the picture when tackling
this problem. To see this, let us compare the following two approaches.
For $h=1/t$,
\begin{itemize}
\item \textbf{Backward step} \textbf{of the $\PS$}:
\[
\pi^{X|Y=y}(x)\propto\pi^{X}(x)\,\exp\bpar{-\frac{1}{2h}\,\norm{x-y}^{2}}\,,
\]
where $y\sim\pi^{Y}=\pi^{X}*\gamma_{h}\overset{d}{=}X+W_{h}$.
\item \textbf{SL }\eqref{eq:SL-pdf}: for $\pi=\pi^{X}\propto p$ and $\pi_{t}\propto p_{t}$,
\[
\pi_{t}(x)\propto\pi(x)\,\exp\bpar{-\frac{t}{2}\,\bnorm{x-\frac{c_{t}}{t}}^{2}}\,,
\]
where $c_{t}$ satisfies $c_{0}=0$ and $\D c_{t}=b_{t}\,\D t+\D W_{t}$.
\end{itemize}
As the reader may notice, these two approaches look similar when $h=1/t$.
In fact, $\E_{c_{t}}\norm{\cov\pi_{t}}=\E_{\pi^{Y}}\norm{\Sigma_{\pi^{X|Y=y}}}$
for $h=1/t$, and this can be justified by showing that $c_{t}/t\overset{d}{=}Y\sim\pi^{Y}$.
Indeed, for another Brownian motion $W'_{t}$, 
\[
Y\overset{d}{=}X+W_{h}=X+W_{1/t}\overset{d}{=}X+\frac{1}{t}\,W'_{t}\overset{d}{\underset{(i)}{=}}\frac{c_{t}}{t}\,,
\]
where $(i)$ follows from \cite{KP23spectral}. Namely, the ``randomness''
in SL is captured by $c_{t}/t$ (which corresponds to $Y\sim\pi^{Y}$
obtained after the forward step of the $\PS$ starting from $\pi^{X}$).
This relationship also implies that the backward step $\pi^{X|Y=y}$
with step size $h$ can be viewed as a ``snapshot'' of SL at time
$t=1/h$. Hence, \emph{we may equivalently attempt to bound }$\E_{c_{t}}\norm{\cov\pi_{t}}$\emph{
instead of }$\E_{\pi^{Y}}\norm{\Sigma_{\pi^{X|Y=y}}}$. This connection
turns the problem above into the following:
\begin{problem}
[Operator-norm control via SL] When $(\pi_{t})_{t\geq0}$ is SL with
isotropic $\pi_{0}=\pi$, establish $\lVert\cov\pi_{t}\rVert=\O(1)$
for $t$ less than some threshold $\tau$.
\end{problem}

When $t$ is close to $0$, we can expect $\pi_{t}$ to remain nearly
isotropic. Thus, we should not run the SL process for too long (i.e.,
we restrict to $t\leq T$ for some threshold $T$), because the longer
SL runs, the more likely $\norm{\cov\pi_{t}}$ is to deviate from
its initial value $\norm{\cov\pi}=1$. Therefore, Theorem~\ref{thm:cov-bound}
can be expressed in the SL language as follows:
\begin{thm}
[SL-version of Theorem~\ref{thm:cov-bound}] Let $\pi=\pi_{0}=\pi^{X}$
be isotropic logconcave probability measure over $\Rn$, and $\Sigma_{t}$
be the covariance of $\pi_{t}$. Then, $\E\norm{\Sigma_{t}}=\O(1)$
for $t\lesssim\log^{-2}n$.
\end{thm}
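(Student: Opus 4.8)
The plan is to run stochastic localization from $\pi_{0}=\pi$ with control matrix $C_{t}=I$, so that by Lemma~\ref{lem:dA} the covariance $A_{t}=\Sigma_{t}$ obeys
\[
\D A_{t}=\int (x-\mu_{t})^{\otimes 2}\,\langle x-\mu_{t},\D W_{t}\rangle\,p_{t}(\D x)-A_{t}^{2}\,\D t ,
\]
and to control $\norm{\Sigma_{t}}$ through the smooth matrix potential $\Phi_{t}:=\tr(A_{t}^{p})$ for $p:=\lceil C\log n\rceil$ with $C$ a large absolute constant. Since $p\gtrsim\log n$ gives $n^{1/p}=\O(1)$, Jensen yields $\E\norm{\Sigma_{t}}\le(\E\Phi_{t})^{1/p}$, so it suffices to show $\E\Phi_{t}=\O(n)$ for $t\lesssim\log^{-2}n$. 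To keep $\pi_{t}$ tame along the way I would run the stopped process $A_{t\wedge\tau}$ with $\tau:=\inf\{t\ge 0:\norm{A_{t}}>2\}$: for $t<\tau$ one has $\norm{A_{t}}\le 2$ and $\tr A_{t}\le 2n$, and by \eqref{eq:SL-pdf} the measure $\pi_{t}$ is automatically $t$-strongly logconcave with covariance $A_{t}$.

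The next step is It\^o's formula for $\Phi_{t\wedge\tau}$. Its drift decomposes into the ``good'' part coming from $-A_{t}^{2}\,\D t$, namely $-p\,\tr(A_{t}^{p+1})\,\D t\le 0$; a martingale term; and the It\^o correction $\frac{1}{2}\langle\hess\tr(A^{p}),\D[A]_{t}\rangle$, whose quadratic variation $\D[A]_{t}$ is governed by the third central‑moment tensor of $\pi_{t}$, $T_{ijm}=\E_{\pi_{t}}[(X-\mu_{t})_{i}(X-\mu_{t})_{j}(X-\mu_{t})_{m}]$. Writing $A_{t}=\sum_{i}\lambda_{i}\,e_{i}e_{i}^{\T}$ and $T^{(ij)}:=\E_{\pi_{t}}[(X-\mu_{t})_{i}(X-\mu_{t})_{j}(X-\mu_{t})]\in\Rn$, the correction equals $\frac{p}{2}\sum_{ij}\frac{\lambda_{i}^{p-1}-\lambda_{j}^{p-1}}{\lambda_{i}-\lambda_{j}}\,\norm{T^{(ij)}}^{2}\,\D t$, which after the elementary bound $\frac{\lambda_{i}^{p-1}-\lambda_{j}^{p-1}}{\lambda_{i}-\lambda_{j}}\le(p-1)(\lambda_{i}^{p-2}+\lambda_{j}^{p-2})$ is at most $2p^{2}\sum_{i}\lambda_{i}^{p-2}\big(\sum_{j}\norm{T^{(ij)}}^{2}\big)\,\D t$. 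Everything now rests on bounding the third moments $T^{(ij)}$.

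That third‑moment estimate is the crux of the argument. The diagonal part ($i=j$) is mild: $\norm{T^{(ii)}}=\norm{\E_{\pi_{t}}[(X-\mu_{t})_{i}^{2}(X-\mu_{t})]}$ is controlled by Borell's reverse-H\"older lemma, so this contribution is $\lesssim p^{2}\tr(A_{t}^{p+1})\le 2p^{2}\tr(A_{t}^{p})$ for $t<\tau$, already within budget (for product measures this is the only term, which is why the threshold is $\log^{-2}n$: it is dictated purely by the $p^{2}=\O(\log^{2}n)$ coming from $\hess\tr(A^{p})$). The off‑diagonal cross‑moments $T^{(ij)}$ with $i\ne j$ are where the Gaussian component of stochastic localization enters: since $\pi_{t}\propto e^{-V_{t}}$ with $\hess V_{t}\succeq tI$, I would use the Brascamp--Lieb covariance identity \eqref{eq:Brascamp-Lieb} in the form $\E_{\pi_{t}}[g\,(X-\mu_{t})]=\E_{\pi_{t}}[(\hess V_{t})^{-1}\grad g]$, together with the improved Lichnerowicz bound $\cpi(\pi_{t})\le(\norm{A_{t}}/t)^{1/2}$ (\S\ref{sec:semigroup}) and the uniform trace control $\E\tr(\Sigma_{t}^{2})=\O(n)$, to absorb the cross‑term into $p^{2}\E\Phi_{t\wedge\tau}$ up to a factor of order $1$ (or $t^{-1/2}$). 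Near $t=0$, where the strong‑logconcavity bounds degenerate, one instead uses the plain logconcavity estimates, which suffice because the time interval is then short. Assembling these into a differential inequality of the shape
\[
\frac{\D}{\D t}\,\E\Phi_{t\wedge\tau}\ \lesssim\ p^{2}\,\E\Phi_{t\wedge\tau}\qquad(\text{possibly with a harmless extra }t^{-1/2}),
\]
Gr\"onwall gives $\E\Phi_{t\wedge\tau}\le n\exp\!\bigl(\O(p^{2}t)\bigr)=\O(n)$ for $t\lesssim \log^{-2}n$.

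Finally I would remove the stopping time. On $\{\tau\le t\}$ we have $\norm{A_{\tau}}=2$, so $\Phi_{t\wedge\tau}\ge 2^{p}$ there, and Markov gives $\P(\tau\le t)\le\E\Phi_{t\wedge\tau}/2^{p}=\O(n)/2^{p}=o(1)$ once $C$ is large enough; a Freedman‑type tail bound for the martingale part of $\Phi_{t\wedge\tau}$ (whose quadratic variation is again third‑moment controlled) sharpens this to the $\exp(-c/t)$ probability of Lemma~\ref{lem:op_norm}. Then $\E\norm{\Sigma_{t}}=\E[\norm{A_{t}}\ind_{\tau>t}]+\E[\norm{A_{t}}\ind_{\tau\le t}]$, where the first term is $\O(1)$ by the stopped estimate and the second is negligible because $\P(\tau\le t)$ is tiny while $\norm{A_{t}}$ has the crude a priori bounds $\E[A_{t}\mid p_{s}]\preceq A_{s}$ (Lemma~\ref{lem:all_t}) and $\norm{A_{t}}\le R^{2}$ (the support lies in a ball of radius $R$; see Theorem~\ref{thm:Paouris}). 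The main obstacle throughout is the cross‑third‑moment estimate for the $t$-strongly logconcave $\pi_{t}$: naive Frobenius‑norm bounds lose polynomial factors in $n$, so one must exploit strong logconcavity (via \eqref{eq:Brascamp-Lieb} and the improved Lichnerowicz inequality) and the uniform trace control in tandem, and track the dependence on $p$ and $t$ exactly.
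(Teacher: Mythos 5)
Your high-level plan (proxy function for the operator norm, stopping time, It\^o calculus, third-moment estimate via strong logconcavity, then remove the stopping time) is the same skeleton as the paper's argument, and several of the ingredients (the Freedman-type tail, Lemma~\ref{lem:3tensor-2}, the improved Lichnerowicz inequality) are correctly identified. But the substitution of $\Phi_t=\tr(A_t^{p})$ for the paper's soft-max potential $h(M)=\tfrac1\beta\log\tr e^{\beta M}$ is not a cosmetic change, and it introduces a genuine gap.

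The reason the paper can reach the threshold $t\lesssim\log^{-2}n$ is that the Hessian of its potential carries only \emph{one} factor of $\beta$ together with a built-in normalization: by Lemma~\ref{lem:second-order-bound}, $\hess h(M)[H,H]\le\beta\,\tr(G H^2)$ with $G=e^{\beta M}/\tr e^{\beta M}$ and $\tr G=1$, which lets one apply the $(1,\infty)$-H\"older inequality and pass to $\|\sum_i H_i^2\|_\infty\lesssim\|\Sigma_t\|^{5/2}/t^{1/2}$. After integrating over $[0,T]$ the drift contribution is $\Theta(\beta\sqrt{T})$, which is $O(1)$ exactly when $T\lesssim\beta^{-2}\asymp\log^{-2}n$. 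In contrast, $\hess\tr(A^{p})[H,H]\le p(p-1)\tr(A^{p-2}H^2)$ carries \emph{two} factors of $p$ and no analogue of $\tr G=1$: in your It\^o correction the quantity that appears is $\sum_i\lambda_i^{p-2}\tr(H_i^2)$, and the best estimate available (the Brascamp--Lieb/Lichnerowicz bound $\tr(H_i^2)\le 4\cpi(\pi_t)\|\Sigma_t\|\lambda_i$, which is exactly what the proof of Lemma~\ref{lem:3tensor-2} gives for $v=e_i$) produces, after using the stopping time, a drift of size $p^2 t^{-1/2}\E\tr(A^{p-1})$. Even after the observation $\tr(A^{p-1})\le 2\tr(A^{p})+n\,2^{-(p-1)}$, Gr\"onwall yields $\E\Phi_{T\wedge\tau}\le n\exp\bigl(\Theta(p^2\sqrt{T})\bigr)$, which is $O(n)$ only for $T\lesssim p^{-4}\asymp\log^{-4}n$. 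Your remark that the extra $t^{-1/2}$ is ``harmless'' is precisely where the argument breaks: it converts the exponent from $p^2T$ to $p^2\sqrt{T}$, and combined with the extra $p$ from the Hessian this costs you a square in the threshold. So the power-trace proxy, as written, does not prove the stated $t\lesssim\log^{-2}n$ bound.

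Two secondary issues. First, you invoke the uniform trace control $\E\tr(\Sigma_t^2)=O(n)$ (Guan's Theorem~\ref{thm:trace-control}), but that result is proven in \S\ref{subsec:Guan} \emph{using} the operator-norm control \eqref{eq:final-target} that you are trying to prove, so this appeal would be circular. Fortunately it is also unnecessary: the third-moment bound Lemma~\ref{lem:3tensor-2} needs only the Poincar\'e inequality for $\pi_t$ plus the improved Lichnerowicz inequality, neither of which depends on the trace bound. Second, the claim that ``for product measures [the diagonal term] is the only term'' and that it alone dictates the $\log^{-2}n$ threshold is suggestive but not accurate once the It\^o correction of $\tr(A^p)$ is written out: the full off-diagonal quadratic variation contributes the dominant $p^2t^{-1/2}$ drift for general logconcave $\pi$. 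To recover the paper's threshold you would need a matrix potential whose Hessian, after normalization, behaves like $\beta\cdot(\text{trace-one PSD matrix})$ rather than $p^2 A^{p-2}$, and that is exactly what the log-sum-exp proxy delivers.
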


To this end, observe that
\[
\E\norm{\Sigma_{t}}=\E[\norm{\Sigma_{t}}\cdot\ind\{\norm{\Sigma_{t}}\leq2\}]+\E[\underbrace{\norm{\Sigma_{t}}}_{\leq1/t}\cdot\ind\{\norm{\Sigma_{t}}\geq2\}]\leq2+\frac{1}{t}\,\P(\norm{\Sigma_{t}}\geq2)\,.
\]
Hence, it suffices to establish the following operator-norm control.
\begin{thm}
[Operator-norm control]\label{thm:opnorm-control} Let $\pi$ be
an isotropic logconcave probability measure in $\Rn$. For $T\asymp\log^{-2}n$,
\begin{equation}
\P(\norm{\Sigma_{t}}\leq2\ \text{for }t\in[0,T])\geq1-\exp\bpar{-\frac{1}{CT}}\,,\label{eq:final-target}
\end{equation}
where $C>0$ is a universal constant.
\end{thm}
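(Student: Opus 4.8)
The plan is to run stochastic localization with control matrix $C_t=I_n$ (so $B_t=tI_n$, $\Sigma_t=A_t$, $\Sigma_0=I_n$), for which Lemma~\ref{lem:dA} gives $\D\Sigma_t=\D N_t-\Sigma_t^2\,\D t$ with $\D N_t=\int(x-\mu_t)^{\otimes2}\,\inner{x-\mu_t,\D W_t}\,p_t(\D x)$ a martingale increment, the Brascamp--Lieb bound $\Sigma_t\preceq t^{-1}I_n$ (Lemma~\ref{lem:strongly_cov}) ensuring the process is well-posed. Rather than tracking the non-smooth $\norm{\Sigma_t}$, I would track the scalar potential $\Phi_t:=\tr(\Sigma_t^q)$ for an even integer $q=\lceil K\log n\rceil$ with $K$ a large absolute constant. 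Since $\norm{\Sigma_t}^q\le\Phi_t\le n\,\norm{\Sigma_t}^q$, we have $\norm{\Sigma_t}\le\Phi_t^{1/q}\le n^{1/q}\norm{\Sigma_t}$ with $n^{1/q}\le e^{1/K}$ arbitrarily close to $1$; in particular $\Phi_0^{1/q}=n^{1/q}\approx1$. It therefore suffices to show that, outside an event of probability $\le\exp(-1/(CT))$, $\log\Phi_t$ grows by less than $q\log2-\log n-\O(1)$ on $[0,T]$, since on the complement $\Phi_t^{1/q}<2$, hence $\norm{\Sigma_t}<2$, for all $t\le T$.

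\textbf{It\^o decomposition.} Applying the matrix It\^o formula to $\Phi_t=\tr(\Sigma_t^q)$ gives $\D\Phi_t=\D U_t+V_t\,\D t$, where $\D U_t=q\,\tr(\Sigma_t^{q-1}\D N_t)$ is a continuous local martingale increment and $V_t=-q\,\tr(\Sigma_t^{q+1})+R_t$, the second-order correction being $R_t=\tfrac q2\sum_{k=0}^{q-2}\Ex{\inner{Y,Y'}\,(Y'^{\T}\Sigma_t^kY)\,(Y'^{\T}\Sigma_t^{q-2-k}Y)}$ for $Y,Y'$ independent centered copies of $\pi_t$ (a double contraction of the third-moment tensor of $\pi_t$ against powers of $\Sigma_t$). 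Passing to $Z_t:=\log\Phi_t$ rescales and discards two favorable terms:
\[
\D Z_t=\D\wt U_t+\frac{V_t}{\Phi_t}\,\D t-\frac{\D[U]_t}{2\Phi_t^2}\ \le\ \D\wt U_t+\frac{R_t}{\Phi_t}\,\D t,\qquad \wt U_t:=\int_0^t\Phi_s^{-1}\,\D U_s.
\]

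\textbf{The key estimates.} Everything rests on the two bounds, for all $t\ge0$ with $C$ absolute:
\[
R_t\ \le\ C\,q^2\,\norm{\Sigma_t}\,\Phi_t,\qquad\qquad \D[U]_t\ \le\ C\,q^2\,\norm{\Sigma_t}\,\Phi_t^2\,\D t.
\]
The quadratic-variation bound is routine: $\D[U]_t=q^2\bnorm{\Ex{(Y^{\T}\Sigma_t^{q-1}Y)\,Y}}^2\D t$, and Cauchy--Schwarz together with the reverse H\"older inequality (Borell's lemma applied to the logconcave vector $\Sigma_t^{(q-1)/2}Y$, giving $\Ex{(Y^{\T}\Sigma_t^{q-1}Y)^2}\le C(\Ex{Y^{\T}\Sigma_t^{q-1}Y})^2=C\Phi_t^2$) bound it by $C'q^2\norm{\Sigma_t}\Phi_t^2\D t$. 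The drift bound on $R_t$ is the delicate step, and the one I expect to be the main obstacle: a naive application of moment inequalities to the third-moment contraction overshoots by a polynomial-in-$n$ factor, and closing this gap needs the full force of logconcavity --- reverse H\"older in the form $\norm{X}_{L^k(\mu)}\le2k\norm{X}_{L^1(\mu)}$ and the trilinear third-moment estimate $\babs{\Ex{\inner{a,Y}\inner{b,Y}\inner{c,Y}}}\lesssim\norm{a}_{\Sigma_t}\norm{b}_{\Sigma_t}\norm{c}_{\Sigma_t}$ for centered logconcave $Y$ --- supplemented by a priori trace controls ($\tr\Sigma_t$ is a nonnegative supermartingale, so $\Ex{\tr\Sigma_t}\le n$, and $\Ex{\tr(\Sigma_t^2)}=\O(n)$) and, where convenient, the $t$-strong logconcavity of $\pi_t$ for $t>0$; this is the core of the arguments in \cite{Klartag23log,guan2024note,KL24isop}. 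It is precisely this estimate that pins the scale to $T\asymp\log^{-2}n$: the drift of $Z_t$ accumulates at rate $\asymp q^2\norm{\Sigma_t}$, and $q\asymp\log n$ is forced by $n^{1/q}=\O(1)$, so $q^2t$ stays bounded only up to $t\asymp\log^{-2}n$.

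\textbf{Localization and conclusion.} Granting the estimates, $\D Z_t\le\D\wt U_t+Cq^2\norm{\Sigma_t}\D t$ with $\D[\wt U]_t\le Cq^2\norm{\Sigma_t}\D t$. I would stop at $\tau:=\inf\{t:\norm{\Sigma_t}\ge2\}$, so that on $[0,\tau)$ one has $\int_0^t\norm{\Sigma_s}\D s\le2t\le2T$. Choosing the constant in $T\asymp\log^{-2}n$ so that $q^2T$ is at most a fixed absolute constant, the drift then contributes at most an absolute constant to $Z_{t\wedge\tau}-Z_0=Z_{t\wedge\tau}-\log n$, and $[\wt U]_{T\wedge\tau}\le v$ with $v$ absolute. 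Hence for $n$ and $K$ large the event $\{\sup_{t\le T}\Phi_{t\wedge\tau}^{1/q}\ge2\}=\{\sup_{t\le T}Z_{t\wedge\tau}\ge q\log2\}$ is contained in $\{\sup_{t\le T}\wt U_{t\wedge\tau}\ge a\}$ for some $a\asymp\log n$, and the exponential martingale inequality (optional stopping of $\exp(\lambda\wt U_t-\tfrac{\lambda^2}2[\wt U]_t)$, or Dubins--Schwarz with reflection) gives
\[
\P\bpar{\sup_{t\le T}\wt U_{t\wedge\tau}\ge a}\ \le\ \exp\bpar{-a^2/(2v)}\ =\ \exp(-c\log^2 n)\ =\ \exp(-c'/T),
\]
using $\log^2 n\asymp1/T$. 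On the complement $\norm{\Sigma_{t\wedge\tau}}<2$ for all $t\le T$, which by continuity of $t\mapsto\norm{\Sigma_t}$ forces $\tau>T$; thus $\norm{\Sigma_t}<2$ on all of $[0,T]$, and setting $C:=1/c'$ finishes the proof.
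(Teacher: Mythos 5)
The proposal swaps the paper's proxy $h(M)=\beta^{-1}\log\tr e^{\beta M}$ for the Schatten power trace $\Phi_t=\tr(\Sigma_t^q)$ with $q\asymp\log n$, and then follows the same skeleton as the paper: It\^o on the proxy, separate the martingale from the drift, bound the quadratic variation via reverse H\"older, stop at $\tau=\inf\{t:\norm{\Sigma_t}\geq2\}$, and conclude with a Freedman-type deviation inequality. That skeleton, and your quadratic-variation estimate $\D[U]_t\le Cq^2\norm{\Sigma_t}\Phi_t^2\,\D t$ (which is genuinely just Cauchy--Schwarz plus Borell), are sound, and the arithmetic at the end does reproduce the $T\asymp\log^{-2}n$ scale and the $\exp(-1/(CT))$ tail.

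The genuine gap is the drift estimate $R_t\le Cq^2\norm{\Sigma_t}\Phi_t$, which you yourself flag as ``the delicate step'' and then defer entirely to the references. This is not an innocuous lemma to outsource: it is the entire content of the theorem, playing precisely the role of Lemma~\ref{lem:3tensor-2} in the paper's proof. Moreover, as written the bound appears to be too strong. The paper's analogous estimate carries an explicit singular factor $t^{-1/2}$ --- Lemma~\ref{lem:3tensor-2} gives $\norm{\sum_i H_i^2}\le4\norm{\Sigma}^{5/2}/t^{1/2}$, where the $t^{-1/2}$ comes from $\cpi(\pi_t)\le\sqrt{\norm{\Sigma_t}/t}$ via \eqref{eq:improved-LI}, the only Poincar\'e information available for $\pi_t$ short of assuming KLS itself. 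Your $R_t$ is essentially $\tfrac{q(q-1)}{2}\tr(\Sigma_t^{q-2}\sum_iH_i^2)$ after applying Lemma~\ref{lem:swap-MHMH}, and every route I see to bound this (Cauchy--Schwarz against $\tr(\Sigma^{q-2})\norm{\sum_iH_i^2}$, or a variance/Poincar\'e argument on the cubic form $\E[\inner{Y,Y'}^2(Y'^{\T}\Sigma^{q-2}Y)]$) inherits that same $\cpi(\pi_t)^{1/2}\asymp t^{-1/4}$ or $t^{-1/2}$ blow-up; at $t=0$ the strong logconcavity vanishes and the estimate as you state it would require $\sum_{ijk}H_{ijk}^2\lesssim n$ unconditionally for isotropic logconcave, which is not something the cited works supply. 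The singularity is not fatal --- $t^{-1/2}$ is integrable, so $\int_0^{T}q^2\norm{\Sigma_s}^{3/2}s^{-1/2}\,\D s\lesssim q^2\sqrt{T}\asymp\log n$ still fits inside your budget $a\asymp\log n$ after shrinking the constant in $T$ --- but the estimate needs to be stated and proved in that corrected form, and doing so is where all the work lies. In short: correct framework, viable alternative proxy, but the crux is missing and its stated form is likely wrong.
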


In words, $\norm{\Sigma_{t}}$ stays close to $\norm{\cov\pi}=1$
with high probability up to the time threshold $T$.

\subsubsection{It\^o calculus for covariance $\Sigma_{t}$}

\paragraph{Preliminary computations.}

While handling the stochastic quantity $\norm{\Sigma_{t}}$, we will
encounter several important quantities involving the barycenter $b_{t}$
and covariance $\Sigma_{t}$ of $\pi_{t}$. We therefore compute $\D b_{t}$
and $\D\Sigma_{t}$. Using It\^o's lemma and applying \eqref{eq:moment}
with $F(x)=x$ and $x^{\otimes2}$,
\begin{align*}
\D b_{t} & =\D\Bpar{\int x\,\pi_{t}(\D x)}=\int x\otimes(x-b_{t})\,\pi_{t}(\D x)\,\D W_{t}=\int(x-b_{t})^{\otimes2}\,\pi_{t}(\D x)\,\D W_{t}\\
 & =\Sigma_{t}\,\D W_{t}\,,\\
\D\Sigma_{t} & =\D\Bpar{\int x^{\otimes2}\,\pi_{t}(\D x)-b_{t}^{\otimes2}}\\
 & =\int x^{\otimes2}\inner{x-b_{t},\D W_{t}}\,\pi_{t}(\D x)-\D(b_{t}^{\otimes2})\\
 & \underset{(i)}{=}\int x^{\otimes2}\inner{x-b_{t},\D W_{t}}\,\pi_{t}(\D x)-(\D b_{t}\otimes b_{t}+b_{t}\otimes\D b_{t}+\D b_{t}\otimes\D b_{t})\\
 & =\int(x-b_{t})^{\otimes2}\inner{x-b_{t},\D W_{t}}\,\pi_{t}(\D x)+\int(x\otimes b_{t}+b_{t}\otimes x-b_{t}\otimes b_{t})\inner{x-b_{t},\D W_{t}}\,\pi_{t}(\D x)\\
 & \qquad-\Bpar{\int(x-b_{t})^{\otimes2}\,\pi_{t}(\D x)\,\D W_{t}\otimes b_{t}+b_{t}\otimes\int(x-b_{t})^{\otimes2}\,\pi_{t}(\D x)\,\D W_{t}+\Sigma_{t}^{2}\,\D t}\\
 & \underset{(ii)}{=}\underbrace{\int(x-b_{t})^{\otimes2}\inner{x-b_{t},\D W_{t}}\,\pi_{t}(\D x)}_{=:\D H_{t}}-\Sigma_{t}^{2}\,\D t\,,
\end{align*}
where $(i)$ follows from the It\^o product rule, and $(ii)$ follows
from $\int x\otimes b_{t}\,\inner{x-b_{t},\D W_{t}}\,\pi_{t}(\D x)=(\int(x-b_{t})^{\otimes2}\,\pi_{t}(\D x)\,\D W_{t})\otimes b_{t}$.
As for $(i)$, one can actually check it by It\^o's formula as follows:
for $f(x)=x\otimes x$,
\begin{align*}
\D f(X_{t}) & =\text{D}(X_{t}\otimes X_{t})[\D X_{t}]+\half\,\text{D}^{2}(X_{t}\otimes X_{t})[\D X_{t},\D X_{t}]\\
 & =(\id\otimes X_{t}+X_{t}\otimes\id)[\D X_{t}]+(\id\otimes\id)[\D X_{t},\D X_{t}]\\
 & =\D X_{t}\otimes X_{t}+X_{t}\otimes\D X_{t}+\D X_{t}\otimes\D X_{t}\,.
\end{align*}

The $3$-tensor term merits its own notation $H_{t}$, as it is the
source of many analytical complications. For the $3$-tensor analysis,
we also define, for each $i\in[n]$
\begin{equation}
H_{t,i}=H_{i}:=\int(x-b_{t})^{\otimes2}(x-b_{t})_{i}\,\D\pi_{t}\,,\label{eq:3-tensor}
\end{equation}
so $\D H_{t}=\sum_{i=1}^{n}H_{i}\,\D W_{t,i}$.

As we will see later, the main quantities for downstream arguments
are $\E\norm{\Sigma_{t}}$, $\E\tr(\Sigma_{t}^{2})$, and $\E\tr\Sigma_{t}$.
\begin{itemize}
\item \textbf{KLS conjecture}: We have already seen that we need
\[
\E\norm{\Sigma_{t}}=\O(1)\quad\text{for }t\lesssim\log^{-2}n\,.
\]
\item \textbf{Thin-shell conjecture}: When working towards the thin-shell
conjecture in \S\ref{sec:thinshell}, we will need 
\[
\E\tr(\Sigma_{t}^{2})=\Theta(n)\quad\text{for }t\lesssim1\,.
\]
\item \textbf{Slicing conjecture}: When resolving the slicing conjecture
in \S\ref{sec:slicing}, we will need 
\[
\E\tr\Sigma_{t}=\Omega(n)\quad\text{for }t\lesssim1\,.
\]
In fact, this is immediate from the trace of $\Sigma_{t}^{2}$ above.
Integrating both sides of 
\[
\D\Sigma_{t}=\int(x-b_{t})^{\otimes2}\inner{x-b_{t},\D W_{t}}\,\pi_{t}(\D x)-\Sigma_{t}^{2}\,\D t\,,
\]
and taking trace and expectation, noting that $\E\tr\Sigma_{0}=n$
and $\E dW_{t}=0$, we obtain 
\[
\E\tr\Sigma_{t}-n=-\int_{0}^{t}\E\tr(\Sigma_{s}^{2})\,\D s\,.
\]
Therefore,
\[
\de_{t}\E\tr\Sigma_{t}=-\E\tr(\Sigma_{t}^{2})\,.
\]
Since $\E\tr\Sigma_{t}^{2}=\Theta(n)$ for $t\lesssim1$, we can conclude
that for $t\lesssim1$,
\[
\E\tr\Sigma_{t}\gtrsim n\,.
\]
\end{itemize}
Lastly, we will also need $\de_{t}\E[\norm{b_{t}}^{2}]$. By It\^o's
formula,
\[
\D(\norm{b_{t}}^{2})=2\inner{\D b_{t},b_{t}}+\tr(I_{n}\Sigma_{t}^{2})\,\D t=\tr(\Sigma_{t}^{2})\,\D t+2\inner{b_{t},\Sigma_{t}}\,\D W_{t}\,,
\]
and thus
\begin{equation}
\frac{\D}{\D t}\,\E[\norm{b_{t}}^{2}]=\E\tr(\Sigma_{t}^{2})\,.\label{eq:derivative-bt}
\end{equation}

\paragraph{Control of $\protect\norm{\Sigma_{t}}$ via a proxy function.}

We work with a proxy function defined by 
\[
h(M):=\frac{1}{\beta}\log(\tr e^{\beta M})\quad\text{for a symmetric matrix \ensuremath{M}\ensuremath{\in\Rnn\ }and constant }\beta>0.
\]
Clearly, for any $t>0$,
\[
\norm{\Sigma_{t}}\leq h(\Sigma_{t})\leq\norm{\Sigma_{t}}+\frac{\log n}{\beta}\,.
\]
To control $h(\Sigma_{t})$, we compute the It\^o derivative $\D h(\Sigma_{t})$
to see the magnitude of its drift. By It\^o's lemma and $\D\Sigma_{t}=\D H_{t}-\Sigma_{t}^{2}\,\D t$,
\begin{align*}
\D h(\Sigma_{t}) & =\nabla h(\Sigma_{t})[\D\Sigma_{t}]+\frac{1}{2}\,\hess h(\Sigma_{t})[\D\Sigma_{t},\D\Sigma_{t}]\\
 & =\underbrace{\nabla h(\Sigma_{t})[\D H_{t}]}_{=\tr(\nabla h(\Sigma_{t})\,\D H_{t})}-\underbrace{\nabla h(\Sigma_{t})[\Sigma_{t}^{2}]}_{=\tr(\nabla h(\Sigma_{t})\,\Sigma_{t}^{2})}\D t+\frac{1}{2}\,\hess h(\Sigma_{t})[\D H_{t},\D H_{t}]\,.
\end{align*}
From direct computation,
\[
G_{t}:=\nabla h(\Sigma_{t})=\frac{e^{\beta\Sigma_{t}}}{\tr e^{\beta\Sigma_{t}}}\succeq0\,,
\]
and note that $\tr G_{t}=1$. Then,
\begin{align*}
\D h(\Sigma_{t}) & \leq\sum_{i}\tr(G_{t}H_{i})\,\D W_{t,i}+\half\,\hess h(\Sigma_{t})\Bbrack{\sum_{i}H_{i}\,\D W_{t,i},\sum_{i}H_{i}\,\D W_{t,i}}\\
 & =\sum_{i}\tr(G_{t}H_{i})\,\D W_{t,i}+\half\,\sum_{i}\hess h(\Sigma_{t})[H_{i},H_{i}]\,\D t\\
 & \underset{(i)}{\leq}\sum_{i}\tr(G_{t}H_{i})\,\D W_{t,i}+\frac{\beta}{2}\,\sum_{i}\tr(G_{t}H_{i}^{2})\,\D t\\
 & \leq\underbrace{\sum_{i}\tr(G_{t}H_{i})\,\D W_{t,i}}_{\eqqcolon\D Z_{t}}+\frac{\beta}{2}\,\Bnorm{\sum_{i}H_{i}^{2}}\,\D t\,,
\end{align*}
where in $(i)$, the bound on $\hess h(\Sigma_{t})[H_{i},H_{i}]$
follows from Lemma~\ref{lem:second-order-bound}, and the last line
follows from the $(1,\infty)$-H\"older's inequality with $\tr G_{t}=\norm{G_{t}}_{1}\leq1$.

\paragraph{Bounding a bad event via Freedman's inequality.}

Integrating both sides, we have that for some constant $C>0$,
\[
h(\Sigma_{t})\leq h(\Sigma_{0})+Z_{t}+\frac{\beta}{2}\int_{0}^{t}\Bnorm{\sum_{i}H_{s,i}^{2}}\,\D s\leq1+\frac{\log n}{\beta}+Z_{t}+\frac{\beta C}{2}\int_{0}^{t}\frac{\norm{\Sigma_{s}}^{5/2}}{s^{1/2}}\,\D s\,,
\]
where the last line follows from Lemma~\ref{lem:3tensor-2}, as well
as the improved Lichnerowicz inequality \eqref{eq:improved-LI}.

Let us take the smallest $\tau$ such that $\tau\leq t$ and $\norm{\Sigma_{\tau}}\geq2$.
Then, for some constant $C'>0$,
\[
2=\norm{\Sigma_{\tau}}\leq h(\Sigma_{\tau})\leq1+\frac{\log n}{\beta}+Z_{\tau}+\Theta(\beta C\tau^{1/2})\,.
\]
For $\beta=2\log n$ and $t\asymp(\beta C)^{-2}\asymp(C\log n)^{-2}$,
we can ensure $Z_{\tau}\geq\frac{1}{4}$. To apply a deviation inequality
to $Z_{t}$, let us compute the quadratic variation of $Z_{t}$:
\[
\D[Z]_{t}=\sum_{i}\tr^{2}(G_{t}H_{i})\,\D t=\norm v^{2}\,\D t\,,
\]
where the vector $v\in\Rn$ satisfies $v_{i}=\tr(G_{t}H_{i})$. For
any unit vector $\theta\in\mbb S^{n-1}$, using the $(1,\infty)$-H\"older's
inequality and $\tr G_{t}\leq1$,
\[
v\cdot\theta=\tr\Bpar{G_{t}\sum_{i}H_{i}\theta_{i}}\leq\Bnorm{\sum_{i}H_{i}\theta_{i}}=\Bnorm{\int(x-b_{t})^{\otimes2}\inner{x-b_{t},\theta}\,\D\pi_{t}}\lesssim\norm{\Sigma_{t}}^{3/2}\,,
\]
where the last inequality follows from Lemma~\ref{lem:3tensor-1}.
Thus, $\norm v^{2}\leq\norm{\Sigma_{t}}^{3}$ and $[Z]_{\tau}\lesssim\int_{0}^{\tau}\norm{\Sigma_{s}}^{3}\,\D s\lesssim\tau\leq t$.
Therefore, for some constant $C'>0$,
\begin{equation}
\P(\exists\tau\leq t:\norm{\Sigma_{\tau}}\geq2)\leq\P\bpar{\exists\tau>0:Z_{\tau}\geq\frac{1}{4}\ \text{and}\ [Z]_{\tau}\lesssim t}\leq\exp\bpar{-\frac{1}{C't}}\,,\label{eq:deviation-bound}
\end{equation}
where the last inequality follows from the classical deviation inequality
below, and this completes the proof of Theorem~\ref{thm:opnorm-control}.
\begin{lem}
[Freedman's inequality]\label{lem:freedman} Let $(M_{t})_{t\geq0}$
be a continuous local martingale with $M_{0}=0$. Then for $u,\sigma^{2}>0$,
we have
\[
\P(\exists t>0:M_{t}>u\ \text{and}\ [M]_{t}\leq\sigma^{2})\leq\exp\bpar{-\frac{u^{2}}{2\sigma^{2}}}\,.
\]
\end{lem}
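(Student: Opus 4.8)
The plan is to run the classical Chernoff-type argument built on the exponential (Dol\'eans--Dade) supermartingale. Fix a parameter $\lambda>0$ to be optimized at the end and set $E_t^\lambda:=\exp\bpar{\lambda M_t-\frac{\lambda^2}{2}[M]_t}$. By It\^o's formula, $E^\lambda$ solves $\D E_t^\lambda=\lambda E_t^\lambda\,\D M_t$ with $E_0^\lambda=1$, so it is a nonnegative continuous local martingale (a stochastic integral against the local martingale $M$). Applying conditional Fatou along a localizing sequence of stopping times shows that any nonnegative local martingale with integrable initial value is a supermartingale; in particular $\E[E_t^\lambda]\le1$ for all $t\ge0$, and the same bound persists for $E^\lambda$ stopped at any stopping time.

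Next I would introduce $\tau_u:=\inf\{t\ge0:M_t\ge u\}$ and $\tau_\sigma:=\inf\{t\ge0:[M]_t\ge\sigma^2\}$ (with $\inf\emptyset=\infty$) and set $\rho:=\tau_u\wedge\tau_\sigma$. On the event $A:=\{\exists t>0:M_t>u\text{ and }[M]_t\le\sigma^2\}$, continuity of $M$ together with $M_0=0$ forces $\tau_u<\infty$ and $M_{\tau_u}=u$, while monotonicity of $t\mapsto[M]_t$ gives $[M]_{\tau_u}\le\sigma^2$; hence on $A$ we have $\rho<\infty$ with $M_\rho=u$ and $[M]_\rho\le\sigma^2$ (the only borderline case being $[M]_{\tau_u}=\sigma^2$, where the same two identities still hold). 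Consequently, on $A$,
\[
E_\rho^\lambda=\exp\bpar{\lambda u-\tfrac{\lambda^2}{2}\,[M]_\rho}\ge\exp\bpar{\lambda u-\tfrac{\lambda^2\sigma^2}{2}}\,.
\]

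To finish, stop the supermartingale at $\rho$: $\E[E_{t\wedge\rho}^\lambda]\le1$ for every $t$, and letting $t\to\infty$ and using Fatou's lemma together with the a.s.\ continuity of $E^\lambda$ on $\{\rho<\infty\}$ (and a.s.\ convergence of the nonnegative supermartingale on $\{\rho=\infty\}$) yields $\E\bbrack{E_\rho^\lambda\,\mathbf{1}_{\{\rho<\infty\}}}\le1$. Since $A\subseteq\{\rho<\infty\}$ and $E_\rho^\lambda\ge0$, restricting the expectation to $A$ and inserting the pointwise lower bound gives
\[
\exp\bpar{\lambda u-\tfrac{\lambda^2\sigma^2}{2}}\,\P(A)\le\E\bbrack{E_\rho^\lambda\,\mathbf{1}_{A}}\le1\,,
\]
so $\P(A)\le\exp\bpar{-\lambda u+\tfrac{\lambda^2\sigma^2}{2}}$. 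Choosing $\lambda=u/\sigma^2$ minimizes the exponent and yields $\P(A)\le\exp\bpar{-u^2/(2\sigma^2)}$, which is exactly the claim.

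The one genuinely delicate point is the passage from ``local martingale'' to the integrated inequality $\E[E_\rho^\lambda\mathbf{1}_{\{\rho<\infty\}}]\le1$: since $\rho$ may be infinite, one cannot simply invoke optional stopping at time $\rho$, and must instead go through the stopped processes $E_{t\wedge\rho}^\lambda$, the a.s.\ convergence of nonnegative supermartingales, and a Fatou argument --- together with the careful (but routine) handling of the borderline case where $[M]$ reaches $\sigma^2$ precisely at $\tau_u$. Everything else is the standard exponential-moment optimization.
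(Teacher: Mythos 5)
The paper does not include a proof of this lemma; it is quoted as a classical deviation inequality and used directly in establishing \eqref{eq:deviation-bound}, so there is nothing in the paper to compare your argument against. Your proof via the Dol\'eans--Dade exponential supermartingale together with an optional-stopping/Fatou argument is the standard route, and it is correct.

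One small remark on execution. You could drop $\tau_\sigma$ entirely and stop only at $\tau_u$: on $A$ one already has $\tau_u<\infty$, $M_{\tau_u}=u$, and (by monotonicity of $[M]$) $[M]_{\tau_u}\le\sigma^2$, which gives the same pointwise lower bound $E_{\tau_u}^\lambda\ge\exp(\lambda u-\tfrac{\lambda^2\sigma^2}{2})$ on $A$, and then $\E\bigl[E_{\tau_u}^\lambda\,\mathbf{1}_{\{\tau_u<\infty\}}\bigr]\le1$ finishes the proof with no case analysis. As written, your handling of the borderline case $[M]_{\tau_u}=\sigma^2$ is correct but implicitly leans on the fact that a continuous local martingale is constant on any interval where its quadratic variation is constant: that fact is what rules out $\tau_\sigma<\tau_u$ when $[M]_{\tau_\sigma}=[M]_{\tau_u}$, since otherwise $M$ would be constant on $[\tau_\sigma,\tau_u]$ and hence already equal to $u$ at time $\tau_\sigma$, contradicting the definition of $\tau_u$. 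Stating this, or simply dispensing with $\tau_\sigma$, would close the only soft spot in an otherwise clean argument.
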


\paragraph{Matrix inequalities.}

We compute the second-order directional derivative of $h(M)=\frac{1}{\beta}\log(\tr e^{\beta M})$.
\begin{lem}
[{\cite[Corollary 56]{KL24isop}}]\label{lem:second-order-bound}
For symmetric matrices $M,H\in\Rnn$, 
\[
\hess h(M)[H,H]\leq\beta\,\tr\bpar{\frac{e^{\beta M}}{\tr e^{\beta M}}H^{2}}\,.
\]
\end{lem}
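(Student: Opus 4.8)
The plan is to differentiate $h$ twice along the ray $s \mapsto M + sH$ and then bound the result by an elementary scalar inequality applied in the eigenbasis of $M$. Write $\psi(s) := h(M+sH)$, so that $\hess h(M)[H,H] = \psi''(0)$. Recall from the computation preceding the lemma that $\nabla h(M) = G := e^{\beta M}/\tr e^{\beta M}$, obtained from Duhamel's formula $\frac{\D}{\D s}\,e^{\beta(M+sH)}\big|_{s=0} = \beta\int_0^1 e^{\beta u M}\,H\,e^{\beta(1-u)M}\,\D u$ together with cyclicity of the trace; in particular $\psi'(s) = \tr(e^{\beta(M+sH)}H)/\tr e^{\beta(M+sH)}$. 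Differentiating once more by the quotient rule, using Duhamel again for the numerator and $\frac{\D}{\D s}\tr e^{\beta(M+sH)}\big|_{s=0} = \beta\,\tr(e^{\beta M}H)$ for the denominator, I would obtain
\[
\hess h(M)[H,H] = \frac{\beta}{\tr e^{\beta M}}\int_0^1 \tr\bpar{e^{\beta u M}\,H\,e^{\beta(1-u)M}\,H}\,\D u - \beta\,\bpar{\tr(GH)}^2\,.
\]

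Since $-\beta\,(\tr GH)^2 \le 0$, it suffices to prove $\int_0^1 \tr(e^{\beta u M}\,H\,e^{\beta(1-u)M}\,H)\,\D u \le \tr(e^{\beta M}\,H^2)$. Here I would diagonalize $M = \sum_i \lambda_i\,v_iv_i^\T$ with $\{v_i\}$ orthonormal and set $H_{ij} := v_i^\T H v_j = H_{ji}$. Expanding the integrand in this basis gives $\tr(e^{\beta u M}He^{\beta(1-u)M}H) = \sum_{i,j} e^{\beta(u\lambda_i + (1-u)\lambda_j)}\,H_{ij}^2$, so after integrating in $u$ the left-hand side becomes $\sum_{i,j} L_{ij}\,H_{ij}^2$, where $L_{ij} := \int_0^1 e^{\beta(u\lambda_i+(1-u)\lambda_j)}\,\D u$ equals the logarithmic mean of $e^{\beta\lambda_i}$ and $e^{\beta\lambda_j}$ (interpreted as $e^{\beta\lambda_i}$ when $i=j$). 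On the other side, $\tr(e^{\beta M}H^2) = \sum_i e^{\beta\lambda_i}(H^2)_{ii} = \sum_{i,j} e^{\beta\lambda_i}\,H_{ij}^2$, which by symmetrizing the summation in $i \leftrightarrow j$ equals $\sum_{i,j}\tfrac12\bpar{e^{\beta\lambda_i}+e^{\beta\lambda_j}}\,H_{ij}^2$.

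Thus the inequality reduces to the entrywise bound $L_{ij} \le \tfrac12\bpar{e^{\beta\lambda_i}+e^{\beta\lambda_j}}$, i.e., logarithmic mean $\le$ arithmetic mean, which follows from weighted AM--GM: $e^{\beta(u\lambda_i+(1-u)\lambda_j)} = (e^{\beta\lambda_i})^{u}(e^{\beta\lambda_j})^{1-u} \le u\,e^{\beta\lambda_i} + (1-u)\,e^{\beta\lambda_j}$ for every $u\in[0,1]$, and integrating over $u$. Summing against the nonnegative weights $H_{ij}^2$ then finishes the argument. I do not anticipate a genuine obstacle here: the only care needed is in the Fréchet-derivative bookkeeping of the first step (Duhamel plus quotient rule) and in checking that the diagonal $i=j$ terms, where $L_{ii} = e^{\beta\lambda_i}$, are handled consistently; everything else rests on the classical logarithmic-mean inequality.
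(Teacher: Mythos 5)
Your proof is correct, and it takes a genuinely different route from the paper's. The paper expands $\tr e^{\beta M}$ as a power series, bounds each term $\tr(M^jHM^{i-1-j}H)$ by $\tr(M^{i-1}H^2)$ via a separate lemma (which uses Young's inequality $\lambda_i^n\lambda_j^m \le \tfrac{n}{n+m}\lambda_i^{n+m} + \tfrac{m}{n+m}\lambda_j^{n+m}$ in the eigenbasis), then handles general symmetric $M$ by adding $\veps I$ to reduce to $M\succeq 0$. You instead work with the Duhamel representation $\frac{\D}{\D s}e^{\beta(M+sH)} = \beta\int_0^1 e^{\beta u(M+sH)}He^{\beta(1-u)(M+sH)}\,\D u$, obtain the closed form
\[
\hess h(M)[H,H] = \frac{\beta}{\tr e^{\beta M}}\int_0^1\tr\bpar{e^{\beta uM}\,H\,e^{\beta(1-u)M}\,H}\,\D u \;-\; \beta\,\bpar{\tr(GH)}^2\,,
\]
drop the nonpositive second term (as the paper also does, in the form $-\frac1\beta(\nabla(\tr e^{\beta M})[H]/\tr e^{\beta M})^2$), diagonalize $M$, and reduce the remaining inequality to $L_{ij}\le\tfrac12(e^{\beta\lambda_i}+e^{\beta\lambda_j})$, the logarithmic-mean/arithmetic-mean inequality, which you get by integrating weighted AM--GM $(e^{\beta\lambda_i})^u(e^{\beta\lambda_j})^{1-u}\le u\,e^{\beta\lambda_i}+(1-u)\,e^{\beta\lambda_j}$ over $u\in[0,1]$. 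Both approaches come down to the same underlying convexity input in the eigenbasis (Young's inequality in the paper, weighted AM--GM integrated out in yours), but your Duhamel version works directly for arbitrary symmetric $M$ and so bypasses the paper's $M\succeq0$ reduction and the $\veps I$-shift; the paper's power-series version is more elementary in that it needs no matrix Duhamel formula. Both are valid proofs of the same inequality, and yours is slightly shorter once Duhamel is available.
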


\begin{proof}
Note that
\begin{align*}
\nabla h(M)[H] & =\frac{\nabla(\tr e^{\beta M})[H]}{\beta\,\tr e^{\beta M}}\,,\\
\nabla^{2}h(M)[H,H] & =\frac{\nabla^{2}(\tr e^{\beta M})[H,H]}{\beta\,\tr e^{\beta M}}-\frac{1}{\beta}\,\bpar{\frac{\nabla(\tr e^{\beta M})[H]}{\tr e^{\beta M}}}^{2}\leq\frac{\nabla^{2}(\tr e^{\beta M})[H,H]}{\beta\,\tr e^{\beta M}}\,.
\end{align*}

Let us bound the second-order directional derivative of $f(M):=\tr e^{\beta M}$.
From $f(M)=\sum_{i=0}^{\infty}\frac{\beta^{i}}{i!}\tr(M^{i})$, we
can readily check that
\begin{align*}
\nabla f(M)[H] & =\sum_{i=1}^{\infty}\frac{\beta^{i}}{(i-1)!}\,\tr(M^{i-1}H)=\beta\sum_{i\geq0}\frac{\beta^{i}}{i!}\tr(M^{i}H)\,,\\
\nabla^{2}f(M)[H,H] & =\beta\sum_{i=1}^{\infty}\frac{\beta^{i}}{i!}\sum_{j=0}^{i-1}\tr(M^{j}HM^{i-1-j}H)\,.
\end{align*}
Assuming $M\succeq0$ for a while, we show below in Lemma~\ref{lem:swap-MHMH}
that 
\[
\tr(M^{j}HM^{i-1-j}H)\leq\tr(M^{i-1}H^{2})\,.
\]
Using this,
\begin{equation}
\hess f(M)[H,H]\leq\beta\sum_{i=1}^{\infty}\frac{\beta^{i}}{(i-1)!}\,\tr(M^{i-1}H^{2})=\beta^{2}\sum_{i\geq0}\frac{\beta^{i}}{i!}\,\tr(M^{i}H^{2})=\beta^{2}\tr(e^{\beta M}H^{2})\,.\label{eq:second-diff-bound}
\end{equation}

Now for a general symmetric $M$, pick $\veps>0$ such that $M+\veps I\succeq0$.
Note that 
\[
f(M+\veps I)=\tr e^{\beta(M+\veps I)}=\tr(e^{\beta M}e^{\beta\veps I})=e^{\beta\veps}f(M)\,.
\]
Therefore, \eqref{eq:second-diff-bound} is still valid for any symmetric
$M$. Substituting \eqref{eq:second-diff-bound} back to the bound
on $\hess h(M)[H,H]$, we obtain
\[
\hess h(M)[H,H]\leq\frac{\beta^{2}\,\tr(e^{\beta M}H^{2})}{\beta\,\tr e^{\beta M}}=\beta\,\tr\bpar{\frac{e^{\beta M}}{\tr e^{\beta M}}H^{2}}\,,
\]
which completes the proof.
\end{proof}
We now show that $\tr(M^{j}HM^{i-1-j}H)\leq\tr(M^{i-1}H^{2})$.
\begin{lem}
[{\cite[Lemma 55]{KL24isop}}]\label{lem:swap-MHMH} Let $M\succeq0$
and $H$ be a symmetric matrix in $\Rdd$. For $n,m\in\mbb N$,
\[
\tr(M^{n}HM^{m}H)\le\tr(M^{n+m}H^{2})\,.
\]
\end{lem}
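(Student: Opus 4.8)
The plan is to reduce this matrix trace inequality to an elementary scalar one by diagonalizing $M$. First I would write $M=U\Lambda U^{\T}$ with $\Lambda=\diag(\lambda_1,\dots,\lambda_d)$, $\lambda_i\ge 0$, and set $K:=U^{\T}HU$, which is again symmetric. Since the trace is invariant under this conjugation,
\[
\tr(M^n H M^m H)=\tr(\Lambda^n K\Lambda^m K)=\sum_{i,j}\lambda_i^n\lambda_j^m K_{ij}^2,\qquad \tr(M^{n+m}H^2)=\tr(\Lambda^{n+m}K^2)=\sum_{i,j}\lambda_i^{n+m}K_{ij}^2,
\]
where I have used that $K$ is symmetric, so $K_{ji}=K_{ij}$. (When $n=0$ or $m=0$ one reads $M^0=I$ directly and the claimed inequality becomes an equality, so I may assume $n,m\ge 1$.)

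Next I would symmetrize both sums in the index pair $(i,j)$; this is legitimate because $K_{ij}^2=K_{ji}^2$, and relabelling $i\leftrightarrow j$ gives $\sum_{i,j}\lambda_i^n\lambda_j^m K_{ij}^2=\tfrac12\sum_{i,j}(\lambda_i^n\lambda_j^m+\lambda_i^m\lambda_j^n)K_{ij}^2$ and $\sum_{i,j}\lambda_i^{n+m}K_{ij}^2=\tfrac12\sum_{i,j}(\lambda_i^{n+m}+\lambda_j^{n+m})K_{ij}^2$. Hence it suffices to prove the termwise bound $\lambda_i^n\lambda_j^m+\lambda_i^m\lambda_j^n\le \lambda_i^{n+m}+\lambda_j^{n+m}$ for every pair $(i,j)$, since the weights $K_{ij}^2$ are nonnegative.

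Finally, the termwise bound is the elementary identity: for $a,b\ge 0$,
\[
a^{n+m}+b^{n+m}-a^n b^m-a^m b^n=(a^n-b^n)(a^m-b^m)\ge 0,
\]
because $t\mapsto t^k$ is nondecreasing on $[0,\infty)$, so the two factors share a sign. Plugging $a=\lambda_i$, $b=\lambda_j$ and summing against $K_{ij}^2$ finishes the proof. I do not expect any genuine obstacle here: the only points requiring a little care are the bookkeeping in the symmetrization step and the degenerate exponents $n=0$ or $m=0$, both of which are routine.
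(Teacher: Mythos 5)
Your proof is correct. The overall route is the same as the paper's: diagonalize $M$, express both traces as $\sum_{i,j}\lambda_i^{n}\lambda_j^{m}(u_i^{\T}Hu_j)^2$ (your $K_{ij}^2$) and $\sum_{i,j}\lambda_i^{n+m}(u_i^{\T}Hu_j)^2$, and finish with a scalar inequality on the eigenvalues. The one genuine difference is the finishing step. The paper applies Young's (weighted AM--GM) inequality pairwise, $\lambda_i^n\lambda_j^m\le\frac{n}{n+m}\lambda_i^{n+m}+\frac{m}{n+m}\lambda_j^{n+m}$, and then collapses the resulting double sums using $\sum_j u_ju_j^{\T}=I$. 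You instead symmetrize first (harmless, since $K_{ij}^2=K_{ji}^2$) and then use the rearrangement-type factorization $a^{n+m}+b^{n+m}-a^nb^m-a^mb^n=(a^n-b^n)(a^m-b^m)\ge 0$. Your termwise bound is exactly the paper's Young's inequality applied to the pair $(i,j)$ and again to $(j,i)$ and summed, so it is a slightly weaker but perfectly sufficient statement; it buys a cleaner, fully symmetric termwise inequality at the cost of the preliminary symmetrization step. Both arguments are elementary and of essentially the same length; neither has a gap. Your explicit remark about the degenerate exponents $n=0$ or $m=0$ is a harmless extra check that the paper leaves implicit.
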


\begin{proof}
By the spectral decomposition of $M$, we can write that for eigenvalues
$\lda_{i}$ and orthonormal basis $\{u_{i}\}$, 
\[
M=\sum_{i=1}^{d}\lda_{i}u_{i}u_{i}^{\T}\,.
\]
Then,
\[
\tr(M^{n}HM^{m}H)=\sum_{i,j}\lda_{i}^{n}\lda_{j}^{m}\,\tr(u_{i}u_{i}^{\T}Hu_{j}u_{j}^{\T}H)=\sum_{i,j}\lda_{i}^{n}\lda_{j}^{m}\,(u_{i}^{\T}Hu_{j})^{2}\,.
\]
By Young's inequality,
\[
\lda_{i}^{n}\lda_{j}^{m}\leq\frac{n}{n+m}\,\lda_{i}^{n+m}+\frac{m}{n+m}\,\lda_{j}^{n+m}\,.
\]
Thus,
\begin{align*}
\sum_{i,j}\lda_{i}^{n}\lda_{j}^{m}\,(u_{i}^{\T}Hu_{j})^{2} & \leq\sum_{i,j}\frac{n}{n+m}\,\lda_{i}^{n+m}(u_{i}^{\T}Hu_{j})^{2}+\sum_{i,j}\frac{m}{n+m}\,\lda_{j}^{n+m}(u_{i}^{\T}Hu_{j})^{2}\\
 & =\sum_{i,j}\lda_{i}^{n+m}\,u_{i}^{\T}Hu_{j}u_{j}^{\T}Hu_{i}=\sum_{i}\lda_{i}^{n+m}\,u_{i}^{\T}H\Bpar{\sum_{j}u_{j}u_{j}^{\T}}Hu_{i}\\
 & =\sum_{i}\lda_{i}^{n+m}\,u_{i}^{\T}H^{2}u_{i}\\
 & =\sum_{i}\tr(\lda_{i}^{n+m}u_{i}u_{i}^{\T}H^{2})\\
 & =\tr(M^{n+m}H^{2})\,,
\end{align*}
which completes the proof.
\end{proof}

\paragraph{3-tensor bounds.}

In the analysis below, we bound $\norm{\int(x-b_{t})^{\otimes2}\inner{x-b_{t},\theta}\,\D\pi_{t}}$
for a unit vector $\theta$. 
\begin{lem}
[{\cite[Lemma 57]{eldan13thin,KL24isop}}]\label{lem:3tensor-1}
Let $X$ be a centered logconcave random variable. Then, for the covariance
$\Sigma$ of $\law X$,
\[
\sup_{\theta\in\mbb S^{n-1}}\norm{\E[(X\cdot\theta)\,X^{\otimes2}]}\lesssim\norm{\Sigma}^{3/2}\,.
\]
\end{lem}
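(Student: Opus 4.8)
The plan is to reduce this $n$-dimensional $3$-tensor estimate to a purely one-dimensional moment comparison for logconcave measures. First I would rewrite the quantity to be bounded: for a fixed unit vector $\theta$, the matrix $M_\theta:=\E[(X\cdot\theta)\,X^{\otimes2}]$ is symmetric, so
\[
\norm{M_\theta}=\sup_{v\in\mbb S^{n-1}}\babs{v^\T M_\theta v}=\sup_{v\in\mbb S^{n-1}}\babs{\E[(X\cdot\theta)\,(X\cdot v)^2]}\,.
\]
Thus it suffices to bound $\babs{\E[(X\cdot\theta)\,(X\cdot v)^2]}$ uniformly over unit vectors $\theta,v$ by $O(\norm\Sigma^{3/2})$.

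Next I would apply Cauchy--Schwarz, peeling the linear factor off the quadratic one:
\[
\babs{\E[(X\cdot\theta)\,(X\cdot v)^2]}\le\E\bbrack{\abs{X\cdot\theta}\,(X\cdot v)^2}\le\bpar{\E[(X\cdot\theta)^2]}^{1/2}\,\bpar{\E[(X\cdot v)^4]}^{1/2}\,.
\]
The first factor is exactly $(\theta^\T\Sigma\theta)^{1/2}\le\norm\Sigma^{1/2}$. For the second factor I would use that the marginal $X\cdot v$ is a \emph{centered} one-dimensional logconcave random variable (marginals of logconcave measures are logconcave, and $\E[X\cdot v]=0$ since $X$ is centered), and invoke the reverse H\"older inequality (Borell's lemma) applied to the law of the scalar variable $X\cdot v$ with $k=4$, followed by the elementary bound $\E\abs{Y}\le(\E Y^2)^{1/2}$:
\[
\norm{X\cdot v}_{L^4}\le 8\,\E\abs{X\cdot v}\le 8\,\bpar{\E[(X\cdot v)^2]}^{1/2}=8\,(v^\T\Sigma v)^{1/2}\le 8\,\norm\Sigma^{1/2}\,.
\]
Hence $\bpar{\E[(X\cdot v)^4]}^{1/2}\le 64\,\norm\Sigma$, and multiplying the two factors yields $\babs{\E[(X\cdot\theta)\,(X\cdot v)^2]}\le 64\,\norm\Sigma^{3/2}$; taking the supremum over $\theta,v$ gives the claim.

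There is no genuine obstacle here; the only point that needs a little care is to apply the reverse H\"older/Borell bound to the one-dimensional pushforward distribution of $X\cdot v$ (whose logconcavity comes from the Pr\'ekopa--Leindler theorem on marginals) rather than to the $n$-dimensional law of $X$ — for the latter the lemma as stated would only give a weaker, dimension-dependent estimate. Everything else is Cauchy--Schwarz together with the identity $\E[(X\cdot u)^2]=u^\T\Sigma u\le\norm\Sigma$.
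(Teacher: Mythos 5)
Your proof is correct and follows exactly the paper's route: reduce to bounding $\abs{\E[(X\cdot\theta)(X\cdot v)^2]}$ over unit vectors, apply Cauchy--Schwarz to split off $\E[(X\cdot\theta)^2]^{1/2}\le\norm\Sigma^{1/2}$, and then control $\E[(X\cdot v)^4]$ via the reverse H\"older (Borell) inequality applied to the one-dimensional logconcave marginal $X\cdot v$. The only differences are cosmetic — you make the supremum over $v$ and the intermediate $L^1$-to-$L^2$ step explicit and track the constant $2k=8$, where the paper just writes $\lesssim$.
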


\begin{proof}
For $v\in\mbb S^{n-1}$,
\[
\E[(X\cdot\theta)(X\cdot v)^{2}]\leq\sqrt{\E[(X\cdot\theta)^{2}]}\sqrt{\E[(X\cdot v)^{4}]}\,.
\]
Since $X\cdot v$ is an $1$-dimensional logconcave random variable,
the reverse H\"older inequality tells us that: 
\[
\bpar{\E[(X\cdot v)^{4}]}^{1/4}\lesssim\bpar{\E[(X\cdot v)^{2}]}^{1/2}\,.
\]
Hence,
\[
\E[(X\cdot\theta)(X\cdot v)^{2}]\leq\norm{\Sigma}^{1/2}\,\E[(X\cdot v)^{2}]\leq\norm{\Sigma}^{3/2}\,,
\]
which completes the proof.
\end{proof}
Next, we bound $\norm{\sum_{i}H_{i}^{2}}$ for $H_{i}=\int(x-b_{t})^{\otimes2}(x-b_{t})_{i}\,\pi_{t}(\D x)$.
\begin{lem}
[{\cite[Lemma 58]{KL24isop}}]\label{lem:3tensor-2} Let $X$ be
a centered $t$-strongly logconcave random variable. Then, for the
covariance $\Sigma$ of $\law X$, 
\[
\Bnorm{\sum_{i}(\E[X_{i}X^{\otimes2}])^{2}}\leq\frac{4\norm{\Sigma}^{5/2}}{t^{1/2}}\,.
\]
\end{lem}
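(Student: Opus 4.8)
The plan is to reduce the operator-norm bound to a one-parameter family of Hilbert--Schmidt estimates and then to close a self-referential inequality using the improved Lichnerowicz bound. With $X$ centered, set $H_i=\E[X_iX^{\otimes2}]$, a symmetric matrix, and for a unit vector $v$ write $M_v:=\E[(X\cdot v)\,X^{\otimes2}]$, also symmetric. Since each $H_i$ is symmetric, $v^{\T}\bpar{\sum_iH_i^2}v=\sum_i\norm{H_iv}^2$; and $(H_iv)_k=\E[(X\cdot v)X_iX_k]=(M_v)_{ik}$, so $v^{\T}\bpar{\sum_iH_i^2}v=\norm{M_v}_{\hs}^2=\tr(M_v^2)$ for every unit $v$. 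Thus $\norm{\sum_iH_i^2}=\sup_{\norm v=1}\tr(M_v^2)$, and it suffices to bound $\tr(M_v^2)$ for a fixed unit $v$.

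For this I would use $M_v=\E[(X\cdot v)X^{\otimes2}]$ once more to write $\tr(M_v^2)=\inner{M_v,M_v}=\E[(X\cdot v)\,\phi(X)]$, where $\phi(x):=x^{\T}M_vx$ is a \emph{quadratic} test function built from $M_v$ itself. Since $X$ is centered, $\E[(X\cdot v)\,\E\phi(X)]=0$, so $\tr(M_v^2)=\E\bpar{(X\cdot v)\,(\phi(X)-\E\phi(X))}\le\bpar{v^{\T}\Sigma v}^{1/2}\bpar{\var\phi(X)}^{1/2}\le\norm\Sigma^{1/2}\bpar{\var\phi(X)}^{1/2}$ by Cauchy--Schwarz. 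Because $\grad\phi(x)=2M_vx$, we have $\E\norm{\grad\phi(X)}^2=4\tr(M_v^2\Sigma)\le4\norm\Sigma\,\tr(M_v^2)$ (as $M_v^2\succeq0$ and $\Sigma\preceq\norm\Sigma\,I$), so \eqref{eq:PI} applied to $\pi:=\law X$ gives $\var\phi(X)\le\cpi(\pi)\,\E\norm{\grad\phi(X)}^2\le4\,\cpi(\pi)\,\norm\Sigma\,\tr(M_v^2)$ --- note that $\tr(M_v^2)$ has reappeared on the right. Now invoke the improved Lichnerowicz inequality \eqref{eq:improved-LI}: since $X$ is $t$-strongly logconcave, $\cpi(\pi)\le(\norm\Sigma/t)^{1/2}$, hence $\var\phi(X)\le4t^{-1/2}\norm\Sigma^{3/2}\tr(M_v^2)$ and therefore $\tr(M_v^2)\le\norm\Sigma^{1/2}\bpar{4t^{-1/2}\norm\Sigma^{3/2}\tr(M_v^2)}^{1/2}=2t^{-1/4}\norm\Sigma^{5/4}\bpar{\tr(M_v^2)}^{1/2}$. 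If $\tr(M_v^2)=0$ the bound is trivial; otherwise dividing by $(\tr(M_v^2))^{1/2}$ gives $\tr(M_v^2)\le4\norm\Sigma^{5/2}/t^{1/2}$, and taking the supremum over unit $v$ finishes the proof. (All moments used are finite since $X$ is logconcave, so $\phi\in L^2(\pi)$.)

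The argument is essentially mechanical once two choices are made, and that is where the content lies. First, the Poincaré test function must be the quadratic form $x\mapsto x^{\T}M_vx$ \emph{with $M_v$ itself} (rather than, say, the sign matrix of $M_v$): this keeps $\E\norm{\grad\phi}^2$ proportional to $\norm\Sigma\,\tr(M_v^2)$ and so avoids a spurious factor of $\tr\Sigma\asymp n$, which is exactly what appears from cruder bounds such as $\tr(M_v^2)\le\norm{M_v}_{\op}\norm{M_v}_{*}$ combined with Lemma~\ref{lem:3tensor-1}. Second, one must use the improved Lichnerowicz bound $\cpi(\pi)\le(\norm\Sigma/t)^{1/2}$ rather than the crude Brascamp--Lieb bound $\cpi(\pi)\le1/t$; the former yields precisely the stated scaling $\norm\Sigma^{5/2}t^{-1/2}$, which is integrable in $t$ near $0$, whereas the weaker $\norm\Sigma^{2}t^{-1}$ coming from Brascamp--Lieb is not and would be useless in Theorem~\ref{thm:opnorm-control}. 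So the real obstacle is simply recognizing the self-improving variance inequality; no genuinely hard estimate is needed.
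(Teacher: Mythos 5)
Your proof is correct and follows essentially the same route as the paper's: after reducing $\bnorm{\sum_iH_i^2}$ to $\sup_{v}\tr(H_v^2)$ with $H_v=\E[(X\cdot v)X^{\otimes 2}]$, both arguments write $\tr(H_v^2)=\E[(X\cdot v)(X^{\T}H_vX-\E[X^{\T}H_vX])]$, apply Cauchy--Schwarz, bound $\var(X^{\T}H_vX)$ by the Poincar\'e inequality (with $\E\norm{2H_vX}^2=4\tr(H_v^2\Sigma)\le 4\norm\Sigma\tr(H_v^2)$), feed $\tr(H_v^2)$ back into itself, and close with the improved Lichnerowicz inequality \eqref{eq:improved-LI}. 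Your added commentary on why the test function must be the quadratic form in $H_v$ itself and why \eqref{eq:improved-LI} rather than $\cpi\le 1/t$ is needed is accurate and a nice articulation of what makes the estimate work.
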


\begin{proof}
For $v\in\mbb S^{n-1}$,
\begin{align*}
v^{\T}\sum_{i}(\E[X_{i}X^{\otimes2}])^{2}v & =\sum_{i}\E[(X\cdot v)\,X_{i}X]\,\bpar{\E[(X\cdot v)\,X_{i}X]}^{\T}=\tr\Bpar{\bpar{\underbrace{\E[(X\cdot v)\,X^{\otimes2}]}_{\eqqcolon H_{v}}}^{2}}\,.
\end{align*}
Now let us bound $\tr(H_{v}^{2})$ as follows:
\begin{align*}
\tr(H_{v}^{2}) & =\E\bbrack{\tr\bpar{H_{v}\,(X\cdot v)\,X^{\otimes2}}}=\E[(X^{\T}H_{v}X)\,(X\cdot v)]\\
 & =\E\bbrack{(X^{\T}H_{v}X-\E[X^{\T}H_{v}X])\,(X\cdot v)}\\
 & \leq\sqrt{\E[(X\cdot v)^{2}]}\sqrt{\var(X^{\T}H_{v}X)}\\
 & \leq\sqrt{\norm{\Sigma}}\sqrt{4\cpi(X)\,\E[\norm{H_{v}X}^{2}]}\\
 & =\sqrt{\norm{\Sigma}}\sqrt{4\cpi(X)\,\E[\tr(H_{v}^{2}X^{\otimes2})]}\\
 & =\sqrt{\norm{\Sigma}}\sqrt{4\cpi(X)\,\tr(H_{v}^{2}\,\E[X^{\otimes2}])}\\
 & \leq\norm{\Sigma}\sqrt{4\cpi(X)\,\tr(H_{v}^{2})}\,.
\end{align*}
Rearranging terms, 
\[
\tr(H_{v}^{2})\leq4\cpi(X)\norm{\Sigma}^{2}\leq\frac{4\norm{\Sigma}^{5/2}}{t^{1/2}}\,,
\]
where the last one follows from \eqref{eq:improved-LI}.
\end{proof}

\section{The Slicing Conjecture \label{sec:slicing}}

We now move to the slicing conjecture, which was resolved in December
2024 by Klartag and Lehec \cite{klartag2024affirmativeresolutionbourgainsslicing}.
We follow the much shorter proof by Bizeul \cite{bizeul2025slicing}
combining Guan's trace bound with an approach from \cite{LV24eldan}.

\subsection{Small-ball probability}

Just as the original KLS conjecture is tackled through its Poincar\'e
version \eqref{eq:KLS-PI}, we handle the slicing conjecture through
an equivalent formulation. Dafnis and Paouris \cite{dafnis2010small}
showed that the following is equivalent to $\bar{L}_{n}=\O(1)$ (i.e.,
the slicing conjecture in terms of isotropic convex bodies; see Theorem~\ref{thm:slicing-equiv}):
\begin{thm}
[Small-ball conjecture] \label{thm:small-ball-conjecture} Let $\pi$
be an isotropic logconcave distribution over $\Rn$. Then, there exist
universal constants $c_{1},c_{2}>0$ such that for any $\veps<c_{1}$
and any $y\in\Rn$,
\begin{equation}
\P_{\pi}(\norm{X-y}^{2}\leq\veps n)=\pi\bpar{B_{\sqrt{\veps n}}(y)}\leq\veps^{c_{2}n}\,,\tag{\ensuremath{\msf{sBall}}}\label{eq:small-ball-conjecture}
\end{equation}
where $B_{r}(y)$ is the ball of radius $r$ centered at $y$ in $\Rn$.
\end{thm}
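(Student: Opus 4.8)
The statement to prove is the small-ball bound itself; granting it, Bourgain's slicing conjecture follows from the Dafnis--Paouris equivalence recalled above. The plan is to run stochastic localization from the isotropic log-concave $\pi$, exploit that each $\pi_t$ is $t$-strongly log-concave, and use that $t\mapsto\pi_t(S)$ is a bounded martingale, so $\pi(S)=\E_{\mathsf{SL}}[\pi_t(S)]$ for every Borel $S$. Fixing $S=B_{\sqrt{\veps n}}(y)$, it then suffices to control $\E_{\mathsf{SL}}\,\pi_{t_0}(B_{\sqrt{\veps n}}(y))$ for a well-chosen (universal) time $t_0$.

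I would split this into two reductions. First, a small-ball estimate for a single strongly log-concave measure: if $\mu=e^{-V}$ is $\tau$-strongly log-concave with mode $x_0$, then $e^{-V}\le e^{-V(x_0)}e^{-\frac{\tau}{2}\norm{\cdot-x_0}^2}$ together with the $\chi^2_n$ lower-tail estimate gives $\mu(B_{\sqrt{\veps n}}(y))\le(2\pi e\veps)^{n/2}\,\norm{\mu}_\infty$ uniformly in $y$ (for $\veps\le1/\tau$). Second, a bound on the sup-density from the covariance: for any $\tau$-strongly log-concave $\mu$, $\norm{\mu}_\infty\le C_1^{\,n}(\det\cov\mu)^{-1/2}$ with $C_1$ universal --- ``slicing for strongly log-concave measures,'' which, unlike the general slicing conjecture, is not circular (strongly log-concave measures are not dense among log-concave ones) and follows from Caffarelli's contraction theorem. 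Combining, for a suitable universal constant $t_0$,
\[
\pi\bigl(B_{\sqrt{\veps n}}(y)\bigr)=\E_{\mathsf{SL}}\,\pi_{t_0}\bigl(B_{\sqrt{\veps n}}(y)\bigr)\le(2\pi e C_1^{2}\,\veps)^{n/2}\,\E_{\mathsf{SL}}\bigl[(\det\Sigma_{t_0})^{-1/2}\bigr].
\]
If this last expectation is at most $C_2^{\,n}$, then $\pi(B_{\sqrt{\veps n}}(y))\le(2\pi e C_1^{2}C_2^{2}\,\veps)^{n/2}\le\veps^{n/4}$ once $\veps<c_1:=(2\pi e C_1^{2}C_2^{2})^{-2}$, which is exactly the claim with $c_2=1/4$.

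Thus the entire problem reduces to the non-degeneracy estimate $\E_{\mathsf{SL}}[(\det\Sigma_{t_0})^{-1/2}]\le C_2^{\,n}$, i.e.\ that the covariance of $\pi_t$ does not collapse. Here stochastic localization and Guan's trace control do the heavy lifting. By It\^o's formula, using $\D\Sigma_t=\D H_t-\Sigma_t^2\,\D t$ with $\D H_t=\sum_i H_{t,i}\,\D W_{t,i}$ and $H_{t,i}$ the third-moment tensor \eqref{eq:3-tensor},
\[
\de_t\,\E[-\log\det\Sigma_t]=\E\tr\Sigma_t+\tfrac12\,\E\!\sum_i\tr\bigl(\Sigma_t^{-1}H_{t,i}\,\Sigma_t^{-1}H_{t,i}\bigr).
\]
The first term is at most $n$, since the trace is non-increasing ($\de_t\E\tr\Sigma_t=-\E\tr\Sigma_t^2\le0$). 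After rescaling $\pi_t$ to isotropic position the second term is at most $\norm{\Sigma_t}$ times the squared Hilbert--Schmidt norm of the third-moment tensor of an isotropic log-concave measure, which is controlled by Lemmas~\ref{lem:3tensor-1}--\ref{lem:3tensor-2}; the delicate regime is $t\to0^+$, where the a priori bound $\norm{\Sigma_t}\le t^{-1}$ blows up, and there one invokes the operator-norm control of \S\ref{sec:KLS-bound} ($\norm{\Sigma_t}\le2$ with overwhelming probability for $t\lesssim\log^{-2}n$) together with Guan's \emph{uniform-in-time} bounds $\E\tr\Sigma_t^2=O(n)$ and $\E\tr\Sigma_t^2=\Theta(n)$ for $t\lesssim1$. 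Integrating from $\det\Sigma_0=1$ controls $\E[-\log\det\Sigma_{t_0}]$, and the passage to the exponential moment $\E[(\det\Sigma_{t_0})^{-1/2}]\le C_2^{\,n}$ is a Freedman-type stopping-time argument on the martingale part of $-\log\det\Sigma_t$, whose quadratic variation over $[0,t_0]$ is again handled by the third-tensor estimates and thin-shell-type bounds ($\sigma_n=O(1)$).

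The main obstacle is precisely this non-collapse estimate for $\Sigma_{t_0}$: bounding how much the covariance of $\pi_t$ can shrink --- in a single direction, hence in determinant --- near $t=0$. This is the ``mirror image'' of the operator-norm control in \S\ref{sec:KLS-bound}, and it is where Guan's uniform control of the trace along the \emph{whole} stochastic-localization flow is indispensable: it removes the polylogarithmic losses present in earlier arguments and produces an absolute-constant bound on the density, so that $c_1$ and $c_2$ can be taken universal. A secondary point demanding care is the dimension-free slicing bound for strongly log-concave measures feeding the second reduction.
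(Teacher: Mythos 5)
Your overall strategy (decompose $\pi(S)=\E_{\mathsf{SL}}\pi_t(S)$ and transfer a small-ball estimate for the strongly log-concave $\pi_t$ back to $\pi$) matches the paper's, but the two reductions you make each smuggle the hard part back in. The claim that every $\tau$-strongly log-concave $\mu$ satisfies $\norm{\mu}_\infty\le C_1^n(\det\cov\mu)^{-1/2}$ with $C_1$ \emph{universal} is equivalent to the slicing conjecture itself, not a corollary of Caffarelli. For any isotropic log-concave $\pi$, the tilts $\pi_\epsilon\propto\pi(x)\,e^{-\epsilon\norm{x}^2/2}$ are $\epsilon$-strongly log-concave and satisfy $\norm{\pi_\epsilon}_\infty\to\norm\pi_\infty$ and $\cov\pi_\epsilon\to\cov\pi$ as $\epsilon\to0^+$; so a $\tau$-independent $C_1$ would immediately give slicing for $\pi$. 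Your parenthetical that strongly log-concave measures are not dense among log-concave ones is therefore false for the purpose at hand. Caffarelli gives $\det\nabla T\le1$ for the Brenier map $T:\gamma_{1/\tau}\to\mu$, hence the pointwise \emph{lower} bound $\mu\circ T\ge\gamma_{1/\tau}$, not an upper bound on $\norm\mu_\infty$; the density upper bounds that are actually available for sub-Gaussian log-concave measures scale with the sub-Gaussian constant of the \emph{isotropized} measure, which is $(\tau\,\lambda_{\min}(\cov\mu))^{-1/2}$ and blows up precisely when the smallest eigenvalue collapses.

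Even granting that step, your second reduction $\E_{\mathsf{SL}}[(\det\Sigma_{t_0})^{-1/2}]\le C_2^n$ is not established by the tools you cite, and the It\^o sketch is itself circular. Your formula is correct, and the quadratic-variation drift $\frac12\sum_i\tr(\Sigma_t^{-1}H_{t,i}\Sigma_t^{-1}H_{t,i})$ can be rewritten as $\tr(M\Sigma_t)\le\norm{\Sigma_t}\,\norm{\tilde H}_{\hs}^2$ for the third-moment tensor $\tilde H$ of the isotropized $\pi_t$; but the only available bound on $\norm{\tilde H}_{\hs}^2$ (Lemma~\ref{lem:third-moment-bound}) scales like $n/\tau$, where $\tau$ is the strong log-concavity parameter of the isotropized measure, which is $t\,\lambda_{\min}(\Sigma_t)$ --- exactly the quantity you are trying to keep bounded away from zero. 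The operator-norm control of \S\ref{sec:KLS-bound} bounds $\lambda_{\max}(\Sigma_t)$, and Guan's bounds control $\tr\Sigma_t$ and $\tr(\Sigma_t^2)$; none of these rule out a collapse of a few small eigenvalues, so the argument does not close. Bizeul's proof, which the paper follows, is engineered precisely to avoid ever needing $\lambda_{\min}(\Sigma_t)$ or $\det\Sigma_t$: one targets $\pi(B_{\sqrt{\veps n}}(y))$ directly via the Dafnis--Paouris reformulation, the cutoff Lemma~\ref{lem:small-ball-cutoff} projects $\pi_t$ onto the span of its large eigendirections so that $\norm{\Sigma_t^{-1}}$ is replaced by $2n/\tr\Sigma_t$, Paouris' small-ball estimate (Lemma~\ref{lem:subGaussian-small-ball}) handles the strongly log-concave snapshot, Guan's trace \emph{lower} bound (Lemma~\ref{lem:trace-lower-bound}) keeps $\tr\Sigma_t\gtrsim n$ with positive probability, and a Markov-type argument on the martingale $\log\pi_t(S)$ (Lemma~\ref{lem:measure-control-Markov}) transfers the bound back to $\pi$.
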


We prove this theorem in this section.
\begin{rem}
[Interpretation] Paouris showed that there exists a universal constant
$c>0$ such that for any $t\geq1$ and any isotropic logconcave $\pi$
over $\Rn$,
\[
\P_{\pi}(\norm X^{2}\leq t^{2}n)\geq1-e^{-ct\sqrt{n}}\,.
\]
In words, the centered $t\sqrt{n}$-ball with $t\geq1$ is \emph{large
enough} to take up most of the measure. On the other hand, the small-ball
conjecture posits how small the measure of a $t\sqrt{n}$-radius ball
should be when $t$ is less than $1$. More precisely, it claims that
its measure decreases polynomially fast in $t$ (i.e., $\sim t^{n}$).
\end{rem}

\subsubsection{Proof outline}

Recall our approach to \eqref{eq:KLS-PI}: we crucially relied on
a \emph{model inequality} such as \eqref{eq:improved-LI}, $\cpi(\pi)\leq(\norm{\Sigma}/t)^{1/2}$,
which holds for $t$-strongly logconcave distributions. Using SL,
we then leveraged this inequality to establish a similar result for
any logconcave distributions.

We take a similar approach toward \eqref{eq:small-ball-conjecture},
which leads the following question:
\begin{center}
\textbf{Q.} What is a suitable \emph{model result} that we can make
use of?
\par\end{center}

In \cite{paouris2012small}, Paouris established a small-ball estimate
for \emph{sub-Gaussian} logconcave distributions, where sub-Gaussianity
is implied by strong log-concavity. 
\begin{lem}
[Small-ball for sub-Gaussian] \label{lem:subGaussian-small-ball}
Let $\pi$ be a $t$-strongly logconcave distribution over $\Rn$
with covariance $\Sigma$. Then, there exists universal constants
$c_{1},c_{2}>0$ such that for any $\veps<c_{1}$ and any $y\in\Rn$,
\[
\P_{\pi}(\norm{X-y}^{2}\leq\veps\tr\Sigma)\leq\veps^{\frac{c_{2}t\tr\Sigma}{\norm{\Sigma}\norm{\Sigma^{-1}}}}\,.
\]
\end{lem}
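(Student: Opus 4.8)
The plan is to reduce to Paouris's small-ball estimate for \emph{sub-Gaussian} logconcave measures \cite{paouris2012small}, which serves as the ``model within the model'': if $\mu$ is an isotropic logconcave measure on $\R^m$ whose linear marginals are sub-Gaussian with $\psi_2$-constant $b$ (i.e.\ $\norm{\inner{\cdot,\theta}}_{\psi_2}\le b$ for every $\theta\in\mbb S^{m-1}$, recalling $\norm{\inner{\cdot,\theta}}_{L^2}=1$), then $\mu\bpar{B_{\veps\sqrt m}(w)}\le\veps^{cm/b^{2}}$ for every $w\in\R^m$ and every $0<\veps<\veps_0$, with $c,\veps_0>0$ universal. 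It then remains to check that strong logconcavity supplies this sub-Gaussianity with the right parameter, and to transport the isotropic, origin-centered model to an arbitrary $t$-strongly logconcave $\pi$ with arbitrary covariance $\Sigma$ and center $y$.

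For the first point, I would use Caffarelli's contraction theorem \cite{Caffarelli00monotonicity}: a $t$-strongly logconcave $\pi$ is the push-forward of $\Gsn(0,I_n)$ under a $t^{-1/2}$-Lipschitz map, so each $\inner{X-\mu,\theta}$ is a $t^{-1/2}$-Lipschitz image of a standard Gaussian, hence sub-Gaussian with variance proxy $\lesssim 1/t$, i.e.\ $\norm{\inner{X-\mu,\theta}}_{\psi_2}\lesssim t^{-1/2}$ for all $\theta$. (Equivalently, one may invoke $\clsi(\pi)\le 1/t$ from the Bakry--\'Emery criterion \cite{BGL14analysis} with Herbst's argument.) I would also record that marginals and linear images of a $t$-strongly logconcave measure remain $t$-strongly logconcave, which follows from the Pr\'ekopa--Leindler inequality.

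For the transport, a global whitening $Z=\Sigma^{-1/2}(X-\mu)$ is too lossy when $\Sigma$ is ill-conditioned, since it inflates the ball radius by $\norm{\Sigma^{-1}}^{1/2}$, which the exponent cannot absorb. Instead I would localize to a well-conditioned piece of the spectrum: diagonalize $\Sigma$, split its eigenvalues into dyadic bands $\sbrace{i:2^{-j-1}\norm{\Sigma}<\sigma_i^2\le 2^{-j}\norm{\Sigma}}$, and pick a band on which $\Sigma$ is conditioned within a factor $2$ and which carries a $\gtrsim 1/(1+\log_2(\norm{\Sigma}\norm{\Sigma^{-1}}))$ fraction of $\tr\Sigma$ --- such a band exists because all eigenvalues lie in $[\mineig\Sigma,\norm{\Sigma}]$. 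Projecting $X-y$ onto the corresponding $m$-dimensional subspace only enlarges the event $\sbrace{\norm{X-y}^2\le\veps\tr\Sigma}$; the induced marginal of $\pi$ is $t$-strongly logconcave, and after whitening on that subspace it is isotropic and $\asymp(t\lambda)$-strongly logconcave, $\lambda$ being the band's scale, hence sub-Gaussian with $\psi_2$-constant $\asymp(t\lambda)^{-1/2}$. Feeding this into Paouris's estimate and simplifying with $\tr\Sigma\,\norm{\Sigma^{-1}}\ge n$, $\tr\Sigma/\norm{\Sigma}\le n$, $\norm{\Sigma}\le 1/t$, and $1+\log_2\kappa\le\kappa$ with $\kappa:=\norm{\Sigma}\norm{\Sigma^{-1}}$, I would land on the stated exponent $c_2\,t\tr\Sigma/(\norm{\Sigma}\norm{\Sigma^{-1}})$ for $\veps$ below a universal $c_1$.

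The main obstacle is precisely this last bookkeeping, especially the genuinely ill-conditioned regime: applying Paouris on the chosen band requires the relative squared radius $\veps\tr\Sigma/(m\lambda)$ to sit below the universal threshold $\veps_0$, which only yields the conclusion for $\veps\lesssim 1/\log_2\kappa$ rather than for all $\veps<c_1$. For $\veps$ in between, one supplements with a cruder bound --- e.g.\ the one-dimensional density estimate $\P(\abs{\inner{v,X-y}}\le r)\le 2r\,\norm{\Sigma}^{-1/2}$ along (a few of) the top eigendirections --- which is adequate there precisely because the target exponent $t\tr\Sigma/(\norm{\Sigma}\norm{\Sigma^{-1}})$ is correspondingly small; merging the two cases into one clean bound uniformly over $\Sigma$ is where $c_1,c_2$ must be chosen with care. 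Finally, I would note that the argument is not circular: it uses only slicing/maximal-density control for sub-Gaussian (equivalently, strongly logconcave) measures, which is classical and far weaker than the slicing conjecture proved in this section.
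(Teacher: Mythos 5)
The paper does not prove this lemma; it cites it from \cite{paouris2012small} (see also \cite[Lemma 4]{bizeul2025slicing}) and uses it as the black-box ``model result'' from which the SL argument for the slicing conjecture bootstraps. Your plan has the right ingredients --- Caffarelli's contraction theorem to deduce $\psi_2$-control with constant $\lesssim t^{-1/2}$ from strong log-concavity, Paouris's isotropic sub-Gaussian small-ball estimate, and a spectral truncation for the anisotropy --- and the dyadic-band decomposition is a finer idea than the single spectral cutoff the paper itself applies to derive Lemma~\ref{lem:small-ball-cutoff} from the present one.

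The gap you flag is genuine, however, and the proposed fallback does not close it. After projecting onto the best band (trace fraction $f \ge 1/L$ with $L \asymp 1+\log_2\kappa$, scale $\lambda$, dimension $m$) and whitening, Paouris gives $\P \le \delta^{c t f\tr\Sigma}$ with $\delta = \veps/f$; asking this to dominate $\veps^{c_2 t\tr\Sigma/\kappa}$ forces not only $\delta < \veps_0$ (your constraint $\veps \lesssim 1/L$) but, after comparing logarithms, roughly $\veps \lesssim f^2$; since $f$ can be as small as $1/L$, the worst-case threshold is of order $1/L^2$, still $\kappa$-dependent and slightly worse than you estimate. The one-dimensional fallback $\P\bpar{\abs{\inner{v_1,X-y}}\le r}\lesssim r\norm{\Sigma}^{-1/2}$ with $r = \sqrt{\veps\tr\Sigma}$ gives $\P \lesssim \sqrt{\veps\tr\Sigma/\norm{\Sigma}}$, which already exceeds $1$ once $\veps > \norm{\Sigma}/\tr\Sigma$; that threshold can be as small as $1/n$, far below $1/L^2$, so the fallback covers essentially none of the missing regime. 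And the supporting heuristic that ``the target exponent is correspondingly small'' there is false in general: take eigenvalues $2^{-j}$ with multiplicity $2^j T$ for $j=0,\dots,L-1$; then $L$ and $\kappa = 2^{L-1}$ are fixed by the shape while $t\tr\Sigma/\kappa \asymp LT/2^{L-1}$ scales freely with $T$, so the target bound $\veps^{c_2 LT/2^{L-1}}$ can be made arbitrarily small for $\veps \in (1/L^2, c_1)$. One can check by Chernoff on the full quadratic form (in the Gaussian instance) that the lemma still holds there, so the statement is fine; the issue is that your argument does not reach it.

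Closing the gap requires a different mechanism: Paouris's small-ball estimate is proved through negative moments $\E[\norm{X-y}^{-q}]$ and $L_q$-centroid-body control, which absorb the anisotropy directly rather than via a pre-processing spectral cutoff. As written, your proposal is a thoughtful but incomplete reconstruction of the cited result.
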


Namely, this lemma proves \eqref{eq:small-ball-conjecture} for isotropic
and $1$-strongly logconcave $\pi$ (e.g., $\msf N(0,I_{n})$), so
we will treat it as the model result in our approach. To start off,
we consider the SL process $(\pi_{t})_{\geq0}$ with $\pi_{0}=\pi$
isotropic logconcave, and SL  ensures $\pi(S)=\E_{\msf{SL}}\pi_{t}(S)$
for $S:=B_{\sqrt{\veps n}}(y)$. We will carry out a simple stochastic
analysis to show that with some probability
\[
\pi(S)\lesssim\pi_{t}(S)\leq\veps^{\frac{c_{2}t\tr\Sigma_{t}}{\norm{\Sigma_{t}}\norm{\Sigma_{t}^{-1}}}}\,,
\]
where the last inequality follows from Lemma~\ref{lem:subGaussian-small-ball}.
This prompts us to
\begin{enumerate}
\item Upper bound $\norm{\Sigma_{t}}$
\item Upper bound $\norm{\Sigma_{t}^{-1}}$
\item Lower bound $\tr\Sigma_{t}$
\end{enumerate}
The first is very easy due to \eqref{eq:improved-LI} (i.e., $\norm{\Sigma_{t}}\leq t^{-1}$).
The second part is addressed by working with a version of Lemma~\ref{lem:subGaussian-small-ball},
and the last part relies on a consequence of Guan's trace bound (explained
in the next section).

\subsubsection{A version of Paouris' small-ball estimate}

We establish a simple consequence of Paouris' small-ball estimate
by cutting off small eigenvalues of $\Sigma$ suitably.
\begin{lem}
[Small-ball with cutoff, {\cite[Lemma 4]{bizeul2025slicing}}] \label{lem:small-ball-cutoff}
Let $\pi$ be a $t$-strongly logconcave distribution over $\Rn$
with covariance $\Sigma$ satisfying $4n\,\norm{\Sigma}/\tr\Sigma\leq C$
for $C>0$. Then, there exist universal constants $c_{1},c_{2}>0$
such that for any $\veps<c_{1}/C$ and any $y\in\Rn$,
\[
\P_{\pi}(\norm{X-y}^{2}\leq\veps\tr\Sigma)\leq\bpar{\frac{4n\veps\norm{\Sigma}}{\tr\Sigma}}^{\frac{c_{2}t^{3}\tr^{3}\Sigma}{8n^{2}}}\,.
\]
\end{lem}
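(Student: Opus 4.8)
The plan is to reduce to Paouris' small-ball estimate for strongly logconcave measures, Lemma~\ref{lem:subGaussian-small-ball}, by discarding the directions in which $\Sigma$ is very small. Let $\lda_1\ge\cdots\ge\lda_n>0$ be the eigenvalues of $\Sigma$ with orthonormal eigenvectors $u_1,\dots,u_n$, fix the cutoff $\tau:=\tr\Sigma/(2n)$, and set $E:=\spanning\{u_i:\lda_i\ge\tau\}$ with $P$ the orthogonal projection onto $E$. Since $\lda_1=\norm\Sigma\ge\tr\Sigma/n>\tau$, the subspace $E$ is nonempty; and since the eigenvalues below $\tau$ sum to less than $n\tau=\tr\Sigma/2$, the compression $\Sigma_E:=P\Sigma P$ (viewed as an operator on $E$) has eigenvalues exactly $\{\lda_i:\lda_i\ge\tau\}$, so that $\tr\Sigma_E>\tr\Sigma/2$, $\norm{\Sigma_E}=\norm\Sigma$, and $\norm{\Sigma_E^{-1}}\le1/\tau=2n/\tr\Sigma$, the last inequality being the whole point of the cutoff.

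Next I would push $\pi$ forward under $P$. The marginal $\mu:=P_{\#}\pi$ is again $t$-strongly logconcave: writing a $t$-strongly logconcave density as $\gamma_{1/t}$ times a logconcave function and marginalizing, the Gaussian factor splits off a Gaussian marginal while the remaining integral over the orthogonal complement is logconcave by Pr\'ekopa's theorem. Its covariance is $\Sigma_E$ and its ambient dimension is $\dim E\le n$. Because an orthogonal projection is $1$-Lipschitz, $\norm{X-y}^2\ge\norm{PX-Py}^2$, hence, with $Z\sim\mu$,
\[
\P_\pi\bpar{\norm{X-y}^{2}\le\veps\tr\Sigma}\le\P_\mu\bpar{\norm{Z-Py}^{2}\le\veps\tr\Sigma}=\P_\mu\bpar{\norm{Z-Py}^{2}\le\veps'\tr\Sigma_E}\,,
\]
where $\veps':=\veps\,\tr\Sigma/\tr\Sigma_E\in[\veps,2\veps)$. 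Applying Lemma~\ref{lem:subGaussian-small-ball} to $\mu$ with center $Py\in E$ --- whose hypothesis $\veps'<c_1$ holds since $C\ge4n\norm\Sigma/\tr\Sigma\ge4$ and $\veps'<2c_1/C\le c_1$ --- bounds the right-hand side by $(\veps')^{c_2 t\tr\Sigma_E/(\norm{\Sigma_E}\norm{\Sigma_E^{-1}})}$.

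It remains to massage the exponent and the base into the stated form. For the exponent, the three cutoff estimates give $\tfrac{c_2 t\tr\Sigma_E}{\norm{\Sigma_E}\norm{\Sigma_E^{-1}}}\ge\tfrac{c_2 t\tr^2\Sigma}{4n\norm\Sigma}$; now $t$-strong logconcavity forces $\norm\Sigma\le\cpi(\pi)\le1/t$ (Brascamp--Lieb), which I use first to replace $1/\norm\Sigma$ by $t$, and then --- via $t\tr\Sigma\le t\,n\norm\Sigma\le n$, i.e. $2n/(t\tr\Sigma)\ge1$ --- to absorb one more factor, landing at $\tfrac{c_2 t^3\tr^3\Sigma}{8n^2}$. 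For the base, $\veps'\le2\veps\le b:=4n\veps\norm\Sigma/\tr\Sigma$ since $2n\norm\Sigma/\tr\Sigma\ge2$, while $b<1$ because $\veps<c_1/C$ and $C\ge4n\norm\Sigma/\tr\Sigma$ (shrinking $c_1\le1$ if necessary). Since $0<\veps'<1$, raising to the larger exponent only decreases the value, and then $\veps'\le b<1$ lets us enlarge the base, so $(\veps')^{c_2 t\tr\Sigma_E/(\norm{\Sigma_E}\norm{\Sigma_E^{-1}})}\le b^{c_2 t^3\tr^3\Sigma/(8n^2)}$, which is the claimed bound.

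I do not foresee a genuine obstacle: the engine is just the cutoff-and-project step, which trades the uncontrolled factor $\norm{\Sigma^{-1}}$ for the harmless $2n/\tr\Sigma$ at the price of halving the trace. The only points demanding care are checking that the marginal onto $E$ inherits $t$-strong logconcavity with the expected spectrum, and the constant bookkeeping that makes the hypothesis ``$\veps<c_1/C$'' --- rather than ``$\veps<c_1$'' --- exactly what is needed both to satisfy the hypothesis of Lemma~\ref{lem:subGaussian-small-ball} for $\mu$ and to keep the base $b$ below $1$.
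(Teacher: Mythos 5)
Your proof is correct and follows essentially the same route as the paper's: project $\pi$ onto the span of the large eigenvectors of $\Sigma$ so that $\norm{\Sigma_E^{-1}}\leq 2n/\tr\Sigma$, invoke Pr\'ekopa to keep $t$-strong logconcavity, apply Lemma~\ref{lem:subGaussian-small-ball} to the marginal, and massage the exponent using $\norm\Sigma\leq 1/t$. The only (cosmetic) difference is the cutoff rule --- you keep eigenvalues above the threshold $\tr\Sigma/(2n)$ while the paper keeps the top $k=\lfloor r/2\rfloor\vee 1$ with $r=\tr\Sigma/\norm\Sigma$; your choice gives the slightly stronger intermediate bound $\tr\Sigma_E>\tr\Sigma/2$, which you then correctly weaken via $t\tr\Sigma\leq n$ to land on the stated exponent.
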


\begin{proof}
Let $\{(v_{i},\lda_{i})\}_{i\in[n]}$ be the pairs of eigenvectors
and eigenvalues of $\Sigma$ with $\lda_{1}\geq\lda_{2}\geq\dots\geq\lda_{n}\geq0$.
For an effective rank $r:=\tr\Sigma/\norm{\Sigma}$, let us pick 
\[
k=\bigl\lfloor\frac{r}{2}\bigr\rfloor\vee1\,.
\]
Under this choice of $k$, since 
\[
\tr\Sigma=(\lda_{1}+\cdots+\lda_{k})+(\lda_{k+1}+\cdots+\lda_{n})\leq k\,\norm{\Sigma}+n\lda_{k}\,,
\]
we have
\[
\frac{\tr\Sigma}{2n}\leq\lda_{k}\,.
\]
Namely, $\lda_{k}$ is almost as large as the average of the eigenvalues
of $\Sigma$.

Now consider the subspace $\mc S$ spanned by $\{v_{1},\dots,v_{k}\}$,
and the projection $P_{\mc S}:\Rn\to\R^{k}$ onto $\mc S$. Then,
the marginal distribution $\pi_{\mc S}:=(P_{\mc S})_{\#}\pi$ is still
$t$-strongly logconcave (by Pr\'ekopa--Leindler) and satisfies
\begin{equation}
\norm{\Sigma_{\mc S}}\leq\norm{\Sigma}\,,\quad\norm{\Sigma_{\mc S}^{-1}}\leq\frac{2n}{\tr\Sigma}\,.\label{eq:ineq-lem1}
\end{equation}
Furthermore, since the average of eigenvalues goes up after cutting
off small ones,
\[
\frac{1}{k}\tr\Sigma_{\mc S}\geq\frac{1}{n}\tr\Sigma\,,
\]
and thus
\begin{equation}
\frac{\tr\Sigma}{\tr\Sigma_{\mc S}}\leq\frac{n}{k}\leq\frac{4n\,\norm{\Sigma}}{\tr\Sigma}\leq C\,,\quad\text{so}\ \frac{\tr^{2}\Sigma}{4n\,\norm{\Sigma}}\leq\tr\Sigma_{\mc S}\,.\label{eq:ineq-lem2}
\end{equation}
It then follows that for $\veps\leq c_{1}/C$,
\begin{align*}
\P(\norm{X-y}\leq\veps\tr\Sigma) & \leq\P(\norm{P_{\mc S}X-P_{\mc S}y}\leq\veps\tr\Sigma)=\P\bpar{\norm{P_{\mc S}X-P_{\mc S}y}\leq\frac{\veps\tr\Sigma}{\tr\Sigma_{\mc S}}\,\tr\Sigma_{\mc S}}\\
 & \underset{(i)}{\leq}\bpar{\frac{\veps\tr\Sigma}{\tr\Sigma_{\mc S}}}^{\frac{c_{2}t\tr\Sigma_{\mc S}}{\norm{\Sigma_{\mc S}}\norm{\Sigma_{\mc S}^{-1}}}}\underset{(ii)}{\leq}\bpar{\frac{4n\veps\,\norm{\Sigma}}{\tr\Sigma}}^{\frac{c_{2}t\tr^{3}\Sigma}{8n^{2}\norm{\Sigma}^{2}}}\underset{(iii)}{\leq}\bpar{\frac{4n\veps\,\norm{\Sigma}}{\tr\Sigma}}^{\frac{c_{2}t^{3}\tr^{3}\Sigma}{8n^{2}}}\,,
\end{align*}
where in $(i)$ we used Lemma~\ref{lem:subGaussian-small-ball} to
$\pi_{\mc S}$, used \eqref{eq:ineq-lem1} and \eqref{eq:ineq-lem2}
in $(ii)$, and used $\norm{\Sigma}\leq1/t$ in $(iii)$.
\end{proof}

\subsubsection{Stochastic analysis}

\paragraph{Reduction to the centered $\sqrt{n}$-ball.}

We note that when studying the small-ball conjecture, the general
isotropic case can be reduced to truncated isotropic distributions
with support of diameter $\Theta(n^{1/2})$ and $y=0$. To see this,
consider 
\[
Y:=\frac{X_{1}-X_{2}}{\sqrt{2}}\,,
\]
where $X_{1}$ and $X_{2}$ are independent copies from an isotropic
logconcave distribution $\pi$. Then, $\law Y$ is symmetric, isotropic,
and logconcave (since convolution preserved logconcavity). Note that
for any $r>0$,
\[
\P_{\pi}(\norm{X-y}\leq r)=\{\P_{(X_{1},X_{2})\sim\pi^{\otimes2}}(\norm{X_{1}-y}\leq r,\norm{X_{2}-y}\leq r)\}^{1/2}\leq\{\P(\norm Y\leq\sqrt{2}r)\}^{1/2}\,,
\]
so it suffices to focus on the case of $y=0$.

Next, we claim that one can truncate $\mu:=\law Y$ to a ball of radius
$\Theta(n^{1/2})$ and then focus on $\tilde{\mu}\propto\mu\cdot\ind_{\mc B}$
for $\mc B:=B_{10n^{1/2}}(0)$. Note that 
\[
\sup_{\mc B}\frac{\D\tilde{\mu}}{\D\mu}\leq\bpar{\mu(\mc B)}^{-1}\leq\frac{100}{99}\,,
\]
where the last inequality follows from Markov's inequality:
\[
\mu(\mc B)=\P(\norm Y^{2}\leq100n)\geq1-\frac{\E[\norm Y^{2}]}{100n}\geq\frac{99}{100}\,.
\]
Hence, $\norm{\Sigma_{\tilde{\mu}}}\leq\frac{100}{99}$ follows from
\[
\E_{\tilde{\mu}}[(Z\cdot\theta)^{2}]\leq\frac{100}{99}\,\E_{\mu}[(Z\cdot\theta)^{2}]=\frac{100}{99}\,.
\]
For the lower bound, using the reverse H\"older in $(i)$,
\begin{align*}
\E_{\tilde{\mu}}[(Z\cdot\theta)^{2}] & =\frac{\int_{\mc B}(z\cdot\theta)^{2}\,\D\mu}{\mu(\mc B)}\geq1-\int_{\mc B^{c}}(z\cdot\theta)^{2}\,\D\mu\\
 & \geq1-\bpar{\E_{\mu}[(Z\cdot\theta)^{4}]}^{1/2}\bpar{\mu(\mc B^{c})}^{1/2}\\
 & \underset{(i)}{\geq}1-\frac{5\,\E_{\mu}[(Z\cdot\theta)^{2}]}{10}=\frac{1}{2}\,.
\end{align*}
Hence, $\frac{1}{2}\,I_{n}\preceq\Sigma_{\tilde{\mu}}\preceq2I_{n}$.

Lastly, consider $\nu:=[(\Sigma_{\tilde{\mu}})^{-1/2}]_{\#}\tilde{\mu}$
which is isotropic logconcave. Then,
\begin{align*}
\P_{\pi}(\norm{X-y}\leq r) & \le\{\P_{\mu}(\norm Y\leq\sqrt{2}r)\}^{1/2}\leq\{\P_{\tilde{\mu}}(\norm Y\leq\sqrt{2}r)\}^{1/2}\\
 & \leq\{\P_{\tilde{\mu}}(\norm{\Sigma^{-1/2}Y}\leq\norm{\Sigma^{-1/2}}\sqrt{2}r)\}^{1/2}\leq\{\P_{\nu}(\norm Z\leq2r)\}^{1/2}\,.
\end{align*}
Therefore, redefining the universal constants $c_{1},c_{2}$ in the
small-ball estimate (through suitable rescaling), we can focus on
truncated isotropic distributions with support of diameter $\Theta(n^{1/2})$
and $y=0$.

\paragraph{Stochastic analysis vis SL.}

We now use the SL process $(\pi_{t})$ with isotropic logconcave $\pi_{0}=\pi$
supported over $\mc B=B_{10n^{1/2}}(0)$. Let $S=B_{\sqrt{\veps n}}(0)$
and define $Z_{t}:=\pi_{t}(S)$. Since
\[
\D Z_{t}=\D\Bpar{\int_{S}\pi_{t}(\D x)}=\int_{S}\inner{x-b_{t},\D W_{t}}\,\pi_{t}(\D x)\,.
\]
Hence, the quadratic variation is bounded as follows:
\[
\D[Z]_{t}=\Bnorm{\int_{S}(x-b_{t})\,\pi_{t}(\D x)}^{2}\,\D t\le(10n^{1/2}Z_{t})^{2}\,\D t=100nZ_{t}^{2}\,\D t\,.
\]
Now using It\^o's formula to $\log Z_{t}$,
\[
\D\log Z_{t}=\frac{\D Z_{t}}{Z_{t}}-\frac{\D[Z]_{t}}{2Z_{t}^{2}}\geq\frac{1}{Z_{t}}\int_{S}\inner{x-b_{t},\D W_{t}}\,\pi_{t}(\D x)-50n\,\D t\,,
\]
and thus
\[
\log\pi(S)\leq\E\log\pi_{t}(S)+50nt\,.
\]

\begin{lem}
\label{lem:measure-control-Markov} For $\lda>0$, with probability
at least $1-\lda^{-1}$, it holds that
\[
\pi(S)\leq e^{50nt}\pi_{t}^{1/\lda}(S)\,.
\]
\end{lem}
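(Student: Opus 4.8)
The plan is to obtain the claim as a one-line application of Markov's inequality to the nonnegative random variable $-\log\pi_t(S)$. The preceding displayed inequality already supplies the only analytic input we need: having integrated the drift bound for $\log Z_t=\log\pi_t(S)$ and used that the martingale increment in $\D\log Z_t$ has zero mean, we have
\[
\log\pi(S)\;\leq\;\E[\log\pi_t(S)]+50nt\,,
\]
equivalently $\E[-\log\pi_t(S)]\leq-\log\pi(S)+50nt$.

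First I would record that $-\log\pi_t(S)$ is genuinely a nonnegative, integrable random variable. By \eqref{eq:SL-pdf}, $\pi_t(x)\propto\pi(x)\,\exp\bpar{c_t^{\T}x-\frac{t}{2}\,\norm x^2}$ has the same support as $\pi$, so $\pi_t(S)\in(0,1]$ almost surely (here $\pi(S)>0$, otherwise the statement is trivial), whence $-\log\pi_t(S)\in[0,\infty)$; and the displayed bound shows $\E[-\log\pi_t(S)]\leq-\log\pi(S)+50nt$, a finite and \emph{strictly positive} number since $\pi(S)\leq1$ and $t>0$. Markov's inequality at threshold $\lambda\bpar{-\log\pi(S)+50nt}$ then gives
\[
\P\Bpar{-\log\pi_t(S)>\lambda\bpar{-\log\pi(S)+50nt}}\;\leq\;\frac{\E[-\log\pi_t(S)]}{\lambda\bpar{-\log\pi(S)+50nt}}\;\leq\;\frac{1}{\lambda}\,.
\]
On the complementary event, which has probability at least $1-\lambda^{-1}$, we have $-\log\pi_t(S)\leq\lambda\bpar{-\log\pi(S)+50nt}$; dividing by $\lambda$, rearranging to $\log\pi(S)\leq\frac{1}{\lambda}\log\pi_t(S)+50nt$, and exponentiating yields $\pi(S)\leq e^{50nt}\,\pi_t^{1/\lambda}(S)$, which is the assertion.

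I do not expect any real obstacle here: the only point worth a moment's attention is confirming that $-\log\pi_t(S)$ is a bona fide nonnegative integrable random variable, so that Markov's inequality applies cleanly, and this is immediate from the explicit form of $\pi_t$ together with the already-established expectation bound.
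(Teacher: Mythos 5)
Your proof is correct and takes essentially the same approach as the paper: apply Markov's inequality to the nonnegative random variable $-\log\pi_t(S)$, then combine with the previously established bound $\log\pi(S)\le\E\log\pi_t(S)+50nt$. The paper chooses the Markov threshold to be $\lambda\,\E[-\log\pi_t(S)]$ while you use $\lambda\bpar{-\log\pi(S)+50nt}$, but these lead to the same conclusion by the same route; your version is, if anything, slightly more explicit about nonnegativity, integrability, and positivity of the threshold.
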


\begin{proof}
Using Markov's inequality to $\log\frac{1}{\pi_{t}(S)}>0$, we have
that for any $\lda>0$,
\[
\P\bpar{\log\frac{1}{\pi_{t}(S)}\geq\lda\,\E\log\frac{1}{\pi_{t}(S)}}\leq\frac{1}{\lda}\,.
\]
Hence, with probability at least $1-\lda^{-1}$,
\[
-\log\pi_{t}(S)=\log\frac{1}{\pi_{t}(S)}\leq\lda\,\E\log\frac{1}{\pi_{t}(S)}=-\lda\,\E\log\pi_{t}(S)\,,
\]
so $\frac{1}{\lda}\log\pi_{t}(S)\geq\E\log\pi_{t}(S)$. Since $\log\pi(S)\leq\E\log\pi_{t}(S)+50nt$
from above, it follows that
\[
\log\pi(S)\leq\frac{1}{\lda}\log\pi_{t}(S)+50nt\,,
\]
which completes the proof.
\end{proof}

\paragraph{Putting all together.}

We introduce a simple consequence of Guan's important bound \cite{guan2024note},
addressed in \S\ref{subsec:Guan}.
\begin{lem}
\label{lem:trace-lower-bound} There exists a universal constant $c\in(0,1)$
such that 
\[
\E\tr\Sigma_{c}\geq cn\,.
\]
\end{lem}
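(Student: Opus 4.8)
The plan is to derive this from the exact evolution identity for $\E\tr\Sigma_t$ along SL, combined with Guan's uniform upper bound on $\E\tr(\Sigma_t^2)$.

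First, I recall from the It\^o analysis of $\Sigma_t$ in \S\ref{subsec:Operator-norm-control} the identity obtained by integrating $\D\Sigma_t=\D H_t-\Sigma_t^2\,\D t$, taking the trace, and then expectation (so the martingale contribution $\D H_t$ vanishes and $\E\tr\Sigma_0=n$):
\[
\E\tr\Sigma_t \;=\; n-\int_0^t \E\tr(\Sigma_s^2)\,\D s\,,
\]
equivalently $\partial_t\,\E\tr\Sigma_t=-\E\tr(\Sigma_t^2)\le 0$. Thus $\E\tr\Sigma_t$ starts at $n$ and is non-increasing, and the only thing left to quantify is how fast it can drop on a short time interval.

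Second, I invoke Guan's trace bound from \S\ref{subsec:Guan}: there is a universal constant $K\ge 1$ such that $\E\tr(\Sigma_t^2)\le Kn$ for all $t\in[0,1]$ (in fact for all $t>0$, but a short interval suffices here). Substituting this into the identity above gives
\[
\E\tr\Sigma_c \;\ge\; n-\int_0^c Kn\,\D s \;=\;(1-Kc)\,n\,.
\]
Third, I choose $c:=\frac{1}{2K}$; since $K\ge 1$ we have $c\le\frac12<1$, so the previous bound applies, and
\[
\E\tr\Sigma_c \;\ge\; \frac{n}{2}\;\ge\;\frac{n}{2K}\;=\;cn\,,
\]
which is the claim.

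The only genuine difficulty here is the input, not the argument: Guan's uniform bound $\E\tr(\Sigma_t^2)=O(n)$ is the deep ingredient (established in \S\ref{subsec:Guan}), and once it is in hand the present lemma is a one-line integration. One minor point to keep track of is that we need the upper bound to hold throughout the interval $[0,c]$ rather than at a single time; this is automatic from the uniform statement, and even the weaker form ``$\E\tr(\Sigma_t^2)=O(n)$ for $t\lesssim1$'' would be enough, since we only integrate over the short interval $[0,c]$.
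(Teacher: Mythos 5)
Your proof is correct and follows essentially the same route as the paper: integrate $\D\Sigma_{t}=\D H_{t}-\Sigma_{t}^{2}\,\D t$, take trace and expectation to get $\E\tr\Sigma_{t}=n-\int_{0}^{t}\E\tr(\Sigma_{s}^{2})\,\D s$, then feed in Guan's uniform bound $\E\tr(\Sigma_{s}^{2})\leq Kn$ and choose $c$ small enough. The paper records this exact derivation (as a consequence of Theorem~\ref{thm:trace-control}) in the discussion following the It\^o computation of $\D\Sigma_{t}$, so you have reproduced the intended argument.
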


We now prove the small-ball conjecture \eqref{eq:small-ball-conjecture}.
\begin{proof}
[Proof of Theorem~\ref{thm:small-ball-conjecture}] Using $\norm{\Sigma_{c}}=\norm{\cov\pi_{c}}\leq1/c$,
we obtain that 
\begin{align*}
cn & \leq\E\tr\Sigma_{c}=\E\bbrack{\tr\Sigma_{c}\cdot\ind[\tr\Sigma_{c}\geq\frac{cn}{2}]}+\E\bbrack{\tr\Sigma_{c}\cdot\ind[\tr\Sigma_{c}\leq\frac{cn}{2}]}\\
 & \leq\frac{n}{c}\,\P(\tr\Sigma_{c}\geq\frac{cn}{2})+\frac{cn}{2}\,,
\end{align*}
so
\[
\P\bpar{\tr\Sigma_{c}\geq\frac{cn}{2}}\geq\frac{c^{2}}{2}\,.
\]

Using Lemma~\ref{lem:measure-control-Markov} with $\lda=4/c^{2}$,
\[
\P\bpar{\pi(S)\leq e^{50cn}\pi_{c}^{c^{2}/4}(S)}\geq1-\frac{c^{2}}{4}\,.
\]
Therefore,
\[
\P\bpar{\tr\Sigma_{c}\geq\frac{cn}{2}\,,\pi(S)\leq e^{50cn}\pi_{c}^{c^{2}/4}(S)}>0\,.
\]
In this event,
\[
\frac{4n\,\norm{\Sigma_{c}}}{\tr\Sigma_{c}}\leq\frac{8n}{c^{2}n}=\frac{8}{c^{2}}=:C\,.
\]
Below in $(i)$, we use the small-ball estimate (Lemma~\ref{lem:small-ball-cutoff})
for $c$-strongly logconcave $\pi_{c}$: for $S=B_{\sqrt{\veps n}}(0)$
with $\veps\leq\frac{c_{1}c}{2C}$,
\begin{align*}
\pi(S) & \leq e^{50cn}\pi_{c}^{c^{2}/4}(S)=e^{50cn}\bpar{\P(\norm X^{2}\leq\frac{\veps n}{\tr\Sigma_{c}}\tr\Sigma_{c})}^{c^{2}/4}\\
 & \leq e^{50cn}\bpar{\P(\norm X^{2}\leq\frac{2\veps}{c}\tr\Sigma_{c})}^{c^{2}/4}\\
 & \underset{(i)}{\leq}e^{50cn}\bpar{\frac{8n\veps\norm{\Sigma_{c}}}{c\tr\Sigma_{c}}}^{\frac{c_{2}c^{5}\tr^{3}\Sigma_{c}}{32n^{2}}}\\
 & \leq e^{50cn}\bpar{\frac{16\veps}{c^{3}}}^{\frac{c_{2}c^{8}n}{256}}\leq\veps^{C'n}\,,
\end{align*}
where one can take a proper universal constant $C'$ in the last line.
\end{proof}

\subsection{Trace control via SL\label{subsec:Guan}}

We now present Guan's important uniform bound on the trace of $\Sigma_{t}$:
\[
\E\tr(\Sigma_{t}^{2})=O(n)\,.
\]
This part is technically more complicated than the operator-norm control
in \S\ref{subsec:Operator-norm-control}.

Two types of bounds had been established before Guan's bound.
\begin{itemize}
\item \textbf{Type I}: $\E\tr(\Sigma_{t}^{2})=\O(n)$ for $t\lesssim\log^{-2}n$
(not a universal constant).
\item \textbf{Type II}: For any $t_{0}>0$, $\E\tr(\Sigma_{t}^{2})\leq\bpar{\frac{t}{t_{0}}}^{\Theta(1)}\E\tr(\Sigma_{t_{0}}^{2})$
for all $t\geq t_{0}$. (The best exponent thus far is $2\sqrt{2}$)
\end{itemize}
Type I's valid regime for $t$ ends at a dimension-dependent bound,
but Guan managed to remove this limit.

In this section, we prove the two bounds on $\E\tr(\Sigma_{t}^{2})$
and then move to Guan's bound.

\subsubsection{Two types of control}

\paragraph{Type I. }

This follows from the operator-norm control established in \eqref{eq:final-target}:
for $t\asymp\log^{-2}n$ and constant $C>0$,
\[
\P(\exists\tau\leq t:\norm{\Sigma_{\tau}}\geq2)\leq\exp\bpar{-\frac{1}{Ct}}\,.
\]
Using this and $\norm{\Sigma_{t}}\leq1/t$,
\begin{align*}
\E\tr(\Sigma_{t}^{2}) & =\E\bbrack{\tr(\Sigma_{t}^{2})\,\ind[\norm{\Sigma_{t}}\leq2]}+\E\bbrack{\tr(\Sigma_{t}^{2})\,\ind[\norm{\Sigma_{t}}\geq2]}\\
 & \lesssim n+\frac{n}{t^{2}}\,\P(\norm{\Sigma_{t}}\geq2)\leq n+\frac{n}{t^{2}}\,e^{-\frac{1}{Ct}}\\
 & \lesssim n\,.
\end{align*}

\paragraph{Type II.}

Just as we work with $h(M)=\frac{1}{\beta}\,\log(\tr e^{\beta M})$
when handling the operator norm, we directly work with the following
proxy function:
\[
h(M)=\tr(M^{2})\,.
\]
Recall the $3$-tensor $H$ from \eqref{eq:3-tensor}:
\[
H_{ijk}=\E_{\pi_{t}}[(x-b_{t})_{i}(x-b_{t})_{j}(x-b_{t})_{k}]\,,\qquad H_{i}:=\E_{\pi_{t}}[(x-b_{t})^{\otimes2}(x-b_{t})_{i}]\,,
\]
and that 
\[
\D\Sigma_{t}=\sum_{i}H_{i}\,\D W_{t,i}-\Sigma_{t}^{2}\,\D t\,.
\]
Then,
\begin{align*}
\D h(\Sigma_{t}) & =\nabla h(\Sigma_{t})[\D\Sigma_{t}]+\half\,\hess h(\Sigma_{t})[\D\Sigma_{t},\D\Sigma_{t}]=2\tr(\Sigma_{t}\,\D\Sigma_{t})+\tr(\D[\Sigma]_{t})\\
 & =2\sum_{i}\tr(\Sigma_{t}H_{i}\,\D W_{t,i})-2\tr(\Sigma_{t}^{3})\,\D t+\sum_{i}\tr(H_{i}^{2})\,\D t\,,
\end{align*}
so $\de_{t}\E\tr(\Sigma_{t}^{2})=\sum_{i}\tr(H_{i}^{2})-2\tr(\Sigma_{t}^{3})\leq\sum_{i}\tr(H_{i}^{2})$.
Due to $\norm H_{\hs}^{2}:=\sum_{ijk}H_{ijk}^{2}=\sum_{i}\norm{H_{i}}_{\hs}^{2}=\sum_{i}\tr(H_{i}^{2})$,
\begin{equation}
\de_{t}\E\tr(\Sigma_{t}^{2})\le\E\Bbrack{\sum_{ijk}H_{ijk}^{2}}\,.\label{eq:tr-ineq}
\end{equation}

We need one observation and technical lemma:
\begin{prop}
$\norm H_{\hs}^{2}=\sum_{ijk}H_{ijk}^{2}=\E_{X,Y\sim\pi_{t}}[(X\cdot Y)^{3}]$,
where $X$ and $Y$ are independent and centered.
\end{prop}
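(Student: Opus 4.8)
The plan is a direct index manipulation using the definition of $H$ together with independence of the two copies. Writing $\bar X := X - b_t$ for the centered version of a draw $X \sim \pi_t$ (and similarly $\bar Y$ for an independent draw $Y \sim \pi_t$), recall that $H_{ijk} = \E_{\pi_t}[\bar X_i \bar X_j \bar X_k]$.

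First I would expand the squared Hilbert--Schmidt norm and insert the definition of each factor:
\[
\norm H_{\hs}^2 = \sum_{ijk} H_{ijk}^2 = \sum_{ijk} \E_{\pi_t}[\bar X_i \bar X_j \bar X_k]\,\E_{\pi_t}[\bar Y_i \bar Y_j \bar Y_k]\,.
\]
Since $X$ and $Y$ are independent, the product of the two expectations equals the joint expectation $\E_{X,Y}[\bar X_i \bar X_j \bar X_k \bar Y_i \bar Y_j \bar Y_k]$, and by (finite-sum) linearity we may pull the sum over $i,j,k$ inside:
\[
\norm H_{\hs}^2 = \E_{X,Y}\Bbrack{\sum_{ijk} \bar X_i \bar Y_i \, \bar X_j \bar Y_j \, \bar X_k \bar Y_k} = \E_{X,Y}\Bbrack{\Bpar{\sum_i \bar X_i \bar Y_i}^3} = \E_{X,Y}\bbrack{(\bar X \cdot \bar Y)^3}\,.
\]
This is exactly the claimed identity (with $X,Y$ understood as centered), since the triple sum factors as the cube of the inner product $\bar X \cdot \bar Y = \sum_i \bar X_i \bar Y_i$.

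There is essentially no obstacle here: the only point requiring (trivial) care is the use of independence to write the product of marginal expectations as a joint expectation, and the interchange of the finite sum with the expectation, both of which are immediate since all moments involved are finite ($\pi_t$ is logconcave with compact-enough support, so third moments exist). The middle-term factorization $\sum_{ijk}\bar X_i\bar Y_i\bar X_j\bar Y_j\bar X_k\bar Y_k = (\sum_i \bar X_i\bar Y_i)^3$ is just the multinomial expansion read backwards.
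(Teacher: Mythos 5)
Your argument is correct and is essentially the paper's proof read in reverse: the paper expands $(X\cdot Y)^3 = \sum_{ijk} x_i x_j x_k y_i y_j y_k$, takes expectations, and uses independence to identify $\sum_{ijk} H_{ijk}^2$, while you start from $\sum_{ijk} H_{ijk}^2$, insert the definitions, use independence to combine the two expectations, and re-fold the triple sum into a cube. The content is identical, so no further comment is needed.
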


\begin{proof}
For $X\cdot Y=\sum_{i}x_{i}y_{i}$,
\[
(X\cdot Y)^{3}=\Bpar{\sum_{i}x_{i}y_{i}}^{3}=\sum_{ijk}x_{i}x_{j}x_{k}y_{i}y_{j}y_{k}\,,
\]
so
\[
\E_{\pi_{t}}[(X\cdot Y)^{3}]=\sum_{ijk}\E[x_{i}x_{j}x_{k}]\,\E[y_{i}y_{j}y_{k}]=\sum_{ijk}H_{ijk}^{2}=\norm H_{\hs}^{2}\,.
\]
\end{proof}
Thus, we now focus on the third-order moment $\E_{\pi_{t}}[(X\cdot Y)^{3}]$.
We could just use a Poincar\'e inequality to bound this, but we can
bound it more tightly through It\^o calculus. To this end, we state
a preliminary lemma without proof.
\begin{prop}
[{\cite[Lemma 4.1]{KL22Bourgain}}] Let $\pi$ be centered and $1$-strongly
logconcave over $\Rn$, and $(W_{t})$ be a standard Brownian motion
with $W_{0}=0$. Then, there exists an adapted process $(M_{t})$
of symmetric matrices such that $\int_{0}^{1}M_{t}\,\D W_{t}$ has
the same law with $\pi$ and that almost surely $0\preceq M_{t}\preceq I_{n}$
for all $t\in[0,1]$.
\end{prop}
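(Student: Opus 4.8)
The plan is to realize $\pi$ as the law of the terminal point of the stochastic localization martingale and to read off $(M_t)$ from the barycenter process. Run SL (Definition~\ref{def:A}) from $\pi_0=\pi$. Since $\pi$ is $1$-strongly logconcave, \eqref{eq:SL-pdf} shows $\pi_t$ is $(1+t)$-strongly logconcave, so by Brascamp--Lieb (Lemma~\ref{lem:strongly_cov} with $B=(1+t)I_n$) its covariance satisfies $\Sigma_t=\cov\pi_t\preceq(1+t)^{-1}I_n$; in particular $\Sigma_t\to0$. Recall the preliminary computation $\D b_t=\Sigma_t\,\D W_t$, so $(b_t)_{t\ge0}$ is a martingale with $b_0=0$ (as $\pi$ is centered), and since $\int_0^\infty\E\tr(\Sigma_t^2)\,\D t\le\int_0^\infty\E[\norm{\Sigma_t}\tr\Sigma_t]\,\D t\le\int_0^\infty\tfrac{n}{(1+t)^2}\,\D t=n<\infty$, the martingale $(b_t)$ is bounded in $L^2$ and hence converges a.s.\ and in $L^2$ to a limit $X:=b_\infty=\int_0^\infty\Sigma_t\,\D W_t$ with $\E X=0$.

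Next I would verify $X\sim\pi$. Because $\Sigma_t\to0$ and $b_t\to X$, Chebyshev gives $\pi_t\Rightarrow\delta_X$, and for any bounded continuous $F$ the process $t\mapsto\int F\,\D\pi_t$ is a bounded martingale by \eqref{eq:moment} that converges to $F(X)$; taking expectations, $\E F(X)=\int F\,\D\pi_0=\int F\,\D\pi$, so $X$ has law $\pi$. It remains to compress $[0,\infty)$ to the unit interval. With the increasing bijection $\tau(s)=s/(1-s)$ from $[0,1)$ onto $[0,\infty)$ (so $\tau'(s)=(1-s)^{-2}$), Lévy's characterization shows that $\wt W_s:=\int_0^s(1-u)\,\D(W_{\tau(u)})$ is a standard Brownian motion (its quadratic variation is $sI_n$), and a deterministic time change in the stochastic integral gives
\[
X=\int_0^\infty\Sigma_t\,\D W_t=\int_0^1 M_s\,\D\wt W_s\,,\qquad M_s:=\frac{1}{1-s}\,\Sigma_{\tau(s)}\,.
\]
Each $M_s$ is symmetric positive semidefinite, and $M_s\preceq(1-s)^{-1}(1+\tau(s))^{-1}I_n=I_n$ by the covariance bound above; setting $M_1:=I_n$ (a single, hence null, time) and renaming $\wt W$ as $W$ finishes the proof.

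The conceptual content is light --- strong logconcavity together with Brascamp--Lieb forces the integrand $\Sigma_t$ of the natural barycenter martingale to be a contraction --- so the main technical work is in the routine measure-theoretic bookkeeping: that the time-changed filtration and $\wt W$ are legitimate (Lévy characterization, adaptedness under reparametrization) and that the limiting identity $X\sim\pi$ holds with enough integrability to pass expectations through the martingale convergence. (One may instead run the Föllmer process $\D X_t=\nabla\log(P_{1-t}h)(X_t)\,\D t+\D W_t$ with $h=\D\pi/\D\gamma_1$, for which $X_1\sim\pi$ directly; then by the martingale representation of $Y_t=\E[X_1\mid\mathcal F_t]$ together with the Markov property, $M_t=\nabla_x\,\E[X_1\mid X_t]=(1-t)^{-1}\cov(X_1\mid X_t)\preceq I_n$, the bound again coming from Brascamp--Lieb applied to the $(1-t)^{-1}$-strongly logconcave conditional law of $X_1$ given $X_t$.)
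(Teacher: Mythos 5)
The paper itself states this proposition without proof, simply citing \cite[Lemma 4.1]{KL22Bourgain} (``we state a preliminary lemma without proof''), so there is no in-paper argument to compare against. Your proof is correct, and the main route you take---running the SL process of Definition~\ref{def:A} from $\pi$, observing via Brascamp--Lieb that the $(1+t)$-strong log-concavity of $\pi_t$ forces $\Sigma_t\preceq(1+t)^{-1}I_n$, identifying $b_\infty=\int_0^\infty\Sigma_t\,\D W_t$ with a sample from $\pi$, and then applying the deterministic time change $\tau(s)=s/(1-s)$ with $\widetilde W_s=\int_0^s(1-u)\,\D(W_{\tau(u)})$ to land on $[0,1]$ with $M_s=(1-s)^{-1}\Sigma_{\tau(s)}\preceq I_n$---is precisely the argument in Klartag--Lehec's Lemma 4.1. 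The cancellation $(1-s)^{-1}(1+\tau(s))^{-1}=1$ is the crux, and you verify it correctly; the identification $b_\infty\sim\pi$ via bounded martingales $\int F\,\D\pi_t$ and the a.s.\ weak convergence $\pi_t\Rightarrow\delta_{b_\infty}$ is handled with the right care. Your parenthetical alternative via the F\"ollmer bridge (with $M_t=\nabla_x\E[X_1\mid X_t]=(1-t)^{-1}\cov(X_1\mid X_t)$ and Brascamp--Lieb applied to the $(1-t)^{-1}$-strongly logconcave conditional law) is an equally valid and more self-contained route that avoids the time change entirely; it buys a direct representation on $[0,1]$ at the cost of invoking the F\"ollmer SDE and the identity $\nabla_x\E[X_1\mid X_t]=(1-t)^{-1}\cov(X_1\mid X_t)$, whereas the SL route recycles exactly the tools the survey has already built.
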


Thus, interpolating $X\sim\pi_{t}$ via another stochastic process,
we can show the following:
\begin{lem}
\label{lem:third-moment-bound} For $t>0$, let $\pi$ be centered
and $t$-strongly logconcave over $\Rn$. Let $X,Y\sim\pi$ be independent.
Then, for $\Sigma=\cov\pi$,
\[
\E[(X\cdot Y)^{3}]\leq\frac{5}{t}\,\E[(X\cdot Y)^{2}]=\frac{5}{t}\,\tr(\Sigma^{2})\,.
\]
\end{lem}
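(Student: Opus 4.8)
The plan is to use the interpolation result stated just above (the Klartag--Lehec lemma, \cite[Lemma 4.1]{KL22Bourgain}): after rescaling to reduce to $t=1$, write $X = \int_0^1 M_s\,\D W_s$ where $(M_s)$ is an adapted process of symmetric matrices with $0\preceq M_s\preceq I_n$ almost surely, and let $Y$ be an independent copy realized through an independent Brownian motion $(\wt W_s)$ and an independent process $(\wt M_s)$. The idea is to track the martingale $N_u := \E[X\cdot Y \mid \mc F_u]$, or more directly to apply It\^o's formula to the product $u\mapsto (\int_0^u M_s\,\D W_s)\cdot(\int_0^u \wt M_s\,\D \wt W_s)$ — call these partial integrals $X_u$ and $Y_u$. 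Since $W$ and $\wt W$ are independent, the cross-variation $\D[X^i_u, Y^j_u]$ vanishes, so by It\^o $\D(X_u\cdot Y_u) = \inner{Y_u, M_u\,\D W_u} + \inner{X_u, \wt M_u\,\D \wt W_u}$, a pure martingale with $X_0\cdot Y_0 = 0$; in particular $\E[(X\cdot Y)]=0$ and $\D[X\cdot Y]_u = (\norm{M_u Y_u}^2 + \norm{\wt M_u X_u}^2)\,\D u \preceq (\norm{Y_u}^2 + \norm{X_u}^2)\,\D u$ using $M_u,\wt M_u\preceq I_n$.

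Next I would apply It\^o's formula to $(X_u\cdot Y_u)^3$. Writing $P_u := X_u\cdot Y_u$, we get $\D(P_u^3) = 3P_u^2\,\D P_u + 3 P_u\,\D[P]_u$, so taking expectations (the $\D P_u$ term is a martingale, modulo a standard localization argument to justify integrability, which follows from the reverse H\"older bounds on the logconcave law of $X$) yields
\[
\E[(X\cdot Y)^3] = 3\int_0^1 \E\!\left[P_u\,\bpar{\norm{M_u Y_u}^2 + \norm{\wt M_u X_u}^2}\right]\D u \le 3\int_0^1 \E\!\left[P_u\,\bpar{\norm{Y_u}^2 + \norm{X_u}^2}\right]\D u\,.
\]
The terms are symmetric in $X\leftrightarrow Y$, so it suffices to bound $\int_0^1 \E[(X_u\cdot Y_u)\,\norm{X_u}^2]\,\D u$. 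Here I would use Cauchy--Schwarz in the form $\E[(X_u\cdot Y_u)\norm{X_u}^2] \le (\E[(X_u\cdot Y_u)^2])^{1/2}(\E[\norm{X_u}^4])^{1/2}$, together with: (i) $\E[(X_u\cdot Y_u)^2] = \E[\tr(\cov(X_u)\cov(Y_u))] \le \E[\tr(\cov X_u)\,\norm{\cov Y_u}]$ and the monotonicity $\cov X_u \preceq \cov X = \Sigma$, $\cov Y_u\preceq\Sigma$ (each coordinate process $u\mapsto \E[\,\cdot\mid\mc F_u]$ has increasing conditional variance, so $\cov X_u$ is increasing in $u$); and (ii) a fourth-moment bound $\E[\norm{X_u}^4]\lesssim (\tr\Sigma)^2$ or $\lesssim n\,\tr\Sigma$, which follows from $X_u$ being a (conditionally) logconcave-type vector — or more robustly from a Poincar\'e/Brascamp--Lieb estimate combined with $\cov X_u\preceq\Sigma$ and $\norm{\Sigma}\le 1$ in the normalized problem. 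Assembling the integrand bound $\E[(X_u\cdot Y_u)\norm{X_u}^2] \lesssim (\tr\Sigma^2)^{1/2}\cdot \tr\Sigma$ and being careful with the constants, integrating over $u\in[0,1]$ and undoing the rescaling by $t$ should deliver $\E[(X\cdot Y)^3]\le \tfrac{5}{t}\tr(\Sigma^2)$.

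The main obstacle is getting the \emph{constant} $5$ exactly right rather than merely some absolute constant: the crude bounds $M_u,\wt M_u\preceq I_n$ and the several Cauchy--Schwarz steps each lose a factor, so a naive execution of the above gives a larger constant. I expect the sharp version requires a more careful bookkeeping of $\D[P]_u$ — e.g. exploiting that $\int_0^1 \tr(\cov X_u + \cov Y_u)\,\D u$ relates back to $\tr\Sigma$ via $\E[\norm{X_1}^2] = \int_0^1 \E\tr(M_u^{\T}M_u)\,\D u$ and $\cov X = \Sigma$ — so that the double integral telescopes and one avoids the lossy fourth-moment step, or alternatively bounding $\E[P_u^2\norm{X_u}^2]$-type quantities by a self-improving/Gr\"onwall argument on $\phi(u) := \E[(X_u\cdot Y_u)^3]$ or on $\E[P_u^2 \norm{X_u}^2]$. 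The logconcavity enters only mildly (through reverse H\"older, to control fourth moments by squared second moments with a universal constant), which is consistent with the lemma's hypothesis; the strong-logconcavity parameter $t$ enters purely through the time-$1$ normalization $X = \sqrt{t}\,\wt X$ with $\wt X$ drawn from the $1$-strongly logconcave rescaling.
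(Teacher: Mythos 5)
You have the right main tool (the Klartag--Lehec interpolation plus It\^o on a cubic functional), but the decomposition you chose has a gap that is exactly the one you suspected, and the paper avoids it by a different setup. The paper interpolates \emph{only} $X$ as $X_s=\int_0^s M_u\,\D W_u$ and keeps $Y\sim\pi$ fixed and un-interpolated. It\^o on $s\mapsto (X_s\cdot Y)^3$ then gives $\E[(X\cdot Y)^3]=3\int_0^1\E\bigl[(X_s\cdot Y)\,\norm{M_s Y}^2\bigr]\,\D s$. Because $Y$ is centered and drawn from the $1$-strongly logconcave $\pi$, one can write $\E_Y[(X_s\cdot Y)\norm{M_s Y}^2]=\E_Y[(X_s\cdot Y)(\norm{M_s Y}^2-\E\norm{M_s Y}^2)]$, apply Cauchy--Schwarz in $Y$ only, and then bound $\var_Y(\norm{M_s Y}^2)$ by the Poincar\'e inequality with $\cpi(\pi)\le1$, followed by $M_s^4\preceq M_s^2$. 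This gives $\E_Y[\cdots]\le 2\sqrt{X_s^{\T}\Sigma X_s}\sqrt{\tr(M_s^2\Sigma)}$, and then two more Cauchy--Schwarz steps plus the exact telescoping identity $\int_0^1\!\int_0^s\tr(Q_u\Sigma)\,\D u\,\tr(Q_s\Sigma)\,\D s=\tfrac12\bigl(\tr(\Sigma^2)\bigr)^2$ (with $Q_s=\E[M_s^2]$) produce the constant $6/\sqrt2\le 5$.

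Your version interpolates both $X$ and $Y$, which breaks the step that makes the constant work: for $u<1$, the intermediate vector $Y_u=\int_0^u\wt M_s\,\D\wt W_s$ is \emph{not} logconcave (the KL lemma only guarantees $Y_1\sim\pi$), so neither reverse H\"older nor Poincar\'e is available to control $\E[\norm{X_u}^4]$ or $\var(\norm{M_u Y_u}^2)$ with clean constants. Falling back on a global Cauchy--Schwarz into $(\E[(X_u\cdot Y_u)^2])^{1/2}(\E[\norm{X_u}^4])^{1/2}$ then requires a BDG-type fourth-moment bound, which is both nontrivial to state in terms of $\Sigma$ and provably lossier than the conditional centering trick. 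You correctly flagged this obstacle and even guessed that a telescoping identity relating $\int\tr(\cov X_u)\,\D u$ to $\tr\Sigma$ should be the fix -- that is indeed what the paper uses -- but the missing ingredients are (i) freezing $Y$ at time $1$ so logconcavity of $Y$ is available, and (ii) routing the bound through $\var_Y(\norm{M_s Y}^2)$ via Poincar\'e rather than through $\E[\norm{X_u}^4]$.
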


\begin{proof}
We may assume that $t=1$ due to homogeneity. Using the interpolation
of $X$, define 
\[
\D X_{t}:=M_{t}\,\D W_{t}\,.
\]
Note that It\^o's formula leads to $\D\E[X_{t}^{\otimes2}]=\E[M_{t}^{2}]\,\D t=:Q_{t}\,\D t$,
so 
\[
\Sigma_{t}:=\E[X_{t}^{\otimes2}]=\int_{0}^{t}Q_{s}\,\D s\,,
\]
and in particular, $\Sigma=\int_{0}^{1}Q_{s}\,\D s$.

Let $Y$ be independent of $W_{t}$ above. By It\^o's formula,
\begin{align*}
\D(X_{t}\cdot Y)^{3} & =3\,(X_{t}\cdot Y)^{2}\,(Y\cdot\D X_{t})+3\,(X_{t}\cdot Y)\,(Y^{\T}\,\D[X]_{t}\,Y)\\
 & =3\,(X_{t}\cdot Y)^{2}\,(Y^{\T}M_{t}\,\D W_{t})+3\,(X_{t}\cdot Y)\,\norm{M_{t}Y}^{2}\,\D t\,.
\end{align*}
Integrating both sides over $[0,1]$ and taking expectation, due to
$X_{0}=0$,
\[
\E[(X\cdot Y)^{3}]=3\int_{0}^{1}\E[(X_{t}\cdot Y)\,\norm{M_{t}Y}^{2}]\,\D t\,.
\]
Then, using \eqref{eq:PI} and $\cpi(\pi)\leq1$,
\begin{align*}
\E_{Y}[(X_{t}\cdot Y)\,\norm{M_{t}Y}^{2}] & =\E\bbrack{(X_{t}\cdot Y)\,(\norm{M_{t}Y}^{2}-\E[\norm{M_{t}Y}^{2}])}\\
 & \leq\sqrt{\E[(X_{t}\cdot Y)^{2}]}\sqrt{\var_{Y}(\norm{M_{t}Y}^{2})}\leq\sqrt{X_{t}^{\T}\Sigma X_{t}}\sqrt{4\cpi(\pi)\,\E[\norm{M_{t}^{2}Y}^{2}]}\\
 & \leq2\sqrt{X_{t}^{\T}\Sigma X_{t}}\sqrt{\tr(M_{t}^{4}\Sigma)}\leq2\sqrt{X_{t}^{\T}\Sigma X_{t}}\sqrt{\tr(M_{t}^{2}\Sigma)}\,,
\end{align*}
where the last line follows from $M_{t}\preceq I$ almost surely for
all $t\in[0,1]$. Putting back to $\E[(X\cdot Y)^{3}]$,
\begin{align*}
\E[(X\cdot Y)^{3}] & \leq6\int_{0}^{1}\E\bbrack{\sqrt{X_{t}^{\T}\Sigma X_{t}}\sqrt{\tr(M_{t}^{2}\Sigma)}}\,\D t\leq6\int_{0}^{1}\sqrt{\E[X_{t}^{\T}\Sigma X_{t}]\,\E[\tr(M_{t}^{2}\Sigma)]}\,\D t\\
 & \leq6\,\Bpar{\int_{0}^{1}\E[X_{t}^{\T}\Sigma X_{t}]\,\E[\tr(M_{t}^{2}\Sigma)]\,\D t}^{1/2}=6\,\Bpar{\int_{0}^{1}\int_{0}^{t}\tr(Q_{s}\Sigma)\,\D s\cdot\tr(Q_{t}\Sigma)\,\D t}^{1/2}\\
 & =\frac{6}{\sqrt{2}}\int_{0}^{1}\tr(Q_{t}\Sigma)\,\D t=\frac{6}{\sqrt{2}}\tr(\Sigma^{2})\leq5\tr(\Sigma^{2})\,,
\end{align*}
which completes the proof.
\end{proof}
With this result in hand, we continue on \eqref{eq:tr-ineq}: 
\[
\de_{t}\E\tr(\Sigma_{t}^{2})\leq\E\Bbrack{\sum_{ijk}H_{ijk}^{2}}=\E\E_{X,Y\sim\pi_{t}}[(X\cdot Y)^{3}]\leq\frac{5}{t}\,\E\tr(\Sigma_{t}^{2})\,.
\]
Solving this,
\[
\E\tr(\Sigma_{t}^{2})\leq\bpar{\frac{t}{t_{0}}}^{5}\,\E\tr(\Sigma_{t_{0}}^{2})\,.
\]

\subsubsection{Guan's approach}

In this section, we sketch the proof of Guan's uniform bound on the
trace.
\begin{thm}
[{\cite{guan2024note}}] \label{thm:trace-control} $\E\tr(\Sigma_{t}^{2})=\Theta(n)$
for $t\lesssim1$, where $\Sigma_{t}$ is the covariance matrix of
$\pi_{t}$ obtained by SL  with isotropic logconcave $\pi_{0}=\pi$.
Moreover, $\E\tr(\Sigma_{t}^{2})=\O(n)$ for any $t>0$.
\end{thm}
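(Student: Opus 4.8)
The plan is to reduce the whole statement to the uniform upper bound $\E\tr(\Sigma_t^2)=\O(n)$ for \emph{all} $t>0$, which is the substantive part (Guan \cite{guan2024note}), and to treat everything else as soft. In particular, the lower half for $t\lesssim 1$ follows from the uniform upper bound: granting $\E\tr(\Sigma_t^2)\le C_0 n$ for all $t$, the identity $\de_t\E\tr\Sigma_t=-\E\tr(\Sigma_t^2)$ together with $\E\tr\Sigma_0=n$ gives $\E\tr\Sigma_t\ge n-C_0 n t\ge n/2$ for $t\le 1/(2C_0)$; since $\Sigma_t\succeq0$, Cauchy--Schwarz on its spectrum yields $\tr(\Sigma_t^2)\ge(\tr\Sigma_t)^2/n$ pathwise, so by Jensen $\E\tr(\Sigma_t^2)\ge(\E\tr\Sigma_t)^2/n\ge n/4$ for $t\lesssim 1$. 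Combined with the upper bound this gives $\E\tr(\Sigma_t^2)=\Theta(n)$ for $t\lesssim 1$.

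For the uniform upper bound I would set $\phi(t):=\E\tr(\Sigma_t^2)$, so $\phi(0)=n$ since $\pi$ is isotropic, and start from the exact evolution: applying It\^o's formula to $M\mapsto\tr(M^2)$ and $\D\Sigma_t=\sum_i H_{t,i}\,\D W_{t,i}-\Sigma_t^2\,\D t$,
\[
\phi'(t)=\E\Bbrack{\textstyle\sum_i\tr(H_{t,i}^2)}-2\,\E\tr(\Sigma_t^3)=\E\,\E_{X,Y\sim\pi_t}[(X\cdot Y)^3]-2\,\E\tr(\Sigma_t^3)\,,
\]
using $\sum_i\tr(H_{t,i}^2)=\norm{H_t}_{\hs}^2=\E_{X,Y\sim\pi_t}[(X\cdot Y)^3]$. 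Two structural inputs are available for free: the \emph{global trace budget} $\int_0^\infty\phi(s)\,\D s\le n$, coming from $\E\tr\Sigma_t=n-\int_0^t\phi\ge0$; and the fact that $\pi_t$ is $t$-strongly logconcave, so the improved Lichnerowicz inequality \eqref{eq:improved-LI} gives $\cpi(\pi_t)\le(\norm{\Sigma_t}/t)^{1/2}$.

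The engine is to turn the display into a self-correcting differential inequality for $\phi$. For the third-moment term one sharpens Lemma~\ref{lem:third-moment-bound}: its proof only invokes $\cpi(\pi_t)\le 1/t$, and feeding in $\cpi(\pi_t)\le(\norm{\Sigma_t}/t)^{1/2}$ through the interpolation lemma \cite[Lemma 4.1]{KL22Bourgain} upgrades $\E_{X,Y\sim\pi_t}[(X\cdot Y)^3]\le\tfrac5t\tr(\Sigma_t^2)$ to a bound carrying an extra factor of $\norm{\Sigma_t}^{1/2}$. For the drift one \emph{keeps} $-2\E\tr(\Sigma_t^3)$ rather than discarding it, bounding it below by spectral (power-mean / Cauchy--Schwarz) inequalities such as $\tr(\Sigma_t^3)\ge(\tr\Sigma_t^2)^2/\tr\Sigma_t$, which after taking expectations and using $\E\tr\Sigma_t=\O(n)$ produces a $-\phi^2/n$--type term. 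Initializing the comparison at $T_0\asymp\log^{-2}n$, where the Type-I estimate $\phi=\O(n)$ holds (Theorem~\ref{thm:opnorm-control}), one then propagates $\phi=\O(n)$ to all later $t$: any excursion of $\phi$ well above $Cn$ is pushed back down by the negative drift, and the global budget $\int_0^\infty\phi\le n$ rules out sustained growth.

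The main obstacle is to close this to a genuinely \emph{constant} multiple of $n$, uniformly in $t$. The cheap combinations available before Guan give only $\phi(t)=\O(n/t)$ (from the na\"ive inequality $\phi'\le\tfrac Ct\phi-\tfrac2n\phi^2$, whose equilibrium is $\sim n/t$) or $\phi(t)=\O(n\,\polylog n)$ (from bootstrapping the Type-II bound $\phi(t)\le(t/t_0)^{\Theta(1)}\phi(t_0)$ off the Type-I estimate). Removing the polylogarithmic loss is exactly Guan's contribution: it needs the sharp third-moment estimate above, careful tracking of the coupled quantities $\E\tr(\Sigma_t^2)$ and $\E[\norm{\Sigma_t}^{1/2}\tr(\Sigma_t^2)]$ under the \emph{outer} SL expectation (so that the $-2\E\tr(\Sigma_t^3)$ drift can be exploited rather than dropped), and the trace budget as a hard global constraint. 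Once this single estimate is in hand, the reductions of the first paragraph and the lower bound are routine.
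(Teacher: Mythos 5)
Your reduction of the two-sided bound to the uniform upper bound is correct and matches the paper: from $\de_t\E\tr\Sigma_t=-\E\tr(\Sigma_t^2)=O(n)$ and $\E\tr\Sigma_0=n$ one gets $\E\tr\Sigma_t\gtrsim n$ for $t\lesssim 1$, and then $\tr(\Sigma_t^2)\ge(\tr\Sigma_t)^2/n$ together with Jensen yields the lower bound. The $O(n)$ bound for $t\gtrsim 1$ from $\norm{\Sigma_t}\le 1/t$ is also standard. All of this is the easy part, as you say.

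The substantive gap is that you never actually produce the uniform $O(n)$ bound; you outline ingredients and then declare ``once this single estimate is in hand, the reductions \ldots are routine,'' which is true, but the estimate itself is the theorem. Worse, the mechanism you propose is not Guan's and, as your own remarks hint, does not close. You suggest keeping the negative drift $-2\,\E\tr(\Sigma_t^3)$, bounding it below by $(\tr\Sigma_t^2)^2/\tr\Sigma_t$, and combining with the trace budget $\int_0^\infty\phi\le n$ to get a self-correcting ODE $\phi'\le \tfrac{C}{t}\phi-\tfrac{2}{n}\phi^2$. Starting from $\phi(t_0)\lesssim n$ at $t_0\asymp\log^{-2}n$, the transient of this ODE tracks the equilibrium $\phi^*(t)\sim n/t$ and at the crossover time produces a maximum of order $n\,\polylog n$, which is exactly the Type-II barrier you are trying to beat, and which also violates the pointwise claim $\E\tr(\Sigma_t^2)=O(n)$ for all $t$. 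The trace budget is a constraint on $\int\phi$, not on $\sup_t\phi$, so it does not prevent this transient excursion; and ``sharpening'' Lemma~\ref{lem:third-moment-bound} with \eqref{eq:improved-LI} does not obviously help, since what you gain in the prefactor you lose in having to control $\E[\norm{\Sigma_t}^{1/2}\tr(\Sigma_t^2)]$, a new coupled quantity with the same problem.

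Guan's actual route is quite different and does not use the cubic drift at all: in the evolution of the chosen functional one \emph{discards} the favorable $-\sum\lda_i^2 f'(\lda_i)$ term (see the proof of Lemma~\ref{lem:typeII-guan}), because the gain comes from elsewhere. The key object is the proxy $F_{k,t}=\tr f_k(\Sigma_t)$ built from the piecewise function $f_k$ of Lemma~\ref{lem:proxy-function}: quadratic on large eigenvalues, exponentially small below a threshold $r_k$, and satisfying the second-derivative comparison $|f_k''|\le D_k^2 f_k$. This filtering makes $\E F_{k,t_k}$ of size $n e^{-\polylog n}$ at the start of each dyadic window, so the polynomial loss $(t/t_k)^{\Theta(1)}$ in the Type-II bound (Lemma~\ref{lem:typeII-guan}) is absorbed rather than accumulated; a multi-scale induction over the windows $[t_k,t_{k+1}]$ (Lemma~\ref{lem:induction-guan}) then carries this to $t=\Theta(1)$, and finally $\E\tr(\Sigma_t^2)$ is recovered by splitting on small versus large eigenvalues. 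Your write-up contains none of this — no filtering functional, no second-derivative comparison, no multi-scale induction — so as it stands it is a plan with the central step missing and a speculated mechanism that differs from the one that actually works.
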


This removes the dimension-dependent restriction of $t\lesssim\log^{-2}n$.
We first prove this up to a universal constant time $t\leq c$, and
then bootstrap this to \emph{any} time $t$. 

\paragraph{High-level ideas.}

Guan's idea can be summarized as follows:
\begin{enumerate}
\item Design a proxy function $F_{k,t}=\tr f_{k}(\Sigma_{t})$, which focuses
mainly on \emph{large eigenvalues} of $\Sigma_{t}$ (while suppressing
small ones exponentially). Here, $f_{k}\in C^{2}(\R_{\geq0})$ is
explicitly constructed for two parameters $D_{k}>0$ and $r_{k}\in[7/3,8/3]$,
such that for some constant $b_{k}=\Theta(1)$, $f_{k}(x)$ is defined
as:
\[
f_{k}(x)=\begin{cases}
e^{-D_{k}\,|r_{k}-x|} & x\leq r_{k}-1/D_{k}\,,\\
b_{k}x^{2} & x\geq r_{k}\,,
\end{cases}
\]
and $|f''_{k}(x)|\leq D_{k}^{2}f_{k}(x)$ (see Lemma~\ref{lem:proxy-function}).
By construction, $F_{k,t}$ \emph{almost} bounds $\tr\Sigma_{t}^{2}$
for each $k\in\mbb N$. 
\begin{enumerate}
\item Even though $f_{k}$ is not specified on $[r_{k}-1/D_{k},r_{k}]$,
intuitively $C^{2}(\R_{\geq0})$ is rich enough to $C^{2}$-smoothly
interpolate $e^{-1}(=f_{k}(r_{k}-D_{k}^{-1}))$ and $b_{k}r_{k}^{2}(=f_{k}(r_{k}))$,
while ensuring $|f''_{k}(x)|\leq D_{k}^{2}f_{k}(x)$.
\item Focusing only on the large eigenvalues is beneficial, as eigenvalues
deviating from $\Theta(1)$ are much less likely. Note that the property
of $f_{k}$ ensures that $\E[\tr(\Sigma_{t}^{2})\,\ind\{\norm{\Sigma_{t}}\gtrsim1\}]\approx\E F_{k,t}$
and recall the operator-norm control in \eqref{eq:final-target}.
Thus, since all eigenvalues of $\Sigma_{t}$ are bounded by $1/t$,
we have 
\[
\E F_{k,t}\approx\E[\tr(\Sigma_{t}^{2})\,\ind\{\norm{\Sigma_{t}}\gtrsim1\}]\lesssim\frac{n}{t^{2}}\,e^{-1/t}\approx e^{-\log^{2}t}\,n\quad\text{for }t=t_{0}\,.
\]
\end{enumerate}
\item Establish a Type II bound: for any $t_{0}>0$ and all $t\in[t_{0},t_{0}\vee D_{k}^{-4}]$,
\[
\E F_{k,t}\leq\bpar{\frac{t}{t_{0}}}^{\Theta(1)}\,\E F_{k,t_{0}}\,.
\]
This part leverages the improved Lichnerowicz inequality \eqref{eq:improved-LI}.
\begin{enumerate}
\item Using this bound, the paper implements the following idea: increase
$t$ slightly from $t_{0}$ (which would increase the bound $e^{-\log^{2}t}\,n$
(on $\E F_{k,t_{0}}$) by a polynomial factor $t_{0}^{-\Theta(1)}$),
but \emph{this polynomial increase is balanced by the exponentially
small coefficient of} $e^{-\log^{2}t_{0}}$ in the given bound $e^{-\log^{2}t_{0}}\,n$.
This lemma allows us to keep the coefficient under control until $t_{0}\vee D_{k}^{-4}$.
\end{enumerate}
\item By choosing $r_{k},D_{k}$ suitably, the paper designs a partition
$\bigcup_{k}[t_{k},t_{k+1}]$ covering $[\log^{-2}n,\Theta(1)]$.
By induction on $k$ (i.e., repeating the argument in 2-(a) for each
interval), it shows that 
\[
\E F_{k,t}\leq e^{-\log^{2}t_{k}}\,n\quad\text{for }t\in[t_{k},t_{k+1}]\,.
\]

\begin{enumerate}
\item This allows us to stitch together an $\O(n)$-bound on $\E F_{k,t}$
across the intervals $t\in[t_{k},t_{k+1}]$, up until $\max t_{k}=\Theta(1)$
(i.e., universal-constant time). Then, for any $t\lesssim1$, find
the corresponding interval $[t_{k},t_{k+1}]$ including $t$, and
bound the quantity of interest as follows:
\[
\E\tr(\Sigma_{t}^{2})=\E[\tr(\Sigma_{t}^{2})\,\ind\{\norm{\Sigma_{t}}\lesssim1\}]+\E[\tr(\Sigma_{t}^{2})\,\ind\{\norm{\Sigma_{t}}\gtrsim1\}]\lesssim n+\E F_{k,t}\lesssim n\,.
\]
\end{enumerate}
\end{enumerate}

\paragraph{Detailed sketch.}

We now reflect on Guan's proof in detail and the need for this sophisticated
approach.

\subparagraph{(1) What if we work directly with $\protect\tr(\Sigma_{t}^{2})$
(not $F_{k,t}$)?}

Recall the Type I bound, $\E\tr(\Sigma_{t}^{2})\lesssim n$ up until
$t\lesssim\log^{-2}n$, and the Type II bound given by
\[
\E\tr(\Sigma_{t}^{2})\leq\bpar{\frac{t}{t_{0}}}^{5}\,\E\tr(\Sigma_{t_{0}}^{2})\quad\text{for any }t\geq t_{0}\asymp\log^{-2}n\,.
\]
Let us decompose $\E\tr(\Sigma_{t_{0}}^{2})$ as follows:
\begin{align*}
\E\tr(\Sigma_{t_{0}}^{2}) & =\E\bbrack{\tr(\Sigma_{t_{0}}^{2})\,\ind[\lda_{i}\ \text{small}]}+\E\bbrack{\tr(\Sigma_{t_{0}}^{2})\,\ind[\lda_{i}\ \text{large}]}\\
 & \leq\E\bbrack{\tr(\Sigma_{t_{0}}^{2})\,\ind[\lda_{i}\ \text{small}]}+\frac{n}{t_{0}^{2}}\,\P(\lda_{i}\ \text{large})\\
 & \leq\E\bbrack{\tr(\Sigma_{t_{0}}^{2})\,\ind[\lda_{i}\ \text{small}]}+\frac{n}{t_{0}^{2}}\exp\bpar{-\frac{1}{Ct_{0}}}\,.
\end{align*}
With this in mind, let us slightly increase $t$ in the Type II bound.
As already seen in the high-level sketch, the multiplicative factor
of $(t/t_{0})^{5}\lesssim t_{0}^{-5}$ increases the RHS of the Type
II bound by a $\polylog n$ factor.

As for the second term, for $t_{0}\asymp\log^{-2}n$ and $u:=\log n\asymp t_{0}^{-2}$,
we can write 
\[
\frac{1}{t_{0}^{2}}\exp\bpar{-\frac{1}{Ct_{0}}}\approx\frac{u^{\O(1)}}{e^{u^{\O(1)}}}\,.
\]
For $t=\Theta(1)$, the multiplicative term of $(t/t_{0})^{5}\lesssim t_{0}^{-5}=u^{10}$
doesn't make a meaningful increase in $\frac{u^{\O(1)}}{e^{u^{\O(1)}}}$,
because the exponential term in the denominator overwhelms this additional
$\polylog n$ factor. Hence, when using the Type II bound to relay
$\E\bbrack{\tr(\Sigma_{t_{0}}^{2})\,\ind[\lda_{i}\ \text{large}]}$
up to $t=\Theta(1)$, we might be able to control it \textbf{without
introducing} $\polylog n$ factors.

As for the first term, the best bound one can hope for is
\[
\E\bbrack{\tr(\Sigma_{t_{0}}^{2})\,\ind[\lda_{i}\ \text{small}]}\lesssim n\,.
\]
There is no slack in this bound (since we already know $\E\tr(\Sigma_{t}^{2})\lesssim n$
for $t\geq0$). However, as opposed to the second term, this $n$
bound on the first term has no exponentially small factors (like $e^{-\polylog n}$),
which (if exists) would keep the polynomial blow-up by $t_{0}^{-5}$
under control. Thus, if we na\"ively stitch and relay the bound on
$\E\tr(\Sigma_{t}^{2})$ across the intervals by using the Type II
bound, then the bound on the first term would end up introducing an
extra $\polylog n$ factor at $t=\Theta(1)$.

\subparagraph{(2) Avoid small eigenvalues.}

From the discussion above, we are prompted to try \emph{to}\textbf{\emph{
only relay}}\emph{ a very slowly increasing bound on $\E\bbrack{\tr(\Sigma_{t}^{2})\,\ind[\lda_{i}\ \text{\emph{large}}]}$}.
If we can do so, when bounding $\E\tr(\Sigma_{t}^{2})$ for any $t\lesssim1$,
we could bound the first term by $n$ as we already did, while bounding
the second term by the relayed small bound on \emph{$\E\bbrack{\tr(\Sigma_{\Theta(1)}^{2})\,\ind[\lda_{i}\ \text{\emph{large}}]}\lesssim n$.
}To this end, Guan introduces the following proxy function which almost
kills small eigenvalues by making them exponentially small.
\begin{lem}
[Proxy function with focus on large eigenvalues] \label{lem:proxy-function}
For two parameters $D>0$ and $r\in[7/3,8/3]$, there exists a $C^{2}(\R)$
function $f=f_{D,r}$ such that for some $b\in[1/20,1/5]$,
\[
f(x)=\begin{cases}
e^{-D\,|r-x|} & x\leq r-1/D\,,\\
bx^{2} & x\geq r\,,
\end{cases}
\]
(1) is increasing and (2) satisfies $\Abs{f''(x)}\leq D^{2}f(x)$
on $x\in\R$.
\end{lem}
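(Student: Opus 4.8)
The plan is to first eliminate the parameter $D$ by rescaling. The only freedom in $f$ is on the transition window $[r-1/D,\,r]$, which has length $1/D$, so I would reparametrize it to $[0,1]$: write $x = r - \tfrac1D + \tfrac sD$ with $s\in[0,1]$ and look for $f(x)=e^{-1}g(s)$ on this window, where $g\in C^2([0,1])$ is to be chosen. Then $f'(x)=De^{-1}g'(s)$ and $f''(x)=D^2e^{-1}g''(s)$, and the various requirements transfer as follows. A $C^2$-gluing at $x=r-1/D$ (where $e^{-D(r-x)}$ has value, first and second derivatives $e^{-1},De^{-1},D^2e^{-1}$) becomes $g(0)=g'(0)=g''(0)=1$; a $C^2$-gluing at $x=r$ to the constant $br^2$ becomes $e^{-1}g(1)=br^2$ together with $g'(1)=g''(1)=0$; monotonicity of $f$ on the window becomes $g'\ge 0$ (on the two outer pieces it is automatic, since $e^{-D(r-x)}$ is increasing and $br^2$ is constant); and, crucially, the inequality $|f''(x)|\le D^2f(x)$ on the window becomes the scale-free inequality $|g''(s)|\le g(s)$, the $D$'s cancelling (on $(-\infty,r-1/D]$ this inequality holds with equality, and on $[r,\infty)$ it is trivial). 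Thus the statement reduces to: for each $r\in[7/3,8/3]$ there exist $b=b(r)\in[1/20,1/5]$ and $g\in C^2([0,1])$ with $(g,g',g'')(0)=(1,1,1)$ and $(g,g',g'')(1)=(c,0,0)$, where $c:=e\,b\,r^2$, satisfying $g'\ge 0$ and $|g''|\le g$ on $[0,1]$.

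Next I would pin down the plateau height. The interpolation problem for $g$ is feasible precisely when $c$ lies in a fixed interval $(c_-,c_+)$ of absolute constants: feasibility forces $c>1$ (as $g$ is increasing from $g(0)=1$), and combining $g'(1)=0$ with $|g''|\le g$ forces $c$ below an absolute constant as well, while every $c$ strictly inside this window is realized by the explicit $g$ below. Since $r\in[7/3,8/3]$ gives $r^2\in[49/9,64/9]$, the image of $[1/20,1/5]$ under $b\mapsto e\,b\,r^2$ is an interval whose endpoints differ by a factor $4$, hence wide enough to meet $(c_-,c_+)$ for every such $r$. I would therefore fix $b=b(r)=\rho_0/r^{2}$ for a suitable absolute constant $\rho_0$; then $b\in[1/20,1/5]$ for all $r\in[7/3,8/3]$, and $\rho_0$ is chosen so that $c=e\rho_0$ lands strictly inside $(c_-,c_+)$.

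For such a fixed $c$ I would construct $g$ explicitly. On an initial segment $[0,s_0]$, with $s_0$ a small absolute constant, take $g(s)=e^{s}$: this produces exactly the required jet $(1,1,1)$ at $0$ and the tight identity $g''=g$. On $[s_0,1]$ I would continue with a short spline (for instance with piecewise-quadratic second derivative) steering the jet from $(e^{s_0},e^{s_0},e^{s_0})$ to $(c,0,0)$; using $g'(s)=e^{s_0}+\int_{s_0}^{s}g''$ and $g(s)=e^{s_0}+e^{s_0}(s-s_0)+\int_{s_0}^{s}(s-u)\,g''(u)\,\D u$, the endpoint conditions at $s=1$ are two linear constraints on $g''$, while $g'\ge 0$ and $|g''|\le g$ are checked pointwise. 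One picks $s_0$ small enough that there is room: away from $s_0$ the value $g$ is of order $c$, so $|g''|\le c$ is ample on the part of the window where $g$ is large, and near $s_0$ one keeps $g''$ close to $e^{s_0}$ while bringing $g'$ down. All quantities involved are absolute constants (no $D$, and $c$ was fixed above), so this is a finite verification; alternatively one may take $g$ to be a single low-degree Hermite polynomial with the six prescribed boundary values and verify the two inequalities directly.

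Finally I would reassemble $f$ from the three pieces $e^{-D(r-x)}$, $e^{-1}g\bigl(D(x-r+1/D)\bigr)$ and $br^2$, record that $f\in C^2(\R)$ by the jet matching at $x=r-1/D$ and $x=r$, that $f$ is increasing, and that $|f''|\le D^2f$ holds piece by piece. I expect the main obstacle to be the third step: producing an explicit $C^2$ patch on $[r-1/D,r]$ that keeps $f$ monotone while respecting the curvature bound. The delicate point is that $f$ must climb from $e^{-1}$ to the plateau value $br^2$ with \emph{vanishing} slope and curvature at the right endpoint, without ever letting $|f''|$ exceed $D^2f$; the rescaling removes the $D$-dependence from this tension, and the freedom to choose $b$ (equivalently the plateau height) inside $[1/20,1/5]$ is exactly the slack that lets the one-dimensional interpolation close.
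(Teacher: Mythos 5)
The paper does not actually prove this lemma (it only asserts it with an informal ``$C^{2}$ is rich enough'' remark), so your constructive outline genuinely adds content. The rescaling $x = r - 1/D + s/D$, $f(x) = e^{-1}g(s)$ is correct and reduces the problem to a $D$-free interpolation: find $g \in C^{2}([0,1])$ with $(g,g',g'')(0) = (1,1,1)$, $(g,g',g'')(1) = (c,0,0)$ where $c = ebr^{2}$, satisfying $g' \ge 0$ and $|g''| \le g$. Your $(c_-,c_+)$ window can also be pinned down: multiplying $g''+g \ge 0$ by $g' \ge 0$ shows $(g')^{2}+g^{2}$ is non-decreasing, so $c > \sqrt{2}$; and $g'' \le g$ with jet $(1,1,1)$ forces $g \le e^{s}$ (Gronwall on $\phi := g - e^{s}$), so $c < e$. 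Since $[er^{2}/20,\,er^{2}/5] \supset (\sqrt{2},e)$ for all $r\in[7/3,8/3]$, the allowed $b$-range is indeed enough once some $c$ in $(\sqrt 2,e)$ is realized.

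The genuine gap is the construction of $g$, and your fallback ``single low-degree Hermite polynomial'' fails: the degree-$5$ Hermite interpolant with the prescribed jets has $g''(1/2) = -7/4$ \emph{independent of $c$} while $g(1/2) = 0.672 + c/2$, so the curvature bound at $s = 1/2$ already forces $c \ge 2.16$; but then for instance at $c = 2.5$ one computes $g''(0.6) \approx -4.4$ against $g(0.6) \approx 2.14$ --- there is no $c$ for which the quintic obeys $|g''| \le g$ on all of $[0,1]$. Your exponential-plus-spline route faces the same tension between braking $g'$ to zero and the pointwise curvature bound, and I don't see that ``a finite verification'' resolves it. A clean way to close the gap is to build $g$ as the solution of the linear ODE $g'' = \psi(s)\,g$ with $g(0)=g'(0)=1$, where $\psi\colon[0,1]\to[-1,1]$ is continuous, $\psi(0)=1$, $\psi(1)=0$: then $|g''|\le g$ and $C^{2}$-regularity hold automatically, and the only remaining requirement $g'(1)=0$ becomes a one-parameter shooting problem. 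Taking $\psi$ piecewise linear --- ramp from $1$ down to $-1$ over $[0,\tau]$, flat at $-1$, then ramp up to $0$ over a short final window --- $g'(1)$ varies continuously in $\tau$ from positive to negative, so the intermediate value theorem gives a $\tau$ with $g'(1)=0$; and $g'\ge 0$ then follows because $g''=\psi g$ is positive on $[0,\tau/2)$ and strictly negative on $(\tau/2,1)$, so $g'$ first rises from $1$ and then decreases monotonically to its zero at $s=1$. This yields an explicit $c = g(1) \in (\sqrt 2, e)$, hence a valid $b = c/(er^{2})\in[1/20,1/5]$, and completes the argument your reduction set up.
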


Since this $f$ becomes quadratic when $x$ is above $r$, it \emph{almost}
bounds $\tr\Sigma_{t}^{2}$. Also, $x=r-1/D$ is where the exponential-cutoff
starts, so any eigenvalues smaller than (say) $r-1/\sqrt{D}$ will
be converted to an exponentially small number like $e^{-\sqrt{D}}$.
Then, by setting $D=\log^{\O(1)}n$, we can make the first term (i.e.,
the sum of small eigenvalues of $\Sigma_{t}^{2}$) exponentially small
like $ne^{-\polylog n}$, which looks very similar to the second term
in the bound of $\E\tr(\Sigma_{t_{0}}^{2})$!

Fortunately, this proxy function also enjoys a Type II bound:
\begin{lem}
[Type II  bound]\label{lem:typeII-guan} Define $F_{t}=F_{t}(\Sigma_{t}):=\sum_{i=1}^{n}f(\lda_{i,t})$
for the eigenvalues $\{\lda_{i,t}\}_{i\in[n]}$ of $\Sigma_{t}$.
Then, for any $t_{0}\in(0,1]$ and $t_{0}\leq t\leq t_{0}\vee D^{-4}$,
\[
\E F_{t}\leq\bpar{\frac{t}{t_{0}}}^{1000}\,\E F_{t_{0}}\,.
\]
\end{lem}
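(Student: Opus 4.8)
The plan is to derive a differential inequality of the form $\partial_t\,\E F_t \le \frac{C}{t}\,\E F_t$ on the window $t\in[t_0,D^{-4}]$ (when $t_0\ge D^{-4}$ the claim is vacuous, since then $t=t_0$), and then integrate by Gronwall to obtain $\E F_t\le(t/t_0)^C\,\E F_{t_0}$ with a universal constant $C$ comfortably below $1000$. The starting point is It\^o's formula applied to $F_t=\tr f(\Sigma_t)$, using the covariance SDE $\D\Sigma_t=\sum_i H_i\,\D W_{t,i}-\Sigma_t^2\,\D t$ established in \S\ref{subsec:Operator-norm-control}. This produces a martingale term plus a drift equal to $-\tr\bpar{f'(\Sigma_t)\,\Sigma_t^2}+\frac12\sum_i\nabla^2(\tr f)(\Sigma_t)[H_i,H_i]$. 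Since $f$ is increasing, $f'(\Sigma_t)\succeq 0$, so the first drift term is non-positive; on the eigenvalues $\lambda_j\ge r$ (where $f$ is quadratic) it equals $-2\sum_{\lambda_j\ge r}\lambda_j f(\lambda_j)$, and I will use it to absorb the large-eigenvalue part of the second drift term.

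For the second-order term I would prove a matrix-calculus estimate in the spirit of Lemma~\ref{lem:second-order-bound}: writing $\nabla^2(\tr f)(M)[H,H]=\sum_{j,k}\frac{f'(\lambda_j)-f'(\lambda_k)}{\lambda_j-\lambda_k}\,(u_j^\T H u_k)^2$ in terms of divided differences of $f'$, the mean value theorem identifies each coefficient with $f''(\xi)$ for some $\xi$ between $\lambda_j$ and $\lambda_k$; using $\abs{f''}\le D^2 f$ and monotonicity of $f$ (so $f(\xi)\le f(\lambda_j)+f(\lambda_k)$) together with the trace-swap Lemma~\ref{lem:swap-MHMH}, one gets $\nabla^2(\tr f)(\Sigma_t)[H_i,H_i]\lesssim D^2\,\tr\bpar{f(\Sigma_t)H_i^2}$, hence $\sum_i(\cdots)\lesssim D^2\,\tr\bpar{f(\Sigma_t)\sum_i H_i^2}$. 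The key refinement is that on the quadratic regime $\lambda\ge r$ one has $f''(\lambda)=\Theta(1)$ rather than $D^2 f(\lambda)$, so the genuinely $D^2$-sized contribution comes only from the small eigenvalues $\lambda_j\lesssim r$, where $f(\lambda_j)$ is exponentially small.

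The remaining work is to estimate $\tr(f(\Sigma_t)\sum_i H_i^2)$ by decomposing over the eigenspaces of $\Sigma_t$: $\tr(f(\Sigma_t)\sum_i H_i^2)=\sum_j f(\lambda_j)\,\tr(H_{u_j}^2)$ with $H_{u_j}=\E_{\pi_t}[(X\cdot u_j)X^{\otimes2}]$ and, by the argument of Lemma~\ref{lem:3tensor-2}, $\tr(H_{u_j}^2)\le 4\,\cpi(\pi_t)\,\norm{\Sigma_t}\,\lambda_j$. Then I would invoke: (i) $\norm{\Sigma_t}=O(1)$ on the typical event --- formally, working up to the stopping time at which $\norm{\Sigma_t}$ first exceeds a constant, whose tail is $\exp(-1/(Ct))$ by Theorem~\ref{thm:opnorm-control} in the parameter regime where the lemma is applied, so the complementary event contributes negligibly even under the crude bound $\norm{\Sigma_t}\le 1/t$; (ii) the improved Lichnerowicz inequality \eqref{eq:improved-LI}, $\cpi(\pi_t)\le(\norm{\Sigma_t}/t)^{1/2}$; and (iii) the window restriction $t\le D^{-4}$, i.e.\ $D^2\le t^{-1/2}$, which turns the $D^2\cpi(\pi_t)$ prefactor on the small-eigenvalue part into $O(1/t)$. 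For the large eigenvalues, combining $f''=\Theta(1)$ there with $\lambda_j^3\ge r^2\lambda_j$ lets the non-positive term $-2\sum_{\lambda_j\ge r}\lambda_j f(\lambda_j)$ dominate the corresponding positive contribution. Collecting the pieces yields $\partial_t\,\E F_t\le \frac{C}{t}\,\E F_t$, and integrating from $t_0$ to $t$ gives the stated bound.

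The main obstacle is precisely this last bookkeeping: one must track three interacting scales --- the proxy parameter $D$, the time $t$, and the operator norm $\norm{\Sigma_t}$ --- and show that the only $D^2$-sized term (from small eigenvalues, harmless because $f$ is exponentially small there) is tamed by $t\le D^{-4}$, while the large eigenvalues, where $\norm{\Sigma_t}$ could a priori be as large as $1/t$, are controlled by the negative drift $-\tr(f'(\Sigma_t)\Sigma_t^2)$. Extracting a clean $C/t$ (rather than $C D^2/t^2$ or worse) from the 3-tensor estimate is exactly what forces both the special shape of $f$ in Lemma~\ref{lem:proxy-function} and the width $D^{-4}$ of the window.
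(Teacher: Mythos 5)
Your plan has the right skeleton --- It\^o's formula for $\tr f(\Sigma_t)$, divided-difference bound $|f''|\le D^2 f$ on the small-eigenvalue part, the improved Lichnerowicz inequality \eqref{eq:improved-LI} combined with the window condition $t\le D^{-4}$ to convert $D^2/t^{1/2}$ into $O(1/t)$, then \Gronwall --- and these pieces all appear in the paper's argument. But there is a genuine gap in how you control the 3-tensor term.

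Your step~(i), conditioning on the high-probability event $\norm{\Sigma_t}=O(1)$ via a stopping-time argument anchored in Theorem~\ref{thm:opnorm-control}, does not survive scrutiny. Theorem~\ref{thm:opnorm-control} asserts $\norm{\Sigma_t}\le 2$ for all $t\le T$ with probability $1-\exp(-1/(CT))$ \emph{only for} $T\asymp\log^{-2}n$; beyond that threshold no such a~priori control is available --- indeed establishing it is precisely the point of Guan's argument. The Type II bound, however, is applied in the paper's induction on windows $[t_k,t_{k+1}]$ that march all the way from $t_1\asymp\log^{-2}n$ up to $t_{k_0}=\Theta(1)$, so already the first step $[t_1,t_2]$ with $t_2\approx(\log\log n)^{-16}\gg t_1$ leaves the range where Theorem~\ref{thm:opnorm-control} applies. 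Your estimate $\tr(H_{u_j}^2)\le 4\,\cpi(\pi_t)\,\norm{\Sigma_t}\,\lambda_j$ from the argument of Lemma~\ref{lem:3tensor-2}, combined with $\cpi(\pi_t)\le(\norm{\Sigma_t}/t)^{1/2}$, produces a prefactor $\norm{\Sigma_t}^{3/2}$ that can be as large as $t^{-3/2}$; without the $\norm{\Sigma_t}=O(1)$ crutch the resulting drift is far worse than $O(1/t)\,\E F_t$, and the additive tail term you propose to absorb (of order $n\,t^{-5}\exp(-1/(Ct))$ under the crude bound $\norm{\Sigma_t}\le 1/t$) does not sit inside a Gr\"onwall inequality for $\E F_t$ at all, let alone for $t$ where the tail bound is vacuous.

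The ingredient you are missing is the paper's partial marginalization lemma \eqref{eq:partial-marginalization}. Rather than applying the Poincar\'e inequality for the full measure $\pi_t$, the paper projects onto the subspace $\mc S$ spanned by the eigenvectors of $\Sigma_t$ with eigenvalues at most $r$; the marginal $\pi_{\mc S}$ is still $1/t$-strongly logconcave (Pr\'ekopa--Leindler) and, crucially, has $\norm{\cov\pi_{\mc S}}\le r=O(1)$ \emph{by construction}, regardless of how large $\norm{\Sigma_t}$ is. The improved Lichnerowicz inequality then gives $\cpi(\pi_{\mc S})\le(r/t)^{1/2}=O(t^{-1/2})$ with a uniform constant, and this is what eventually yields $\de_t\,\E\tr f(\Sigma_t)\le O(1/t+D^2/t^{1/2})\,\E\tr f(\Sigma_t)$ on the window $t\le D^{-4}$ without any a~priori bound on $\norm{\Sigma_t}$. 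This subspace restriction (together with the tensor-shuffling inequality \eqref{eq:tensor-shuffle} and the four-way partition of index pairs by $\tau(r)$ and $\tau(r+1/3)$) is the engine of the proof, and it replaces your step~(i). Your idea of using the non-positive drift $-\tr(f'(\Sigma_t)\Sigma_t^2)$ to cancel the large-eigenvalue contribution is a harmless deviation --- the paper simply drops that term and still lands at the $O(1/t)$ rate --- but it does not repair the $\norm{\Sigma_t}$ dependence.
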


This lemma allows us to carry out the analytical idea discussed above:
(1) partition $[\log^{-2}n,\Theta(1)]$ into $\{[t_{k},t_{k+1}]\}_{k\geq0}$
and (2) for any $t\in[t_{k},t_{k+1}]$, pass a given bound on $\E F_{t_{k}}$
to a bound on $\E F_{t}$.

\subparagraph{(3) Additional indexing by $k$.}

At this point, it is quite tempting to directly transfer the second-term
bound from $t_{0}=\log^{-2}n$ to $t=\Theta(1)$, since the first
term is also exponentially small that the multiplicative term of $t_{0}^{-1000}=\log^{2000}n$
is dominated by $e^{-\polylog n}$. However, one should notice that
the lemma above is valid only on 
\[
t_{0}\leq t\leq t_{0}\vee\frac{1}{D^{4}}\,.
\]
Given that $D$ would be set very large (e.g., $D\geq\polylog n$),
the valid regime of the lemma is actually very short (e.g., the valid
length is at most $D^{-4}\lesssim\log^{-\O(1)}n$). 

A strategy at this point is quite obvious: 
\begin{itemize}
\item Properly design the intervals $[t_{k},t_{k+1}]$ so that $[\log^{-2}n,\Theta(1)]\subset\cup_{k}[t_{k},t_{k+1}]$.
\item For each interval $[t_{k},t_{k+1}]$, define a corresponding proxy
function $F_{k,t}$ (in terms of two parameters $D_{k}$ and $r_{k}$)
such that the Type II bound can be applied to $\E F_{k,t}$ on $[t_{k},t_{k+1}]$
(i.e., $|t_{k+1}-t_{k}|\leq D_{k}^{-4}$).
\item Once we have bounded $\E F_{k,t}$ on $t\in[t_{k},t_{k+1}]$ (in particular,
a bound on $\E F_{k,t_{k+1}}$), we should pass this bound to $\E F_{k+1,t}$
on $t\in[t_{k+1},t_{k+2}]$. By induction, Guan shows the intended
bound on $\E[\tr(\Sigma_{t}^{2})\,\ind\{\lda_{i}\ \text{large}\}]$:
\end{itemize}
\begin{lem}
\label{lem:induction-guan} For any $k\in\mathbb{N}$,
\[
\E F_{k,t}\leq e^{-\log^{2}t_{k}}n\leq n\quad\text{for }t\in[t_{k},t_{k+1}]\,.
\]
\end{lem}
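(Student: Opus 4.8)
The plan is to prove Lemma~\ref{lem:induction-guan} by induction on $k$, using the operator-norm control of Theorem~\ref{thm:opnorm-control} to seed the base case and the Type~II estimate of Lemma~\ref{lem:typeII-guan} to propagate the bound across each window $[t_k,t_{k+1}]$ of the partition. Throughout, the quantities $r_k\in[7/3,8/3]$, $D_k$, and the breakpoints $t_k$ are the free parameters, and they must be chosen so that three requirements hold simultaneously: (i) each window is short enough, $t_{k+1}-t_k\le D_k^{-4}$, for Lemma~\ref{lem:typeII-guan} to apply on it; (ii) $D_k$ is large enough that the proxy $f_k$ of Lemma~\ref{lem:proxy-function} genuinely suppresses the eigenvalues that merely fluctuate around $\Theta(1)$, so that $F_{k,t}$ effectively sees only the (rare) large part of the spectrum; and (iii) the windows cover $[\log^{-2}n,\Theta(1)]$ in a controlled number of steps.

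For the base case $k=0$, take $t_0\asymp\log^{-2}n$ and split on the event $\{\norm{\Sigma_{t_0}}\le 2\}$, which by Theorem~\ref{thm:opnorm-control} has probability at least $1-\exp\bpar{-\Theta(1/t_0)}=1-\exp\bpar{-\Theta(\log^2 n)}$. On this event every eigenvalue lies below $r_0-1/D_0$ (for $D_0$ above an absolute constant), so Lemma~\ref{lem:proxy-function} gives $f_0(\lambda_{i,t_0})\le e^{-D_0|r_0-2|}\le e^{-D_0/3}$ and hence $F_{0,t_0}\le n\,e^{-D_0/3}$; on the complementary event, the crude bounds $\norm{\Sigma_{t_0}}\le 1/t_0$ and $f_0(x)\lesssim x^2$ give $F_{0,t_0}\lesssim n/t_0^2$. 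Therefore
\[
\E F_{0,t_0}\ \lesssim\ n\,e^{-D_0/3}+\frac{n}{t_0^{2}}\,\exp\bpar{-\Theta(1/t_0)}\ \le\ e^{-\log^{2}t_0}\,n\,,
\]
once $D_0$ is chosen polylogarithmic in $n$ and using $1/t_0\asymp\log^2 n\gg(\log\log n)^2\asymp\log^2 t_0$.

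For the inductive step, assume $\E F_{k,t_k}\le e^{-\log^{2}t_k}\,n$. Since the breakpoints satisfy $t_{k+1}-t_k\le D_k^{-4}$, Lemma~\ref{lem:typeII-guan} applies on $[t_k,t_{k+1}]$ and yields $\E F_{k,t}\le \bpar{t/t_k}^{1000}\,\E F_{k,t_k}$ there. The partition is engineered so that the accumulated multiplicative loss $\prod_{j<k}\bpar{t_{j+1}/t_j}^{1000}$ — polynomial in $1/t_0$, hence only $e^{O(\log\log n)}$ — never overtakes the total envelope gain $e^{\log^{2}t_0}=e^{\Theta((\log\log n)^2)}$ available as $t$ runs from $\log^{-2}n$ up to $\Theta(1)$ (note $e^{-\log^2 t}$ is increasing on $(0,1)$); consequently $\E F_{k,t}$ stays below $n$, and carried to the right endpoint — and transferred from $f_k$ to $f_{k+1}$, which alters $F$ only by the $e^{-\polylog n}$ mass sitting below the two cutoffs, since $f_k$ and $f_{k+1}$ agree up to constants on the large-eigenvalue regime — it gives $\E F_{k+1,t_{k+1}}\le e^{-\log^{2}t_{k+1}}\,n$, closing the induction. (Here the invariant $\le e^{-\log^2 t_k}n$ should be read as tracking the remaining slack; the load-bearing consequence downstream is simply $\E F_{k,t}\le n$.)

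The main obstacle is exactly the simultaneous bookkeeping in requirements (i)--(iii): $D_k$ must be large enough that only genuinely large eigenvalues — exponentially unlikely by \eqref{eq:final-target} — contribute to $F_{k,t}$, yet small enough that $D_k^{-4}$ still lets each window advance, so that the whole range $[\log^{-2}n,\Theta(1)]$ is crossed in a number of steps for which the compounded $(\,\cdot\,)^{1000}$ factors never catch up with the envelope. Granting Lemmas~\ref{lem:proxy-function} and~\ref{lem:typeII-guan}, the induction itself is then just the two steps above; and once $\E F_{k,t}\le n$ is known up to $t=\Theta(1)$, the uniform trace bound $\E\tr(\Sigma_t^2)=\O(n)$ for $t\lesssim1$ in Theorem~\ref{thm:trace-control} follows by writing $\tr(\Sigma_t^2)=\E[\tr(\Sigma_t^2)\,\ind\{\norm{\Sigma_t}\lesssim1\}]+\E[\tr(\Sigma_t^2)\,\ind\{\norm{\Sigma_t}\gtrsim1\}]\lesssim n+\E F_{k,t}\lesssim n$, and then bootstrapping past $t=\Theta(1)$ via the Type~II bound of the previous subsection.
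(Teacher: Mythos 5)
Your proposal follows essentially the same strategy as the paper: seed the base case with the high-probability operator-norm control of Theorem~\ref{thm:opnorm-control}, propagate within each window via the Type~II bound of Lemma~\ref{lem:typeII-guan}, and pay the cost of switching proxies (from $f_k$ to $f_{k+1}$) using the explicit structure of $f_k$ --- exponential suppression below the cutoff, and $f_{k+1}\le 5 f_k$ above it. One place where your prose deviates slightly from the cleanest execution: you frame the inductive closing as a comparison between the global accumulated loss $\prod_{j<k}(t_{j+1}/t_j)^{1000}$ and the one-time ``envelope gain'' $e^{\log^2 t_0}$. The paper instead maintains the invariant $\E F_{k,t}\le e^{-\log^2 t_k}n$ \emph{per window}, and re-derives it fresh at each step: the two additive error terms (the $n e^{-D_k(r_k-r_{k-1})}$ from eigenvalues below $r_{k-1}$, and $5\cdot\E F_{k-1,t_k}$ from the inductive hypothesis) already carry $e^{-|\log t_k|^{3.5}}$ and $e^{-|\log t_{k-1}|^2}$ factors, each of which individually absorbs the per-window blowup $t_k^{-1000}$ and the transfer factor $5$. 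Running the induction this way avoids having to track an accumulated product of Type~II losses and constants across all windows, which would otherwise require a separate argument that the number of windows is very slowly growing. This is a bookkeeping refinement rather than a different approach; the ingredients and overall plan are the same as the paper's.
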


This allows us to invoke this $\O(n)$-bound on the large part of
eigenvalues for any $t\lesssim1$. Then, combining with an obvious
$\O(n)$-bound on the first term (i.e., the sum of square of small
eigenvalues), we can achieve $\O(n)$-bound on $\E\tr(\Sigma_{t}^{2})$
for any $t\lesssim1$.

\subparagraph{(4) Lower bound and extension to any time.}

The lower bound of $\Omega(n)$ readily follows from the upper bound.
As $\D\Sigma_{t}=\D H_{t}-\Sigma_{t}^{2}\,\D t\,,$we have 
\[
\de_{t}\E\tr\Sigma_{t}=-\E\tr(\Sigma_{t}^{2})\,.
\]
Using $\E\tr(\Sigma_{t}^{2})\lesssim n$ for $t=\Theta(1)$, we obtain
that 
\[
\E\tr\Sigma_{t}\gtrsim n\,.
\]
Noting that $\E\tr^{2}\Sigma_{t}\gtrsim n^{2}$ (by Jensen) and
\[
\tr^{2}\Sigma_{t}=(\lda_{1}+\cdots+\lda_{n})^{2}\leq(1+\cdots+1)\,(\lda_{1}^{2}+\cdots+\lda_{n}^{2})=n\tr(\Sigma_{t}^{2})\,,
\]
we can conclude that 
\[
\E\tr(\Sigma_{t}^{2})\gtrsim n\,.
\]

Generalization of $\O(n)$-bound to \emph{any time} $t$ is also obvious:
for any $t\gtrsim1$, 
\[
\E\tr(\Sigma_{t}^{2})\leq\frac{n}{t^{2}}\lesssim n\,.
\]

\paragraph{Type II bound via \eqref{eq:improved-LI}.}

We prove the bound of type II for the proxy function $F_{t}$ (Lemma~\ref{lem:typeII-guan}),
following \cite{guan2024note} and Klartag's note. To streamline computations
later, we need two preliminaries --- (i) tensor shuffling and (ii)
marginalization.

\subparagraph{(1) Tensor shuffling. }

We denote the eigenvalues of $\Sigma_{t}=\cov\pi_{t}$ by $0<\lda_{1}\leq\dots\leq\lda_{n}$,
and their corresponding eigenvectors by $\{u_{i}\}_{i\in[n]}$. Denoting
$x_{i}:=\inner{x,u_{i}}$, we recall the third-order tensor $H_{t}$
defined by 
\[
H_{ijk}:=\int(x-b_{t})_{i}\,(x-b_{t})_{j}\,(x-b_{t})_{k}\,\pi_{t}(\D x)\,,
\]
and denote its fiber by
\[
H_{ij}:=(H_{ij1},\dots,H_{ijn})\in\Rn\,.
\]
We also define a \emph{threshold index} by
\[
\tau(\lda)=\max\{i:\lda_{i}\leq\lda\}\,.
\]

We are going to use the following lemma often:
\begin{lem}
[Tensor bound] For any $r>0$,
\begin{equation}
\sum_{\substack{i,j,k=1\\
i\geq\tau(r)
}
}^{n}H_{ijk}^{2}\leq3\sum_{i\geq\tau(r)}\sum_{j,k=1}^{i}H_{ijk}^{2}\,.\tag{\ensuremath{\msf{ts}}-\ensuremath{\msf{shuffle}}}\label{eq:tensor-shuffle}
\end{equation}
\end{lem}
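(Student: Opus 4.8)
The plan is to reduce \eqref{eq:tensor-shuffle} to the permutation symmetry of the third-moment tensor $H_t$ together with a union bound over which of the three indices carries the largest value. Write $\tau:=\tau(r)$, and recall that $H_{ijk}=\int(x-b_t)_i(x-b_t)_j(x-b_t)_k\,\pi_t(\D x)$ is invariant under every permutation of $\{i,j,k\}$. The first step is to enlarge the left-hand side: since $\{(i,j,k):i\ge\tau\}\subseteq\{(i,j,k):\max(i,j,k)\ge\tau\}$ and each summand is nonnegative,
\[
\sum_{\substack{i,j,k=1\\ i\ge\tau}}^{n}H_{ijk}^{2}\ \le\ \sum_{\max(i,j,k)\ge\tau}H_{ijk}^{2}\,.
\]

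Next I would cover the index set on the right by the three sets $B_1=\{(i,j,k):i\ge j,\ i\ge k,\ i\ge\tau\}$, $B_2=\{(i,j,k):j\ge i,\ j\ge k,\ j\ge\tau\}$, and $B_3=\{(i,j,k):k\ge i,\ k\ge j,\ k\ge\tau\}$: any triple with $\max(i,j,k)\ge\tau$ lies in $B_\ell$ for at least one $\ell$ (take $\ell$ to be the position of a maximal coordinate), so $\sum_{\max\ge\tau}H_{ijk}^{2}\le\sum_{B_1}H_{ijk}^{2}+\sum_{B_2}H_{ijk}^{2}+\sum_{B_3}H_{ijk}^{2}$. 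Now $B_1$ is exactly $\{(i,j,k):i\ge\tau,\ 1\le j\le i,\ 1\le k\le i\}$, so $\sum_{B_1}H_{ijk}^{2}=\sum_{i\ge\tau}\sum_{j,k=1}^{i}H_{ijk}^{2}$, which is precisely one third of the right-hand side of \eqref{eq:tensor-shuffle}. For $B_2$ and $B_3$, relabelling the indices so that a maximal coordinate is moved into the first slot and invoking the permutation invariance of $H_t$ identifies each of these two sums with $\sum_{B_1}H_{ijk}^{2}$ as well. Adding the three equal contributions and combining with the display above gives \eqref{eq:tensor-shuffle}.

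This argument involves no genuine difficulty; the only points to watch are that ties among $i,j,k$ are what force a union bound (so the enlargement in the first display is an inequality, not an equality) and that the constant $3$ is deliberately wasteful but sufficient for the downstream bound on $\E\tr(\Sigma_t^2)$. The one thing worth double-checking is that the symmetry used in the last step is the full symmetry of $H_t$ under all permutations of its three indices, which holds because $H_t$ is the third moment of the centered measure $\pi_t$.
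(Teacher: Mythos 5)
Your proof is correct and takes essentially the same route as the paper: the paper's one-line argument covers the index set by which coordinate is maximal and then invokes permutation symmetry of $H_{ijk}$, which is precisely what you do with the sets $B_1,B_2,B_3$; you have simply spelled out the union bound and the relabelling step that the paper leaves implicit.
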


\begin{proof}
It is almost immediate from the symmetry of $H_{ijk}$ in $ijk$ and
a simple combinatorial argument:
\[
\sum_{\substack{i,j,k=1\\
i\geq\tau(r)
}
}^{n}H_{ijk}^{2}\leq\sum_{i\geq j,k}H_{ijk}^{2}+\sum_{j\geq i,k}H_{ijk}^{2}+\sum_{k\geq i,j}H_{ijk}^{2}\leq3\sum_{i\geq\tau(r)}\sum_{j,k=1}^{i}H_{ijk}^{2}\,,
\]
which completes the proof.
\end{proof}

\subparagraph{(2) Marginalization. }

When bounding the expectation of drift of $\D\E F_{k,t}$, we will
run into $\sum_{i,j}^{\tau(r)}\Abs{H_{ijk}}^{2}$ quite often, which
can be viewed as a (partial) marginalization of $i,j$.
\begin{lem}
[Partial marginalization] For any $r>0$, $k\geq\tau(r)$, and $f(x)=bx^{2}$,
\begin{equation}
\sum_{i,j=1}^{\tau(r)}H_{ijk}^{2}\leq\frac{4r^{3/2}\lda_{k}}{t^{1/2}}\,,\qquad\&\qquad b\sum_{i,j=1}^{k}H_{ijk}^{2}\leq\frac{4f(\lda_{k})}{t}\,.\tag{\ensuremath{\msf p}-\ensuremath{\msf{marginal}}}\label{eq:partial-marginalization}
\end{equation}
\end{lem}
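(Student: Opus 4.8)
The plan is to recognise each restricted tensor sum as the squared Hilbert--Schmidt norm of a compressed slice of the third-moment tensor, and to bound it through a self-referential inequality coming from \eqref{eq:PI} --- with the twist that the Poincar\'e constant fed in is that of a \emph{low-dimensional marginal} whose covariance has small operator norm, estimated by the improved Lichnerowicz inequality \eqref{eq:improved-LI}. Write $X:=x-b_{t}$ for a centred draw from $\pi_{t}$, let $\{(u_{i},\lda_{i})\}_{i\in[n]}$ be the eigenpairs of $\Sigma_{t}$ with $0<\lda_{1}\le\dots\le\lda_{n}$, and let $H_{k}$ be the matrix with entries $(H_{k})_{ij}=H_{ijk}$, i.e. the slice of \eqref{eq:3-tensor}, so $H_{k}=\E_{\pi_{t}}[X^{\otimes2}X_{k}]$. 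For the first estimate let $P$ be the orthogonal projection onto $V:=\spanning\{u_{1},\dots,u_{\tau(r)}\}$; since $\tau(r)$ collects exactly the eigenvalues $\le r$, we have $P\Sigma_{t}P\preceq rP$. I would start from
\[
\sum_{i,j=1}^{\tau(r)}H_{ijk}^{2}=\norm{PH_{k}P}_{\hs}^{2}=\tr\bpar{PH_{k}PH_{k}}=\E_{\pi_{t}}\bbrack{X_{k}\,X^{\T}(PH_{k}P)X}\,,
\]
the last equality because $\langle M,H_{k}\rangle=\E[X_{k}\,X^{\T}MX]$ for symmetric $M$. Set $G:=PH_{k}P$ (symmetric, supported on $V$) and $Y:=PX$, so $X^{\T}GX=Y^{\T}GY$; since $\E X_{k}=0$ I may subtract the mean and use Cauchy--Schwarz,
\[
\norm{G}_{\hs}^{2}=\E\bbrack{X_{k}\,\bpar{Y^{\T}GY-\E[Y^{\T}GY]}}\le\lda_{k}^{1/2}\,\bpar{\var(Y^{\T}GY)}^{1/2}\,.
\]

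The core step is the variance bound. The law of $Y$ is the marginal of the $t$-strongly logconcave $\pi_{t}$ onto $V$, hence is again $t$-strongly logconcave (write $\pi_{t}=e^{-t\norm{\cdot}^{2}/2}g$ with $g$ logconcave and convolve over $V^{\perp}$, by Pr\'ekopa--Leindler), with covariance $\Sigma_{Y}=\diag(\lda_{1},\dots,\lda_{\tau(r)})$, $\norm{\Sigma_{Y}}\le r$. Applying \eqref{eq:PI} to $y\mapsto y^{\T}Gy$ with $\cpi(\law Y)\le(\norm{\Sigma_{Y}}/t)^{1/2}\le(r/t)^{1/2}$ from \eqref{eq:improved-LI},
\[
\var(Y^{\T}GY)\le\cpi(\law Y)\,\E\norm{2GY}^{2}=4\,\cpi(\law Y)\,\tr(G^{2}\Sigma_{Y})\le 4\,(r/t)^{1/2}\,r\,\norm{G}_{\hs}^{2}=\frac{4r^{3/2}}{t^{1/2}}\,\norm{G}_{\hs}^{2}\,,
\]
where $\tr(G^{2}\Sigma_{Y})\le r\norm{G}_{\hs}^{2}$ since $G^{2}$ is supported on $V$, on which $\Sigma_{Y}\preceq rI$. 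Substituting, $\norm{G}_{\hs}^{2}\le\lda_{k}^{1/2}(4r^{3/2}t^{-1/2})^{1/2}\norm{G}_{\hs}$, hence $\norm{G}_{\hs}^{2}\le 4r^{3/2}\lda_{k}t^{-1/2}$, which is the first bound.

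The second bound follows by the identical argument with $r$ replaced by $\lda_{k}$: projecting onto $\spanning\{u_{1},\dots,u_{k}\}$, where $\Sigma_{t}$ restricts to operator norm $\le\lda_{k}$, gives $\sum_{i,j=1}^{k}H_{ijk}^{2}\le 4\lda_{k}^{5/2}t^{-1/2}\le 4\lda_{k}^{2}/t$, the last step using $\lda_{k}\le\norm{\Sigma_{t}}\le 1/t$ (strong logconcavity); here even the plain bound $\cpi\le1/t$ from Brascamp--Lieb \eqref{eq:Brascamp-Lieb} applied to the marginal suffices. Multiplying by $b$ and using $f(\lda_{k})=b\lda_{k}^{2}$ on the quadratic branch of $f$ (valid for $\lda_{k}\ge r$, which holds for $k>\tau(r)$) gives $b\sum_{i,j=1}^{k}H_{ijk}^{2}\le 4f(\lda_{k})/t$.

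The step I expect to be the crux is precisely the choice made in bounding $\var(Y^{\T}GY)$. Using $\cpi\le 1/t$ there --- whether for $\pi_{t}$ or even for the marginal --- only yields $\sum_{i,j\le\tau(r)}H_{ijk}^{2}\lesssim r\lda_{k}/t$, which is \emph{weaker} than the claimed $r^{3/2}\lda_{k}/t^{1/2}$ exactly in the regime $rt\ll1$, i.e. the regime that is actually used downstream (where $r=\Theta(1)$ while $t$ can be as small as $\log^{-2}n$). The gain is obtained only by feeding the improved Lichnerowicz inequality the $\tau(r)$-dimensional marginal, whose covariance has operator norm $\le r$, rather than $\pi_{t}$ itself; the subtlety is recognising that this marginal remains $t$-strongly logconcave and tracking its small covariance norm through the contraction $\tr(G^{2}\Sigma_{Y})\le r\norm{G}_{\hs}^{2}$.
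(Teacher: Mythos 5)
Your proof is correct and follows essentially the same route as the paper's: you write the restricted tensor sum as a self-referential quadratic-form variance, peel off $\sqrt{\lda_k}$ by Cauchy--Schwarz, pass to the $\tau(r)$-dimensional marginal (still $t$-strongly logconcave with $\norm{\Sigma_Y}\le r$), and apply the Poincar\'e inequality with the improved Lichnerowicz constant $(r/t)^{1/2}$. Your $Y^{\T}GY$ with $G=PH_kP$ is exactly the paper's $g_k$, and your $\tr(G^2\Sigma_Y)\le r\norm{G}_{\hs}^2$ is the same computation the paper does when bounding $\E[\norm{\nabla_{\mathcal S}g_k}^2]=4\sum_{i,j}H_{ijk}^2\lda_j\le 4r\sum_{i,j}H_{ijk}^2$; you also correctly identify the crucial gain over plain Brascamp--Lieb.
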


\begin{proof}
Note that
\begin{align*}
\sum_{i,j=1}^{\tau(r)}H_{ijk}^{2} & =\int\underbrace{\sum_{ij}^{\tau(r)}H_{ijk}(x-b_{t})_{i}(x-b_{t})_{j}}_{=:g_{k}(x)}(x-b_{t})_{k}\,\pi_{t}(\D x)\\
 & =\int g_{k}(x)\,(x-b_{t})_{k}\,\pi_{t}(\D x)\leq\sqrt{\var_{\pi_{t}}g_{k}}\sqrt{\lda_{k}}\,.
\end{align*}

Let $\mc S$ be the subspace spanned by $\{u_{1},\dots,u_{\tau(r)}\}$,
and $P_{\mc S}:\Rn\to\R^{\tau(r)}$ the projection function onto $\mc S$.
Clearly, $\pi_{\mc S}:=(P_{\mc S})_{\#}\pi_{t}$ is not only $1/t$-strongly
logconcave (Pr\'ekopa--Leindler) but also $\norm{\cov\pi_{\mc S}}\leq r$
by its construction. Since $g_{k}$ has no components in $\mc S^{\perp}$,
by \eqref{eq:PI} with $\cpi(\pi_{\mc S})\le(\norm{\cov\pi_{\mc S}}/t)^{1/2}\le(r/t)^{1/2}$
(due to \eqref{eq:improved-LI}),
\[
\var_{\pi_{t}}g_{k}=\var_{\pi_{\mc S}}g_{k}\leq\bpar{\frac{r}{t}}^{1/2}\,\E_{\pi_{\mc S}}[\norm{\nabla_{\mc S}g_{k}}^{2}]=4\,\bpar{\frac{r}{t}}^{1/2}\sum_{i,j=1}^{\tau(r)}H_{ijk}^{2}\lda_{j}\leq\frac{4r^{3/2}}{t^{1/2}}\sum_{i,j=1}^{\tau(r)}H_{ijk}^{2}\,.
\]
Putting this back to the display above,
\[
\sum_{i,j=1}^{\tau(r)}H_{ijk}^{2}\le\Bpar{\frac{4r^{3/2}}{t^{1/2}}\sum_{i,j=1}^{\tau(r)}H_{ijk}^{2}}^{1/2}\sqrt{\lda_{k}}\,,
\]
so 
\[
\sum_{i,j=1}^{\tau(r)}H_{ijk}^{2}\leq\frac{4r^{3/2}\lda_{k}}{t^{1/2}}\,,
\]
which proves the first part. 

As for the second claim, using the first claim with $\tau^{-1}(k)=\lda_{k}$
and $\lda_{k}\leq1/t$,
\[
b\sum_{i,j=1}^{k}H_{ijk}^{2}\leq\frac{4b\lda_{k}^{5/2}}{t^{1/2}}\leq\frac{4b\lda_{k}^{2}}{t}=\frac{4f(\lda_{k})}{t}\,,
\]
which completes the proof.
\end{proof}
We are now ready to prove the bound of type II on $\E F_{t}$. The
following It\^o derivative of eigenvalues is well known \cite{Klartag21yuansi}:
\begin{prop}
[It\^o derivative of eigenvalues] For any smooth function $f:\R_{\geq0}\to\R$
with bounded second derivative,
\begin{equation}
\de_{t}\E\tr f(\Sigma_{t})=-\sum_{i=1}^{n}\E[\lda_{i}^{2}f'(\lda_{i})]+\half\sum_{i,j=1}^{n}\E\bbrack{\norm{H_{ij}}^{2}\,\frac{f'(\lda_{i})-f'(\lda_{j})}{\lda_{i}-\lda_{j}}}\,,\label{eq:ito-proxy}
\end{equation}
where the quotient is interpreted as $f''(\lda_{i})$ when $\lda_{i}=\lda_{j}$.
\end{prop}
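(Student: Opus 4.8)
The plan is to apply It\^o's formula to the scalar process $t\mapsto\tr f(\Sigma_t)$ along the covariance dynamics $\D\Sigma_t=\D H_t-\Sigma_t^2\,\D t$ with $\D H_t=\sum_{i=1}^n H_i\,\D W_{t,i}$ established earlier, and then to take expectations so that the local-martingale terms vanish. The two analytic ingredients are the first two Fr\'echet derivatives of the spectral map $g:M\mapsto\tr f(M)$ on symmetric matrices. The first is immediate: differentiating $\tr f(M+sK)$ at $s=0$ gives $\grad g(M)[K]=\tr(f'(M)\,K)$. The second is the Daleckii--Krein (divided-difference) formula: writing $M=\sum_i\lda_i\,u_i u_i^\T$ in an orthonormal eigenbasis,
\begin{equation}
\hess g(M)[K,K]=\sum_{i,j=1}^n\frac{f'(\lda_i)-f'(\lda_j)}{\lda_i-\lda_j}\,(u_i^\T K u_j)^2\,,
\end{equation}
with the quotient read as $f''(\lda_i)$ when $\lda_i=\lda_j$; this follows by differentiating $s\mapsto\tr(f'(M+sK)\,K)$ at $s=0$ together with the integral representation of the Fr\'echet derivative of $f'$, and is exactly the standard Hessian of a spectral function.

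Feeding these into It\^o's formula, the first-order contribution is $\tr(f'(\Sigma_t)\,\D\Sigma_t)=\sum_i\tr(f'(\Sigma_t)H_i)\,\D W_{t,i}-\tr(f'(\Sigma_t)\,\Sigma_t^2)\,\D t$; the stochastic piece is a local martingale, while in the eigenbasis of $\Sigma_t$ the drift piece is $-\sum_i\lda_i^2 f'(\lda_i)\,\D t$, since $f'(\Sigma_t)\Sigma_t^2$ is diagonal there. For the second-order contribution only the martingale part of $\D\Sigma_t$ enters the quadratic variation, and $\D W_{t,i}\,\D W_{t,j}=\delta_{ij}\,\D t$ collapses the double sum to $\half\sum_i\hess g(\Sigma_t)[H_i,H_i]\,\D t$. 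In the eigenbasis one has $u_j^\T H_i u_k=H_{ijk}$ (coordinates taken with respect to $\{u_\ell\}$, as in the tensor-shuffling computation), so the Daleckii--Krein formula gives $\half\sum_i\sum_{j,k}\frac{f'(\lda_j)-f'(\lda_k)}{\lda_j-\lda_k}H_{ijk}^2\,\D t$; summing over $i$ and using the full symmetry of $H_{ijk}$ in its three indices together with $\sum_i H_{ijk}^2=\norm{H_{jk}}^2$ turns this into $\half\sum_{j,k}\norm{H_{jk}}^2\,\frac{f'(\lda_j)-f'(\lda_k)}{\lda_j-\lda_k}\,\D t$.

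Collecting the drift terms, taking expectations (the local-martingale terms drop out, and $\de_t$ commutes with $\E$ since $f''$ is bounded and $\Sigma_t$ has all moments --- the support of $\pi_t$ sits inside a ball of radius $R$, or one may invoke the Paouris-type tail bound of Theorem~\ref{thm:Paouris}), and relabelling $j,k$ as $i,j$ yields exactly
\begin{equation}
\de_t\E\tr f(\Sigma_t)=-\sum_{i=1}^n\E[\lda_i^2 f'(\lda_i)]+\half\sum_{i,j=1}^n\E\Bbrack{\norm{H_{ij}}^2\,\frac{f'(\lda_i)-f'(\lda_j)}{\lda_i-\lda_j}}\,,
\end{equation}
as claimed. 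The only genuinely delicate point is the second Fr\'echet derivative --- the divided-difference/Daleckii--Krein formula for $\hess g$ --- and the bookkeeping of the contraction $\sum_i H_{ijk}^2=\norm{H_{jk}}^2$ that folds the three-tensor term into the stated divided-difference sum; everything else is routine It\^o calculus.
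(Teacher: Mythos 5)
The paper states this proposition as a known fact, citing \cite{Klartag21yuansi}, and does not reproduce a proof; your derivation is therefore a genuine addition, and it is correct. The route you take --- It\^o's formula for the scalar $\tr f(\Sigma_t)$ with $\D\Sigma_t = \D H_t - \Sigma_t^2\,\D t$, the first Fr\'echet derivative $\tr(f'(M)K)$, and the Daleckii--Krein divided-difference formula $\hess(\tr f)(M)[K,K] = \sum_{j,k} \frac{f'(\lda_j)-f'(\lda_k)}{\lda_j-\lda_k}(u_j^\T K u_k)^2$ --- is exactly the standard one, and the bookkeeping ($\D W_{t,i}\,\D W_{t,j}=\delta_{ij}\,\D t$, reading the drift in the instantaneous eigenbasis where $f'(\Sigma_t)\Sigma_t^2$ is diagonal, then contracting the third index via $\sum_i H_{ijk}^2 = \norm{H_{jk}}^2$ by full symmetry of $H$) is all in order.

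One step worth flagging as a sentence you should spell out: you decompose $\D W_t$ in the time-varying eigenbasis $\{u_\ell\}$ when evaluating $\sum_i\hess g(\Sigma_t)[H_i,H_i]$, whereas $\D H_t = \sum_i H_i\,\D W_{t,i}$ was derived with a fixed coordinate frame. This is legitimate, but only because the relevant object $\sum_i H_i\otimes H_i$ (the quadratic-variation contraction) is invariant under orthogonal change of the index $i$: if $\tilde H_i = \sum_j O_{ji} H_j$ for orthogonal $O$, then $\sum_i \tilde H_i\otimes\tilde H_i = \sum_j H_j\otimes H_j$. Without this remark the switch to the eigenbasis in the third index looks like a gap. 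Also, for rigor one should note that $g(M)=\tr f(M)$ is $C^2$ on symmetric matrices even across eigenvalue crossings --- this is precisely what the divided-difference form (with the $f''(\lda_i)$ convention on the diagonal) encodes --- and that the local-martingale parts are genuine martingales so that expectation kills them; your appeal to the bounded support / Paouris tail is the right justification for both this and the interchange of $\de_t$ with $\E$.
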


All we need to do now is bound the magnitude of the drift (i.e., the
RHS) for the proxy function $f$ (Lemma~\ref{lem:proxy-function}):
given two parameters $D>0$ and $r\in[7/3,8/3]$, there exists $b\in[1/20,1/5]$
such that $f$ is positive and increasing, and satisfies
\[
\Abs{f''}\leq D^{2}f\qquad\&\qquad f(x)=bx^{2}\quad\text{for }x\geq r\,.
\]

\begin{proof}
[Proof of Lemma~\ref{lem:typeII-guan}] Since $f$ is increasing,
we bound the first term in \eqref{eq:ito-proxy} by $0$, focusing
on the second term. We partition $\{(i,j)\in[n]\times[n]\}$ into
four pieces:
\begin{align*}
(i): & i,j>\tau(r)\\
(ii): & i\leq\tau(r),\,j\geq\tau\bpar{r+\frac{1}{3}}\\
(iii): & j\leq\tau(r),\,i\geq\tau\bpar{r+\frac{1}{3}}\\
(iv): & i,j\leq\tau\bpar{r+\frac{1}{3}}
\end{align*}
so we slightly overcount $\{(i,j):\tau(r)\leq i,j\leq\tau(r+1/3)\}$.
Also, $(ii)$ and $(iii)$ are essentially the same due to symmetry.

As for $(i)$, using $f'(x)=2bx$ for $x\geq r$,
\begin{align*}
\half\sum_{i,j\geq\tau(r)+1}^{n}\norm{H_{ij}}^{2}\,\frac{f'(\lda_{i})-f'(\lda_{j})}{\lda_{i}-\lda_{j}} & =b\sum_{i,j\geq\tau(r)+1}\sum_{k}H_{ijk}^{2}\\
 & \underset{\eqref{eq:tensor-shuffle}}{\leq}3b\sum_{i\geq\tau(r)+1}\sum_{j,k=1}^{i}H_{ijk}^{2}\\
 & \underset{\eqref{eq:partial-marginalization}}{\leq}\frac{12}{t}\sum_{i\geq\tau(r)+1}f(\lda_{i})\,.
\end{align*}

As for $(ii)$ (and $(iii)$), using $r\leq8/3$,
\begin{align*}
\half\sum_{j\leq\tau(r)}\sum_{i\geq\tau(r+\frac{1}{3})}\norm{H_{ij}}^{2}\,\frac{f'(\lda_{i})-f'(\lda_{j})}{\lda_{i}-\lda_{j}} & \leq b\sum_{j\leq\tau(r)}\sum_{i\geq\tau(r+\frac{1}{3})}\sum_{k}H_{ijk}^{2}\,\frac{\lda_{i}}{\lda_{i}-\lda_{j}}\\
 & \leq3\bpar{r+\frac{1}{3}}\,b\sum_{j\leq\tau(r)}\sum_{i\geq\tau(r+\frac{1}{3})}\sum_{k}H_{ijk}^{2}\\
 & \leq3\bpar{r+\frac{1}{3}}\,b\sum_{i\geq\tau(r+\frac{1}{3})}\sum_{j,k}H_{ijk}^{2}\\
 & \underset{\eqref{eq:tensor-shuffle}}{\leq}27b\sum_{i\geq\tau(r+\frac{1}{3})}\sum_{j,k}^{i}H_{ijk}^{2}\\
 & \underset{\eqref{eq:partial-marginalization}}{\leq}\frac{108}{t}\sum_{i\geq\tau(r+\frac{1}{3})}f(\lda_{i})
\end{align*}

As for $(iv)$, using $\Abs{f''}\leq D^{2}f$,
\[
\half\sum_{i,j\leq\tau(r+\frac{1}{3})}\norm{H_{ij}}^{2}\,\frac{f'(\lda_{i})-f'(\lda_{j})}{\lda_{i}-\lda_{j}}\leq\frac{D^{2}}{2}\sum_{i,j\leq\tau(r+\frac{1}{3})}\norm{H_{ij}}^{2}\,\bpar{f(\lda_{i})\vee f(\lda_{j})}\,.
\]
We then further separate the sum into two cases: $k\leq\tau(r+1/3)$
or $k>\tau(r+1/3)$. For the former,
\begin{align*}
\frac{D^{2}}{2}\sum_{i,j,k\leq\tau(r+\frac{1}{3})}H_{ijk}^{2}\,\bpar{f(\lda_{i})\vee f(\lda_{j})} & \leq\frac{D^{2}}{2}\sum_{i\leq\tau(r+\frac{1}{3})}f(\lda_{i})\sum_{j,k}^{i}H_{ijk}^{2}+\frac{D^{2}}{2}\sum_{j\leq\tau(r+\frac{1}{3})}f(\lda_{j})\sum_{i,k}^{j}H_{ijk}^{2}\\
 & \quad+\frac{D^{2}}{2}\sum_{k\leq\tau(r+\frac{1}{3})}\sum_{i,j}^{k}H_{ijk}^{2}f(\lda_{k})\\
 & =\frac{3D^{2}}{2}\sum_{i\leq\tau(r+\frac{1}{3})}f(\lda_{i})\sum_{j,k}^{i}H_{ijk}^{2}\\
 & \underset{\eqref{eq:partial-marginalization}}{\leq}\frac{6D^{2}}{t^{1/2}}\sum_{i\leq\tau(r+\frac{1}{3})}f(\lda_{i})\,\lda_{i}^{5/2}\\
 & \leq\frac{6D^{2}\,(r+1/3)^{5/2}}{t^{1/2}}\sum_{i\leq\tau(r+\frac{1}{3})}f(\lda_{i})\\
 & \leq\frac{100D^{2}}{t^{1/2}}\sum_{i\leq\tau(r+\frac{1}{3})}f(\lda_{i})\,.
\end{align*}
For the latter,
\begin{align*}
\frac{D^{2}}{2}\sum_{i,j\leq\tau(r+\frac{1}{3})}\sum_{k>\tau(r+\frac{1}{3})}H_{ijk}^{2}\,\bpar{f(\lda_{i})\vee f(\lda_{j})} & \leq\frac{D^{2}b\,(r+1/3)^{2}}{2}\sum_{i,j\leq\tau(r+\frac{1}{3})}\sum_{k>\tau(r+\frac{1}{3})}H_{ijk}^{2}\\
 & \leq\frac{D^{2}b\,(r+1/3)^{2}}{2}\sum_{k>\tau(r+\frac{1}{3})}\sum_{ij}^{\tau(r+\frac{1}{3})}H_{ijk}^{2}\\
 & \underset{\eqref{eq:partial-marginalization}}{\le}\frac{4D^{2}\,(r+1/3)^{7/2}}{2t^{1/2}}\sum_{k>\tau(r+\frac{1}{3})}b\lda_{k}\\
 & \leq\frac{4D^{2}\,(r+1/3)^{9/2}}{2t^{1/2}}\sum_{k>\tau(r+\frac{1}{3})}f(\lda_{k})\\
 & \lesssim\frac{D^{2}}{t^{1/2}}\sum_{i>\tau(r+\frac{1}{3})}f(\lda_{i})\,.
\end{align*}

Adding up these five sums, we can conclude that
\[
\de_{t}\E\tr f(\Sigma_{t})\leq\O\bpar{\frac{1}{t}+\frac{D^{2}}{t^{1/2}}}\sum_{i=1}^{n}\E[f(\lda_{i})]\leq\O\bpar{\frac{1}{t}+\frac{D^{2}}{t^{1/2}}}\,\E\tr f(\Sigma_{t})\,.
\]
Solving this differential inequality leads to the desired bound.
\end{proof}

\paragraph{$n$-bound on the large eigenvalues.}

We prove an $O(n)$-bound on $\E[\tr(\Sigma_{t}^{2})\,\ind\{\lda_{i}\ \text{large}\}]$
for any $t\lesssim1$ through $\E F_{k,t}$ (Lemma~\ref{lem:induction-guan}).
To this end, we carefully design $t_{k},r_{k},D_{k}$.

Recall that we can take $t_{1}\asymp\log^{-2}n$ such that
\[
\P(\exists t\in[0,t_{1}]:\norm{\Sigma_{t}}\geq2)\leq\frac{1}{n}\,.
\]
Set $r_{1}=7/3$. For $k\geq2$, we set
\begin{align*}
t_{k} & =\log^{-16}t_{k-1}\,,\quad r_{k}=r_{1}+\sum_{i=2}^{k}\frac{1}{\sqrt{\Abs{\log t_{k}}}}\,,\quad D_{k}=\log^{4}t_{k}\,,\\
 & \quad f_{k}=f_{D_{k},r_{k}}\,,\quad F_{k,t}=\tr f_{k}(\Sigma_{t})\,,
\end{align*}
and $b_{k}\in[1/20,1/5]$ denotes the given constant for $f_{k}$
defined in Lemma~\ref{lem:typeII-guan}. 

Let $k_{0}=\max\{k:t_{k}\leq e^{-100}\}$. Then, one can show that 
\begin{enumerate}
\item $r_{k}\in[7/3,8/3]$.
\item $0<t_{1}<t_{2}\leq\cdots\leq t_{k_{0}}<e^{-100}$. 
\item For $2\leq k\leq k_{0}$, it clearly holds that 
\[
r_{k-1}=r_{k}-\frac{1}{\sqrt{\Abs{\log t_{k}}}}\leq r_{k}-\frac{1}{\log^{4}t_{k}}\,.
\]
\item For $x\ge r_{k-1}$, $f_{k}(x)\leq5f_{k-1}(x)$.
\end{enumerate}
We are now ready to show Lemma~\ref{lem:induction-guan}. By induction
on $k\in\mathbb{N}$, let us show that for $t\in[t_{k},t_{k+1}]$,
\[
\E F_{k,t}\leq e^{-\log^{2}t_{k}}\,n\,.
\]
When $k=1$,
\[
\E F_{1,t_{1}}\leq nf_{1}(2)+\frac{1}{n}\,nf_{1}(1/t_{1})\leq ne^{-(\log^{4}t_{1})/3}+b_{1}t_{1}^{-2}\,.
\]
Using the Type II bound (Lemma~\ref{lem:typeII-guan}), for any $t\in[t_{1},t_{2}]$,
\[
\E F_{1,t}\leq\bpar{\frac{t_{2}}{t_{1}}}^{1000}\,\E F_{1,t_{1}}\leq t_{1}^{-1000}\,\bpar{ne^{-(\log^{4}t_{1})/3}+b_{1}t_{1}^{-2}}\leq ne^{-\log^{2}t_{1}}\,.
\]

Now we prove the claim for $k\geq2$: since $r_{k-1}\leq r_{k}-\log^{-4}t_{k}=r_{k}-D_{k}^{-1}$
and $f_{k}\leq5f_{k-1}$ when $x\geq r_{k-1}$, 
\begin{align*}
\E F_{k,t_{k}} & =\E\sum_{i\leq\tau(r_{k-1})}f_{k}(\lda_{i,t_{k}})+\E\sum_{i>\tau(r_{k-1})}f_{k}(\lda_{i,t_{k}})\leq ne^{D_{k}\,(r_{k-1}-r_{k})}+5\E\sum_{i>\tau(r_{k-1})}f_{k-1}(\lda_{i,t_{k}})\\
 & \leq ne^{-\Abs{\log t_{k}}^{3.5}}+5\E F_{k-1,t_{k}}\leq ne^{-\Abs{\log t_{k}}^{3.5}}+5ne^{-\abs{\log t_{k-1}}^{2}}\,.
\end{align*}
where we used the induction hypothesis for $k-1$. By the Type II
bound, for $t\in[t_{k},t_{k+1}]$,
\[
\E F_{k,t}\leq t_{k}^{-1000}\,(ne^{-\Abs{\log t_{k}}^{3.5}}+5ne^{-\abs{\log t_{k-1}}^{2}})\leq ne^{-\log^{2}t_{k}}\,.
\]

\section{The Thin-shell Conjecture \label{sec:thinshell}}

Recall that the KLS conjecture implies the thin-shell conjecture,
which in turn implies the slicing conjecture. Using a combination
of the backdoor approach and the SL argument, we addressed the strongest
of these (KLS) and even resolved the weakest one (slicing). For the
intermediate conjecture (thin-shell), however, previous approaches
required more sophisticated developments in spectral and semigroup
theory. In fact, we reviewed Guan's $O(\log\log n)$-bound on the
thin-shell constant in the first version of our survey (which we moved
to the appendix in this version for interested readers).

The thin-shell conjecture was resolved in 2025 by Klartag and Lehec
\cite{klartag2025thin}. In this section, we review their approach
based on parallel coupling of exponentially-tilted measures and a
refinement of Guan's bound. This approach does not heavily rely on
spectral theory, and instead works with optimal transport and parallel
coupling (from non-linear filtering theory). The starting point is
Barthe and Klartag's $H^{-1}$-inequality \cite{barthe2019spectral}:
\begin{lem}
Let $\pi$ be a logconcave distribution over $\Rn$. For any smooth
function $f$ such that $f,\norm{\nabla f}\in L^{2}(\pi)$ and $\int\nabla f\,\D\pi=0$,
\[
\var_{\pi}f\leq\norm{\nabla f}_{H^{-1}(\pi)}^{2}:=\sum_{i=1}^{n}\norm{\de_{i}f}_{H^{-1}(\pi)}^{2}\,,
\]
where for $g\in L^{2}(\pi)$ with $\int g\,\D\pi=0$, its $H^{-1}$-norm
is defined as 
\begin{align*}
\norm g_{H^{-1}(\pi)} & =\sup\Bbrace{\int g\vphi\,\D\pi:\textup{locally Lipschitz }\vphi\in L^{2}(\pi)\ \textup{such that }\int\norm{\nabla\vphi}^{2}\leq1}\\
 & =\sup\Bbrace{\int g\vphi\,\D\pi:\vphi\in C_{c}^{\infty}(\Rn),\,\int\norm{\nabla\vphi}^{2}\,\D\pi\leq1}\,.
\end{align*}
\end{lem}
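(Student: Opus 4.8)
The plan is to prove this via the Helffer--Sjöstrand representation of the variance together with the positivity of $\nabla^2 V$. Write $\pi\propto e^{-V}$ with $V$ convex, and let $\mathcal L := \Delta - \langle\nabla V,\nabla\,\cdot\,\rangle$ be the generator of the overdamped Langevin diffusion, so that $\pi$ is its reversible invariant measure and $\binner{-\mathcal L g}{\varphi}_{L^2(\pi)} = \int\langle\nabla g,\nabla\varphi\rangle\,\D\pi$ for sufficiently regular $g,\varphi$; thus $-\mathcal L$ is self-adjoint and nonnegative on $L^2(\pi)$, its kernel consists of the constants, and it has a positive spectral gap $1/\cpi(\pi)$ since every logconcave measure satisfies a Poincaré inequality (e.g.\ $\cpi(\pi)\lesssim\norm{\cov\pi}\log n$). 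The key algebraic input is the commutation identity obtained by differentiating $\mathcal Lg$ in a coordinate direction: $\partial_i(\mathcal Lg) = \mathcal L(\partial_i g) - \langle\nabla\partial_i V,\nabla g\rangle$, i.e.\ in vector form
\[
\nabla(\mathcal Lg) = \mathcal M(\nabla g)\,,\qquad \mathcal M := \mathcal L\otimes I_n - \nabla^2 V\,,
\]
where $\mathcal M$ acts on vector fields ($\mathcal L$ applied componentwise). Because $\pi$ is logconcave, $\nabla^2 V\succeq 0$, so as quadratic forms on $L^2(\pi;\Rn)$,
\[
-\mathcal M \;=\; -\mathcal L\otimes I_n + \nabla^2 V \;\succeq\; -\mathcal L\otimes I_n \;\succeq\; 0\,;
\]
moreover $-\mathcal M$ has trivial kernel, since $-\mathcal M\mathbf g = 0$ forces both $\binner{-\mathcal L\otimes I_n\,\mathbf g}{\mathbf g}_{L^2(\pi)} = 0$ and $\binner{\nabla^2 V\,\mathbf g}{\mathbf g}_{L^2(\pi)} = 0$, hence $\mathbf g$ is a constant vector field $c$ with $c^{\T}\nabla^2 V\,c\equiv 0$, which is impossible for a genuine probability density ($V$ cannot be affine along any line).

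Next I would derive the Helffer--Sjöstrand formula. Since $\int\nabla f\,\D\pi = 0$, the function $u := (-\mathcal L)^{-1}(f-\E_\pi f)$ is well defined; applying $\nabla$ to $-\mathcal Lu = f-\E_\pi f$ and using the commutation identity gives $-\mathcal M(\nabla u) = \nabla f$, whence
\[
\var_\pi f = \binner{f-\E_\pi f}{-\mathcal Lu}_{L^2(\pi)} = \binner{\nabla f}{\nabla u}_{L^2(\pi)} = \binner{\nabla f}{(-\mathcal M)^{-1}\nabla f}_{L^2(\pi)}\,.
\]
Then I apply operator monotonicity of the inverse: from $-\mathcal M\succeq -\mathcal L\otimes I_n$ one gets $\binner{\nabla f}{(-\mathcal M)^{-1}\nabla f}_{L^2(\pi)}\le\binner{\nabla f}{(-\mathcal L\otimes I_n)^{-1}\nabla f}_{L^2(\pi)}$, the right-hand side being finite precisely because $\nabla f$ is orthogonal to the constant vector fields ($\int\nabla f\,\D\pi = 0$). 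Finally $(-\mathcal L\otimes I_n)^{-1}$ acts coordinatewise, and for any mean-zero $g\in L^2(\pi)$ the definition of the $H^{-1}$-norm gives $\binner{g}{(-\mathcal L)^{-1}g}_{L^2(\pi)} = \norm{g}_{H^{-1}(\pi)}^{2}$: writing $h := (-\mathcal L)^{-1}g$, one has $\int g\varphi\,\D\pi = \binner{\nabla h}{\nabla\varphi}_{L^2(\pi)}\le\norm{\nabla h}_{L^2(\pi)}$ whenever $\int\norm{\nabla\varphi}^{2}\,\D\pi\le 1$, with equality for $\varphi\propto h$, and $\norm{\nabla h}_{L^2(\pi)}^{2} = \binner{g}{h}_{L^2(\pi)}$. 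Summing over $i$ yields $\var_\pi f\le\sum_{i=1}^n\norm{\partial_i f}_{H^{-1}(\pi)}^{2}$.

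The hard part is rigor rather than strategy: one must justify the Helffer--Sjöstrand manipulations --- that $u$ and $\nabla u$ lie in the relevant function spaces, that $\nabla$ and $(-\mathcal L)^{-1}$ genuinely commute on the appropriate domain, and that every integration by parts has vanishing boundary terms. The standard remedy is to first prove the inequality for smooth, compactly supported $f$ and a smooth, strongly convex $V$ (e.g.\ replacing $\pi$ by $\pi\ast\gamma_\veps$, or adding $\tfrac{\veps}{2}\norm{\,\cdot\,}^{2}$ to $V$), where $-\mathcal M\succeq\veps I$ is uniformly elliptic and all of the above is classical, and then to remove the regularization by a limiting argument using $L^2(\pi)$-density of nice functions and lower semicontinuity of both sides. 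It is worth noting that this argument is the ``dual'' of the Brascamp--Lieb inequality \eqref{eq:Brascamp-Lieb}: bounding $(-\mathcal M)^{-1}$ by using $-\mathcal L\otimes I_n\succeq 0$ recovers Brascamp--Lieb, whereas bounding it by using $\nabla^2 V\succeq 0$ gives the $H^{-1}$-inequality above.
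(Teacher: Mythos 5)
The paper states this lemma without proof, citing Barthe and Klartag \cite{barthe2019spectral}; your Helffer--Sj\"ostrand argument is precisely the standard route taken there, exploiting the $L^2$-representation $\var_\pi f = \binner{\nabla f}{(-\mathcal M)^{-1}\nabla f}_{L^2(\pi)}$ with $\mathcal M = \mathcal L\otimes I_n - \nabla^2 V$ and dropping the $\nabla^2 V\succeq 0$ term rather than the $-\mathcal L\otimes I_n\succeq 0$ term (which would give Brascamp--Lieb instead). The reasoning is correct, and your closing remark about duality with Brascamp--Lieb is exactly the right way to see it. One small simplification worth noting: you can avoid the pseudoinverse bookkeeping entirely by pairing $-\mathcal M\nabla u=\nabla f$ with $\nabla u$, which gives $\binner{\nabla u}{(-\mathcal L\otimes I_n)\nabla u}\le\binner{\nabla f}{\nabla u}=\var_\pi f$, and then Cauchy--Schwarz in the $(-\mathcal L\otimes I_n)$-quadratic form yields $\var_\pi f\le\binner{\nabla f}{(-\mathcal L\otimes I_n)^{-1}\nabla f}$ directly, sidestepping the question of whether operator monotonicity of the inverse applies across kernels.
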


The last line follows from the fact \cite{barthe2019spectral} that
the space $C_{c}^{\infty}(\Rn)$ of smooth, compactly-supported functions
in $\Rn$ is a dense subspace of $H^{1}(\pi)$. Here, $H^{1}(\pi)$
is the collection of all functions $f\in L^{2}(\pi)$ with weak partial
derivatives in $L^{2}(\pi)$, equipped with the norm 
\[
\norm f_{H^{1}(\pi)}^{2}:=\int\abs f^{2}\,\D\pi+\int\norm{\nabla f}^{2}\,\D\pi\,.
\]

The $H^{-1}$-inequality looks quite similar to the Poincar\'e inequality.
A benefit of this inequality is obvious when $f(\cdot)=\norm{\cdot}^{2}$
and $\pi$ is isotropic:
\begin{equation}
\var_{\pi}(\norm X^{2})\leq4\,\norm x_{H^{-1}(\pi)}^{2}=4\sum_{i=1}^{n}\norm{x_{i}}_{H^{-1}(\pi)}^{2}\,,\label{eq:hinverse-for-thin}
\end{equation}
so the thin-shell parameter $\sigma_{\pi}^{2}=n^{-1}\var_{\pi}(\norm X^{2})$
is bounded by $n^{-1}\sum_{i=1}^{n}\norm{x_{i}}_{H^{-1}(\pi)}^{2}$
(i.e., the average of the $H^{-1}$-norm across the coordinate functions).

\subsection{High-level ideas}

It suffices to show the following.
\begin{thm}
Let $\pi$ be an isotropic, logconcave probability measure in $\Rn$.
Then,
\[
\sum_{i=1}^{n}\norm{x_{i}}_{H^{-1}(\pi)}^{2}=O(n)\,.
\]
\end{thm}

We take a top-down approach to proving this result, assuming the following
two results. We then provide proof sketches for these results and
give the full details in the next subsection.
\begin{itemize}
\item Let $\pi$ be compactly-supported, centered, logconcave probability
measure in $\Rn$. Then for any $t>0$,
\begin{equation}
\sum_{i=1}^{n}\norm{x_{i}}_{H^{-1}(\pi)}^{2}\leq\frac{1}{t^{2}}\,\E\Bbrack{\sum_{i=1}^{n}\exp\Bpar{2\int_{0}^{t}\lda_{i}(s)\,\D s}}\,,\label{eq:target-bound}
\end{equation}
where $\lda_{1}(t)\geq\cdots\geq\lda_{n}(t)>0$ are the eigenvalues
of $\Sigma_{t}$, the covariance matrix of $\pi_{t}$. In particular,
$\sum_{i=1}^{n}\norm{x_{i}}_{H^{-1}(\pi)}^{2}\leq\E[\sum_{i=1}^{n}\exp(2\int_{0}^{1}\lda_{i}(s)\,\D s)]$
by choosing $t=1$.
\item Let $\tau$ be a stopping time with respect to the natural filtration
of the Brownian motion $(W_{t})_{t\geq0}$. Then for any $t>0$, 
\[
\sum_{i=1}^{n}\P\bpar{\lda_{i}(t\wedge\tau)\geq3}\lesssim n\exp(-t^{-1/8})\,.
\]
\end{itemize}
Note that it suffices to show the main theorem when $\pi$ is \emph{compactly-supported},
isotropic, and logconcave. This follows from the standard fact that
any isotropic logconcave probability measure is the weak limit of
a sequence of compactly-supported isotropic logconcave probability
measures.

Earlier we focused on when the largest eigenvalue $\lda_{1}(t)$ becomes
large (e.g., $\lda_{1}(t)\geq3$) during the SL process, but we now
analyze when this happens for \emph{all} eigenvalues. To this end,
we define the following stopping time:
\[
\tau_{k}:=\inf\{t>0:\lda_{k}(t)\geq3\}\,.
\]
Using $\Sigma_{t}\preceq t^{-1}I_{n}$,
\[
\int_{0}^{1}\lda_{k}(t)\,\D t=\int_{0}^{\tau_{k}\wedge1}\lda_{k}(t)\,\D t+\int_{\tau_{k}\wedge1}^{1}\lda_{k}(t)\,\D t\leq3\,(\tau_{k}\wedge1)-\log(\tau_{k}\wedge1)\,.
\]
Hence,
\[
\exp\Bpar{2\int_{0}^{1}\lda_{k}(t)\,\D t}\leq\exp\bpar{6\,(\tau_{k}\wedge1)-2\log(\tau_{k}\wedge1)}\leq e^{6}\,(\tau_{k}^{-2}+1)\,.
\]
Thus, bounding the RHS of \eqref{eq:target-bound} reduces to bounding
\[
\sum_{k=1}^{n}\E[\tau_{k}^{-2}]=O(n)\,.
\]

For this, we need an in-depth understanding of $\{\lda_{k}(t)\}_{k\in[n]}$.
This is where the second result comes into play.
\begin{lem}
For $k\in[n]$ and $t>0$,
\[
\P(\tau_{k}\leq t)\lesssim\frac{n}{k}\,\exp(-t^{-1/8})\,,\qquad\&\qquad\E[\tau_{k}^{-2}]\lesssim\bpar{1+\log\frac{n}{k}}^{16}\,.
\]
\end{lem}

\begin{proof}
Note that if $\tau_{k}\leq t$, then at time $\tau_{k}=t\wedge\tau_{k}$,
the $k$ largest eigenvalues are at least $3$ by the definition of
$\tau_{k}$. Namely, for $i\in[k]$,
\[
\P(\tau_{k}\leq t)\leq\P\bpar{\lda_{i}(t\wedge\tau_{k})\geq3}\,.
\]
Summing this inequality over $i\in[k]$ and using the second main
result, 
\[
k\,\P(\tau_{k}\leq t)\leq\sum_{i=1}^{k}\P\bpar{\lda_{i}(t\wedge\tau_{k})\geq3}\leq\sum_{i=1}^{n}\P\bpar{\lda_{i}(t\wedge\tau_{k})\geq3}\lesssim n\exp(-t^{-1/8})\,,
\]
which proves the first claim.

For the second claim, fix $x_{0}>0$ to be determined later. Let $\alpha=1/8$.
Then,
\begin{align*}
\E[\tau_{k}^{-2}] & =\int_{0}^{\infty}2x\,\P(\tau_{k}^{-1}\geq x)\,\D x=x_{0}^{2}+\int_{x_{0}}^{\infty}2x\,\P(\tau_{k}^{-1}\geq x)\,\D x\\
 & \lesssim x_{0}^{2}+\frac{n}{k}\int_{x_{0}}^{\infty}x\exp(-x^{-\alpha})\,\D x\\
 & \leq x_{0}^{2}+\frac{n}{k}\,\exp\bpar{-\half\,x_{0}^{-\alpha}}\int_{0}^{\infty}x\exp\bpar{-\half\,x^{-\alpha}}\,\D x\\
 & \leq x_{0}^{2}+O(1)\,,
\end{align*}
where the last line follows from the choice of $x_{0}=2^{1/\alpha}\,(\log\frac{n}{k})^{1/\alpha}$
and $\int_{0}^{\infty}x\exp(-\half\,x^{-\alpha})\,\D x=O(1)$.
\end{proof}
We are only left to show $\sum_{k}\E[\tau_{k}^{-2}]=O(n)$. This easily
follows from
\[
\sum_{k=1}^{n}\E[\tau_{k}^{-2}]\lesssim\sum_{k}\bpar{1+\log\frac{n}{k}}^{16}\lesssim\int_{0}^{n}\bpar{1+\log\frac{n}{x}}^{16}\,\D x=n\int_{0}^{1}\bpar{1+\log\frac{1}{y}}^{16}\,\D y=O(n)\,,
\]
which completes the proof of the thin-shell conjecture.

\paragraph{Recovering Guan's uniform trace bound.}

Using these results, we can recover Guan's uniform trace bound quite
easily. Note that

\[
\lda_{k}^{2}(t)=\lda_{k}^{2}(t)\,\ind[t\leq\tau_{k}]+\lda_{k}^{2}(t)\,\ind[\tau_{k}\leq t]\leq9+\tau_{k}^{-2}\,.
\]
Hence,
\[
\E\tr(\Sigma_{t}^{2})=\sum_{k}\E[\lda_{k}^{2}(t)]\leq O(n)+\sum_{k}\E[\tau_{k}^{-2}]=O(n)\,.
\]

\subsection{Bounding $H^{-1}$-norm by Wasserstein distance with parallel coupling}

Recall the first result. For any $t>0$, 
\[
\sum_{i=1}^{n}\norm{x_{i}}_{H^{-1}(\pi)}^{2}\leq\frac{1}{t^{2}}\,\E\Bbrack{\sum_{i=1}^{n}\exp\Bpar{2\int_{0}^{t}\lda_{i}(s)\,\D s}}\,.
\]
KL starts with a version of the inequality in \cite{Klartag09Berry},
which relates the infinitesimal Wasserstein distance to the $H^{-1}$-norm.
Recall that for any two Borel probability measures $\mu,\nu$ in $\Rn$
and any $p\in[1,\infty)$, the $p$-Wasserstein distance $\mc W_{p}(\mu,\nu)$
between $\mu$ and $\nu$ is defined as 
\[
\mc W_{p}^{p}(\mu,\nu)=\inf_{\gamma\in\Gamma(\mu,\nu)}\int\norm{x-y}^{p}\,\gamma(\D x,\D y)\,,
\]
where $\Gamma(\mu,\nu)$ is the family of \emph{couplings} of $(\mu,\nu)$
(i.e., the probability measure $\gamma\in\Gamma(\mu,\nu)$ in $\Rn\times\Rn$
has marginals $\mu$ and $\nu$, respectively).
\begin{lem}
\label{lem:Hinverse2W2} Let $\pi$ be a centered, compactly-supported
probability measure on $\Rn$. Then, for any $v\in\Rn$,
\[
\norm{\inner{x,v}}_{H^{-1}(\pi)}\leq\limsup_{\veps\to0^{+}}\frac{\mc W_{2}(\pi,\pi_{\veps v})}{\veps}\,,
\]
where $\D\pi_{\veps v}(x)\propto e^{\inner{\veps v,x}}\,\D\pi(x)$
is an exponentially-tilted measure.
\end{lem}

\begin{proof}
WLOG, we may assume that $\mc W_{2}(\pi,\pi_{\veps v})=o(\veps^{1/2})$;
otherwise the RHS becomes infinity. Since $\pi$ is centered, for
the LLT $\Lambda_{t}$ of $\pi$,
\[
\Lambda_{0}(\veps v)=\Lambda_{0}(0)+\inner{\nabla\Lambda_{0}(0),\veps v}+o(\veps)=o(\veps)\,.
\]
Given a smooth compactly-supported function $\vphi:\Rn\to\R$ with
$\norm{\nabla\vphi}_{L^{2}(\pi)}\leq1$, 
\begin{align*}
\int\inner{x,v}\vphi(x)\,\pi(\D x) & =\int\frac{e^{\inner{x,\veps v}}-1}{\veps}\,\vphi(x)\,\pi(\D x)+o(1)=\int\frac{e^{\inner{x,\veps v}-\Lambda_{0}(\veps v)}-1}{\veps}\,\vphi(x)\,\pi(\D x)+o(1)\\
 & =\frac{1}{\veps}\,\Babs{\int\vphi\,\D\pi_{\veps v}-\int\vphi\,\D\pi}+o(1)\,.
\end{align*}
Since $\vphi\in C_{c}^{\infty}(\Rn)$, we can take $R>0$ such that
\[
\abs{\vphi(y)-\vphi(x)}\leq\norm{\nabla\vphi(x)}\norm{y-x}+R\,\norm{y-x}^{2}\,.
\]
Take an arbitrary coupling $(X,Y)\sim(\pi,\pi_{\veps v})$. Then,
\begin{align*}
\Babs{\int\vphi\,\D\pi_{\veps v}-\int\vphi\,\D\pi} & \leq\E\abs{\vphi(X)-\vphi(Y)}\leq\E[\norm{\nabla\vphi(X)}\norm{Y-X}]+R\,\E[\norm{Y-X}^{2}]\\
 & \leq(\E[\norm{X-Y}^{2})^{1/2}+R\,\E[\norm{Y-X}^{2}]\,.
\end{align*}
Taking infimum over all couplings of $(\pi,\pi_{\veps v})$, we deduce
that 
\[
\Babs{\int\vphi\,\D\pi_{\veps v}-\int\vphi\,\D\pi}\leq\mc W_{2}(\pi,\pi_{\veps v})+R\mc W_{2}^{2}(\pi,\pi_{\veps v})\,.
\]
Therefore, 
\[
\int\inner{x,v}\vphi(x)\,\pi(\D x)=\veps^{-1}\mc W_{2}(\pi,\pi_{\veps v})+\veps^{-1}R\mc W_{2}^{2}(\pi,\pi_{\veps v})+o(1)\,.
\]
The claim then follows from $\mc W_{2}(\pi,\pi_{\veps v})=o(\veps^{1/2})$.
\end{proof}
Applying this result with $v=e_{1},\dots,e_{n}$, it suffices to bound
$\mc W_{2}(\pi,\pi_{\veps e_{i}})$. To bound the Wasserstein distance
in a \emph{meaningful} way, we need a nontrivial coupling between
$\pi$ and $\pi_{\veps e_{i}}$ (rather than a trivial coupling such
as $\pi\otimes\pi_{\veps e_{i}}$).

For any given $\pi_{x}$ and $\pi_{y}$, how can we find a sufficiently
good coupling? The idea is to construct a \emph{parallel} coupling
of their stochastically localized measures, and then pull this coupling
to that of $\pi_{x}$ and $\pi_{y}$. This coupling yields the RHS
bound in \eqref{eq:target-bound} for the $\mc W_{2}$-distance. We
will go over the idea behind this shortly.

\subsubsection{Parallel coupling of exponentially-tilted measures}

We recall the \emph{logarithmic Laplace transform} (LLT), exponential
tilts, and notation for SL.
\begin{defn}
[LLT and exponential tilts] Let $\pi$ be a compactly-supported probability
measure in $\Rn$. For $t\geq0$ and $\theta\in\Rn$, the LLT of $\pi$
is defined as
\[
\Lambda_{t}(\theta)=\log\int\exp\bpar{\inner{\theta,x}-\frac{t}{2}\,\norm x^{2}}\,\pi(\D x)\,.
\]
This defines the probability measure $\mu_{t,\theta}$ as follows:
\[
\D\pi_{t,\theta}(x)=\exp\bpar{\inner{\theta,x}-\frac{t}{2}\,\norm x^{2}-\Lambda_{t}(\theta)}\,\D\pi(x)\,.
\]
Then, $\pi_{\theta}:=\pi_{0,\theta}$ is called a \emph{log-affine
transformation} or \emph{exponential tilt} of $\pi$.
\end{defn}

Differentiating $\Lambda_{t}(\theta)$ w.r.t. $\theta$ leads to the
barycenter and covariance matrix of $\pi_{t,\theta}$:
\begin{align*}
b(t,\theta) & =\nabla\Lambda_{t}(\theta)=\int x\,\pi_{t,\theta}(\D x)\in\Rn\,,\\
\Sigma(t,\theta) & =\hess\Lambda_{t}(\theta)=\int x\otimes x\,\pi_{t,\theta}(\D x)-b(t,\theta)\otimes b(t,\theta)\in\Rnn\,.
\end{align*}

Now consider a stochastic process $(\theta_{t}^{x})_{t\geq0}$ given
by 
\[
\D\theta_{t}^{x}=b(t,\theta_{t}^{x})\,\D t+\D W_{t}\,,\quad\theta_{0}^{x}=x\,,\qquad t\geq0\,.
\]
For $\theta_{t}:=\theta_{t}^{0}$, observe that the SL is simply the
measure-valued process $(\pi_{t})_{t\geq0}$ defined as 
\[
\pi_{t}:=\pi_{t,\theta_{t}}\propto\exp\bpar{\inner{\theta_{t},x}-\frac{t}{2}\,\norm x^{2}}\,\pi(x)\,,
\]
and the barycenter and covariance matrix of $\pi_{t}$ are denoted
as 
\[
b_{t}=b(t,\theta_{t})\,,\qquad\Sigma_{t}=\Sigma(t,\theta_{t})\,.
\]
In summary, randomness in stochastically localized $\pi_{t,\theta_{t}^{x}}$
is captured by $\theta_{t}^{x}$ and driven by the Brownian motion
$W_{t}$. 

\paragraph{Construction of a coupling between $\pi_{x}$ and $\pi_{y}$.}

Our goal is to construct a sufficiently good coupling between $\pi_{x}$
and $\pi_{y}$, so that we could bound $\mc W_{2}(\pi,\pi_{\veps e_{i}})$
by taking $x=0$ and $y=\veps e_{i}$. We again take a backdoor approach
to constructing such a coupling by instead coupling the SL measures
of $\pi_{x}$ and $\pi_{y}$.

Perhaps, the most straightforward way to couple $\pi_{t,\theta_{t}^{x}}$
and $\pi_{t,\theta_{t}^{y}}$ (the SL measures of $\pi_{x}$ and $\pi_{y}$)
is to drive both processes with the same Brownian motion. That is,
for a fixed realization of the Brownian motion, we obtain an explicit
description of how $\theta_{t}^{x}$ and $\theta_{t}^{y}$ evolve
deterministically over time (Lemma~\ref{lem:parallel}). We then
relate how $\theta_{t}^{x}/t$ recovers the exponentially-tilted measure
$\pi_{x}$ in the limit (Lemma~\ref{lem:localization}). This allows
us to propagate the parallel coupling of $\pi_{t,\theta_{t}^{x}}$
and $\pi_{t,\theta_{t}^{y}}$ to the desired coupling of $\pi_{x}$
and $\pi_{y}$.

We begin with the first component by providing an explicit description
of $(\theta_{t}^{x})$ for a realized Brownian motion $w=(w_{t})_{t\geq0}\in C(\R_{\geq0},\Rn)$
(i.e., the collection of continuous paths in $\Rn$).
\begin{lem}
\label{lem:parallel} Fix $w=(w_{t})_{t\geq0}\in C([0,\infty),\Rn)$.
Then for any starting point $x\in\Rn$, there exists a unique solution
$(\theta_{t})_{t\geq0}$ to the integral equation
\begin{equation}
\theta_{t}=x+w_{t}+\int_{0}^{t}b(s,\theta_{s})\,\D s\,.\label{eq:integral-eq}
\end{equation}
It has the following regularity:
\begin{itemize}
\item The solution $\theta_{t}(x)$ is continuous in $(t,x)\in\R_{\geq0}\times\Rn$
and smooth in $x\in\Rn$ for any fixed $t\geq0$.
\item The derivative $M_{t}(x):=\theta_{t}'(x)\in\Rnn$ is continuous in
$(t,x)\in\R_{\geq0}\times\Rn$ and $C^{1}$-smooth in $t>0$. Also,
$M_{t}(x)$ is the unique solution of the linear differential equation
\[
M_{0}(x)=I_{n}\,,\qquad\de_{t}M_{t}(x)=\Sigma\bpar{t,\theta_{t}(x)}\,M_{t}(x)\quad\text{for }t\geq0\,.
\]
\end{itemize}
\end{lem}

To indicate the dependence of $\theta_{t}(x)$ on $w_{t}$, they denote
by $G=(G_{t,w}(x))_{t\geq0}$ the unique solution $\theta_{t}(x)$
to the integral equation. Equipping $C(\R_{\geq0},\Rn)$ with the
topology of uniform convergence on compact intervals and letting $(\mc F_{t})_{t\geq0}$
be the natural filtration of the coordinate process on $C(\R_{\geq0},\Rn)$,
KL show the following result:
\begin{lem}
\label{lem:G-ftn} The map $(x,w)\mapsto(G_{t,w}(x))_{t\geq0}\in C(\R_{\geq0},\Rn)$
is continuous. Moreover, for fixed $t\geq0$ and $x\in\Rn$, the map
$w\mapsto G_{t,w}(x)$ is $\mc F_{t}$-measurable. In particular,
$G_{t,w}:\Rn\to\Rn$ is $e^{C_{\pi}t}$-Lipschitz, where $C_{\pi}$
is a constant depending only on $\pi$.
\end{lem}

KL then inject randomness into the construction by treating $w_{t}$
as a realization of the standard Brownian motion $W=(W_{t})_{t\geq0}$.
Consider the stochastic process $(\bar{\theta}_{t}^{x})_{t\geq0}$
given by 
\[
\bar{\theta}_{t}^{x}=G_{t,W}(x)\qquad\text{for }t\geq0\,.
\]
By construction, one can interpret the integral equation \eqref{eq:integral-eq}
as a SDE:
\[
\D\bar{\theta}_{t}^{x}=b(t,\bar{\theta}_{t}^{x})\,\D t+\D W_{t}\,,\quad\bar{\theta}_{0}^{x}=x\,,\qquad t\geq0\,.
\]
Thus, we now have a pathwise description of $\theta_{t}^{x}=\bar{\theta}_{t}^{x}$!
Moreover, $\theta_{t}^{x}/t$ recovers the starting measure $\pi_{x}$
in the limit:
\begin{lem}
\label{lem:localization} Let $x\in\Rn$ and $X\sim\pi_{x}$ be independent
of the process $W=(W_{t})_{t\geq0}$. Then, the process $(G_{t,W}(x))_{t\geq0}$
has the same law as the process
\[
(x+W_{t}+tX)_{t\geq0}\,.
\]
\end{lem}

Notably, as $W_{t}/t\to0$ almost surely, we can conclude that 
\[
\lim_{t\to\infty}\frac{G_{t,W}(x)}{t}\sim\pi_{x}\,.
\]
The construction thus far implicitly defines a coupling between $\pi_{x}$
and $\pi_{y}$ for any $x,y\in\Rn$. Note that for a realized Brownian
motion $w_{t}$, the infinitesimal updates to $G_{t,W}(x)$ and $G_{t,W}(y)$
would look parallel since they are driven by the same Brownian motion,
and this is why KL calls this parallel coupling.

\subsubsection{Bounding $\protect\mc W_{2}$-distance}

We bound $\mc W_{2}(\pi,\pi_{\veps v})$ using the parallel coupling.
Let $\pi$ be a logconcave distribution in $\Rn$. We can first check
that for any $x,y\in\Rn$, 
\[
\frac{\norm{\theta_{t}^{x}-\theta_{t}^{y}}}{t}\text{ is non-increasing in }t\in\R_{>0}\,.
\]
To see this, recall from the integral equation \eqref{eq:integral-eq}
that $\theta_{t}^{x}=x+w_{t}+\int_{0}^{t}b(s,\theta_{s}^{x})\,\D s$,
so 
\[
\theta_{t}^{x}-\theta_{t}^{y}=x-y+\int_{0}^{t}\bpar{b(s,\theta_{s}^{x})-b(s,\theta_{s}^{y})}\,\D s\,.
\]
Thus,
\[
\frac{\D}{\D t}\,\norm{\theta_{t}^{x}-\theta_{t}^{y}}^{2}=2\inner{\theta_{t}^{x}-\theta_{t}^{y},b(t,\theta_{t}^{x})-b(t,\theta_{t}^{y})}\le\frac{2}{t}\,\norm{\theta_{t}^{x}-\theta_{t}^{y}}^{2}\,,
\]
where the last line follows from $b(t,\theta_{t}^{x})-b(t,\theta_{t}^{y})=\Sigma(t,\theta^{*})\,(\theta_{t}^{x}-\theta_{t}^{y})$
for some $\theta^{*}\in\Rn$, and from $\Sigma(t,\cdot)\preceq t^{-1}I_{n}$
due to the strong log-concavity of $\pi_{t}$. Hence,
\[
\frac{\D}{\D t}\,\frac{\norm{\theta_{t}^{x}-\theta_{t}^{y}}^{2}}{t^{2}}\leq t^{-2}\cdot\frac{2}{t}\,\norm{\theta_{t}^{x}-\theta_{t}^{y}}^{2}-2\,\frac{\norm{\theta_{t}^{x}-\theta_{t}^{y}}^{2}}{t^{3}}\leq0\,,
\]
which shows that $\norm{\theta_{t}^{x}-\theta_{t}^{y}}/t$ is indeed
non-increasing in $t$.

We are ready to bound the $\mc W_{2}$-distance as follows:
\begin{equation}
\mc W_{2}(\pi_{x},\pi_{y})\leq\frac{1}{t}\,(\E[\norm{\theta_{t}^{x}-\theta_{t}^{y}}^{2}])^{1/2}\,.\label{eq:w2-bound}
\end{equation}
To see this, for any $x,y\in\Rn$, take the parallel coupling between
$\lim_{t\to\infty}\frac{\theta_{t}^{x}}{t}\sim\pi^{x}$ and $\lim_{t\to\infty}\frac{\theta_{t}^{y}}{t}\sim\pi^{y}$.
Then, 
\[
\mc W_{2}^{2}(\pi_{x},\pi_{y})\leq\E\bbrack{\bnorm{\lim_{t\to\infty}\frac{\theta_{t}^{x}}{t}-\lim_{t\to\infty}\frac{\theta_{t}^{y}}{t}}^{2}}=\E\bbrack{\lim_{t\to\infty}\bnorm{\frac{\theta_{t}^{x}-\theta_{t}^{y}}{t}}^{2}}\,.
\]
Since $\norm{\theta_{t}^{x}-\theta_{t}^{y}}/t$ is non-increasing
in $t$, the claim follows.

Defining $M_{t}:=G_{t,W}'(0)\in\Rnn$, we next show that
\[
\limsup_{\veps\to0^{+}}\frac{\mc W_{2}(\pi,\pi_{\veps v})}{\veps}\leq\frac{(\E[\norm{M_{t}v}^{2}])^{1/2}}{t}\qquad\text{for any }v\in\Rn\,.
\]
To see this, recall from Lemma~\ref{lem:G-ftn} that $G_{t,w}(\cdot)$
is $e^{C_{\pi}t}$-Lipschitz. Hence,
\[
\frac{\norm{\theta_{t}^{0}-\theta_{t}^{\veps v}}}{\veps}\leq e^{C_{\pi}t}\,\norm v\,.
\]
Therefore, by the dominated convergence theorem,
\[
\lim_{\veps\to0^{+}}\E\bbrack{\frac{\norm{\theta_{t}^{0}-\theta_{t}^{\veps v}}^{2}}{\veps^{2}}}=\E\bbrack{\lim_{\veps\to0^{+}}\frac{\norm{\theta_{t}^{0}-\theta_{t}^{\veps v}}^{2}}{\veps^{2}}}=\E\bbrack{\lim_{\veps\to0^{+}}\frac{\norm{G_{t,w}(0)-G_{t,w}(\veps v)}^{2}}{\veps^{2}}}=\E[\norm{M_{t}v}^{2}]\,.
\]
Using the $\mc W_{2}$-bound in \eqref{eq:w2-bound} with $x=0$ and
$y=\veps v$, 
\[
\limsup_{\veps\to0^{+}}\frac{\mc W_{2}(\pi,\pi_{\veps v})}{\veps}\leq\frac{1}{t}\,\bpar{\limsup_{\veps\to0^{+}}\frac{\E[\norm{\theta_{t}^{0}-\theta_{t}^{\veps v}}^{2}]}{\veps^{2}}}^{1/2}=\frac{(\E[\norm{M_{t}v}^{2}])^{1/2}}{t}\,.
\]

Using this result with $v=e_{1},\dots,e_{n}$ and Lemma~\ref{lem:Hinverse2W2},
if $\pi$ is a centered, compactly-supported probability measure on
$\Rn$, we can conclude that
\begin{equation}
\sum_{i=1}^{n}\norm{x_{i}}_{H^{-1}(\pi)}^{2}\leq t^{-2}\sum_{i=1}^{n}\E[\norm{M_{t}e_{i}}^{2}]=t^{-2}\E[\norm{M_{t}}_{\hs}^{2}]\,.\label{eq:final-ineq}
\end{equation}

\subsubsection{Relating $\protect\norm{M_{t}}_{\protect\hs}^{2}$ to $\{\protect\lda_{i}(t)\}_{i\in[n]}$}

Let us bound $\E[\norm{M_{t}}_{\hs}^{2}]$. Recall that $M_{t}=G_{t,w}'(x)$
satisfies that for $\Sigma_{t}=\Sigma(t,G_{t,w}(x))$,
\[
M_{0}=I_{n}\,,\qquad\de_{t}M_{t}=\Sigma_{t}M_{t}\quad\text{for }t\geq0\,.
\]
When $n=1$, we have a closed-form formula as $M_{t}=\exp(\int_{0}^{t}\Sigma_{s}\,\D s)$,
so 
\[
\E[\norm{M_{t}}_{\hs}^{2}]=\exp\Bpar{2\int_{0}^{t}\lda_{1}(s)\,\D s}\,.
\]
However, this does not hold in higher dimension in general, so Klartag
and Lehec instead prove the following statement.
\begin{lem}
\label{lem:trace-bound} Let $\lda_{1}(t)\geq\lda_{2}(t)\geq\dots\geq\lda_{n}(t)>0$.
Then for any $t>0$,
\[
\E[\norm{M_{t}}_{\hs}^{2}]\leq\sum_{i=1}^{n}\exp\Bpar{2\int_{0}^{t}\lda_{i}(s)\,\D s}\,.
\]
\end{lem}

To this end, we need one algebraic lemma.
\begin{lem}
Let $\mu_{1}(t),\dots,\mu_{n}(t)$ and $\lambda_{1}(t)\ge\cdots\ge\lambda_{n}(t)$
be nonnegative, continuous functions on $[0,\infty)$. Assume that
for all $t\ge0$ and $k=1,\dots,n$, 
\begin{equation}
\sum_{i=1}^{k}\mu_{i}(t)\le\sum_{i=1}^{k}\Bpar{1+2\int_{0}^{t}\mu_{i}(s)\lambda_{i}(s)\,\D s}.\label{eq:assump45}
\end{equation}
Then for all $t\ge0$ and $k=1,\dots,n$, 
\begin{equation}
\sum_{i=1}^{k}\mu_{i}(t)\le\sum_{i=1}^{k}\exp\Bpar{2\int_{0}^{t}\lambda_{i}(s)\,\D s}\,.\label{eq:goal46}
\end{equation}
\end{lem}

\begin{proof}
Define, for $i=1,\dots,n$,
\begin{equation}
\nu_{i}(t):=1+2\int_{0}^{t}\mu_{i}(s)\lambda_{i}(s)\,\D s,\qquad E_{i}(t):=\exp\Bpar{2\int_{0}^{t}\lambda_{i}(s)\,\D s}\,.\label{eq:nu_def}
\end{equation}
By \eqref{eq:assump45}, for all $k$,
\begin{equation}
\sum_{i=1}^{k}\mu_{i}(t)\le\sum_{i=1}^{k}\nu_{i}(t)\,.\label{eq:mu_le_nu_partial}
\end{equation}

We prove \eqref{eq:goal46} by induction on $k$. When $k=1$, by
\eqref{eq:nu_def} and \eqref{eq:mu_le_nu_partial} with $k=1$, 
\[
\nu_{1}'(t)=2\mu_{1}(t)\lambda_{1}(t)\le2\nu_{1}(t)\lambda_{1}(t)\,.
\]
Hence $\de_{t}(\nu_{1}(t)e^{-2\int_{0}^{t}\lambda_{1}})\le0$, so
$\nu_{1}(t)\le E_{1}(t)$. Using $\mu_{1}(t)\le\nu_{1}(t)$ gives
$\mu_{1}(t)\le E_{1}(t)$.

We now fix $k\ge2$ and assume \eqref{eq:goal46} holds for all $1,\dots,k-1$.
We first recall a summation-by-parts claim: \emph{Let $w_{1}\ge\cdots\ge w_{k-1}\ge0$
and $a_{i},b_{i}\ge0$ satisfy $\sum_{i=1}^{m}a_{i}\le\sum_{i=1}^{m}b_{i}$
for all $m\le k-1$. Then $\sum_{i=1}^{k-1}a_{i}w_{i}\le\sum_{i=1}^{k-1}b_{i}w_{i}$}.
To use this, set $w_{i}(t):=\lambda_{i}(t)-\lambda_{k}(t)$ for $i\in[k-1]$.
Then $w_{1}\ge\cdots\ge w_{k-1}\ge0$. Applying the above claim with
$a_{i}=\mu_{i}(t)$ and $b_{i}=E_{i}(t)$ (using the induction hypothesis
for all partial sums $m\le k-1$) yields 
\begin{equation}
\sum_{i=1}^{k-1}\mu_{i}(t)\,\bpar{\lambda_{i}(t)-\lambda_{k}(t)}\le\sum_{i=1}^{k-1}E_{i}(t)\,\bpar{\lambda_{i}(t)-\lambda_{k}(t)}\,.\label{eq:weighted_bound}
\end{equation}

Let $F_{k}(t):=\sum_{i=1}^{k}\nu_{i}(t)$. Then, using $\sum_{i=1}^{k}\mu_{i}(t)\le F_{k}(t)$
from \eqref{eq:mu_le_nu_partial},
\begin{align*}
F_{k}'(t) & =2\sum_{i=1}^{k}\mu_{i}(t)\lambda_{i}(t)=2\sum_{i=1}^{k-1}\mu_{i}(t)\,\bpar{\lambda_{i}(t)-\lambda_{k}(t)}+2\lambda_{k}(t)\sum_{i=1}^{k}\mu_{i}(t)\\
 & \leq2\sum_{i=1}^{k-1}E_{i}(t)\,\bpar{\lambda_{i}(t)-\lambda_{k}(t)}+2\lambda_{k}(t)F_{k}(t)\,.
\end{align*}

Define $I_{k}(t)=\exp(-2\int_{0}^{t}\lambda_{k}(s)\,\D s)$. Multiplying
$I_{k}(t)$ to both sides of the above gives
\[
\de_{t}\bpar{I_{k}(t)F_{k}(t)}\le2\sum_{i=1}^{k-1}I_{k}(t)E_{i}(t)\,\bpar{\lambda_{i}(t)-\lambda_{k}(t)}=\sum_{i=1}^{k-1}\de_{t}\exp\Bpar{2\int_{0}^{t}\bpar{\lambda_{i}(s)-\lambda_{k}(s)}\,\D s}\,,
\]
where the last equality follows from $I_{k}(t)E_{i}(t)=\exp(2\int_{0}^{t}(\lambda_{i}(s)-\lambda_{k}(s))\,\D s)$.
Integrating from $0$ to $t$ and using $\nu_{i}(0)=1$ (hence $F_{k}(0)=k$)
yields 
\[
I_{k}(t)F_{k}(t)\le k+\sum_{i=1}^{k-1}(e^{2\int_{0}^{t}(\lambda_{i}-\lambda_{k})}-1)=1+\sum_{i=1}^{k-1}e^{2\int_{0}^{t}(\lambda_{i}-\lambda_{k})}\,.
\]
Multiplying by $I_{k}(t)^{-1}$, we conclude
\[
F_{k}(t)\le e^{2\int_{0}^{t}\lambda_{k}}+\sum_{i=1}^{k-1}e^{2\int_{0}^{t}\lambda_{i}}=\sum_{i=1}^{k}E_{i}(t)\,.
\]
Finally, \eqref{eq:mu_le_nu_partial} gives $\sum_{i=1}^{k}\mu_{i}(t)\le F_{k}(t)\le\sum_{i=1}^{k}E_{i}(t)$,
which is \eqref{eq:goal46} for this $k$.
\end{proof}
We can now prove Lemma~\ref{lem:trace-bound}.
\begin{proof}
[Proof of Lemma~\ref{lem:trace-bound}] Let $S_{t}:=M_{t}^{\T}M_{t}\succeq0$
and its eigenvalues be $\mu_{1}(t)\ge\cdots\ge\mu_{n}(t)\ge0$. Since
$\de_{t}M_{t}=\Sigma_{t}M_{t}$, we have $S'_{t}=2M_{t}^{\T}\Sigma_{t}M_{t}$.
For $k\in[n]$, define the Ky Fan $k$-sum, 
\[
f_{k}(t):=\sum_{i=1}^{k}\mu_{i}(t)\,.
\]
By the variational characterization of Ky Fan $k$-sums,
\[
f_{k}(t)=\max\{\tr(PS_{t}):P^{2}=P=P^{\T},\,\rank(P)=k\}\,.
\]
Recall Danskin's theorem that $f(x)=\max_{y}g(x,y)$ with some regularity
implies that $f'(x)=\max_{y\in\arg\max_{y}g(x,y)}\de_{x}g(x,y)$.
Using this and taking a maximizing projector $P_{t}$ (which exists
a.e. $t$),
\[
f_{k}'(t)=\de_{t}\tr(P_{t}S_{t})=\tr(P_{t}S'_{t})=2\tr(P_{t}M_{t}^{\T}A_{t}M_{t})=2\tr(\Sigma_{t}M_{t}P_{t}M_{t}^{\T})\,.
\]
Let $B_{t}:=M_{t}P_{t}M_{t}^{\T}\succeq0$ (so $\rank(B_{t})\le k$),
and since $P_{t}$ projects onto the top-$k$ eigenspace of $S_{t}$,
the nonzero eigenvalues of $B_{t}$ are precisely $\mu_{1}(t),\dots,\mu_{k}(t)$.
By von Neumann's trace inequality for PSD matrices, 
\[
\tr(\Sigma_{t}B_{t})\le\sum_{i=1}^{k}\lambda_{i}(t)\mu_{i}(t)\,.
\]
Therefore, for a.e. $t$,
\[
f_{k}'(t)\le2\sum_{i=1}^{k}\lambda_{i}(t)\mu_{i}(t)\,.
\]
Integrating and using $M_{0}=I$ (so $S_{0}=I$ and $f_{k}(0)=k$),
\[
f_{k}(t)\le k+2\int_{0}^{t}\sum_{i=1}^{k}\lambda_{i}(s)\mu_{i}(s)\,\D s\,.
\]
Using the algebraic lemma above,
\[
f_{k}(t)\le\sum_{i=1}^{k}\exp\Bpar{2\int_{0}^{t}\lambda_{i}(s)\,\D s}\,.
\]
Taking $k=n$ concludes the proof.
\end{proof}
Combining this lemma with \eqref{eq:final-ineq}, we obtain the first
desired result:
\[
\sum_{i=1}^{n}\norm{x_{i}}_{H^{-1}(\pi)}^{2}\leq\frac{1}{t^{2}}\,\E\Bbrack{\sum_{i=1}^{n}\exp\Bpar{2\int_{0}^{t}\lda_{i}(s)\,\D s}}\,.
\]

\subsection{A stopping-time version of Guan's result}

In this section, we sketch a proof of the second main technical result:
for any stopping time $\tau$ and $t>0$, 
\[
\sum_{i=1}^{n}\P\bpar{\lda_{i}(t\wedge\tau)\geq3}\lesssim n\exp(-t^{-1/8})\,.
\]
Note that this result recovers Guan's bound when $\tau=\infty$, so
this is essentially a stopping-time version of Guan's result.

KL first prove a stopping-time version of the It\^o derivative of
eigenvalues in \cite{Klartag21yuansi}: for any stopping time $\tau$
and any $C^{2}$-smooth function $f:[0,\infty)\to\R$, 
\[
\de_{t}\E\tr f(\Sigma_{t\wedge\tau})=-\sum_{i=1}^{n}\E\bbrack{\lda_{i}^{2}f'(\lda_{i})\cdot\ind[t<\tau]}+\half\sum_{i,j=1}^{n}\E\bbrack{\norm{H_{ij}}^{2}\,\frac{f'(\lda_{i})-f'(\lda_{j})}{\lda_{i}-\lda_{j}}\cdot\ind[t<\tau]}\,.
\]
As seen earlier, Guan showed that for some increasing function $f:[0,\infty)\to[0,\infty)$
such that $f(x)=x^{2}$ for $x\geq r$ and $f''(x)\leq D^{2}f(x)$
for $x\geq0$,
\[
\sum_{i,j}\norm{H_{ij}}^{2}\frac{f'(\lda_{i})-f'(\lda_{j})}{\lda_{i}-\lda_{j}}\lesssim\bpar{\frac{1}{t}+\frac{D^{2}}{t^{1/2}}}\sum_{i}f(\lda_{i})\,.
\]
Multiplying $\ind[t<\tau]$ to both sides above and taking expectation
leads to
\begin{align*}
\de_{t}\E\tr f(\Sigma_{t\wedge\tau}) & \lesssim\bpar{\frac{1}{t}+\frac{D^{2}}{t^{1/2}}}\sum_{i}\E\bbrack{f\bpar{\lda_{i}(t)}\,\ind[t<\tau]}\leq\bpar{\frac{1}{t}+\frac{D^{2}}{t^{1/2}}}\sum_{i}\E\bbrack{f\bpar{\lda_{i}(t\wedge\tau)}}\\
 & =\bpar{\frac{1}{t}+\frac{D^{2}}{t^{1/2}}}\,\E\tr f(\Sigma_{t\wedge\tau})\,.
\end{align*}

Using a similar function as in Guan's paper, KL consider a $C^{2}$-smooth
function $f_{D,r}:\R_{\geq0}\to\R_{\geq0}$ with two parameters $D>1$
and $r\in[2,3]$ such that $f''(x)\leq(12D)^{2}f(x)$ and 
\[
f(x)=f_{D,r}(x)=\begin{cases}
e^{D(x-r)} & x\leq r-D^{-1}\,,\\
x^{2} & x\geq r\,.
\end{cases}
\]
Then the proof can be summarized as follows:
\begin{itemize}
\item As in Guan's paper, construct sequences $r_{k},D_{k},t_{k}(=t/2^{8k})$
properly such that solving the differential inequality above leads
to 
\[
\E\tr f_{k}(\Sigma_{t_{k}\wedge\tau})\leq\bpar{\frac{t_{k}}{t_{k+1}}}^{O(1)}\E\tr f_{k}(\Sigma_{t_{k+1}\wedge\tau})\,.
\]
\item Define $g_{k}(x)=x^{2}\,\ind[x\geq r_{k}]$ (so $g_{k}\leq f_{k}$)
and show $f_{k}\leq\frac{9}{4}\,g_{k+1}+\exp(-t_{k}^{-1/8})$.
\item For $F_{k}:=\E\tr g_{k}(\Sigma_{t_{k}\wedge\tau})$, using the two
items above, we can deduce that 
\[
F_{k}\leq2^{O(1)}\,\bpar{F_{k+1}+n\exp(-t_{k}^{-1/8})}\,.
\]
By recursion, for any $k\geq1$ and constant $C>0$,
\[
F_{0}\leq2^{Ck}F_{k}+n\sum_{i=0}^{k-1}2^{C\,(i+1)}\exp(-t_{i}^{-1/8})\,.
\]
Using easy inequalities (following from the construction of $t_{k}$),
we obtain that 
\[
F_{0}\leq t_{k}^{-C}F_{k}+O(n)\,\exp(-t^{-1/8})\,.
\]
\item We now show $F_{k}/t_{k}^{C}\to0$ as $k\to\infty$. Since $r_{k}\geq2$,
\begin{align*}
F_{k} & =\E\tr g_{k}(\Sigma_{t_{k}\wedge\tau})\leq\E\Bbrack{\sum_{i}\lda_{i}^{2}(t_{k}\wedge\tau)\,\ind[\lda_{i}(t_{k}\wedge\tau)\geq2]}\leq\E\bbrack{\norm{\Sigma_{t_{k}\wedge\tau}}_{\hs}^{2}\,\ind[\norm{\Sigma_{t_{k}\wedge\tau}}\geq2]}\\
 & \lesssim_{n,C_{\pi}}\P(\norm{\Sigma_{t_{k}}}\geq2)\,,
\end{align*}
where the last inequality follows from the compact-support of $\pi_{t}$.
Since $t_{k}\to0$ as $k\to\infty$, and $\P(\norm{\Sigma_{t}}\geq2)\leq\exp(-O(t^{-1}))$
for $t\lesssim\log^{-2}n$, the bound on $F_{k}$ is dominated by
$t_{k}^{-C}$ as $k\to\infty$. Thus,
\[
F_{0}\lesssim n\exp(-t^{-1/8})\,.
\]
\item Lastly, due to $\ind[x\geq3]\leq x^{2}\,\ind[x\geq3]=g_{0}(x)$ (here
$r_{0}=3$), the second main claim follows from 
\[
\sum_{i=1}^{n}\P\bpar{\lda_{i}(t\wedge\tau)\geq3}\leq\E\tr g_{0}(\Sigma_{t\wedge\tau})=F_{0}\lesssim n\exp(-t^{-1/8})\,.
\]
\selectlanguage{american}%
\end{itemize}

\selectlanguage{english}%

\section{Discussion}

There are several applications of localization in recent years that
are not covered in this survey. We have already mentioned localization
schemes for analyzing Markov chains \cite{chen2022localization}.
There are also algorithmic applications \cite{AMS22sampling} and
information-theoretic perspectives \cite{AM22information}.

The KLS conjecture, a principal motivation for localization methods,
remains unresolved. In fact, the lower bound in Theorem \ref{thm:cov-bound}
suggests that a new idea is needed to affirmatively resolve the conjecture.
Besides its mathematical appeal, settling the full conjecture is also
motivated by a few different applications such as hypercontractivity
for clustering mixtures of logconcave distributions and bounding the
spectral norm of a matrix with a joint logconcave distribution.

We mention the following question, which could be viewed as an analog
of the Kadison--Singer problem for convex bodies: given an isotropic
convex body, does there exist a hyperplane partition into two convex
bodies such that both parts have covariance matrices with spectral
norm strictly less than $1$?
\begin{acknowledgement*}
The authors are indebted to Ravi Kannan, Laci Lov\'asz, Yin Tat Lee,
Assaf Naor, Daniel Dadush, Ben Cousins, Boaz Klartag, Ronen Eldan,
Mark Rudelson, Emanuel Milman, Matthieu Fradelizi, Andre Wibisono,
Sinho Chewi, Aditi Laddha, He Jia and Yuansi Chen. This work was supported
in part by NSF award CCF-2106444 and a Simons Investigator award.
\end{acknowledgement*}
\bibliographystyle{alpha}
\bibliography{main}

\appendix
\selectlanguage{american}%

\section{Semigroups and Spectral Analysis\label{sec:semigroup}}

\inputencoding{latin9}In this appendix we review relevant semigroup
approaches with spectral analysis. We refer interested readers to
\cite{van14probability,BGL14analysis,chewi25log} for semigroup theory.

\subsection{Semigroups and infinitesimal generators}

For a time-homogeneous Markov process $(X_{t})_{t\geq0}$, one can
study its properties through its \emph{semigroup} and \emph{generator},
which serve as handy tools for studying the Markov process.
\begin{defn}
[Semigroup and generator] For a time-homogeneous Markov process $(X_{t})_{t\geq0}$,
its associated \emph{semigroup} $(P_{t})_{t\geq0}$ is the family
of functional operators defined as 
\[
P_{t}f(x)\defeq\E[f(X_{t})|X_{0}=x]\,.
\]
Then, the \emph{(infinitesimal) generator} $\mc L$ associated with
the semigroup $(P_{t})_{t\geq0}$ is a functional operator defined
by 
\[
\mc Lf\defeq\lim_{t\to0}\frac{P_{t}f-f}{t}
\]
for all functions $f$ for which the above limit exists.
\end{defn}

Why do we bother considering these functional operators? The semigroup
and generator carry essential information on the original Markov process,
so the process can be studied through these notions in handy ways.
For example, we will see a very natural connection between (\ref{eq:PI})
and generators.

In this note, all we need are just two Markov semigroups --- the
\emph{heat} semigroup and the \emph{Langevin} semigroup.

\subsubsection{Heat semigroup}

The simplest Markov process would be the Brownian motion (or the Wiener
process) defined by 
\[
\D X_{t}=\D W_{t}\,.
\]
From the direct computation, one can check that
\[
P_{t}f=f*\gamma_{t}\,,
\]
where $\gamma_{t}$ denotes the Gaussian $\mc{\msf N}(0,tI_{n})$.
Then, its associated generator is simply
\[
\mc L=\half\,\Delta\,.
\]
Computational details will follow shortly.

\subsubsection{Langevin semigroups}

Another fundamental Markov process is the Langevin diffusion: for
a potential function $V:\Rn\to\R$,
\[
\D X_{t}=-\nabla V(X_{t})\,\D t+\sqrt{2}\,\D W_{t}\,.
\]
One can show that its associated generator $\mc L:L^{2}(\pi)\to L^{2}(\pi)$
(precisely, the domain is the subset of $L^{2}(\pi)$ where $\mc L$
is well-defined) is defined by 
\[
\mc L(\cdot)=\Delta(\cdot)-\inner{\nabla V,\nabla\cdot}\,.
\]
To see this, we use It\^o's formula to obtain that for a smooth test
function $f$,
\begin{align*}
\D f(X_{t}) & =\inner{\nabla f(X_{t}),\,\D X_{t}}+\half\inner{\nabla^{2}f(X_{t}),2I_{n}}\,\D t\\
 & =\inner{\nabla f(X_{t}),-\nabla V(X_{t})\,\D t+\sqrt{2}\,\D W_{t}}+\tr\nabla^{2}f(X_{t})\,\D t\\
 & =\bigl(\Delta f(X_{t})-\inner{\nabla V(X_{t}),\nabla f(X_{t})}\bigr)\,\D t+\sqrt{2}\,\inner{\nabla f(X_{t}),\D W_{t}}\,.
\end{align*}
Hence,
\[
f(X_{t})-f(X_{0})=\int_{0}^{t}\bpar{\Delta f(X_{s})-\inner{\nabla V(X_{s}),\nabla f(X_{s})}}\,\D s+\sqrt{2}\int_{0}^{t}\inner{\nabla f(X_{s}),\D W_{s}}\,.
\]
For $X_{0}=x$, as the It\^o integral term is a martingale (so its
expectation vanishes),
\[
\frac{\E[f(X_{t})|X_{0}=x]-f(x)}{t}=\frac{1}{t}\int_{0}^{t}\E\bigl[\bpar{\Delta f(X_{s})-\inner{\nabla V(X_{s}),\nabla f(X_{s})}}\,|\,X_{0}=x\bigr]\,\D s\,,
\]
and thus
\[
\mc Lf(x)=\lim_{t\to0}\frac{\E[f(X_{t})|X_{0}=x]-f(x)}{t}=\Delta f(x)-\inner{\nabla V(x),\nabla f(x)}\,.
\]
The generator associated with the heat semigroup can be computed in
a similar way with $V=0$ (and different scaling for $W_{t}$).

\subsection{Semigroup preliminaries}

We briefly go over the notions of stationary measure, reversibility,
and carr\'e du champ operator, which are essential for spectral analyses
of $\mc L$.

\subsubsection{Stationary distributions}

As a given Markov process $(X_{t})_{t\geq0}$ evolves, it is natural
to study how $\pi_{t}:=\law X_{t}$ evolves over time $t$. A \emph{stationary
distribution} $\pi$ of the Markov process is a probability measure
such that if $\pi=\law X_{0}$, then for all $t\ge0$,
\[
\pi_{t}=\pi\,.
\]
It is a standard fact that $\D\pi\propto\exp(-V)\,\D x$ is the stationary
distribution of the Langevin diffusion (under mild conditions on $V$).

\subsubsection{Reversibility and integration by parts}

Now consider a Markov semigroup $(P_{t})_{t\geq0}$ with its generator
$\mc L$ and stationary distribution $\pi$. We now study properties
of $P_{t}$ and $\mc L$ on the Hilbert space $L^{2}(\pi)$, which
has a natural inner product structure:
\[
\inner{f,g}_{L^{2}(\pi)}:=\int fg\,\D\pi\,.
\]
This inner-product suggests a ``symmetric'' structure of $P_{t}$
and $\mc L$, called \emph{reversibility}.
\begin{defn}
[Reversibility] The Markov semigroup is reversible with respect to
the stationary distribution $\pi$ if for all $f,g\in L^{2}(\pi)$
and $t\geq0$,
\[
\inner{P_{t}f,g}_{L^{2}(\pi)}=\inner{f,P_{t}g}_{L^{2}(\pi)}\,.
\]
Equivalently, for all $f$ and $g$ (for which $\mc Lf$ and $\mc Lg$
exist),
\[
\inner{\mc Lf,g}_{L^{2}(\pi)}=\inner{f,\mc Lg}_{L^{2}(\pi)}\,.
\]
\end{defn}

\begin{rem}
Note that $P_{t}f\in L^{2}(\pi)$ as $P_{t}$ is a \emph{contraction}
operator: for $f\in L^{2}(\pi)$,
\begin{align*}
\norm{P_{t}f}_{L^{2}(\pi)}^{2} & =\int(P_{t}f)^{2}\,\D\pi\underset{\text{Jensen}}{\leq}\int P_{t}(f^{2})\,\D\pi=\E_{X_{0}\sim\pi}\bigl[\E[f^{2}(X_{t})|X_{0}]\bigr]\\
 & =\E_{X_{t}}[f^{2}(X_{t})]=\E_{\pi}[f^{2}]=\norm f_{L^{2}(\pi)}^{2}\,,
\end{align*}
where we used the stationarity of $\pi$ in the penultimate equality.
\end{rem}

Recall that one can define the $L^{2}(\pi)$\emph{-adjoint} $\mc L^{*}$
of $\mc L$ through
\[
\inner{\mc Lf,g}_{L^{2}(\pi)}=\inner{f,\mc L^{*}g}_{L^{2}(\pi)}\quad\text{for all }f,g\in(\text{subset of})\ L^{2}(\pi)\,.
\]
Essentially, this is a concept that extends the idea of the transpose
of a matrix to functional operators. The $L^{2}(\pi)$\emph{-}adjoint
$P_{t}^{*}$ of semigroup $P_{t}$ can be defined in a similar way.
In terms of the adjoint, the reversibility simply means that the generator
$\mc L$ is \emph{self-adjoint} in $L^{2}(\pi)$ (i.e., $\mc L^{*}=\mc L$). 

We need one more notion to establish an important property of reversible
semigroups and generators.
\begin{defn}
[Carr\'e du champ] The \emph{carr\'e du champ} operator $\Gamma$
is defined as
\[
\Gamma(f,g)\defeq\half\,\bpar{\mc L(fg)-f\,\mc Lg-g\,\mc Lf}\,,
\]
and the Dirichlet energy $\mc E(f,g)$ is a functional defined as
\[
\mc E(f,g)=\int\Gamma(f,g)\,\D\pi\,.
\]
\end{defn}

\begin{example}
[Reversibility of Langevin semigroup] We now check the reversibility
of the Langevin semigroup, computing its carr\'e du champ operator.
For any functions $f$ and $g$, noting that
\begin{align*}
\nabla(fg) & =g\nabla f+f\nabla g\,,\\
\Delta(fg) & =\nabla\cdot\nabla(fg)=2\,\inner{\nabla f,\grad g}+g\Delta f+f\Delta g\,,
\end{align*}
we can deduce that
\begin{align*}
2\Gamma(f,g) & =\mc L(fg)-f\,\mc Lg-g\,\mc Lf\\
 & =\Delta(fg)-\inner{\nabla V,\nabla(fg)}-f\,(\Delta g-\inner{\nabla V,\nabla g})-g\,(\Delta f-\inner{\nabla V,\nabla f})\\
 & =2\,\inner{\nabla f,\grad g}+g\Delta f+f\Delta g-\inner{\nabla V,g\nabla f+f\nabla g}\\
 & \qquad-f\,(\Delta g-\inner{\nabla V,\nabla g})-g\,(\Delta f-\inner{\nabla V,\nabla f})\\
 & =2\,\inner{\nabla f,\grad g}\,.
\end{align*}
Thus,
\[
\Gamma(f,g)=\inner{\nabla f,\nabla g}\,,\qquad\Gamma(f,f)=\norm{\nabla f}^{2}\,,\qquad\mc E(f,f)=\E_{\pi}[\norm{\nabla f}^{2}]\,.
\]
The reversibility follows from (\ref{eq:IBP}) which is introduced
right below.
\end{example}

We introduce our first helper lemma, which is an equivalent formulation
of reversibility:
\begin{lem}
[Integration by parts] Suppose that a Markov process is reversible
w.r.t. $\pi$, and let $\mc L$ and $\Gamma$ be the associated generator
and carr\'e du champ operator. Then, for any $f$ and $g$,
\begin{equation}
\inner{f,(-\mc L)g}_{L^{2}(\pi)}=\inner{(-\mc L)f,g}_{L^{2}(\pi)}=\mc E(f,g)=\int\Gamma(f,g)\,\D\pi\,.\tag{\ensuremath{\msf{IBP}}}\label{eq:IBP}
\end{equation}
\end{lem}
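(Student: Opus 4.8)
The plan is to reduce \eqref{eq:IBP} to reversibility together with the single auxiliary fact that $\int \mc L\vphi\,\D\pi = 0$ for every (sufficiently regular) test function $\vphi$. First I would establish this auxiliary identity: since $\pi$ is stationary, for every $t\geq 0$ one has $\int P_t\vphi\,\D\pi = \E_{X_0\sim\pi}\bigl[\E[\vphi(X_t)\mid X_0]\bigr] = \E_{X_t\sim\pi}[\vphi(X_t)] = \int\vphi\,\D\pi$; hence $\int \frac{P_t\vphi - \vphi}{t}\,\D\pi = 0$ for all $t>0$, and letting $t\to 0$ gives $\int\mc L\vphi\,\D\pi = 0$. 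The only delicate point is justifying that the limit passes through the integral, which is where the regularity hypothesis on $\vphi$ is used.

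With this in hand the first equality in \eqref{eq:IBP}, namely $\inner{f,(-\mc L)g}_{L^{2}(\pi)} = \inner{(-\mc L)f,g}_{L^{2}(\pi)}$, is immediate: it is precisely the self-adjointness of $\mc L$ in $L^{2}(\pi)$, which is the definition of reversibility. For the remaining equality I would integrate the defining relation $2\Gamma(f,g) = \mc L(fg) - f\,\mc Lg - g\,\mc Lf$ against $\pi$, obtaining
\[
2\,\mc E(f,g) = \int \mc L(fg)\,\D\pi - \int f\,\mc Lg\,\D\pi - \int g\,\mc Lf\,\D\pi .
\]
The first integral on the right vanishes by the auxiliary identity applied to $\vphi = fg$, and reversibility gives $\int g\,\mc Lf\,\D\pi = \int f\,\mc Lg\,\D\pi$; therefore the right-hand side equals $-2\int f\,\mc Lg\,\D\pi = 2\inner{f,(-\mc L)g}_{L^{2}(\pi)}$. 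Dividing by $2$ yields $\mc E(f,g) = \inner{f,(-\mc L)g}_{L^{2}(\pi)}$, and since $\mc E(f,g) = \int\Gamma(f,g)\,\D\pi$ by definition, this completes \eqref{eq:IBP}.

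The main obstacle is analytic rather than algebraic: one has to restrict attention to a class of functions $f,g$ for which $f$, $g$, and the product $fg$ all lie in the domain of $\mc L$ and for which the interchange of limit and integral above is legitimate. For the two semigroups actually used in the paper --- the heat semigroup with $\mc L = \half\,\Delta$ and the Langevin semigroup with $\mc L = \Delta - \inner{\nabla V,\nabla\,\cdot\,}$ --- smooth, compactly supported functions (or Schwartz functions, with suitable growth control on $V$) form such a class, on which every step above is rigorous; the identity then extends to the natural Sobolev-type completion by density.
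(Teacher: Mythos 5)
Your argument is correct and follows essentially the same route as the paper: integrate the defining relation $2\Gamma(f,g)=\mc L(fg)-f\,\mc Lg-g\,\mc Lf$ against $\pi$, drop the $\int\mc L(fg)\,\D\pi$ term, and use reversibility to symmetrize the remaining two. The only cosmetic difference is that you derive $\int\mc L\vphi\,\D\pi=0$ from stationarity of $\pi$ (differentiating $\int P_t\vphi\,\D\pi$ at $t=0$), whereas the paper obtains it directly from reversibility via $\langle\mc L\vphi,1\rangle_{L^2(\pi)}=\langle\vphi,\mc L 1\rangle_{L^2(\pi)}=0$; both are standard and equivalent in this setting.
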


\begin{proof}
Observe that
\[
\int\Gamma(f,g)\,\D\pi=\half\int\bpar{\mc L(fg)-f\mc Lg-g\mc Lf}\,\D\pi\underset{(i)}{=}-\half\int(f\mc Lg+g\mc Lf)\,\D\pi\underset{(ii)}{=}\inner{f,(-\mc L)g}_{\ltwo(\pi)}\,,
\]
where $(i)$ follows from $\E_{\pi}[\mc Lh]=0$ for any $h$, due
to
\[
\E_{\pi}[\mc Lh]=\inner{\mc Lh,1}_{\ltwo(\pi)}=\inner{h,\mc L1}_{\ltwo(\pi)}=0\,,
\]
and $(ii)$ follows from reversibility.
\end{proof}
We now arrive at a key observation about a reversible Markov process:
\begin{prop}
[$\mathcal{L}$ is negative] For a reversible Markov process, $-\mc L\geq0$,
i.e., for any $f$, 
\[
\inner{f,(-\mc L)f}_{\ltwo(\pi)}\geq0\,.
\]
\end{prop}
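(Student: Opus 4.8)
The plan is to read this off directly from the integration-by-parts identity \eqref{eq:IBP} together with the nonnegativity of the carr\'e du champ on the diagonal. Applying \eqref{eq:IBP} with $g=f$ gives
\[
\inner{f,(-\mc L)f}_{\ltwo(\pi)}=\mc E(f,f)=\int\Gamma(f,f)\,\D\pi\,,
\]
so it suffices to show $\Gamma(f,f)\geq0$. For the two semigroups relevant in this survey this is transparent: we computed above that for the Langevin semigroup (and, taking $V=0$ with the appropriate rescaling of $W_t$, for the heat semigroup) one has $\Gamma(f,f)=\norm{\nabla f}^{2}\geq0$ pointwise, whence $\int\Gamma(f,f)\,\D\pi\geq0$.

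For a general reversible Markov semigroup --- where $\Gamma(f,f)\geq0$ is essentially the defining positivity of the Dirichlet form rather than something one computes --- the same conclusion follows instead from the contraction property recorded above. Set $\phi(t):=\norm{P_t f}_{\ltwo(\pi)}^{2}$. Using reversibility and the semigroup law, $\phi(t)=\inner{P_t f,P_t f}_{\ltwo(\pi)}=\inner{P_{2t}f,f}_{\ltwo(\pi)}$; differentiating at $t=0$ and using $\de_t P_{2t}f\big|_{t=0}=2\mc Lf$ yields $\phi'(0)=2\inner{\mc Lf,f}_{\ltwo(\pi)}$. On the other hand, the contraction bound $\norm{P_t f}_{\ltwo(\pi)}\leq\norm f_{\ltwo(\pi)}$ says $\phi(t)\leq\phi(0)$ for all $t\geq0$, so $\phi$ cannot increase at the origin, i.e.\ $\phi'(0)\leq0$. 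Combining these two facts gives $\inner{f,(-\mc L)f}_{\ltwo(\pi)}=-\inner{\mc Lf,f}_{\ltwo(\pi)}=-\half\,\phi'(0)\geq0$.

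I expect the only real subtlety to be bookkeeping rather than a genuine obstacle: justifying the differentiation inside the $\ltwo(\pi)$ inner product (that $t\mapsto P_t f$ is differentiable in $\ltwo(\pi)$ with derivative $\mc Lf$ at $t=0$, for $f$ in the domain of $\mc L$), and then extending from the dense domain of $\mc L$ to all $f\in\ltwo(\pi)$ by approximation --- or, in the concrete cases at hand, simply invoking the pointwise identity $\Gamma(f,f)=\norm{\nabla f}^{2}$, which sidesteps these issues entirely. In either route the statement is a direct repackaging of \eqref{eq:IBP} or of the contraction of $P_t$, so no deeper argument is needed.
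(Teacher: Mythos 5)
Your proposal is correct and reaches the same conclusion, but the key step --- establishing nonnegativity of the Dirichlet form --- is handled by a genuinely different argument from the paper's. After reducing (via \eqref{eq:IBP}) to showing $\int\Gamma(f,f)\,\D\pi\geq0$, the paper proves the stronger \emph{pointwise} statement $\Gamma(f,f)\geq0$ directly, by applying conditional Jensen to the Markov kernel: $P_t(f^2)\geq(P_tf)^2$, hence $\mc L(f^2)=\lim_{t\to0}\frac{P_t(f^2)-f^2}{t}\geq\lim_{t\to0}\frac{(P_tf)^2-f^2}{t}=2f\,\mc Lf$, which is exactly $2\Gamma(f,f)\geq0$. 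This needs no contraction estimate, no stationarity, and no reversibility at all for the core inequality --- reversibility only enters through \eqref{eq:IBP} at the very end. Your second route instead integrates first and differentiates afterward: you show that $\phi(t)=\norm{P_tf}_{\ltwo(\pi)}^2$ is non-increasing (using the contraction remark, which itself rests on the same Jensen step plus stationarity), rewrite $\phi(t)=\inner{P_{2t}f,f}_{\ltwo(\pi)}$ via reversibility, and read off $\phi'(0)=2\inner{\mc Lf,f}\leq0$. This is valid, and as you note is the more ``operator-theoretic'' argument, but it is somewhat more roundabout: it relies on both stationarity and reversibility where the paper's pointwise Jensen argument needs neither, and it pushes the regularity bookkeeping (differentiability of $t\mapsto P_tf$ in $\ltwo(\pi)$, density of $\dom\mc L$) into the argument rather than sidestepping it. Your first route --- invoking $\Gamma(f,f)=\norm{\nabla f}^2$ for the Langevin/heat generators --- is the quickest in the cases the survey actually uses, but as you acknowledge it does not cover the proposition as stated for a general reversible Markov process. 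The paper's pointwise Jensen argument is the one that covers the general case with the least machinery, and is worth internalizing.
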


\begin{proof}
By (\ref{eq:IBP}), 
\[
\inner{f,\mc Lf}_{\ltwo(\pi)}=-\int\Gamma(f,f)\,\D\pi\,,
\]
so it suffices to show that for any $f$,

\[
0\leq2\Gamma(f,f)=\mc L(f^{2})-2f\mc Lf\,.
\]
To this end, note that
\[
P_{t}(f^{2})=\E[f^{2}(X_{t})|X_{0}]\geq\bpar{\E[f(X_{t})|X_{0}]}^{2}=(P_{t}f)^{2}\,,
\]
and thus
\[
\mc L(f^{2})=\lim_{t\to0}\frac{P_{t}(f^{2})-f^{2}}{t}\geq\lim_{t\to0}\frac{(P_{t}f)^{2}-f^{2}}{t}=2f\mc Lf\,,
\]
which completes the proof.
\end{proof}

\subsubsection{Spectral theory and Rayleigh quotient}

We have just observed that $-\mc L$ is, as an operator, positive
semi-definite (PSD). Just as a PSD matrix has non-negative real eigenvalues,
this spectral theorem can be generalized to this functional-operator
setup (under some conditions). Thus, we may consider the eigenvalues
of $-\mc L$ as follows:
\[
0=\lda_{0}(-\mc L)<\lda_{1}(-\mc L)\leq\cdots
\]
with corresponding eigenfunctions $\vphi_{0},\vphi_{1},\dotsc$, where
$\vphi_{0}=1$. We also know that the Rayleigh quotient can be related
to eigenvalues through 
\[
\lda_{k}(-\mc L)=\inf_{f\perp\{\vphi_{i}\}_{0\leq i\leq k-1}}\frac{\inner{f,(-\mc L)f}_{\ltwo(\pi)}}{\norm f_{2}^{2}}\,.
\]
Perhaps, the most important eigenvalue is the first non-zero eigenvalue
$\lda_{1}$ called the \emph{spectral gap}. Since $f\perp\vphi_{0}(=1)$
is equivalent to $\E_{\pi}f=0$,
\[
\lda_{1}(-\mc L)=\inf_{f\neq0,\,\E_{\pi}f=0}\frac{\inner{f,(-\mc L)f}_{\ltwo(\pi)}}{\norm f_{2}^{2}}\,.
\]

\begin{example}
[Spectral gap of Langevin] As $\inner{f,(-\mc L)f}_{\ltwo(\pi)}=\mc E(f,f)=\E_{\pi}[\norm{\grad f}^{2}]$
for the Langevin diffusion, the spectral gap can be written as
\[
\lda_{1}(-\mc L)=\inf_{f\neq0,\,\E_{\pi}f=0}\frac{\inner{f,(-\mc L)f}_{\ltwo(\pi)}}{\norm f_{2}^{2}}=\inf_{f\neq0,\,\E_{\pi}f=0}\frac{\E_{\pi}[\norm{\nabla f}^{2}]}{\var_{\pi}f}=\frac{1}{\cpi(\pi)}\,,
\]
where the last equality follows from (\ref{eq:PI}). This expression
shows how spectral analyses of $-\mc L$ become relevant in the study
of the KLS conjecture ($\cpi(\pi)\asymp\norm{\cov\pi}$).
\end{example}

\subsubsection{Bochner formula}

In addition to the first helper (\ref{eq:IBP}), we need another one
called the \emph{Bochner formula}.
\begin{lem}
[Bochner formula] For any function $f$,
\begin{equation}
\half\,\Delta(\norm{\nabla f}^{2})=\inner{\Delta\nabla f,\nabla f}+\norm{\nabla^{2}f}_{\hs}^{2}\,.\tag{\ensuremath{\msf{Bochner}}}\label{eq:Bochner}
\end{equation}
\end{lem}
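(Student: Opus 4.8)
The plan is to prove this purely by a coordinate computation in $\Rn$; it is the standard ``flat'' Bochner identity and needs nothing beyond calculus once $f$ is taken smooth enough (say $f\in C^{3}$, which is all the applications in \S\ref{sec:thinshell} require). First I would write $\norm{\nabla f}^{2}=\sum_{i=1}^{n}(\de_{i}f)^{2}$ and differentiate. Applying $\de_{j}$ gives $\de_{j}\norm{\nabla f}^{2}=2\sum_{i}(\de_{i}f)\,(\de_{i}\de_{j}f)$, and applying $\de_{j}$ once more yields $\de_{j}^{2}\norm{\nabla f}^{2}=2\sum_{i}\bpar{(\de_{i}\de_{j}f)^{2}+(\de_{i}f)\,(\de_{i}\de_{j}^{2}f)}$. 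Summing over $j\in[n]$ then produces exactly two groups of terms: $\sum_{i,j}(\de_{i}\de_{j}f)^{2}$, which is by definition $\norm{\nabla^{2}f}_{\hs}^{2}$, and $\sum_{i,j}(\de_{i}f)\,(\de_{i}\de_{j}^{2}f)$.

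The second group is where the ``$\Delta\nabla f$'' term comes from: regrouping, $\sum_{i,j}(\de_{i}f)\,(\de_{i}\de_{j}^{2}f)=\sum_{i}(\de_{i}f)\,\de_{i}\bpar{\sum_{j}\de_{j}^{2}f}=\sum_{i}(\de_{i}f)\,\de_{i}(\Delta f)=\inner{\nabla f,\nabla\Delta f}$, where the interchange of partials (legitimate for $C^{3}$ functions by Clairaut's theorem) is what lets me pull the Laplacian out. Finally I would note that commuting derivatives also gives $\Delta(\de_{i}f)=\de_{i}(\Delta f)$ componentwise, i.e.\ $\Delta\nabla f=\nabla\Delta f$, so $\inner{\nabla f,\nabla\Delta f}=\inner{\Delta\nabla f,\nabla f}$. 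Dividing the summed identity by $2$ gives the claim.

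There is no genuine obstacle here; the only point to watch is the smoothness hypothesis that justifies freely interchanging $\de_{i}\de_{j}=\de_{j}\de_{i}$ and $\de_{i}\Delta=\Delta\de_{i}$, and everything else is bookkeeping. If one wanted the identity for merely $C^{2}$ (or weak) $f$, one would instead interpret both sides distributionally and run the same computation against a test function, but that refinement is unnecessary for the uses in this survey, where $f$ is always a smooth semigroup image.
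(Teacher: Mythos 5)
Your computation is correct and is the standard direct coordinate proof of the flat Bochner identity: expand $\norm{\nabla f}^{2}=\sum_{i}(\de_{i}f)^{2}$, differentiate twice, sum, and use Clairaut's theorem to commute $\de_{i}$ with $\Delta$. The paper states this lemma without proof (it is treated as a classical fact), so there is no alternative argument to compare against; your write-up would serve as an adequate supplement, and your remark about the needed smoothness ($C^{3}$, or weak derivatives interpreted distributionally) is the right caveat.
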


To better understand this, we define the iterated carr\'e du champ:
\begin{defn}
[Iterated carr\'e du champ] The \emph{iterated carr\'e du champ}
operator $\Gamma_{2}$ is defined by 
\[
\Gamma_{2}(f,g)\defeq\half\,\bpar{\mc L\Gamma(f,g)-\Gamma(f,\mc Lg)-\Gamma(g,\mc Lf)}\,,
\]
which essentially replaces the multiplication in the carr\'e du champ
operator with $\Gamma$.
\end{defn}

\begin{example}
[$\Gamma_2$ for Langevin] Using $\Gamma(f,g)=\inner{\nabla f,\nabla g}$,
we can show that
\begin{align}
\Gamma_{2}(f,f) & =\half\,\bpar{\mc L\Gamma(f,f)-2\Gamma(f,\mc Lf)}=\half\,\bpar{\mc L(\norm{\nabla f}^{2})-2\inner{\nabla f,\nabla\mc Lf}}\label{eq:gamma2-generator}\\
 & =\half\,\bigl\{\Delta(\norm{\nabla f}^{2})-\inner{\nabla V,\nabla(\norm{\nabla f}^{2})}-2\bigl<\nabla f,\nabla(\Delta f-\inner{\nabla V,\nabla f})\bigr>\bigr\}\nonumber \\
 & =\half\,\Delta(\norm{\grad f}^{2})-\inner{\nabla V,\nabla^{2}f\,\nabla f}-\inner{\nabla f,\nabla\Delta f}+\bigl<\grad f,\nabla\inner{\nabla V,\nabla f}\bigr>\nonumber \\
 & \underset{(i)}{=}\inner{\Delta\nabla f,\nabla f}+\norm{\nabla^{2}f}_{\hs}^{2}-\inner{\nabla V,\nabla^{2}f\,\nabla f}-\inner{\nabla f,\nabla\Delta f}+\inner{\grad f,\hess V\,\nabla f+\nabla^{2}f\,\nabla V}\nonumber \\
 & =\norm{\nabla^{2}f}_{\hs}^{2}+\inner{\grad f,\hess V\,\nabla f}\,,\nonumber 
\end{align}
where we used (\ref{eq:Bochner}) in $(i)$. Therefore, the iterated
carr\'e du champ for the Langevin semigroup is
\[
\Gamma_{2}(f,f)=\norm{\nabla f}_{\hess V}^{2}+\norm{\hess f}_{\hs}^{2}\,.
\]
\end{example}

We are ready to prove the \emph{integrated Bochner formula} for the
Langevin generator:
\begin{lem}
[Integrated Bochner formula] For the Langevin generator $\mc L=\mc L_{V}$
and function $f$,
\begin{equation}
\int(\mc Lf)^{2}\,\D\pi=\int\norm{\nabla f}_{\hess V}^{2}\,\D\pi+\int\norm{\nabla^{2}f}_{\hs}^{2}\,\D\pi\,.\tag{\ensuremath{\msf{Int}}-\ensuremath{\msf{Bochner}}}\label{eq:int-Bochner}
\end{equation}
\end{lem}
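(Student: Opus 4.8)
The plan is to derive \eqref{eq:int-Bochner} from the integration-by-parts identity \eqref{eq:IBP} together with the pointwise formula \eqref{eq:gamma2-generator} for the iterated carr\'e du champ $\Gamma_{2}$ of the Langevin generator, which was already computed in the preceding example. First I would rewrite the left-hand side using reversibility: applying \eqref{eq:IBP} to the pair $(f,\mc Lf)$ gives
\[
-\int(\mc Lf)^{2}\,\D\pi=\inner{(-\mc L)f,\mc Lf}_{\ltwo(\pi)}=\int\Gamma(f,\mc Lf)\,\D\pi\,,
\]
so that $\int(\mc Lf)^{2}\,\D\pi=-\int\Gamma(f,\mc Lf)\,\D\pi$.

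Next I would use the definition of the iterated carr\'e du champ, $\Gamma_{2}(f,f)=\tfrac12\bpar{\mc L\Gamma(f,f)-2\Gamma(f,\mc Lf)}$, to solve for $\Gamma(f,\mc Lf)=\tfrac12\,\mc L\Gamma(f,f)-\Gamma_{2}(f,f)$ and substitute:
\[
\int(\mc Lf)^{2}\,\D\pi=\int\Gamma_{2}(f,f)\,\D\pi-\half\int\mc L\Gamma(f,f)\,\D\pi\,.
\]
The last integral vanishes, since for any regular $h$ one has $\int\mc Lh\,\D\pi=\inner{\mc Lh,1}_{\ltwo(\pi)}=\inner{h,\mc L1}_{\ltwo(\pi)}=0$ by reversibility and $\mc L1=0$ (this is exactly the fact $(i)$ used in the proof of \eqref{eq:IBP}). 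Hence $\int(\mc Lf)^{2}\,\D\pi=\int\Gamma_{2}(f,f)\,\D\pi$. Finally I would plug in the formula from \eqref{eq:gamma2-generator}, namely $\Gamma_{2}(f,f)=\norm{\nabla^{2}f}_{\hs}^{2}+\inner{\nabla f,\hess V\,\nabla f}$ (whose derivation already invoked the Bochner formula \eqref{eq:Bochner}), and integrate against $\pi$ to obtain \eqref{eq:int-Bochner}.

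The only point requiring care — not really an obstacle — is regularity: one needs $f$ smooth with enough integrability/decay that the integrations by parts are valid, that $\mc Lf,\norm{\nabla^{2}f}_{\hs}\in\ltwo(\pi)$, and that $\int\mc L\Gamma(f,f)\,\D\pi$ genuinely vanishes with no boundary contribution. This is precisely the standard ``regular test function'' hypothesis already in force throughout this appendix, so I would simply record it and then carry out the three-line computation above.
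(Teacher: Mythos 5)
Your argument is correct and is essentially the same as the paper's: both reduce $\int(\mc Lf)^{2}\,\D\pi$ to $\int\Gamma_{2}(f,f)\,\D\pi$ via \eqref{eq:IBP}, the definition of $\Gamma_{2}$, and $\int\mc Lh\,\D\pi=0$, then substitute the explicit formula $\Gamma_{2}(f,f)=\norm{\nabla^{2}f}_{\hs}^{2}+\norm{\nabla f}_{\hess V}^{2}$ from \eqref{eq:gamma2-generator}. The only cosmetic difference is that you keep the computation in the abstract $\Gamma$-calculus a bit longer before unfolding $\Gamma(f,\mc Lf)=\inner{\nabla f,\nabla\mc Lf}$, whereas the paper unfolds immediately; the steps and the intermediate identity $\int(\mc Lf)^{2}\,\D\pi=\int\Gamma_{2}(f,f)\,\D\pi$ coincide.
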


\begin{proof}
Using (\ref{eq:IBP}) and (\ref{eq:gamma2-generator}),
\begin{align*}
\int(\mc Lf)^{2}\,\D\pi & =-\int\inner{\nabla f,\nabla\mc Lf}\,\D\pi=\int\bpar{\Gamma_{2}(f,f)-\half\,\mc L(\norm{\nabla f}^{2})}\,\D\pi\\
 & =\int\Gamma_{2}(f,f)\,\D\pi=\int\bpar{\norm{\nabla f}_{\hess V}^{2}+\norm{\hess f}_{\hs}^{2}}\,\D\pi\,,
\end{align*}
where we used $\int\mc Lh\,\D\pi=0$ for any function $h$ (which
follows from (\ref{eq:IBP})).
\end{proof}

\subsection{Improved Lichnerowicz inequality}

Using the tools developed above, we can prove the improved Lichnerowicz
inequality \cite{Klartag23log}.
\begin{thm}
[Improved Lichnerowicz inequality] Let $\pi\propto\exp(-V)$ be
a $t$-strongly log-concave probability measure over $\Rn$. Then,
\begin{equation}
\cpi(\pi)\leq\sqrt{\frac{\norm{\cov\pi}}{t}}\,.\tag{{\ensuremath{\msf{ILI}}}}\label{eq:improved-LI}
\end{equation}
\end{thm}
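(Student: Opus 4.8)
The plan is to test against the bottom eigenfunction of the Langevin generator $\mc L=\Delta-\inner{\nabla V,\nabla\cdot}$ of $\pi$ and to read off $\Sigma:=\cov\pi$ via a single Cauchy--Schwarz step. Recall from the appendix that $\lambda_{1}(-\mc L)=1/\cpi(\pi)=:\lambda$; since $\pi$ is $t$-strongly log-concave, $-\mc L$ has a discrete bottom of its spectrum, so fix a nonconstant eigenfunction $f$ with $-\mc L f=\lambda f$, normalized so that $\int f\,\D\pi=0$ and $\var_{\pi}f=1$; then $\int\norm{\nabla f}^{2}\,\D\pi=\binner{f}{-\mc L f}_{\ltwo(\pi)}=\lambda$ by \eqref{eq:IBP}. (Elliptic regularity makes $f$ smooth with $f,\nabla f,\hess f\in\ltwo(\pi)$, so all the integrations by parts below are licit.) Writing $m:=\int\nabla f\,\D\pi\in\Rn$, the whole proof reduces to the two estimates $\norm m^{2}\ge t$ and $\norm m^{2}\le\lambda^{2}\norm\Sigma$, since together they give $t\le\lambda^{2}\norm\Sigma$, i.e. $\cpi(\pi)=1/\lambda\le\sqrt{\norm\Sigma/t}$.

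\textbf{Lower bound $\norm m^{2}\ge t$.} First apply the integrated Bochner formula \eqref{eq:int-Bochner} to $f$ and use $\hess V\succeq tI$:
\[
\lambda^{2}=\int(\mc L f)^{2}\,\D\pi=\int\inner{\nabla f,\hess V\,\nabla f}\,\D\pi+\int\norm{\hess f}_{\hs}^{2}\,\D\pi\ \ge\ t\lambda+\int\norm{\hess f}_{\hs}^{2}\,\D\pi,
\]
so $\int\norm{\hess f}_{\hs}^{2}\,\D\pi\le\lambda(\lambda-t)$ (in particular $\lambda\ge t$). Then apply \eqref{eq:PI} to each partial derivative $\de_{i}f$ and sum over $i$; since $\sum_{i}\norm{\nabla\de_{i}f}^{2}=\norm{\hess f}_{\hs}^{2}$ and $\cpi(\pi)=1/\lambda$,
\[
\lambda-\norm m^{2}=\sum_{i=1}^{n}\var_{\pi}(\de_{i}f)\ \le\ \tfrac{1}{\lambda}\int\norm{\hess f}_{\hs}^{2}\,\D\pi\ \le\ \lambda-t,
\]
which yields $\norm m^{2}\ge t$.

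\textbf{Upper bound $\norm m^{2}\le\lambda^{2}\norm\Sigma$.} For a unit vector $\theta$ take $g(x)=x\cdot\theta$, so $\nabla g=\theta$ and $-\mc L g=\nabla V\cdot\theta$; then by \eqref{eq:IBP} and self-adjointness of $\mc L$ in $\ltwo(\pi)$,
\[
m\cdot\theta=\int\inner{\nabla f,\nabla g}\,\D\pi=\int(-\mc L f)\,g\,\D\pi=\lambda\int fg\,\D\pi=\lambda\,\cov_{\pi}\bpar{f,\ x\cdot\theta},
\]
the last equality using $\int f\,\D\pi=0$. By Cauchy--Schwarz $\abs{\cov_{\pi}(f,\ x\cdot\theta)}\le\sqrt{\var_{\pi}f}\,\sqrt{\theta^{\T}\Sigma\theta}\le\norm\Sigma^{1/2}$, so $\norm m=\sup_{\norm\theta=1}m\cdot\theta\le\lambda\norm\Sigma^{1/2}$. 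Combining the two bounds proves \eqref{eq:improved-LI}. The one genuinely delicate point is the spectral/regularity input — existence of the first nontrivial eigenfunction and enough smoothness/integrability that \eqref{eq:int-Bochner} and the integrations by parts hold — but for $t$-strongly log-concave $\pi$ these are classical, so the rest of the argument is just the two short estimates above.
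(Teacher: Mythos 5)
Your proof is correct and follows essentially the same route as the paper: take the eigenfunction for the spectral gap, apply the integrated Bochner formula together with the Poincar\'e inequality on each partial derivative to get the lower bound $\norm{\int\nabla f\,\D\pi}^{2}\ge t$, and pair with the coordinate test function $x\cdot\theta$ via Cauchy--Schwarz for the upper bound $\norm{\int\nabla f\,\D\pi}\le\lambda\,\norm{\Sigma}^{1/2}$. The only differences are cosmetic (introducing the name $m$, splitting the Bochner step into two displayed inequalities, and invoking the eigenvalue relation $-\mc Lf=\lambda f$ before Cauchy--Schwarz rather than after), so this matches the paper's argument step for step.
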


\begin{rem}
It is well known that $\cpi(\pi)\leq1/t$ in the same setting, which
is the so-called \emph{Lichnerowicz inequality} (or an implication
of the Brascamp--Lieb inequality). Since $\norm{\cov\pi}\leq\cpi(\pi)$
in general, the improved LI recovers the classical LI as a special
case. Moreover, it provides a tighter bound than the classical version.

The bound above can be viewed as the geometric average of the KLS
conjecture (i.e., $\cpi(\pi)\lesssim\norm{\cov\pi}$) and the LI (i.e.,
$\cpi(\pi)\leq1/t$).
\end{rem}

\begin{proof}
We relate an upper and lower bound on $\norm{\int\nabla\vphi}$. Let
$\mc L$ denote the Langevin generator associated with the potential
$V$.

As for a lower bound, let $\vphi_{1}$ be an eigenfunction of $\lda_{1}=\cpi(\pi)^{-1}$
with unit norm, which satisfies $\lda_{1}=\inner{\vphi_{1},(-\mc L)\vphi_{1}}_{\ltwo(\pi)}=\E_{\pi}[\norm{\nabla\vphi_{1}}^{2}]$.
Using (\ref{eq:int-Bochner}) with a test function $\vphi=\vphi_{1}$,
and hiding $\D\pi$ in $\int$,
\begin{align*}
\lda_{1}^{2} & =\int(\mc L\vphi)^{2}=\int\norm{\nabla\vphi}_{\hess V}^{2}+\int\norm{\nabla^{2}\vphi}_{\text{F}}^{2}\geq t\int\norm{\nabla\vphi}^{2}+\sum_{i=1}^{n}\underbrace{\int\norm{\nabla(\de^{i}\vphi)}^{2}}_{\text{Use PI to each }\de^{i}\vphi}\\
 & \geq t\int\norm{\nabla\vphi}^{2}+\sum_{i=1}^{n}\lda_{1}\Bpar{\int(\de^{i}\vphi)^{2}-\Bpar{\int\de^{i}\vphi}^{2}}=t\underbrace{\int\norm{\nabla\vphi}^{2}}_{=\lda_{1}}+\lda_{1}\Bpar{\int\norm{\nabla\vphi}^{2}-\Bnorm{\int\nabla\vphi}^{2}}\\
 & =\lda_{1}t+\lda_{1}^{2}-\lda_{1}\Bnorm{\int\nabla\vphi}^{2}\,.
\end{align*}
Rearranging terms,
\[
\Bnorm{\int\nabla\vphi}^{2}\geq t\,.
\]
We now move to an upper bound. For some unit vector $\theta\in\mbb S^{n-1}$,
\begin{align*}
\Bnorm{\int\nabla\vphi} & =\int\nabla\vphi\cdot\theta=\sum_{i=1}^{n}\int\inner{\nabla\vphi,\nabla(x_{i}-\mu_{i})}\,\theta_{i}\underset{\eqref{eq:IBP}}{=}-\int\mc L\vphi\,(x-\mu)\cdot\theta\\
 & \leq\sqrt{\int(\mc L\vphi)^{2}}\sqrt{\int\bpar{(x-\mu)\cdot\theta}^{2}}\leq\lda_{1}\sqrt{\norm{\cov\pi}}\,.
\end{align*}
Relating the upper and lower bound on $\norm{\int\nabla\vphi}$, we
have
\[
t\leq\lda_{1}^{2}\,\norm{\cov\pi}\Longleftrightarrow\cpi(\pi)\leq\sqrt{\frac{\norm{\cov\pi}}{t}}\,,
\]
which completes the proof.\selectlanguage{english}%
\end{proof}

\section{A Connection between Stochastic Localization and the Proximal Sampler\label{sec:PS-SL}}

 Here we interpret stochastic localization (SL) using the $\ps$
($\PS$) \cite{LST21structured} for ease of exposition. The $\PS$
can be viewed as a \emph{realization} of a static version of SL. We
then develop semigroup machinery for studying static SL, and finally
move to SL. For additional material, we refer to \cite{KP23spectral}
and \cite[\S8]{chewi25log}.

\subsection{Proximal sampler}

Let $\pi^{X}\propto\exp(-V)$ be a target distribution we aim to sample
from. The $\ps$ with step size $h$ considers the augmented distribution
\[
\bs{\pi}_{h}(x,y)\propto\exp\bpar{-V(x)-\frac{1}{2h}\,\norm{x-y}^{2}}\,.
\]
With the abbreviation $\bs{\pi}=\bs{\pi}_{h}$, and writing $\pi^{X|Y}=\bs{\pi}^{X|Y}$
and $\pi^{Y|X}=\bs{\pi}^{Y|X}$, one iteration consists of two steps---the
forward step and backward step:
\begin{itemize}
\item {[}Forward{]} Sample $y_{k+1}\sim\pi^{Y|X=x_{k}}=\msf N(x_{k},hI_{n})$
\item {[}Backward{]} Sample $x_{k+1}\sim\pi^{X|Y=y_{k+1}}\propto\exp\bpar{-V(\cdot)-\frac{1}{2h}\,\norm{\cdot-y_{k+1}}^{2}}$
\end{itemize}
Essentially, the $\PS$ runs Gibbs sampling over $\bs{\pi}$, so $\bs{\pi}$
is clearly its stationary distribution. Thus, if the $\PS$ starts
from the target distribution (i.e., $x_{0}\sim\pi^{X}$), the forward
step sends $\pi^{X}$ to $\pi^{Y}=\pi^{X}*\gamma_{h}$, and the backward
step takes $\pi^{Y}$ back to $\pi^{X}$ through the backward channel
$\pi^{X|Y=y}$:
\[
\pi^{X}=\int\pi^{X|Y=y}\,\D\pi^{Y}(y)=\E_{y\sim\pi^{Y}}\pi^{X|Y=y}=:\E_{\pi^{Y}}\pi^{X|Y}\,.
\]
In other words, $\pi^{X}$ can be viewed as \textbf{a mixture of the
conditional distributions $\pi^{X|Y=y}$ with $y\sim\pi^{Y}$}.

\paragraph{SDE interpretation.}

Suppose we want to understand how fast the $\PS$ converges to $\pi^{X}$.
This leads us to study the forward and backward processes. The forward
step can be interpreted as simulating a heat flow for time $h$:

\[
\D Z_{t}=\D W_{t}\,,\quad Z_{0}\sim X_{k}\Longrightarrow Z_{h}\sim Y_{k+1}\,.
\]
Note that $(Z_{0},Z_{h})\sim\bs{\pi}$ when $Z_{0}\sim\pi^{X}$. Hence,
the heat semigroup $(Q_{h})_{h\geq0}$ naturally arise when studying
the forward step, and it is natural to ask whether one can introduce
a corresponding semigroup structure for the backward step as well.
It turns out that the \emph{backward heat flow} can be made precise:
\[
\D Z_{t}^{\leftarrow}=\grad\log(\pi^{X}*\gamma_{h-t})(Z_{t}^{\leftarrow})\,\D t+\D W_{t}\,,\quad Z_{0}^{\leftarrow}\sim Y_{k+1}\Longrightarrow Z_{h}^{\leftarrow}\sim X_{k+1}\,.
\]
In words, the backward step corresponds to simulating the backward
heat flow for time $h$. For example, if $Z_{0}^{\leftarrow}\sim\pi^{Y}$,
then $Z_{h}^{\leftarrow}\sim\pi^{X}$. Since the backward step acts
as a \emph{channel} $\pi^{X|Y=y}$, it is natural to guess that if
$Z_{0}^{\leftarrow}=\delta_{y}$, then $Z_{h}^{\leftarrow}=\pi^{X|Y=y}$
(though justifying this rigorously is non-trivial). Therefore, one
may introduce a semigroup structure for the backward heat flow via
\begin{equation}
Q_{h}^{\leftarrow}f(y)\defeq\E[f(Z_{h}^{\leftarrow})|Z_{0}^{\leftarrow}=y]=\E_{\pi^{X|Y=y}}f\,.\label{eq:ps-adjoint}
\end{equation}

As we will see in the next section, Klartag and Putterman \cite{KP23spectral}
defined the same object as an \emph{adjoint of the heat semigroup}
and developed its corresponding ``generator'' along with calculus
rules, thereby providing a framework for carrying out spectral analyses
of the backward step.
\begin{rem}
[Non-homogeneous backward process] One may notice that the semigroup
and adjoint associated with the backward heat flow are handled quite
differently compared to those for the heat flow and Langevin flow.
In the case of the heat flow or Langevin flow, we have a single SDE
(with no external parameter like $h$), from which the semigroup and
generator are derived. In contrast, the backward heat flow involves
an additional parameter $h$, and as $h$ varies, the corresponding
SDE changes accordingly.
\end{rem}

\begin{rem}
[Connection to SL] The backward step of the $\PS$ with step size
$h$ is encoded by $\pi^{X|Y}=\pi_{h}^{X|Y}$. Let us examine two
extreme cases, $h=0$ and $h=\infty$. Below, let $X_{0}\sim\pi^{X}$.

As $h\to0$, we have $Y_{1}=X_{0}+\msf N(0,hI_{n})\to X_{0}\sim\pi^{X}$,
and $\pi_{h}^{X|Y_{1}}$ becomes concentrated, vanishing unless $X=X_{0}$.
Thus, we expect that 
\[
\pi_{h}^{X|Y}\to\delta_{X_{0}}\quad\text{with }X_{0}\sim\pi^{X}\,.
\]
When $h\to\infty$, the density of $\pi^{X|Y}$ suggests that it converges
to $\exp(-V)\propto\pi^{X}$.

SL provides an interpolation between these two extreme cases when
viewed using ``time'' $t=1/h$. In this sense, SL provides a \emph{dynamic
perspective} on the backward step of the $\PS$. Vice versa, the $\PS$
backward step with fixed $h$ can be regarded as a snapshot of SL
at time $t=1/h$. 
\end{rem}

\subsection{Adjoint of the heat semigroup}

We provide a gentle introduction to the heat adjoint, as defined by
Klartag and Putterman \cite{KP23spectral}, through the lens of the
$\PS$ framework.

Let $\D Z_{t}=\D W_{t}$ with $Z_{0}\sim\pi=\pi^{X}$. Then the heat
semigroup $(Q_{h})_{h\geq0}$ is defined as
\[
Q_{h}f(x)=\E[f(Z_{h})\,|\,Z_{0}=x]=(f*\gamma_{h})(x)\,.
\]
Denoting $\pi_{h}:=\pi*\gamma_{h}=\pi^{Y}$ (which corresponds to
the forward step of the $\PS$), we note that $Q_{h}:L^{2}(\pi_{h})\to\ltwo(\pi)$:
for $f\in\ltwo(\pi_{h})$,
\[
\norm{Q_{h}f}_{\ltwo(\pi)}^{2}\leq\int Q_{h}(f^{2})\,\D\pi=\E_{\pi}\bbrack{\E[f^{2}(Z_{h})|Z_{0}=x]}=\E_{\pi_{h}}[f^{2}(Z_{h})]=\norm f_{\ltwo(\pi_{h})}^{2}\,.
\]

\subsubsection{Heat adjoint\label{subsec:heat-adjoint}}

We consider its adjoint $Q_{h}^{*}:L^{2}(\pi)\to L^{2}(\pi_{h})$
defined in the way that for any $f\in\ltwo(\pi)$ and $g\in L^{2}(\pi_{h})$,
\[
\inner{f,Q_{h}g}_{\ltwo(\pi)}=\inner{Q_{h}^{*}f,g}_{\ltwo(\pi_{h})}\,.
\]
The RHS is simply 
\[
\int Q_{h}^{*}f\cdot g\,\D\pi_{h}\,,
\]
and the LHS equals
\[
\inner{f,Q_{h}g}_{\ltwo(\pi)}=\E\bbrack{f(Z_{0})\,\E[g(Z_{h})|Z_{0}]}=\E[f(Z_{0})\,g(Z_{h})]=\E_{Z_{h}\sim\pi_{h}}\bbrack{g(Z_{h})\,\E[f(Z_{0})|Z_{h}]}\,.
\]
Relating the two equations above,
\[
Q_{h}^{*}f(y)=\E[f(Z_{0})|Z_{h}=y]\,.
\]
Due to $(Z_{0},Z_{h})\sim\bs{\pi}(x,y)$, we have $\law(Z_{0}|Z_{h}=y)=\pi^{X|Y=y}$,
so
\[
Q_{h}^{*}f(y)=\E_{\pi^{X|Y=y}}f\,.
\]
This is exactly the same as (\ref{eq:ps-adjoint})---$Q_{h}^{\leftarrow}f(y)=\E_{\pi^{X|Y=y}}f$.
Therefore, we can view the heat adjoint as the backward step of the
$\PS$.

Just as $Q_{h}f=f*\gamma_{h}$, we can also derive a formula for the
heat adjoint: using $\pi^{X|Y}\pi^{Y}=\bs{\pi}$ and noting $\bs{\pi}(x,y)=\pi(x)\,\gamma_{h}(y-x)$,
\begin{align*}
Q_{h}^{*}f(y) & =\int f\,\D\pi^{X|Y=y}=\frac{\int f(x)\,\bs{\pi}(x,y)\,\D x}{\pi^{Y}}=\frac{\int f(x)\,\pi(x)\,\gamma_{h}(y-x)\,\D x}{\pi*\gamma_{h}}=\frac{(f\pi)*\gamma_{h}}{\pi*\gamma_{h}}\\
 & =\frac{Q_{h}(f\pi)}{Q_{h}\pi}=\frac{(f\pi)_{h}}{\pi_{h}}\,.
\end{align*}

\subsubsection{Dynamic backward step $\equiv$ Stochastic localization}

For $X\sim\pi=\pi^{X}$, the forward step of the $\PS$ sends it to
$Y=X+W_{h}$. Using a standard fact that $\frac{1}{t}W_{t}\overset{d}{=}W_{1/t}'$
($W_{t}'$ is another BM), we know that for $t=1/h$
\[
tY=tX+tW_{1/t}\overset{d}{=}tX+W_{t}'\,.
\]
Hence, $\bar{\theta}_{t}:=tX+W_{t}'$ has the same law as $tY$. Since
$Y\overset{d}{=}\bar{\theta}_{t}/t$, the backward step of the $\PS$
can be thought of as 
\begin{align}
\pi^{X|Y} & \propto\exp\bpar{-V(x)-\frac{1}{2h}\,\bnorm{x-\frac{\bar{\theta}_{t}}{t}}^{2}}\propto\pi(x)\,\exp\bpar{-\frac{t}{2}\,\bnorm{x-\frac{\bar{\theta}_{t}}{t}}^{2}}\nonumber \\
 & \propto\pi(x)\,\exp\bpar{\inner{\bar{\theta}_{t},x}-\frac{t}{2}\,\norm x^{2}}\,.\label{eq:ps-backward-theta}
\end{align}
By comparing the last line with (\ref{eq:SL-pdf}), it is plausible
to view SL as a dynamic version of the backward step of the $\PS$.

\section{Guan's $\log\log n$-bound via Klartag and Lehec's spectral analysis}

Guan's bound is obtained by combining his uniform trace bound with
an improved spectral analysis. Overall, Guan follows the approach
of Klartag and Lehec  (KL) toward proving $\sigma_{\pi}\lesssim\log^{4}n$
\cite{KL22Bourgain}. It also starts with Barthe and Klartag's $H^{-1}$-inequality,
and Guan focuses on bounding $\sum_{i=1}^{n}\norm{x_{i}}_{H^{-1}(\pi)}^{2}$
(see (\ref{eq:hinverse-for-thin})).

It is a standard fact that for a centered $L^{2}(\pi)$ function $f$
(i.e., $\E_{\pi}f=0$), 
\[
\norm f_{H^{-1}(\pi)}^{2}=\int_{\lda_{1}}^{\infty}\frac{\inner{E_{\lda}f,f}}{\lda^{2}}\,\D\lda\,,
\]
where $\lda_{1}$ is the spectral gap of the Langevin generator $\mc L$
associated with $\pi$ (i.e., $\lda_{1}^{-1}=\cpi(\pi)$), and $E_{\lda}$
is the \emph{spectral projection} below level $\lda$. In terms of
a matrix analogy, one can think of it as the orthogonal projection
onto the subspace spanned by eigenvectors whose eigenvalues are at
most $\lda$. We will return to this later in more detail.

Putting these together,
\begin{equation}
\sigma_{\pi}^{2}\leq\frac{4}{n}\sum_{i=1}^{n}\int_{\lda_{1}}^{\infty}\frac{\inner{E_{\lda}x_{i},x_{i}}}{\lda^{2}}\,\D\lda=:4\int_{\lda_{1}}^{\infty}\frac{F(\lda)}{\lda^{2}}\,\D\lda\,,\label{eq:sigmasquare-bound}
\end{equation}
where $F(\lda):=\frac{1}{n}\sum_{i=1}^{n}\inner{E_{\lda}x_{i},x_{i}}=\frac{1}{n}\sum_{i=1}^{n}\norm{E_{\lda}x_{i}}_{L^{2}(\pi)}^{2}$
can be interpreted as the average spectral mass of the coordinate
functions below level $\lda$. This allows KL to focus on bounding
$F(\lda)$. Then, KL related $F(\lda)$ to $\norm{Q_{h}^{*}x}_{L^{2}(\pi Q_{h})}^{2}:=\sum_{i=1}^{n}\norm{Q_{h}^{*}x_{i}}_{L^{2}(\pi Q_{h})}^{2}$
for $\pi Q_{h}:=\pi*\gamma_{h}$ as follows:
\begin{itemize}
\item \textbf{LB}: $\norm{Q_{h}^{*}E_{\lda}x_{i}}_{L^{2}(\pi Q_{h})}^{2}\geq\exp\bpar{-h\int\abs{\nabla E_{\lda}x_{i}}^{2}\,\D\pi}\,\norm{E_{\lda}x_{i}}_{L^{2}(\pi)}^{2}$.
\item \textbf{UB}: $\norm{Q_{h}^{*}E_{\lda}x_{i}}_{L^{2}(\pi Q_{h})}^{2}\leq\norm{Q_{h}^{*}x_{i}}_{L^{2}(\pi Q_{h})}^{2}+\text{terms involving }Q_{h}^{*}\,(I-E_{\lda})x_{i}$.
\end{itemize}
Summing over $i\in[n]$ and chaining these two bounds, KL established
\[
F(\lda)\lesssim\frac{1}{n}\sum_{i=1}^{n}\norm{Q_{h}^{*}x_{i}}_{L^{2}(\pi Q_{h})}^{2}+\lda h\,.
\]

Since SL  is a dynamic version of the heat adjoint, we can introduce
SL  to carry out a finer analysis of $\norm{Q_{h}^{*}x}_{L^{2}(\pi Q_{h})}^{2}$.
Precisely, for $t=1/h$, $\pi=\pi^{X}$, $\pi^{Y}=\pi^{X}*\gamma_{h}$,
and SL $(\pi_{t})_{t\geq0}$ with $\pi_{0}=\pi$,
\begin{align*}
\E[\norm{b_{t}}^{2}] & =\E\Bbrack{\Bnorm{\int x\,\D\pi_{t}}^{2}}=\sum_{i=1}^{n}\E\Bbrack{\Bpar{\int x_{i}\,\D\pi_{t}}^{2}}\\
 & =\sum_{i=1}^{n}\E_{\pi^{Y}}[(\E_{\pi^{X|Y}}x_{i})^{2}]=\sum_{i=1}^{n}\E_{\pi^{X}*\gamma_{h}}[(Q_{h}^{*}x_{i})^{2}]\\
 & =\sum_{i=1}^{n}\norm{Q_{h}^{*}x_{i}}_{L^{2}(\pi Q_{h})}^{2}=\norm{Q_{h}^{*}x}_{L^{2}(\pi Q_{h})}^{2}\,,
\end{align*}
where we used the language of the $\ps$. Then, using $\de_{t}\E[\norm{b_{t}}^{2}]=\E\tr(\Sigma_{t}^{2})$
from (\ref{eq:derivative-bt}) and known bounds on $\tr(\Sigma_{t}^{2})$
at that time, KL managed to bound $F(\lda)$, which in turn implied
the $\O(\log^{4}n)$-bound on $\sigma_{\pi}$.

Guan analyzes the \textbf{UB} part more carefully while leveraging
the uniform trace bound (i.e., $\E\tr(\Sigma_{t}^{2})=\O(n)$ from
Theorem~\ref{thm:trace-control}) and an improved spectral analysis.
The main components are integration by parts, the integrated Bochner
formula, and the properties of the dynamic $\Gamma$-calculus developed
by Klartag and Putterman \cite{KP23spectral}. By chaining the LB
with the improved UB, Guan establishes 
\[
e^{-h\lda}F(\lda)\lesssim h^{-1}+\sqrt{\lda F(\lda)\,(\lceil\log_{2}h\rceil+1)}+\lda hF(\lda)\,.
\]
Taking $h=(\lda F(\lda))^{-1/2}$, the paper deduces
\[
F(\lda)\lesssim\lda\,\abs{\log\lda}\,.
\]
Putting these back to (\ref{eq:sigmasquare-bound}), Guan concludes
that $\sigma_{\pi}\lesssim\abs{\log\lda_{1}}\asymp\log\cpi(\pi)\lesssim\log\log n$
as desired. 

\subsection{Calculus on adjoint of heat semigroup}

We now delve into the adjoint $Q_{h}^{*}$ of heat semigroups in more
details. We recommend readers read \S\ref{sec:semigroup} if they
are not familiar with this topic. In the following two subsections,
$\pi_{t}$ denotes $\pi*\gamma_{t}$, not the SL $\pi_{t}$.

\subsubsection{Forward equation and box operator}

Markov semigroups in general satisfy that for test functions $f$,
\[
\de_{t}P_{t}f=\mc LP_{t}f=P_{t}\mc Lf\,,
\]
as seen in

\[
\de_{t}P_{t}f=\lim_{\delta\to0}\frac{P_{t+\delta}f-P_{t}f}{\delta}=P_{t}\lim_{\delta\to0}\frac{P_{\delta}f-f}{\delta}=P_{t}\mc Lf\,.
\]
Thus, for the heat semigroup $Q_{t}:L^{2}(\pi_{t})\to L^{2}(\pi)$,
as its generator is $\mc L=\half\,\Delta$, we have that
\[
\de_{t}Q_{t}f=\half\,\Delta Q_{t}f\,.
\]

Recall that we derived in \S\ref{subsec:heat-adjoint} that the heat
adjoint $Q_{t}^{*}:L^{2}(\pi)\to L^{2}(\pi_{t})$ satisfies 
\[
Q_{t}^{*}f=\frac{Q_{t}(f\pi)}{Q_{t}\pi}\,.
\]
Taking gradients of both sides of $Q_{t}\pi\cdot Q_{t}^{*}f=Q_{t}(f\pi)$,
we obtain
\begin{align*}
\nabla Q_{t}(f\pi) & =Q_{t}^{*}f\,\nabla Q_{t}\pi+Q_{t}\pi\,\nabla Q_{t}^{*}f\,,\\
\Delta Q_{t}(f\pi) & =Q_{t}^{*}f\,\Delta Q_{t}\pi+2\inner{\nabla Q_{t}^{*}f,\nabla Q_{t}\pi}+Q_{t}\pi\,\Delta Q_{t}^{*}f\,.
\end{align*}
Using this, we can directly compute that
\begin{align*}
\de_{t}Q_{t}^{*}f & =\frac{\de_{t}Q_{t}(f\pi)}{Q_{t}\pi}-\frac{Q_{t}(f\pi)}{(Q_{t}\pi)^{2}}\,\de_{t}Q_{t}\pi\\
 & =\half\,\frac{\Delta Q_{t}(f\pi)}{Q_{t}\pi}-\half\,\Delta Q_{t}\pi\,\frac{Q_{t}(f\pi)}{(Q_{t}\pi)^{2}}\\
 & =\half\,\frac{\Delta Q_{t}(f\pi)}{Q_{t}\pi}-\half\,\frac{Q_{t}^{*}f\,\Delta Q_{t}\pi}{Q_{t}\pi}\\
 & =\half\,\Delta Q_{t}^{*}f+\inner{\nabla Q_{t}^{*}f,\nabla\log Q_{t}\pi}\\
 & =\half\,\Delta Q_{t}^{*}f+\inner{\nabla Q_{t}^{*}f,\nabla\log\pi_{t}}
\end{align*}
Therefore, comparing this with $\de_{t}Q_{t}f=\mc LQ_{t}f=\half\,\Delta Q_{t}f$,
we are led to define a sort of ``generator'' of $Q_{t}^{*}$ (called
the box operator in \cite{KP23spectral}) as follows:
\begin{equation}
\square_{t}f=\half\,\Delta f+\inner{\nabla\log\pi_{t},\nabla f}\,,\tag{\ensuremath{\msf{box}}}\label{eq:box-generator}
\end{equation}
which looks quite similar to the Langevin operator associated with
$\pi_{t}$, given as 
\[
\mc L_{t}f=\Delta f+\inner{\nabla\log\pi_{t},\nabla f}\,.
\]
Note that 
\[
\de_{t}Q_{t}^{*}f=\square_{t}Q_{t}^{*}f\,.
\]

\subsubsection{A dynamic version of $\Gamma$-calculus}

Given a Langevin or heat-flow generator $\mc L$, we defined the (iterated)
carr\'e du champ as 
\begin{align*}
\Gamma_{0}(f,g) & :=fg\,,\\
\Gamma_{i+1}(f,g) & :=\half\,\bpar{\mc L\Gamma_{i}(f,g)-\Gamma_{i}(f,\mc Lg)-\Gamma_{i}(\mc Lf,g)}\ \text{for }i\geq0\,.
\end{align*}
Inspired by this, we can define this $\Gamma$-notion for the box
operator $\square_{t}$:
\begin{align*}
\Gamma_{0}(f,g) & :=fg\,,\\
\Gamma_{i+1}(f,g) & :=\square_{t}\Gamma_{i}(f,g)-\Gamma_{i}(\square_{t}f,g)-\Gamma_{i}(f,\square_{t}g)-\frac{\D\Gamma_{i}}{\D t}(f,g)\,.
\end{align*}
We note that unlike the Langevin case (which admits a stationary distribution),
this has dependence on the time $t$.

\paragraph{The formulas of $\Gamma_{1}$ and $\Gamma_{2}$.}

Let us compute this:
\begin{align*}
\Gamma_{1}(f,g) & =\square_{t}(fg)-g\,\square_{t}f-f\,\square_{t}g\\
 & =\mc L_{t}(fg)-g\,\mc L_{t}f-f\,\mc L_{t}g-\half\,\Delta(fg)+\half\,g\Delta f+\half\,f\Delta g\\
 & =2\inner{\nabla f,\nabla g}-\half\,(f\,\Delta g+2\inner{\nabla f,\nabla g}+g\,\Delta f)+\half\,g\Delta f+\half\,f\Delta g\\
 & =\inner{\nabla f,\nabla g}\,,
\end{align*}
so it is actually the same as the carr\'e du champ of the heat and
Langevin generators. 

Using this, we can immediately obtain that
\[
\Gamma_{2}(f,g)=\square_{t}\inner{\nabla f,\nabla g}-\inner{\nabla\square_{t}f,\nabla g}-\inner{\nabla f,\nabla\square_{t}g}\,.
\]
Let $\Gamma_{2}^{L}$ be the iterated carr\'e du champ of the Langevin
generator (associated with $\pi$), which satisfies
\[
\Gamma_{2}^{L}(f,f)=\norm{\nabla^{2}f}_{\hs}^{2}-\inner{\nabla f,\nabla^{2}\log\pi\,\nabla f}\,.
\]
Due to $\mc L_{t}=\square_{t}+\half\,\Delta$, 
\begin{align}
\Gamma_{2}(f,f) & =\square_{t}(\norm{\nabla f}^{2})-2\inner{\nabla\square_{t}f,\nabla f}\nonumber \\
 & =2\Gamma_{2}^{L}(f,f)-\half\,\Delta(\norm{\nabla f}^{2})+\inner{\nabla\Delta f,\nabla f}\nonumber \\
 & =\norm{\nabla^{2}f}_{\hs}^{2}-2\inner{\nabla f,\nabla^{2}\log\pi_{t}\,\nabla f}\,,\label{eq:dynamic-cd-champ}
\end{align}
where we used (\ref{eq:Bochner}), $\half\,\Delta(\norm{\nabla f}^{2})=\inner{\Delta\nabla f,\nabla f}+\norm{\nabla^{2}f}_{\hs}^{2}$.

\paragraph{A handy lemma.}

All the things developed lead to the following lemma.
\begin{lem}
Under some regularity conditions on $f$, for $i=0,1$,
\begin{equation}
\de_{t}\int\Gamma_{i}(Q_{t}^{*}f,Q_{t}^{*}f)\,\D(Q_{t}\pi)=-\int\Gamma_{i+1}(Q_{t}^{*}f,Q_{t}^{*}f)\,\D(Q_{t}\pi)\,.\tag{\ensuremath{\msf{iter}}-\ensuremath{\msf{diff}}}\label{eq:iterated-diff-Gamma}
\end{equation}
\end{lem}

\begin{proof}
Let $f_{t}:=Q_{t}^{*}f$ and $\pi_{t}=Q_{t}f$. Then, using $\de_{t}f_{t}=\square_{t}f_{t}$
and $\de_{t}\pi_{t}=\mc L\pi_{t}=\half\,\Delta\pi_{t}$,
\begin{align*}
\de_{t}\int\Gamma_{i}(Q_{t}^{*}f,Q_{t}^{*}f)\,\D(Q_{t}\pi) & =\de_{t}\int\Gamma_{i}(f_{t},f_{t})\,\pi_{t}\\
 & =\int\bpar{\de_{t}\Gamma_{i}(f_{t},f_{t})\,\pi_{t}+2\Gamma_{i}(\square_{t}f_{t},f_{t})\,\pi_{t}+\Gamma_{i}(f_{t},f_{t})\,\frac{\Delta\pi_{t}}{2}}\\
 & \underset{\eqref{eq:IBP}}{=}\int\bpar{2\Gamma_{i}(\square_{t}f_{t},f_{t})+\half\,\Delta\Gamma_{i}(f_{t},f_{t})+\de_{t}\Gamma_{i}(f_{t},f_{t})}\,\pi_{t}\\
 & =\int\bpar{-\square_{t}\Gamma_{i}(f_{t},f_{t})+2\Gamma_{i}(\square_{t}f_{t},f_{t})+\de_{t}\Gamma_{i}(f_{t},f_{t})}\,\pi_{t}\\
 & \qquad+\int\bpar{\square_{t}\Gamma_{i}(f_{t},f_{t})+\half\,\Delta\Gamma_{i}(f_{t},f_{t})}\,\pi_{t}\\
 & =-\int\Gamma_{i+1}(f_{t},f_{t})\,\pi_{t}+\int\mc L_{t}\Gamma_{i}(f_{t},f_{t})\,\pi_{t}\\
 & =-\int\Gamma_{i+1}(f_{t},f_{t})\,\D\pi_{t}\,,
\end{align*}
where we used $\int\mc L\vphi\,\D\pi=0$ for (regular) test functions
$\vphi$ (e.g., shown by (\ref{eq:IBP})).
\end{proof}

\subsubsection{Decreasing Rayleigh quotient}

Recall that the Rayleigh quotient with respect to a function $f$
is defined as 
\[
R_{f}=\frac{\int\norm{\nabla f}^{2}\,\D\pi}{\int f^{2}\,\D\pi}\,.
\]
We now consider its time-varying version, defined as
\[
R_{f}(t)=\frac{\int\norm{\nabla Q_{t}^{*}f}^{2}\,\D(Q_{t}\pi)}{\int(Q_{t}^{*}f)^{2}\,\D(Q_{t}\pi)}=\frac{\int\norm{\nabla f_{t}}^{2}\,\D\pi_{t}}{\int f_{t}^{2}\,\D\pi_{t}}\,,
\]
and we show that this is non-increasing in $t$ when $\pi$ is a logconcave
distribution.
\begin{thm}
[Decreasing Rayleigh] Let $\pi$ be an absolutely continuous and
logconcave probability measure over $\Rn$. Given a regular $f$ (in
the Sobolev space $H^{1}(\pi)$), the Rayleigh quotient 
\[
R_{f}(t)=\frac{\int\norm{\nabla f_{t}}^{2}\,\D\pi_{t}}{\int f_{t}^{2}\,\D\pi_{t}}
\]
is non-increasing in $t\geq0$. In particular, the function $\norm{f_{t}}_{L^{2}(\pi_{t})}$
is log-convex in $t\geq0$.
\end{thm}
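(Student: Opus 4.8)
The plan is to convert the monotonicity of $R_f(\cdot)$ into a single differential inequality. Write $f_t := Q_t^* f$ and $\pi_t := Q_t\pi = \pi * \gamma_t$, and set
\[
N(t) := \int f_t^2 \, \D\pi_t = \norm{f_t}_{L^2(\pi_t)}^2 , \qquad D(t) := \int \norm{\nabla f_t}^2 \, \D\pi_t = \int \Gamma_1(f_t,f_t)\,\D\pi_t ,
\]
so that $R_f(t) = D(t)/N(t)$. Applying the handy lemma \eqref{eq:iterated-diff-Gamma} (in the form $\de_t \int \Gamma_i(f_t,f_t)\,\D\pi_t = -\int \Gamma_{i+1}(f_t,f_t)\,\D\pi_t$ that is actually derived in its proof) with $i=0$ gives $N'(t) = -D(t)$, and with $i=1$ gives $D'(t) = -\int \Gamma_2(f_t,f_t)\,\D\pi_t$. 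Hence
\[
\frac{\D}{\D t} R_f(t) = \frac{D'(t)N(t) - D(t)N'(t)}{N(t)^2} = \frac{D'(t)N(t) + D(t)^2}{N(t)^2},
\]
and it suffices to prove the pointwise-in-$t$ estimate $\int \Gamma_2(f_t,f_t)\,\D\pi_t \ge D(t)^2/N(t)$, which forces the numerator above to be non-positive.

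This lower bound on $\int \Gamma_2$ is where logconcavity enters. Since the convolution of logconcave functions is logconcave (the first theorem of the excerpt), $\pi_t = \pi*\gamma_t$ is logconcave, so $\hess \log \pi_t \preceq 0$. Comparing the dynamic carré du champ \eqref{eq:dynamic-cd-champ}, $\Gamma_2(g,g) = \norm{\hess g}_{\hs}^2 - 2\inner{\nabla g, \hess\log\pi_t\,\nabla g}$, with the iterated carré du champ $\Gamma_2^{\mc L_t}(g,g) = \norm{\hess g}_{\hs}^2 - \inner{\nabla g, \hess\log\pi_t\,\nabla g}$ of the Langevin generator $\mc L_t g = \Delta g + \inner{\nabla\log\pi_t,\nabla g}$ of $\pi_t$, we get the pointwise inequality $\Gamma_2(g,g) - \Gamma_2^{\mc L_t}(g,g) = -\inner{\nabla g,\hess\log\pi_t\,\nabla g} \ge 0$. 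Integrating against $\pi_t$ and using the integrated Bochner identity $\int \Gamma_2^{\mc L_t}(g,g)\,\D\pi_t = \int (\mc L_t g)^2\,\D\pi_t$ (a standard consequence of \eqref{eq:IBP} and the invariance $\int \mc L_t \vphi\,\D\pi_t = 0$), we obtain $\int \Gamma_2(f_t,f_t)\,\D\pi_t \ge \int (\mc L_t f_t)^2\,\D\pi_t$. Finally, Cauchy--Schwarz gives $\int (\mc L_t f_t)^2\,\D\pi_t \ge \bpar{\int f_t\,\mc L_t f_t\,\D\pi_t}^2 / \int f_t^2\,\D\pi_t$, and integration by parts gives $\int f_t\,\mc L_t f_t\,\D\pi_t = -\int \norm{\nabla f_t}^2\,\D\pi_t = -D(t)$; combining, $\int \Gamma_2(f_t,f_t)\,\D\pi_t \ge D(t)^2/N(t)$, as required, and therefore $\frac{\D}{\D t}R_f(t) \le 0$.

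For the last assertion, observe $\frac{\D}{\D t}\log N(t) = N'(t)/N(t) = -D(t)/N(t) = -R_f(t)$; since $R_f$ is non-increasing, $-R_f$ is non-decreasing, so $\log N(t) = 2\log\norm{f_t}_{L^2(\pi_t)}$ is convex, i.e. $\norm{f_t}_{L^2(\pi_t)}$ is log-convex. The main obstacle is the key estimate of the second paragraph — recognizing that the correct lower bound for $\int \Gamma_2$ is exactly $D^2/N$, and chaining logconcavity (to discard the $\hess\log\pi_t$ term), the integrated Bochner formula, and Cauchy--Schwarz for the self-adjoint operator $-\mc L_t$. A secondary technical point is the rigorous justification of differentiation under the integral sign, of the integration-by-parts identities, and of the Bochner identity for the non-stationary family $(\pi_t)$; these are precisely the regularity conditions on $f$ alluded to in \eqref{eq:iterated-diff-Gamma} and in the theorem statement ($f \in H^1(\pi)$ with enough decay), and are where the remaining care must be exercised.
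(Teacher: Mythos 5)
Your proof is correct and takes essentially the same route as the paper: both reduce to the estimate $\int \Gamma_2(f_t,f_t)\,\D\pi_t \ge D(t)^2/N(t)$, obtained by discarding the $-\int\inner{\nabla f_t,\hess\log\pi_t\,\nabla f_t}\,\D\pi_t \ge 0$ term (logconcavity of $\pi_t$), applying the integrated Bochner identity, and finishing with Cauchy--Schwarz plus integration by parts. The only cosmetic difference is that you first compare $\Gamma_2$ pointwise with the Langevin $\Gamma_2^{\mc L_t}$ and then integrate, whereas the paper unfolds Bochner directly inside the quotient-rule computation; you also correctly note that the displayed statement of \eqref{eq:iterated-diff-Gamma} has a stray $\de_t$ on the right-hand side and that the form actually proven (and needed) is $\de_t\int\Gamma_i\,\D\pi_t = -\int\Gamma_{i+1}\,\D\pi_t$.
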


\begin{proof}
Let us differentiate the quotient in $t$. Using (\ref{eq:iterated-diff-Gamma})
for $i=0,1$,
\begin{align*}
\de_{t}R_{f}(t) & =\de_{t}\frac{\int\Gamma_{1}(f_{t},f_{t})\,\D\pi_{t}}{\int\Gamma_{0}(f_{t},f_{t})\,\D\pi_{t}}\\
 & =-\frac{\int\Gamma_{2}(f_{t},f_{t})\,\D\pi_{t}}{\int\Gamma_{0}(f_{t},f_{t})\,\D\pi_{t}}+\Bpar{\frac{\int\Gamma_{1}(f_{t},f_{t})\,\D\pi_{t}}{\int\Gamma_{0}(f_{t},f_{t})\,\D\pi_{t}}}^{2}\\
 & =\frac{\bpar{\int\norm{\nabla f_{t}}^{2}\,\D\pi_{t}}^{2}}{\norm{f_{t}}_{L^{2}(\pi_{t})}^{4}}-\frac{\int(\norm{\nabla^{2}f_{t}}_{\hs}^{2}-2\inner{\nabla f_{t},\nabla^{2}\log\pi_{t}\,\nabla f_{t}})\,\D\pi_{t}}{\norm{f_{t}}_{L^{2}(\pi_{t})}^{2}}\\
 & \underset{(i)}{=}\frac{\bpar{\int\norm{\nabla f_{t}}^{2}\,\D\pi_{t}}^{2}-\int(\mc L_{t}f_{t})^{2}\,\D\pi_{t}\,\norm{f_{t}}_{L^{2}(\pi_{t})}^{2}}{\norm{f_{t}}_{L^{2}(\pi_{t})}^{4}}+\frac{\int\inner{\nabla f_{t},\nabla^{2}\log\pi_{t}\,\nabla f_{t}}\,\D\pi_{t}}{\norm{f_{t}}_{L^{2}(\pi_{t})}^{2}}\\
 & \leq\frac{\bpar{\int\norm{\nabla f_{t}}^{2}\,\D\pi_{t}}^{2}-\int(\mc L_{t}f_{t})^{2}\,\D\pi_{t}\,\norm{f_{t}}_{L^{2}(\pi_{t})}^{2}}{\norm{f_{t}}_{L^{2}(\pi_{t})}^{4}}\leq0
\end{align*}
where in $(i)$ we used (\ref{eq:int-Bochner}), and the last inequality
follows from
\[
\int\norm{\nabla f_{t}}^{2}\,\D\pi_{t}=\int\Gamma_{1}(f_{t},f_{t})\,\D\pi_{t}=\int(-\mc L_{t}f_{t})f_{t}\,\D\pi_{t}\leq\sqrt{\int(\mc L_{t}f_{t})^{2}\,\D\pi_{t}}\sqrt{\int f_{t}^{2}\,\D\pi_{t}}\,.
\]

As for the second claim, we note that
\[
\de_{t}\log\norm{f_{t}}_{L^{2}(\pi_{t})}^{2}=\frac{\de_{t}\int\Gamma_{0}(f_{t},f_{t})\,\D\pi_{t}}{\norm{f_{t}}_{L^{2}(\pi_{t})}^{2}}=-\frac{\int\Gamma_{1}(f_{t},f_{t})\,\D\pi_{t}}{\norm{f_{t}}_{L^{2}(\pi_{t})}^{2}}=-R_{f}(t)\,,
\]
so $\de_{t}^{2}(\log\norm{f_{t}}_{L^{2}(\pi_{t})}^{2})=-\de_{t}R_{f}(t)\geq0$.
\end{proof}
This gives us an important observation that will be used to study
the thin-shell conjecture.
\begin{cor}
[Relating $Q_h^*f$ and $f$] \label{cor:adjoint-norm-change} Let
$f\in H^{1}(\pi)$. Then for all $h>0$,
\[
\frac{\norm{Q_{h}^{*}f}_{L^{2}(\pi_{h})}^{2}}{\norm f_{L^{2}(\pi)}^{2}}\geq e^{-hR_{f}(0)}\,.
\]
\end{cor}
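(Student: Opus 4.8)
The plan is to derive Corollary~\ref{cor:adjoint-norm-change} as an immediate consequence of the ``Decreasing Rayleigh'' theorem just proved. First I would recall from the second claim of that theorem (and its proof) that $\partial_t \log \norm{f_t}_{L^2(\pi_t)}^2 = -R_f(t)$, where $f_t = Q_t^* f$ and $\pi_t = \pi * \gamma_t$, with $f_0 = f$ and $\pi_0 = \pi$. Integrating this identity from $0$ to $h$ gives
\[
\log \frac{\norm{Q_h^* f}_{L^2(\pi_h)}^2}{\norm{f}_{L^2(\pi)}^2} = -\int_0^h R_f(t)\,\D t\,.
\]
Now the key step is to invoke monotonicity: the theorem states that $R_f(t)$ is non-increasing in $t \geq 0$, so $R_f(t) \leq R_f(0)$ for all $t \in [0,h]$, hence $\int_0^h R_f(t)\,\D t \leq h\,R_f(0)$. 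Substituting, $\log\bigl(\norm{Q_h^* f}_{L^2(\pi_h)}^2 / \norm{f}_{L^2(\pi)}^2\bigr) \geq -h R_f(0)$, and exponentiating yields the claimed bound.

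I would present this cleanly in three lines: (i) state the logarithmic-derivative identity $\partial_t \log \norm{f_t}_{L^2(\pi_t)}^2 = -R_f(t)$, citing the proof of the Decreasing Rayleigh theorem; (ii) integrate over $[0,h]$; (iii) bound the integral using $R_f(t) \le R_f(0)$ and exponentiate. One should note for cleanliness that $Q_0^* f = f$ and $Q_0 \pi = \pi$, so the lower endpoint of the integration genuinely gives the denominator $\norm{f}_{L^2(\pi)}^2$.

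The only subtlety — and the place where one must be slightly careful rather than the ``hard part'' — is the regularity hypothesis: the argument requires $f \in H^1(\pi)$ so that $R_f(0)$ is finite and the Decreasing Rayleigh theorem applies along the whole trajectory $(f_t)_{t\ge 0}$; this is exactly the standing assumption in the corollary's statement. There is no real obstacle here: the corollary is a two-step corollary of the preceding theorem, and all the analytic work (the $\Gamma_2$ computation, the integrated Bochner formula, and the Cauchy--Schwarz estimate showing $\partial_t R_f(t) \le 0$) has already been carried out. If $R_f(0) = \infty$ the inequality holds trivially, so one may freely assume $R_f(0) < \infty$.
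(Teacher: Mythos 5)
Your proof is correct and matches the paper's argument essentially verbatim: both use the logarithmic-derivative identity $\partial_t \log\norm{f_t}_{L^2(\pi_t)}^2 = -R_f(t)$ from the Decreasing Rayleigh theorem, bound $R_f(t)\le R_f(0)$ by monotonicity, and integrate over $[0,h]$. The only cosmetic difference is that you integrate first and then bound the integral, while the paper bounds the integrand pointwise and then integrates; the content is identical.
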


One might notice a similarity with, for the Langevin semigroup $P_{t}$,
\[
\frac{\norm{P_{t}f}_{L^{2}(\pi)}^{2}}{\norm f_{L^{2}(\pi)}^{2}}\leq\exp\bpar{-\frac{2t}{\cpi(\pi)}}\,.
\]

\begin{proof}
From the computation above, 
\[
\de_{t}\log\norm{Q_{h}^{*}f}_{L^{2}(\pi_{h})}^{2}=-R_{f}(t)\geq-R_{f}(0)\,,
\]
so 
\[
\log\frac{\norm{Q_{h}^{*}f}_{L^{2}(\pi_{h})}^{2}}{\norm f_{L^{2}(\pi)}^{2}}\geq-hR_{f}(0)\,,
\]
which completes the proof. 
\end{proof}
It also give another useful insight into the adjoint:
\begin{cor}
[Gradient inequality] \label{cor:gradient-ineq} Let $f\in H^{1}(\pi)$.
Then for all $h>0$,
\[
\int\norm{\nabla Q_{h}^{*}f}^{2}\,\D\pi_{h}\leq\int\norm{\nabla f}^{2}\,\D\pi\,.
\]
\end{cor}

\begin{proof}
Recall that 
\[
R_{f}(h)=\frac{\E_{\pi_{h}}[\norm{\nabla Q_{h}^{*}f}^{2}]}{\E_{\pi_{h}}[\norm{Q_{h}^{*}f}^{2}]}\leq\frac{\E_{\pi}[\norm{\nabla f}^{2}]}{\E_{\pi}[\norm f^{2}]}\,.
\]
This implies that 
\[
\E_{\pi_{h}}[\norm{\nabla Q_{h}^{*}f}^{2}]\leq\frac{\E_{\pi_{h}}[\norm{Q_{h}^{*}f}^{2}]}{\E_{\pi}[\norm f^{2}]}\,\E_{\pi}[\norm{\nabla f}^{2}]\leq\E_{\pi}[\norm{\nabla f}^{2}]\,,
\]
which completes the proof.
\end{proof}

\subsection{Bound on the thin-shell constant}

We are now ready to prove $\sigma_{n}\lesssim\log\psi_{\kls}(n)$.
Pick an isotropic logconcave $\pi$ that saturates the thin-shell
constant (i.e., $\sigma_{n}/2\leq\sigma_{\pi}$). It suffices to show
that
\[
\sigma_{\pi}\lesssim\log\psi_{\kls}(\pi)\asymp\log\cpi(\pi)=\Abs{\log\lda_{\pi}}\,,
\]
where $\lda_{\pi}$ denotes the spectral gap of $-\mc L$ associated
with $\pi$.

Recall that 
\[
\sigma_{\pi}^{2}\lesssim\int_{\lda_{\pi}}^{\infty}\frac{F(\lda)}{\lda^{2}}\,\D\lda=\int_{e^{-1}}^{\infty}\frac{F(\lda)}{\lda^{2}}\,\D\lda+\int_{\lda_{\pi}\wedge e^{-1}}^{e^{-1}}\frac{F(\lda)}{\lda^{2}}\,\D\lda\,,
\]
where $F(\lda)=n^{-1}\sum_{i=1}^{n}\norm{E_{\lda}x_{i}}_{L^{2}(\pi)}^{2}$
is the average spectral mass of the coordinate functions below level
$\lda$. Note that $F(\lda)\leq1$ for $\lda>0$, as seen in 
\[
F(\lda)=\frac{1}{n}\sum_{i=1}^{n}\norm{E_{\lda}x_{i}}_{L^{2}(\pi)}^{2}\leq\frac{1}{n}\sum_{i=1}^{n}\norm{x_{i}}_{L^{2}(\pi)}^{2}=\frac{1}{n}\int\norm x^{2}\,\D\pi=1\,.
\]
Thus,
\begin{equation}
\sigma_{\pi}^{2}\lesssim\int_{e^{-1}}^{\infty}\frac{1}{\lda^{2}}\,\D\lda+\int_{\lda_{\pi}\wedge e^{-1}}^{e^{-1}}\frac{F(\lda)}{\lda^{2}}\,\D\lda\lesssim1+\int_{\lda_{\pi}\wedge e^{-1}}^{e^{-1}}\frac{F(\lda)}{\lda^{2}}\,\D\lda\,.\label{eq:sigma-decomp}
\end{equation}

\paragraph{Proof outline.}

We can now focus on bounding $F(\lda)$ when $\lda\leq e^{-1}$. As
mentioned earlier, our approach is to translate the bound on $\norm{Q_{h}^{*}x}_{L^{2}(\pi_{h})}^{2}$
into one on $F(\lda)$. This starts with the observation made in Corollary~\ref{cor:adjoint-norm-change}:
\[
\norm{Q_{h}^{*}E_{\lda}x_{i}}_{L^{2}(\pi_{h})}^{2}\geq e^{-hR_{E_{\lda}x_{i}}(0)}\,\norm{E_{\lda}x_{i}}_{L^{2}(\pi)}^{2}\,.
\]
As $R_{E_{\lda}x_{i}}(0)\leq\lda$ (due to the spectral projection
$E_{\lda}$), it follows that $\norm{Q_{h}^{*}E_{\lda}x_{i}}_{L^{2}(\pi_{h})}^{2}\geq e^{-h\lda}\,\norm{E_{\lda}x_{i}}_{L^{2}(\pi)}^{2}$,
so
\[
\frac{1}{n}\sum_{i=1}^{n}\norm{Q_{h}^{*}E_{\lda}x_{i}}_{L^{2}(\pi_{h})}^{2}\geq e^{-h\lda}\,\frac{1}{n}\sum_{i=1}^{n}\norm{E_{\lda}x_{i}}_{L^{2}(\pi)}^{2}=e^{-h\lda}\,F(\lda)\,.
\]
Therefore, we can introduce $\sum\norm{Q_{h}^{*}x_{i}}_{L^{2}(\pi_{h})}^{2}$
simply by, for $E_{\lda}^{\perp}:=\id-E_{\lda}$,
\[
\sum_{i}\norm{Q_{h}^{*}E_{\lda}x_{i}}_{L^{2}(\pi_{h})}^{2}\leq\sum_{i}\norm{Q_{h}^{*}x_{i}}_{L^{2}(\pi_{h})}^{2}+2\underbrace{\Abs{\sum_{i=1}^{n}\int(Q_{h}^{*}E_{\lda}^{\perp}x_{i})\,(Q_{h}^{*}E_{\lda}x_{i})\,\D\pi_{h}}}_{=:\msf A}=\norm{Q_{h}^{*}x}_{L^{2}(\pi_{h})}^{2}+2\msf A\,.
\]
Hence, it suffices to bound the first term and $\msf A$ (and then
simply relate this upper bound to the lower bound on the LHS, which
is $e^{-h\lda}F(\lda)$).

\paragraph{Bound on $\protect\norm{Q_{h}^{*}x}_{L^{2}(\pi_{h})}^{2}$.}

Let us bound $\norm{Q_{h}^{*}x}_{L^{2}(\pi_{h})}^{2}=\E[\norm{b_{t}}^{2}]$.
Since $\de_{t}\E[\norm{b_{t}}^{2}]=\E\tr(\Sigma_{t}^{2})\lesssim n$,
we have 
\[
\E[\norm{b_{t}}^{2}]\lesssim nt\,.
\]
Moreover,
\[
\E[\norm{b_{t}}^{2}]=\E\Bbrack{\Bnorm{\int x\,\D\pi_{t}}^{2}}\leq\E\Bbrack{\int\norm x^{2}\,\D\pi_{t}}=\int\norm x^{2}\,\D\pi_{0}=n\,.
\]
Combining these two bounds, for some constant $c>e$,
\[
\norm{Q_{h}^{*}x}_{L^{2}(\pi_{h})}^{2}\le cn\,(h^{-1}\wedge1)\,.
\]

\paragraph{Bound on $\protect\msf A$.}

We now bound the term $\msf A$: using (\ref{eq:iterated-diff-Gamma}),
\begin{align*}
 & \Babs{\sum_{i=1}^{n}\int(Q_{h}^{*}E_{\lda}^{\perp}x_{i})\,(Q_{h}^{*}E_{\lda}x_{i})\,\D\pi_{h}}\\
 & =\Babs{\sum_{i=1}^{n}\int_{0}^{h}\de_{s}\int(Q_{s}^{*}E_{\lda}^{\perp}x_{i})\,(Q_{s}^{*}E_{\lda}x_{i})\,\D\pi_{s}\D s}\\
 & =\Babs{\sum_{i=1}^{n}\int_{0}^{h}\int\inner{\nabla Q_{s}^{*}E_{\lda}^{\perp}x_{i},\nabla Q_{s}^{*}E_{\lda}x_{i}}\,\D\pi_{s}\D s}\\
 & =\Babs{\sum_{i=1}^{n}\int_{0}^{h}\int\inner{\nabla Q_{s}^{*}x_{i}-\nabla Q_{s}^{*}E_{\lda}x_{i},\nabla Q_{s}^{*}E_{\lda}x_{i}}\,\D\pi_{s}\D s}\\
 & \leq\Babs{\underbrace{\sum_{i=1}^{n}\int_{0}^{h}\int\inner{\nabla Q_{s}^{*}x_{i},\nabla Q_{s}^{*}E_{\lda}x_{i}}\,\D\pi_{s}\D s}_{=:\msf I}}+\underbrace{\sum_{i=1}^{n}\int_{0}^{h}\int\norm{\nabla Q_{s}^{*}E_{\lda}x_{i}}^{2}\,\D\pi_{s}\D s}_{=:\msf{II}}\,.
\end{align*}

We bound $\msf I$ and $\msf{II}$, respectively. Let us denote by
$\mc L_{s}$ the Langevin generator associated with $\pi_{s}=\pi*\gamma_{s}$.
As for $\msf I$, using (\ref{eq:IBP}) and Cauchy-Schwarz,
\begin{align*}
\sum_{i=1}^{n}\int\inner{\nabla Q_{s}^{*}x_{i},\nabla Q_{s}^{*}E_{\lda}x_{i}}\,\D\pi_{s} & =-\sum_{i=1}^{n}\int Q_{s}^{*}x_{i}\cdot\mc L_{s}Q_{s}^{*}E_{\lda}x_{i}\,\D\pi_{s}\\
 & \leq\Bpar{\sum_{i=1}^{n}\int(Q_{s}^{*}x_{i})^{2}\,\D\pi_{s}}^{1/2}\Bpar{\sum_{i=1}^{n}\int(\mc L_{s}Q_{s}^{*}E_{\lda}x_{i})^{2}\,\D\pi_{s}}^{1/2}\\
 & \leq c^{1/2}n^{1/2}\,(s^{-1/2}\wedge1)\,\Bpar{\sum_{i=1}^{n}\int(\mc L_{s}Q_{s}^{*}E_{\lda}x_{i})^{2}\,\D\pi_{s}}^{1/2}\,.
\end{align*}
Letting $\Gamma_{2,s}(f):=\Gamma_{2,s}(f,f)$ be the dynamic iterated
carr\'e du champ (\ref{eq:dynamic-cd-champ}) and using (\ref{eq:int-Bochner}),
\begin{align*}
\int(\mc L_{s}Q_{s}^{*}E_{\lda}x_{i})^{2}\,\D\pi_{s} & =\int(\norm{\hess Q_{s}^{*}E_{\lda}x_{i}}_{\hs}^{2}-\inner{\nabla^{2}\log\pi_{s}\,\nabla Q_{s}^{*}E_{\lda}x_{i},Q_{s}^{*}E_{\lda}x_{i}})\,\D\pi_{s}\\
 & =\int\bpar{\Gamma_{2,s}(Q_{s}^{*}E_{\lda}x_{i})+\inner{\nabla^{2}\log\pi_{s}\,\nabla Q_{s}^{*}E_{\lda}x_{i},Q_{s}^{*}E_{\lda}x_{i}}}\,\D\pi_{s}\\
 & \leq\int\Gamma_{2,s}(Q_{s}^{*}E_{\lda}x_{i})\,\D\pi_{s}=-\de_{s}\int\Gamma_{1,s}(Q_{s}^{*}E_{\lda}x_{i})\,\D\pi_{s}\,.
\end{align*}
Putting this back above,
\[
\Babs{\sum_{i=1}^{n}\int\inner{\nabla Q_{s}^{*}x_{i},\nabla Q_{s}^{*}E_{\lda}x_{i}}\,\D\pi_{s}}\leq c^{1/2}n^{1/2}\,(s^{-1/2}\wedge1)\,\Bpar{-\de_{s}\sum_{i=1}^{n}\int\Gamma_{1,s}(Q_{s}^{*}E_{\lda}x_{i})\,\D\pi_{s}}^{1/2}\,.
\]
Therefore,
\begin{align*}
\msf I & =\Babs{\int_{0}^{h}\sum_{i=1}^{n}\int\inner{\nabla Q_{s}^{*}x_{i},\nabla Q_{s}^{*}E_{\lda}x_{i}}\,\D\pi_{s}\D s}\\
 & \leq c^{1/2}n^{1/2}\int_{0}^{h}(s^{-1/2}\wedge1)\,\Bpar{-\de_{s}\underbrace{\sum_{i=1}^{n}\int\Gamma_{1,s}(Q_{s}^{*}E_{\lda}x_{i})\,\D\pi_{s}}_{=:\vphi(s)}}^{1/2}\,\D s\,.
\end{align*}
To bound this, we use a simple algebraic lemma:
\begin{lem}
[{\cite{guan2024note}}]Let $\vphi\geq0$ be a differentiable and
non-increasing on $\R_{\geq0}$. Then, for any integer $N\geq0$,
\[
\int_{0}^{2^{N}}(s^{-1/2}\wedge1)\,\bigl(-\vphi'(s)\bigr)^{1/2}\,\D s\le\sqrt{\vphi(0)\,(N+1)}\,.
\]
\end{lem}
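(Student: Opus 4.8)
The plan is to chop $[0,2^N]$ into the dyadic pieces $[0,1]$ and $[2^j,2^{j+1}]$ for $j=0,\dots,N-1$ (so $N+1$ blocks in all), apply Cauchy--Schwarz \emph{within} each block against the measure $-\varphi'(s)\,\D s$, and then apply Cauchy--Schwarz \emph{across} the $N+1$ resulting terms, exploiting that $-\varphi'\geq0$ makes the block-masses telescope to at most $\varphi(0)$.

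First I would handle the block $[0,1]$, where $s^{-1/2}\wedge 1=1$; by Cauchy--Schwarz,
\[
\int_0^1 (-\varphi'(s))^{1/2}\,\D s\le \Bpar{\int_0^1 1\,\D s}^{1/2}\Bpar{\int_0^1 (-\varphi'(s))\,\D s}^{1/2}=\bpar{\varphi(0)-\varphi(1)}^{1/2}.
\]
Next, on each $[2^j,2^{j+1}]$ we have $s^{-1/2}\wedge 1=s^{-1/2}$, so again by Cauchy--Schwarz
\[
\int_{2^j}^{2^{j+1}} s^{-1/2}(-\varphi'(s))^{1/2}\,\D s\le \Bpar{\int_{2^j}^{2^{j+1}}\frac{\D s}{s}}^{1/2}\Bpar{\int_{2^j}^{2^{j+1}}(-\varphi'(s))\,\D s}^{1/2}=(\log 2)^{1/2}\,\bpar{\varphi(2^j)-\varphi(2^{j+1})}^{1/2}.
\]
Write $a_{-1}:=\varphi(0)-\varphi(1)$ and $a_j:=\varphi(2^j)-\varphi(2^{j+1})$ for $0\le j\le N-1$, and correspondingly $c_{-1}:=1$, $c_j:=\log 2$; each term above is bounded by $(c_j a_j)^{1/2}$.

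Finally I would sum and apply Cauchy--Schwarz over the $N+1$ indices: the left-hand side is at most
\[
\sum_{j=-1}^{N-1}(c_j a_j)^{1/2}\le\Bpar{\sum_{j=-1}^{N-1}c_j}^{1/2}\Bpar{\sum_{j=-1}^{N-1}a_j}^{1/2}.
\]
Here $\sum_j a_j=\varphi(0)-\varphi(2^N)\le\varphi(0)$ since $\varphi\ge0$, and $\sum_j c_j=1+N\log 2\le N+1$, which yields the claimed bound $\sqrt{\varphi(0)(N+1)}$. (One should assume the integrals are finite/$\varphi$ absolutely continuous on compacts, so that the fundamental theorem of calculus applies; this is harmless in the intended application where $\varphi(s)=\sum_i\int\Gamma_{1,s}(Q_s^*E_\lambda x_i)\,\D\pi_s$ is smooth and non-increasing.) There is no real obstacle here beyond choosing the dyadic scale correctly so that $\int s^{-1}\,\D s$ over each block is the constant $\log 2$; the doubling is exactly what makes the number of blocks $O(N)$ while keeping each block's weight $c_j$ bounded.
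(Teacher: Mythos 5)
Your proof is correct. A quick audit: on $[0,1]$ you have $s^{-1/2}\wedge 1 = 1$ and Cauchy--Schwarz gives $(\varphi(0)-\varphi(1))^{1/2}$; on each $[2^j,2^{j+1}]$ you have $s^{-1/2}\wedge 1=s^{-1/2}$ and $\int_{2^j}^{2^{j+1}}s^{-1}\,\D s=\log 2$, so the block contributes $(\log 2)^{1/2}(\varphi(2^j)-\varphi(2^{j+1}))^{1/2}$; the outer Cauchy--Schwarz over the $N+1$ blocks then telescopes the $\varphi$-differences to at most $\varphi(0)$ (using $\varphi\ge 0$) and gives the weight $1+N\log 2\le N+1$. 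The edge case $N=0$ reduces to the single block $[0,1]$ and also works.

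The survey does not reproduce a proof — it states the lemma as a black-box citation to Guan — so there is nothing in the text to compare against. That said, the shape of the statement (dyadic upper limit $2^N$, prefactor $N+1$) essentially dictates the dyadic-block-plus-double-Cauchy--Schwarz argument you gave, which is the standard and almost certainly the intended route. Your parenthetical on absolute continuity so that $\int_a^b(-\varphi')=\varphi(a)-\varphi(b)$ is the right regularity caveat and is harmless in the paper's application.
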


To use this, we bound
\begin{align}
\vphi(0) & =\sum_{i=1}^{n}\int\Gamma_{1,0}(E_{\lda}x_{i})\,\D\pi=\sum_{i=1}^{n}\int\norm{\nabla E_{\lda}x_{i}}^{2}\,\D\pi\label{eq:repeat}\\
 & =\sum_{i}\int(E_{\lda}x_{i})(-\mc LE_{\lda}x_{i})\,\D\pi\leq\lda\sum_{i}\int\norm{E_{\lda}x_{i}}^{2}\,\D\pi\nonumber \\
 & =\lda n\,\frac{1}{n}\sum_{i=1}^{n}\norm{E_{\lda}x_{i}}_{L^{2}(\pi)}^{2}=\lda nF(\lda)\,.\nonumber 
\end{align}
Using this lemma with this bound and assuming $h\geq1$ for a while,
\begin{align*}
\msf I & \leq c^{1/2}n^{1/2}\int_{0}^{h}(s^{-1/2}\wedge1)\,\Bpar{-\de_{s}\sum_{i=1}^{n}\int\Gamma_{1,s}(Q_{s}^{*}E_{\lda}x_{i})\,\D\pi_{s}}^{1/2}\,\D s\\
 & \leq c^{1/2}\lda^{1/2}nF^{1/2}(\lda)\,(\lceil\log_{2}h\rceil+1)^{1/2}\,.
\end{align*}

As for $\msf{II}$, using the gradient inequality (Corollary~\ref{cor:gradient-ineq})
and reproducing (\ref{eq:repeat}),
\[
\sum_{i=1}^{n}\int_{0}^{h}\int\norm{\nabla Q_{s}^{*}E_{\lda}x_{i}}^{2}\,\D\pi_{s}\D s\leq\sum_{i=1}^{n}\int_{0}^{h}\int\norm{\nabla E_{\lda}x_{i}}^{2}\,\D\pi\D s\leq\lda hnF(\lda)\,.
\]

Combining the bounds on $\msf I$ and $\msf{II}$, the term $\msf A$
is bounded as, if $h\geq1$ 
\[
\Babs{\sum_{i=1}^{n}\int(Q_{h}^{*}E_{\lda}^{\perp}x_{i})\,(Q_{h}^{*}E_{\lda}x_{i})\,\D\pi_{h}}\leq c^{1/2}\lda^{1/2}nF^{1/2}(\lda)\,(\lceil\log_{2}h\rceil+1)^{1/2}+\lda hnF(\lda)\,.
\]

\paragraph{Putting together.}

We now provide a bound on $F(\lda)$ on $\lda\in(0,e^{-1})$ (so $\Abs{\log\lda}\geq1$)
as 
\[
F(\lda)\lesssim\lda\,\abs{\log\lda}\,.
\]
We may assume that 
\[
\frac{F(\lda)}{\lda}\geq e^{-4}\,.
\]
Otherwise, the desired inequality clearly holds on $\lda\in(0,e^{-1})$.

Recall that 

\[
\frac{1}{n}\sum_{i=1}^{n}\norm{Q_{h}^{*}E_{\lda}x_{i}}_{L^{2}(\pi_{h})}^{2}\geq e^{-h\lda}\,F(\lda)\,,
\]
and 
\[
\sum_{i}\norm{Q_{h}^{*}E_{\lda}x_{i}}_{L^{2}(\pi_{h})}^{2}\leq\norm{Q_{h}^{*}x}_{L^{2}(\pi_{h})}^{2}+2\msf A\,.
\]
Combining these, if $h\geq1$,
\begin{align*}
e^{-h\lda}F(\lda) & \leq c\,(h^{-1}\wedge1)+2c^{1/2}\lda^{1/2}F^{1/2}(\lda)\,(\lceil\log_{2}h\rceil+1)^{1/2}+2\lda hF(\lda)\\
 & =ch^{-1}+2c^{1/2}\lda^{1/2}F^{1/2}(\lda)\,(\lceil\log_{2}h\rceil+1)^{1/2}+2\lda hF(\lda)\,.
\end{align*}
Take
\[
h=\lda^{-1/2}F^{-1/2}(\lda)\,,
\]
under which $1/2\leq e^{-\lda^{1/2}F^{-1/2}}$ holds due to $F/\lda\geq e^{-4}$.
Thus,
\[
\half\,F\lesssim\lda^{1/2}F^{1/2}+\lda^{1/2}F^{1/2}\,(\lceil\log_{2}h\rceil+1)^{1/2}+\lda^{1/2}F^{1/2}\,,
\]
and
\[
F\lesssim\lda\,(\lceil\log_{2}h\rceil+1)\,.
\]
One can readily check that $\lceil\log_{2}h\rceil\lesssim\abs{\log\lda}$,
so the desired inequality follows.

If $h\leq1$, we could have simply worked with $2^{|\log\lda|}$ instead
of $h$ in the algebraic lemma, so 
\[
F\lesssim\lda\,\abs{\log\lda}\,,
\]
which completes the proof of the desired inequality.

Substituting the desired inequality back to (\ref{eq:sigma-decomp}),
\[
\sigma_{\pi}^{2}\lesssim1+\int_{e^{-1}\wedge\lda_{\pi}}^{e^{-1}}\frac{F(\lda)}{\lda^{2}}\,\D\lda\lesssim1+\int_{e^{-1}\wedge\lda_{\pi}}^{e^{-1}}\frac{\abs{\log\lda}}{\lda}\,\D\lda\lesssim\abs{\log(\lda_{\pi}\wedge e^{-1})}^{2}\,,
\]
and thus
\[
\sigma_{\pi}^{2}\lesssim|\log\lda_{\pi}|^{2}\,.
\]

\end{document}